\newtheorem{corollary}{Corollary}
\newtheorem{theorem}{Theorem}
\newtheorem{conjecture}{Conjecture}
\newtheorem{lemma}{Lemma}
\newcommand{\shortcite}{\cite}
\newcommand{\trelation}{\succ}
\newcommand{\mymapsto}{\rightarrow}
\newcommand{\teq}{$\tau$}
\newcommand{\mathteq}{\tau}
\newcommand{\schwaz}{Schwartz's Conjecture}
\newcommand{\captain}{captain}
\newcommand{\capgraph}[1]{Dom{(#1)}}
\newcommand{\captaingraph}{domination graph}
\newcommand{\sibling}{sibling} 
\newcommand{\myvspace}[1]{\vspace*{0pt}}
\newcommand{\myfig}[1]{{Figure}~\ref{#1}}
\newcommand{\myfigs}{{Figures}}
\newcommand{\mytable}[1]{{Table}~\ref{#1}}
\newcommand{\broom}[3]{(#1; #2; #3)}
\newcommand{\edge}[2]{(#1,#2)}
\newcommand{\dcapgraph}[1]{\mathfrak{Dom}{(#1)}}
\newcommand{\caseinproofskip}{\medskip}
\newcommand{\comments}[1]{}
\newcommand{\condition}[1]{Condition~(#1)}
\newcommand{\conditions}[2]{Conditions~(#1)-(#2)}
\newcommand{\assumecontradiction}{Assume for the sake of contradiction that $R_1$ and $R_2$ are two minimal {\teq}-retentive sets of $T$ that satisfy the above two conditions in the lemma.}
\newcommand{\casebegin}{$\bullet$\hspace{1em}}
\newcommand{\mymod}[2]{#1\mod #2}
\begin{document}
\begin{frontmatter}

\title{A Further Step Towards an Understanding of the Tournament Equilibrium Set}
\author{Yongjie Yang
}
\ead{yyongjie@mmci.uni-saarland.de}
\address{Universit{\"{a}}t des Saarlandes, Saarbr\"{u}cken, Germany}

\begin{abstract}
We study some problems pertaining to the tournament equilibrium set ({\teq} for short).
A tournament $H$ is a {\teq}-retentive tournament if there is a tournament $T$ which has a minimal {\teq}-retentive set $R$ such that $T[R]$ is isomorphic to $H$. We study {\teq}-retentive tournaments and achieve many significant results.
In particular, we prove that there are no {\teq}-retentive tournaments of size 4, only 2 non-isomorphic {\teq}-retentive tournaments of sizes 5 and 6, respectively, and 26 non-isomorphic {\teq}-retentive tournaments of size 7.
For three tournaments $H_1, H_2$ and $T$, we say $T$ is a $(H_1,H_2)$-{\teq}-retentive tournament if $T$ has two minimal {\teq}-retentive sets $R_1$ and $R_2$ such that $T[R_1]$ and $T[R_2]$ are isomorphic to $H_1$ and $H_2$, respectively. We show that there are no $(H_1,H_2)$-retentive tournaments for $H_1$ and $H_2$ being small tournaments. Our results imply that Schwartz's Conjecture holds in all tournaments of size at most 14.
Finally, we study Schwartz's Conjecture in several classes of tournaments.
To achieve these results, we study the relation between (directed) domination graphs of tournaments and {\teq}-retentive sets, and derive a number of properties on minimal {\teq}-retentive sets.
\end{abstract}

\begin{keyword}
Tournament equilibrium set \sep Domination graphs\sep Schwartz's Conjecture\sep Tournament Solutions\sep Retentive set
\end{keyword}
\end{frontmatter}

\section{Introduction}
\label{sec:introduction}
Tournaments play a significant role in decision making~\cite{DBLP:conf/ijcai/AltmanPT09,DBLP:journals/jair/AzizBFHLS15,DBLP:conf/aaai/BrandtBH14,DBLP:conf/ijcai/BrandtBS11,
DBLP:journals/scw/BrandtCKLNSST13,DBLP:conf/atal/BrandtHKS13,DBLP:conf/atal/BrandtHS14,DBLP:conf/aldt/YangG13}. For instance, a group of autonomous agents may jointly decide on a course of action based on the relation of majority preference, which prescribes that an alternative dominates another alternative if a majority of agents prefer the former to the latter. If there is no tie, the relation of majority preference gives rise to a tournament---a complete and antisymmetric binary relation over the alternatives.

When tournaments are used for joint decision making, the problem of determining which alternatives should be selected as the winners is of particular importance. If there is an alternative that dominates every other alternative, then this alternative is widely recognized as the winner, the so-called {\it{Condorcet winner}}.
However, the relation of majority preference may result in tournaments where no Condorcet winner exists.
For instance, consider the preferences $a\succ b\succ c, b\succ c\succ a, c\succ a\succ b$ of three agents. 
In this case there is no straightforward notion of a ``best"
alternative. To address the problem, researchers proposed several prominent tournament solutions, which are functions that map a given tournament to a nonempty subset of alternatives.
In particular, Thomas Schwartz~\shortcite{Schwartz1990} proposed to select the {\it{tournament equilibrium set}} ($\tau$ for short) as the winning set. The tournament equilibrium set of a tournament $T$, denoted by $\tau(T)$, is recursively defined as the union of all minimal {\teq}-retentive sets of $T$. By and large, a {\teq}-retentive set of $T$ is a nonempty alternative subset $R\subseteq V(T)$ such that for every $v\in R$, $\tau(H)\subseteq R$, where $H$ is the subtournament induced by all inneighbors of $v$ (see Section~\ref{sec:preliminaries} for further details). 

Since the work of Thomas Schwartz~\shortcite{Schwartz1990}, many researchers have made much effort to investigate the properties of the tournament equilibrium set~\cite{Brandt2010a,Brandt2010,Dutta1990,Houy2009,Laffond1993,MSYangIJCAI2015}. Nevertheless, less work has focused on structural properties of subtournaments induced by minimal {\teq}-retentive sets. In particular, questions such as, ``What structures are forbidden, necessary or sufficient for a set of alternatives to form a minimal {\teq}-retentive set? What structures are forbidden for two sets of alternatives to form two minimal {\teq}-retentive sets in the same tournament?", which are of particular importance for people to comprehensively understand the tournament equilibrium set, have not been extensively explored. This paper is aimed to partially answer these questions and propose some new related questions that we believe to be important for a comprehensive understanding of the tournament equilibrium set. In the following, the size of a tournament refers to the number of alternatives of the tournament.

First, we study tournaments $H$ such that there exists a tournament $T$ which has a minimal {\teq}-retentive set $R$ such that $T[R]$ is isomorphic to $H$ (we refer to Section~\ref{sec:preliminaries} for the definitions and notations that are used in the following discussion).
We call such tournaments $H$ {\it{{\teq}-retentive tournaments}}. Let $\beta_n$ be the number of all non-isomorphic {\teq}-retentive tournaments of size $n$. Mnich, Shrestha and Yang~\shortcite{MSYangIJCAI2015} showed that all minimal {\teq}-retentive sets induce irreducible subtournaments, which implies that $\beta_2=0$ (since all tournaments of size $2$ are reducible). It is fairly easy to check that $\beta_1=\beta_3=1$. In this paper, we study $\beta_n$ for some other small values of $n$. Our results are somewhat surprising. In particular, we show that $\beta_4=0, \beta_5=\beta_6=2$ and $\beta_7=26$.
Recall that there are $4,12,56,456$ non-isomorphic tournaments of sizes $4,5,6,7$, respectively.
Our results reveal that small {\teq}-retentive tournaments are considerably rare.
In other words, in order for a small set of alternatives to form a tournament equilibrium set in a tournament, the alternatives must first fulfil several  restrictive structural properties inside themselves.
In addition, we show the upper bounds for $\beta_n$ for $n=8,9,10$. These findings provide theoretical evidence for the results of the experimental research in~\cite{BrandtS2014ORDiscrimitiveTournament}, where the authors examined the distribution of the number of alternatives returned by common tournament solutions for empirical data as well as data generated according to stochastic preference models. They showed that under various stochastic preference models, the average absolute size of the tournament equilibrium set is less than $3$, when the number of alternatives is $5$. According to our findings there are only 2 $\tau$-retentive tournaments of size 5 and no {\teq}-retentive tournaments of sizes 2 and 4, implying very small possibility of a returned ${\tau}$ of size 5, and leaving many of sizes 1 or 3.

Schwartz conjectured~\cite{Schwartz1990} that every tournament has a unique minimal $\tau$-retentive set. This conjecture is of particular importance since it is equivalent that $\tau$ having any one of a set of desirable fairness properties, including  monotonicity, independence of unchosen alternatives, weak superset and strong superset (see~\cite{Brandt2010a} for further details).
Unfortunately, {\schwaz} has been disproved very recently~\cite{DBLP:journals/scw/BrandtCKLNSST13,Brandt2013b}, indicating that $\tau$ does not satisfy any of the above mentioned fairness properties in general. In particular, a counterexample of size~24 is derived by a exhaustive search~\cite{Brandt2013b}. In spite of these negative results for $\tau$ in general, $\tau$ is still recognized by researchers as one of the most attractive tournament solutions, since {\teq} satisfies the properties for all practical purposes. On the one hand, researchers proved that Schwartz's Conjecture holds in many classes of tournaments~\cite{Brandt2010,MSYangIJCAI2015}. In particular, by enumerating all non-isomorphic tournaments of size at most~12 and checking Schwartz's Conjecture on each of them, Brandt et al.~\shortcite{Brandt2010} proved that Schwartz's Conjecture holds in all tournaments of size at most 12. These positive results are important since many real-world tournaments are small tournaments (if there are too many alternatives, it is unlikely to be able to get a tournament in many cases due to huge information involved).
On the other hand, counterexamples of large size to Schwartz's Conjecture seem extremely rare, according to an experimental research by Brandt et al.~\cite{Brandt2010}.

In this paper, we study $(H_1,H_2)$-{\teq}-retentive tournaments which are defined as tournaments that have two distinct minimal {\teq}-retentive sets $R_1$ and $R_2$ such that $T[R_1]$ and $T[R_2]$ are isomorphic to the tournaments $H_1$ and $H_2$, respectively.
Schwartz's Conjecture is equivalent to saying that there are no $(H_1,H_2)$-retentive tournaments for all possible tournaments $H_1$ and $H_2$. The counterexample of size 24 found by Brandt and Seedig~\cite{Brandt2013b} has two minimal {\teq}-retentive sets each of size 12. This means that there are $(H_1,H_2)$-{\teq}-retentive tournaments such that $|H_1|=|H_2|=12$. In this paper, we prove that there are no $(H_1, H_2)$-{\teq}-retentive tournaments for $H_1$ and $H_2$ being small tournaments. See \myfig{fig:summary} for a demonstration. A significant consequence of our results is that Schwartz's Conjecture holds in all tournaments of size at most 14. We remark that we do not enumerate tournaments of sizes~13 and~14 (in fact, enumerating all non-isomorphic tournaments of sizes 13 and 14 is an extremely time-consuming task), but instead, we derive numerous properties on minimal {\teq}-retentive sets, and then based on these properties, we show the non-existence of $(H_1,H_2)$-{\teq}-retentive tournaments for small tournaments $H_1$ and $H_2$.

Finally, based on the derived properties on minimal {\teq}-retentive sets, we examine Schwartz's Conjecture in several classes of tournaments, and prove that Schwartz's Conjecture holds on these tournaments.

To study the problems above, we exploit directed domination graphs of tournaments. The {\it{directed domination graph}} of a tournament $T$ is a directed graph with the same vertex (alternative) set as $T$. Moreover, there is an arc from a vertex $v$ to another vertex $u$, if $v$ dominates $u$ and all the other inneighbors of $u$. (Directed) domination graphs were first studied by Fisher et al.~\shortcite{DBLP:journals/gc/FisherGLMR01,DBLP:journals/jgt/FisherLMR98}.
We first introduce directed domination graphs into the study of the tournament equilibrium set. To achieve the results of this paper, we derive numerous intriguing properties on {\teq}-retentive sets. For instance, we show both necessary and sufficient conditions for a set of three vertices to be a minimal {\teq}-retentive set, which further implies an efficient algorithm to determine whether a given three vertices is a minimal {\teq}-retentive set in a tournament. We believe that these properties are helpful for people to have a comprehensive understanding of the tournament equilibrium set. Moreover, these properties are useful for further studies on the tournament equilibrium set. 

\begin{figure}[h!]
\begin{center}
\includegraphics[width=\textwidth]{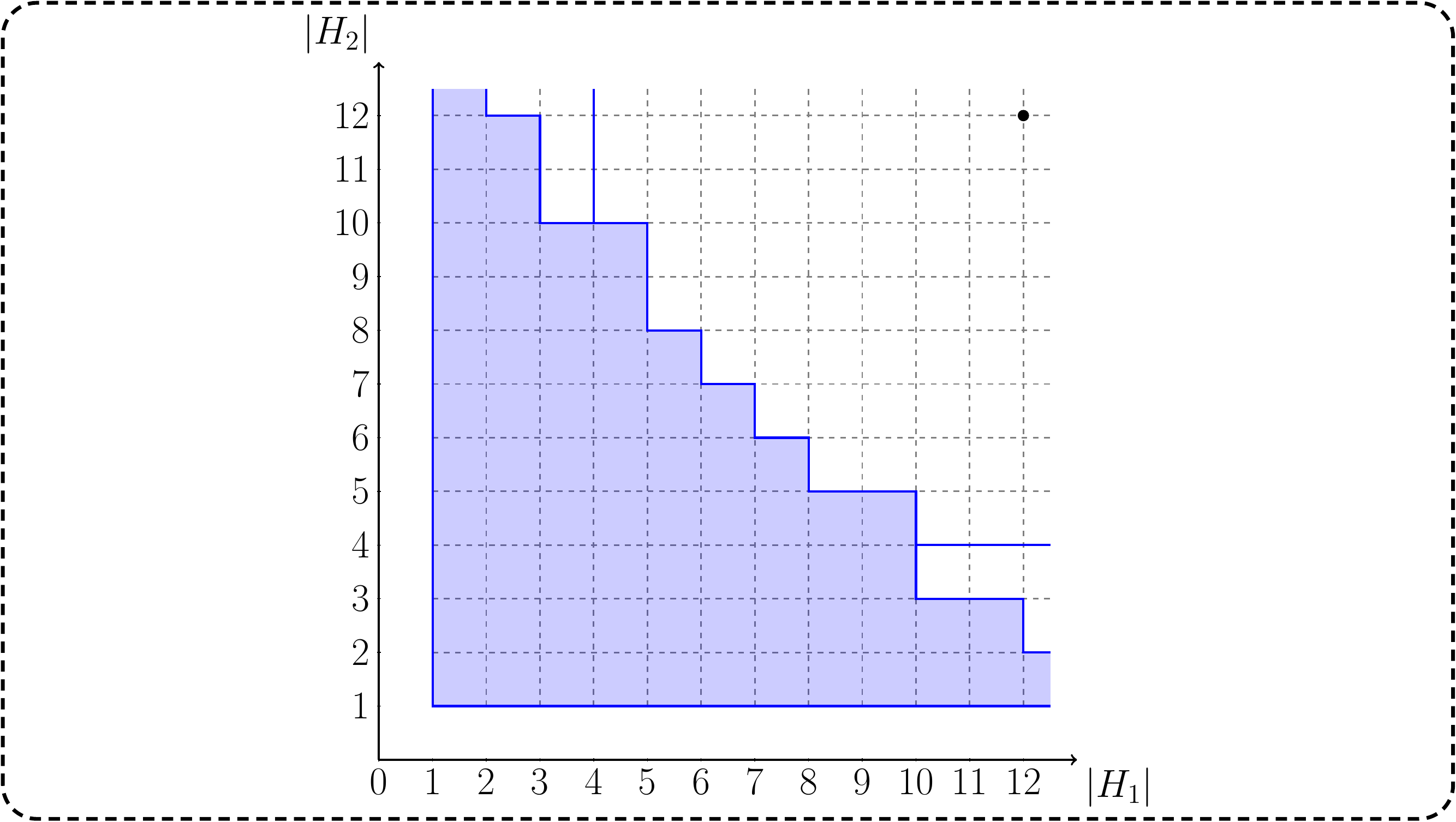}
\end{center}
\caption{This figure shows the status of $(H_1,H_2)$-retentive tournaments. A coordinate $(x,y)$ in the blue area, including the boundaries, means that there are no $(H_1,H_2)$-retentive tournaments such that $|H_1|=x$ and $|H_2|=y$. The counterexample to Schwartz's Conjecture discovered by Brandt and Seedig~\cite{Brandt2013b} is a tournament with two minimal {\teq}-retentive sets each of size 12. Thus, there are $(H_1,H_2)$-retentive tournaments such that $|H_1|=|H_2|=12$, as indicated by the dark circle on the coordinate $(12,12)$. Except the coordinate $(12,12)$, for every other positive integer coordinate $(x,y)$ that are not in the blue area, it remains open whether there exists $(H_1,H_2)$-{retentive} tournaments such that $|H_1|=x$ and $|H_2|=y$.}
\label{fig:summary}
\end{figure}

\section{Preliminaries}
\label{sec:preliminaries}
{\textbf{Tournament.}} 
A \emph{tournament} $T$ is a pair $(V(T), \trelation)$, where $V(T)$ is a set of {\it{alternatives}} and $\trelation$ is an asymmetric and complete
binary relation on $V(T)$. 
For two alternative sets $X$ and $Y$, $X \trelation Y$ means that $x\trelation y$ for every $x\in X$ and every $y\in Y$.
For ease of exposition, we use {\it{directed graphs}} to represent tournaments. Precisely, in this paper a tournament $T=(V(T), \trelation)$ is considered as a directed graph where $V(T)$ is considered as the vertex set and $\trelation$ the arc set. We refer to the textbook by West~\shortcite{Douglas2000} for readers who are not familiar with graph theory. We use the term ``vertex" for ``alternative" hereinafter.

The {\it{source}} of a tournament $T=(V(T),\trelation)$ is the vertex  $a$ so that $a\trelation b$ for every vertex $b\in V(T)\setminus \{a\}$.
From a social choice point of view, the source is called the {\it{Condorcet winner}} of the tournament. 

For a vertex $v\in V(T)$, let $N^-_{T}(v)$ denote the set of inneighbors of $v$ in $T$ and $N^{+}_{T}(v)$ the set of outneighbors of $v$, that is,  $N^-_{T}(v)=\{u\in V(T)\mid u\trelation v\}$ and $N^+_{T}(v)=\{u\in V(T)\mid v\trelation u\}$.
For a subset $B\subseteq V(T)$, $T[B]$ is the {\it{subtournament}} induced by $B$, that is, $T[B]=(B,\trelation')$ where for every $a,b\in B$, $a\trelation' b$ if and only if $a\trelation b$. A {\it{directed triangle}} is a tournament with three vertices so that each vertex has exactly one inneighbor.

A tournament $T=(V(T),\trelation)$ is {\it{isomorphic}} to another tournament $T'=(V(T'),\trelation')$ if there is a bijection $f: V(T)\mymapsto V(T')$ such that $v\trelation u$ if and only if $f(v)\trelation' f(u)$. Throughout this paper, let $T_n$ denote the set of all non-isomorphic tournaments of $n$ vertices.

A tournament $T$ is {\it{reducible}} if there is a partition $(A,B)$ of $V(T)$ such that $A\trelation B$. Otherwise, it is {\it{irreducible}}.

\smallskip

{\textbf{Tournament Equilibrium Set ({\teq} for short).} A {\it{tournament solution}} $S$ is a function that maps every tournament $T$ to a nonempty subset $S(T)\subseteq V(T)$. For a tournament solution $S$ and a tournament $T$, a nonempty subset $A\subseteq V(T)$ is an $S$-{\it{retentive set}} if for all $v\in A$ with $N^-_{T}(v)\neq \emptyset$, it holds that $S(T[N^-_{T}(v)])\subseteq A$. An $S$-retentive set $A$ is {\it{minimal}} if there is no other $S$-retentive set $B$ such that $B\subset A$. Since the set $V(T)$ of
all vertices is trivially an $S$-retentive set of $T$, $S$-retentive
sets are guaranteed to exist.
 The tournament equilibrium set $\mathteq{(T)}$ of a tournament $T$ is defined as the union of all minimal $\tau$-retentive sets of $T$~\cite{Schwartz1990}. This recursion is well-defined since $|N^-_T(v)|$ is strictly smaller than $|V(T)|$ for every $v\in V(T)$.  See~\myfig{fig:exampleteq} for some small tournaments with their tournament equilibrium sets.  


\begin{figure}[h!]
\begin{center}
\includegraphics[width=\textwidth]{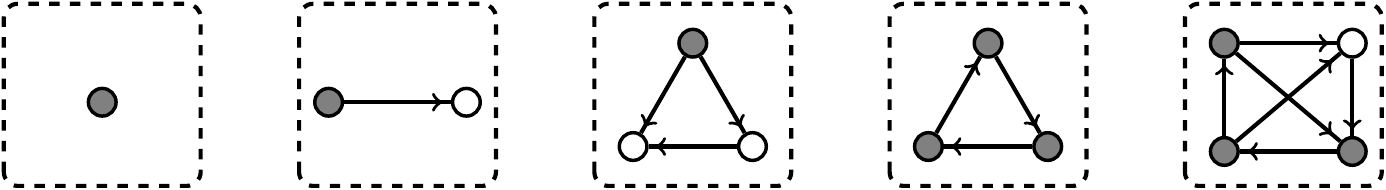}
\end{center}
\caption{The tournament equilibrium set of each tournament consists of all the gray vertices.}
\label{fig:exampleteq}
\end{figure}

{\textbf{{\teq}-Retentive Tournament.}} A tournament $H$ is a {\it{\teq}-retentive tournament} if there is a tournament $T$ that has a minimal {\teq}-retentive set $R$ so that $T[R]$ is isomorphic to $H$. 

{\textbf{{($H_1,H_2$)-{\teq}-Retentive Tournament.}}} For three tournaments $T, H_1$ and $H_2$, we say that $T$ is a {\it{$(H_1, H_2)$-{\teq}-retentive tournament}}, if $T$ has two distinct minimal {\teq}-retentive sets $R_1$ and $R_2$ such that $T[R_1]$ is isomorphic to $H_1$ and $T[R_2]$ is isomorphic to $H_2$.
For two classes $\mathcal{H}_1$ and $\mathcal{H}_2$ of tournaments, we say a tournament $T$ is a {\it{$(\mathcal{H}_1, \mathcal{H}_2)$-{\teq}-retentive tournament}} if there are $H_1\in \mathcal{H}_1$ and $H_2\in \mathcal{H}_2$ such that $T$ is a $(H_1, H_2)$-{\teq}-retentive tournament. 

\smallskip



\begin{figure}[h!]
\begin{center}
\includegraphics[width=\textwidth]{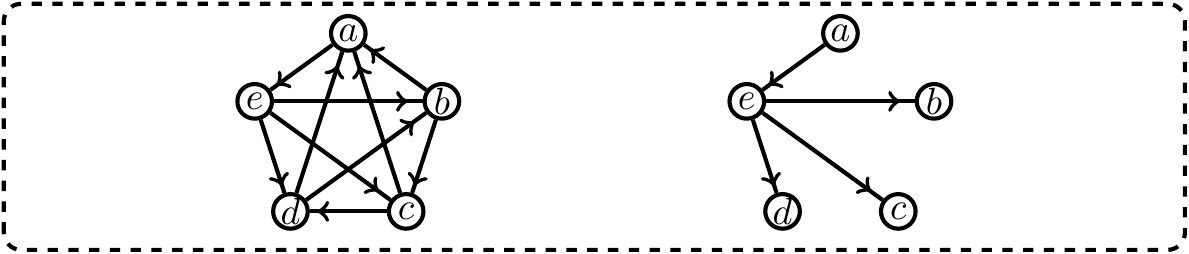}
\end{center}
\caption{A tournament (left) and its directed {\captaingraph} (right).}
\label{fig:Tfiveoneandtiscaptain}
\end{figure}


{\textbf{Domination Graph.}} Let $T$ be a tournament. A vertex $u$ is the {\it{\captain}} of another vertex $v$ if $u$ dominates $v$ and all the other inneighbors of $v$, that is, $u\trelation v$ and $\{u\}\trelation N^-_T(v)\setminus \{u\}$.  We say that $v$ is a {\it{slave}} of $u$. It is clear that every vertex has at most one captain, but may have more than one slave. Moreover, if a vertex $u$ is the captain of a vertex $v$, then $u$ is the source of $T[N^-_T(v)]$. A vertex is a {\it{captain vertex}} (resp. {\it{slave vertex}}) if $u$ is the captain (resp. a slave) of another vertex in $T$.
The {\it{\captaingraph}} (resp. {\it{directed {\captaingraph}}}) of $T$, denoted by ${\capgraph{T}}$ (resp. ${\dcapgraph{T}}$) is an undirected (resp. a directed) graph such that
(1) $V({\capgraph{T}})=V(T)$ (resp. $V({\dcapgraph{T}})=V(T)$); and
(2) there is an edge $\edge{u}{v}$ (resp. an arc $u\trelation v$) in ${\capgraph{T}}$ (resp. ${\dcapgraph{T}}$) if and only if $u$ is the {\captain} of $v$ in $T$. So the domination graph of a tournament is the restriction of the directed domination graph of the tournament without the orientation of arcs.
A vertex in $\capgraph{T}$ (resp. $\dcapgraph{T}$) is an {\it{isolated}} vertex if it is neither a captain vertex nor a slave vertex in $T$. A domination graph (resp. directed domination graph) is {\it{empty}} if it contains no edge (resp. arc), that is, it consists of only isolated vertices; otherwise it is {\it{non-empty}}.
See \myfig{fig:teqtournamentofsizesix} for an illustration.

Domination graphs were initially studied by Fisher~et~al.~\shortcite{DBLP:journals/jgt/FisherLMR98,DBLP:journals/gc/FisherGLMR01}~\footnote{Captain vertices and slave vertices are not distinguished in the definition of domination graphs by Fisher~et~al.~\shortcite{DBLP:journals/jgt/FisherLMR98,DBLP:journals/gc/FisherGLMR01}. In particular, in their definition, two vertices $v,u$ dominate a tournament $T$ if every vertex $w\in V(T)\setminus \{v,u\}$ is dominated by at least one of $\{v,u\}$. Then, the domination graph of $T$ is defined as the graph $\capgraph{T}=(V(T),E)$, where there is an edge between two vertices $v,u$ in $E$ if they dominate the tournament $T$. It is easy to verify that if two vertices $v,u$ with $u\trelation v$ dominate a tournament, then $u$ is the captain of $v$. On the other hand, if $u$ is the captain of a vertex $v$, then $v,u$ dominate the tournament. Therefore, these two definitions lead to the same {\captaingraph}.
Our adoption of the definition is due to the following reasons. First, we were not aware of the papers by Fisher~et~al.~\cite{DBLP:journals/jgt/FisherLMR98,DBLP:journals/gc/FisherGLMR01} as we initiated the study. Second, the distinguish between captain vertices and slave vertices plays important role in our study of tournament equilibrium set.
}.
So far, domination graphs were purely studied from the graph theory perspective. In this paper, we first extend the application of {\captaingraph}s to the study of the tournament equilibrium set.
Fisher~et~al.~\shortcite{DBLP:journals/jgt/FisherLMR98,DBLP:journals/gc/FisherGLMR01} derived a property of domination graphs that is very useful for our study (see Lemma~\ref{lem:dominationgraphproperty} below).
A {\it{cycle}} (resp. {\it{directed cycle}}) is a sequence of vertices $(v_0,v_1,...,v_{k-1})$ such that $v_i\neq v_j$ for all $0\leq i\neq j\leq k-1$, and there is an edge $\edge{v_i}{v_{\mymod{(i+1)}{k}}}$ (resp. an arc $v_i\trelation v_{\mymod{(i+1)}{k}}$) for all $i=0,1,...,k-1$.
The set of vertices in a cycle $C$ is denoted by $V(C)$.
An {\it{odd cycle}} is a cycle $C$ such that $|V(C)|$ is odd.
A graph is {\it{acyclic}} if there is no cycle in the graph.
A {\it{tree}} is a connected acyclic graph.
A {\it{forest}} is a collection of vertex-disjoint trees.
A path $P$ is a sequence of distinct vertices $(v_0,v_1,...,v_{k-1})$ such that there is an edge $\edge{v_i}{v_{i+1}}$ for all $i=0,...,k-2$.
We say $P$ is a path between $v_1$ and $v_k$.
The {\it{length of a path}} is the number of vertices in the path.
The {\it{distance}} between two vertices $v$ and $u$ is the length of a shortest path between them minus one.
For example, the length of a path $(v,u,w,v)$ is 3, since we have three distinct vertices $v,u,w$ in the path.
A {\it{caterpillar}} is a tree such that the removal of all degree-1 vertices yields a path.
A {\it{spiked cycle}} is a connected graph such that the removal of all degree-1 vertices yields a cycle.
\myfig{fig:dominationgraphs} illustrates the following Lemma.


\begin{lemma}\cite{DBLP:journals/jgt/FisherLMR98}
\label{lem:dominationgraphproperty}
Let $T$ be a tournament. Then ${\capgraph{T}}$ is either a spiked odd cycle, with or without isolated vertices, or a forest of caterpillars.
\end{lemma}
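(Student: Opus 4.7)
The plan is to exploit the uniqueness of sources in tournaments. Since the captain of any vertex $v$ (when it exists) must be the source of $T[N^-_T(v)]$, and sources are unique, every vertex has at most one captain. Hence in $\dcapgraph{T}$ every vertex has in-degree at most one, which immediately implies that each connected component of the undirected $\capgraph{T}$ contains at most one cycle. Moreover, counting in-degrees around any undirected cycle forces it to appear in $\dcapgraph{T}$ as a directed cycle $v_0 \to v_1 \to \cdots \to v_{k-1} \to v_0$ (indices taken modulo $k$), so that $v_i$ is the captain of $v_{i+1}$ in $T$.

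From the captain relations one reads off $v_i \to v_{i+1}$ in $T$. The fact that $v_{i+1}$ is the captain of $v_{i+2}$ then forces $v_{i+2} \to v_i$ in $T$: otherwise $v_i \in N^-_T(v_{i+2})$ would require $v_{i+1}$ to dominate $v_i$, contradicting the cycle arc. Generalising, I would prove by induction on $d$ that $v_j \to v_{j+d}$ when $d$ is odd and $v_{j+d} \to v_j$ when $d$ is even. The inductive step assumes the opposite direction at distance $d$, then uses the captain relation $v_{j-1} \to v_j$ combined with the inductive hypothesis at distance $d-1$ (of the opposite parity) to derive a contradiction. For even $k$, applying the resulting rule at $d = k/2$ with both $j$ and $j + k/2$ produces opposite arcs on the same pair, so the cycle length must be odd.

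Next I would establish uniqueness of the cycle and the spike/caterpillar structure. Suppose a captain relation $a \to b$ lives in a different component from an odd cycle $C$ of length $k$. Setting $S = \{i : v_i \to b\}$, the captaincies at $b$ and at each $v_i$ of the cycle combine to force both $S$ and its complement $S^c$ to be independent sets in the cycle graph $C_k$; since $C_k$ is not bipartite for odd $k$, this is impossible. The same constraint-propagation approach prevents any vertex attached to the cycle from extending beyond a single leaf, so the cycle component must be a spiked odd cycle. In the cycle-free case, I would show each tree is a caterpillar by contradiction: if a vertex $v$ has three non-leaf neighbours, descending one step into each of the three subtrees and applying the captain-source requirements eventually either forces a new captain edge (creating a cycle and contradicting the forest assumption) or directly conflicts with an arc derived earlier. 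The hardest part will be closing the parity-based induction of the second paragraph uniformly across all $d$, including the boundary $d = k-1$ where the captain $v_{j-1}$ of $v_j$ coincides with $v_{j+d}$, and pinpointing the clashing pair for each even $k$; the bipartite obstruction used for uniqueness is a further delicate combinatorial step that relies on all the earlier arc rules being in place.
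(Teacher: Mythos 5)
The paper itself offers no proof of this lemma: it is imported verbatim from Fisher, Lundgren, Merz and Reid, so your proposal can only be judged against what a complete argument requires, not against an in-paper derivation. Your skeleton is the right one, and the parts you make precise do check out: uniqueness of the source of $T[N^-_T(v)]$ gives in-degree at most one in $\dcapgraph{T}$, so each component of $\capgraph{T}$ carries at most one cycle and every cycle is consistently directed; and your two-independent-sets obstruction is correct and, once in place, does all the work in the cyclic case. Indeed, if $a$ is the captain of $b$ and both avoid the directed odd cycle $C=(v_0,\dots,v_{k-1})$, then $S=\{i: v_i\trelation b\}$ is independent along $C$ (if $v_i\trelation b$ and $v_{i+1}\trelation b$ then $a\trelation v_{i+1}$, so the captain $v_i$ of $v_{i+1}$ beats $a$, while captaincy of $b$ forces $a\trelation v_i$), and its complement is independent directly from captaincy along $C$; an odd cycle admits no such $2$-colouring. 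Since captains are unique, any edge meeting $C$ points away from $C$, and this yields the spiked odd cycle with isolated vertices and the uniqueness of the cycle.

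Two steps, however, do not go through as written. First, the odd-$d$ case of your parity induction fails in the form you describe: assuming $v_{j+d}\trelation v_j$ and invoking the captaincy $v_{j-1}\trelation v_j$ only produces $v_{j-1}\trelation v_{j+d}$, a statement at distance $d+1$, which does not clash with the hypothesis at distance $d-1$. The repair is short but different: apply the even rule at distance $d-1$ from base $j+1$ to get $v_{j+d}\trelation v_{j+1}$; then $v_{j+d}$ would beat both $v_j$ and $v_{j+1}$, impossible because the captain $v_j$ of $v_{j+1}$ beats every other in-neighbour of $v_{j+1}$ (this in fact proves the odd rule directly, no contradiction needed). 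Second, and more seriously, the caterpillar half is only a hope. What must be excluded is the tree obtained from $K_{1,3}$ by subdividing each edge once (centre $c$, middles $x,y,z$, ends $x',y',z'$), and your sketch never exhibits the contradiction. It does exist and is purely local: $c$ has at most one captain, so $c$ is the captain of at least two middles, say $x$ and $y$, forcing $x$ to be the captain of $x'$ and $y$ of $y'$; then $x'\trelation c$ and $y'\trelation c$ (apply the edge $\{x,x'\}$, resp.\ $\{y,y'\}$, to the vertex $c$), hence $x\trelation y'$ and $y\trelation x'$ (edges $\{c,x\}$ and $\{c,y\}$), hence $x\trelation y$ (edge $\{x,x'\}$ applied to $y$), and now $x$ beats both $y$ and $y'$, contradicting that the captain $y$ of $y'$ beats every other in-neighbour of $y'$. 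Until you supply this case analysis (or cite Fisher et al.\ for it), the forest-of-caterpillars conclusion --- the substantial half of the lemma --- remains unproven in your plan.
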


See \myfig{fig:dominationgraphs} for an illustration of Lemma~\ref{lem:dominationgraphproperty}.

\begin{figure}
\begin{center}
\includegraphics[width=\textwidth]{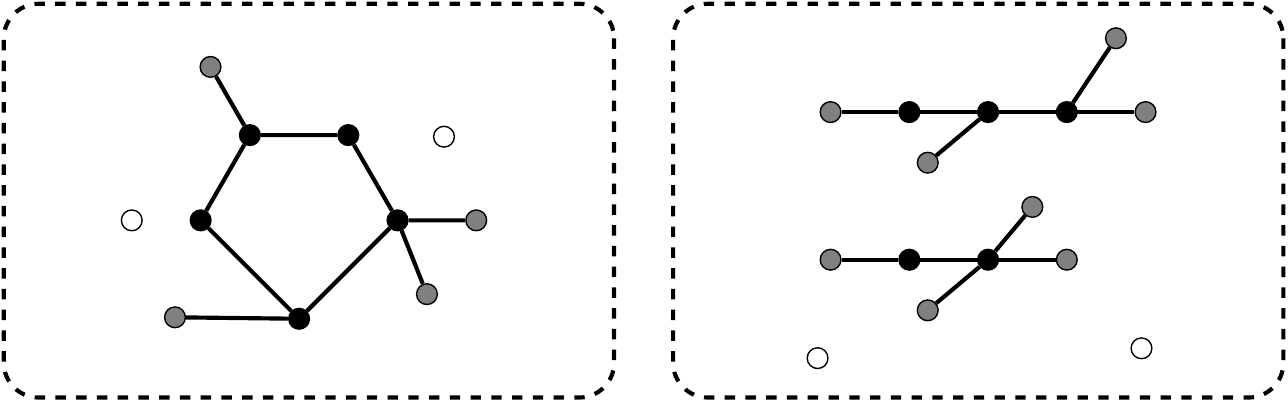}
\end{center}
\caption{An illustration of Lemma~\ref{lem:dominationgraphproperty}. Degree-1 vertices are in gray, isolated vertices in white, and vertices in an odd cycle or a path in black. Every domination graph is either a spiked odd cycle, with or without isolated vertices (as the graph on the left side), or a forest of caterpillars (as the graph on the right side).
}
\label{fig:dominationgraphs}
\myvspace{-20pt}
\end{figure}

\section{{\teq}-Retentive Tournaments}\label{sec:numberofteqretentivetournaments}
In this section, we study {\teq}-retentive tournaments of small size. In particular, we focus on the following questions:

``How many non-isomorphic {\teq}-retentive tournaments of size $n$, and what structural properties do they fulfill?"

In what follows, let $\beta_n$ denote the number of all non-isomorphic {\teq}-retentive tournaments of size $n$. It is clear that $\beta_1=1$. Mnich, Shrestha and Yang~\shortcite{MSYangIJCAI2015} studied the following lemma.

\begin{lemma}
\label{lem:teqirreducible}
Every minimal {\teq}-retentive set of a tournament $T$ induces an irreducible subtournament of $T$.
\end{lemma}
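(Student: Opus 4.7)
The plan is to argue by contradiction: assume $R$ is a minimal $\tau$-retentive set of $T$ yet $T[R]$ is reducible, and produce a smaller $\tau$-retentive set inside $R$, contradicting minimality. By reducibility, I would fix a nontrivial partition $R = A \uplus B$ with $A \trelation B$ and both $A$ and $B$ nonempty. The natural candidate for a smaller retentive set is the ``top'' piece $A$, since in a reducible tournament the dominators lie in the upper block and $\tau$-retentivity is a condition about dominators.

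Concretely, the key step would be to verify directly from the definition that $A$ itself is $\tau$-retentive in $T$. Fix any $v \in A$; I need $\mathteq(T[N^-_T(v)]) \subseteq A$. The retentivity of $R$ already gives $\mathteq(T[N^-_T(v)]) \subseteq R$, so it remains to rule out contributions from $B$. Here the partition does the work: because $A \trelation B$, every vertex of $A$ dominates every vertex of $B$, and in particular $v$ dominates all of $B$. Hence $N^-_T(v) \cap B = \emptyset$, so $N^-_T(v) \cap R \subseteq A$. Combining with $\mathteq(T[N^-_T(v)]) \subseteq R$ yields $\mathteq(T[N^-_T(v)]) \subseteq A$, as required.

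Since $A$ is nonempty and a proper subset of $R$ (because $B \neq \emptyset$), this contradicts minimality of $R$ and completes the argument. I do not expect any real obstacle here: the proof is a one-shot application of the definitions, and no induction on $|V(T)|$ or on the recursion depth of $\mathteq$ is needed, because retentivity only constrains the inneighborhood tournament through its $\mathteq$-value, not through its internal structure. The only subtlety worth being explicit about is that vertices outside $R$ could perfectly well dominate vertices of $A$ inside $T[N^-_T(v)]$; this is harmless, since we never claim anything about the internal structure of $T[N^-_T(v)]$, only that whichever subset of $R$ its $\mathteq$ lands in must avoid $B$ by the dominance $A \trelation B$.
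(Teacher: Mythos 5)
Your argument is correct. The paper itself gives no proof of this lemma (it is imported from Mnich, Shrestha and Yang), and your proof is exactly the standard one: take the dominating block $A$ of a reducible decomposition $R=A\uplus B$ with $A\trelation B$, note that for $v\in A$ we have $N^-_T(v)\cap B=\emptyset$, so $\mathteq(T[N^-_T(v)])\subseteq R\cap N^-_T(v)\subseteq A$, making the nonempty proper subset $A$ a $\mathteq$-retentive set and contradicting minimality.
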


Since tournaments with two vertices are reducible, Lemma~\ref{lem:teqirreducible} implies that 
$\beta_2=0$.
It is clear that a tournament with three vertices is irreducible only if the three vertices form a directed triangle. Since a directed triangle is a {\teq}-retentive tournament, we have that $\beta_3=1$.
Now we investigate $\beta_4$. Let's first recall and study some useful properties of {\teq}-retentive sets. 

\begin{lemma}\cite{MSYangIJCAI2015}
\label{lem:teqsourceuniqueminial}
A tournament $T$ has a unique minimal {\teq}-retentive set $R$ consisting of only one vertex $v$, if and only if $v$ is the source of $T$.
\end{lemma}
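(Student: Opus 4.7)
\smallskip
\noindent\textbf{Proof plan.} The plan is to prove the two directions separately, with the backward direction requiring induction on the size of the tournament.

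For the forward direction, I would argue directly from the definition of a $\tau$-retentive set. Suppose $\{v\}$ is a minimal $\tau$-retentive set of $T$. If $N^-_T(v)\neq\emptyset$, then the defining condition forces $\tau(T[N^-_T(v)])\subseteq\{v\}$. But $\tau(T[N^-_T(v)])$ is a nonempty subset of $N^-_T(v)$, which does not contain $v$, a contradiction. Hence $N^-_T(v)=\emptyset$, so $v$ is the source of $T$. (Uniqueness is not needed in this direction.)

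For the backward direction, suppose $v$ is the source of $T$. Then $N^-_T(v)=\emptyset$, so $\{v\}$ is trivially a $\tau$-retentive set, and it is minimal since it is a singleton. It remains to show uniqueness, which I would establish by strong induction on $|V(T)|$. The base case $|V(T)|=1$ is immediate. For the inductive step, let $R$ be any minimal $\tau$-retentive set of $T$. I split into two cases. If $v\in R$, then since $v$ dominates every other vertex of $T$, the subtournament $T[R]$ has $v$ as a source, hence is reducible whenever $|R|\geq 2$; by Lemma~\ref{lem:teqirreducible}, $T[R]$ is irreducible, forcing $R=\{v\}$. If $v\notin R$, pick any $u\in R$. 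Since $v$ is the source of $T$, we have $v\in N^-_T(u)$ and in fact $v$ dominates every element of $N^-_T(u)\setminus\{v\}$, so $v$ is the source of $T[N^-_T(u)]$. Because $|N^-_T(u)|<|V(T)|$, the inductive hypothesis applies to $T[N^-_T(u)]$ and yields $\tau(T[N^-_T(u)])=\{v\}$. Retentiveness of $R$ then gives $v\in R$, contradicting $v\notin R$. Hence the only minimal $\tau$-retentive set is $\{v\}$.

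The main obstacle is not technical but conceptual: avoiding circularity in the inductive argument. The temptation is to invoke ``$\tau$ of a tournament with a source equals the source'' as a known fact, but this is precisely the statement being proved. The clean way around this, as sketched above, is to induct on $|V(T)|$ and combine the inductive hypothesis with Lemma~\ref{lem:teqirreducible} applied to $T[R]$; Lemma~\ref{lem:teqirreducible} is what rules out the case $v\in R$ with $|R|\geq 2$, while the induction handles the case $v\notin R$ by forcing $v$ into $R$ through a smaller inneighborhood.
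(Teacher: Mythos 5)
Your proof is correct as stated; note, however, that the paper itself gives no proof of this lemma to compare against -- it is quoted from Mnich, Shrestha and Yang \cite{MSYangIJCAI2015} as a known fact, so your argument is judged on its own. Both directions check out: the forward direction is exactly the right vacuity/nonemptiness argument, and the induction on $|V(T)|$ in the backward direction is well-founded (since $u\notin N^-_T(u)$) and correctly forces $v\in R$ via $\mathteq{(T[N^-_T(u)])}=\{v\}$ when $v\notin R$. One small remark on the case $v\in R$: invoking Lemma~\ref{lem:teqirreducible} works, but it is heavier than necessary and drags in another result cited from the same source (whose standard proof, fortunately, does not rely on the present lemma, so there is no actual circularity). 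You can avoid it entirely: since $N^-_T(v)=\emptyset$, the singleton $\{v\}$ is itself a {\teq}-retentive set, so if $v\in R$ and $|R|\geq 2$ then $\{v\}\subsetneq R$ directly contradicts the minimality of $R$. With that substitution your argument is fully self-contained, needing nothing beyond the definition of retentive sets and the fact that $\mathteq$ is the union of the minimal retentive sets.
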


The following lemmas 
connect directed {\captaingraph}s with minimal {\teq}-retentive sets.
%
%
\begin{lemma}
\label{lem:captainistheuniqueteqofslavery}
Let $T$ be a tournament. A vertex $v\in V(T)$ is the captain of another vertex $u\in V(T)$ in $T$ if and only if $\{v\}$ is the unique minimal {\teq}-retentive set of $T[N^-_T(u)]$. Moreover, if $v$ is the captain of $u$, and $u$ is in a minimal {\teq}-retentive set $R$ of $T$, then $v$ is also in $R$.
\end{lemma}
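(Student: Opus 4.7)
The plan is to derive both directions of the biconditional and the moreover clause as direct consequences of Lemma~\ref{lem:teqsourceuniqueminial}, unwinding only the definitions of \emph{captain} and of \emph{{\teq}-retentive set}.

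For the forward direction, suppose $v$ is the captain of $u$. By definition this means $v\trelation u$ and $\{v\}\trelation N^-_T(u)\setminus\{v\}$, so $v\in N^-_T(u)$ and $v$ dominates every other vertex of $T[N^-_T(u)]$; i.e., $v$ is the source of $T[N^-_T(u)]$. Applying Lemma~\ref{lem:teqsourceuniqueminial} to the tournament $T[N^-_T(u)]$ immediately gives that $\{v\}$ is its unique minimal {\teq}-retentive set.

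For the reverse direction, assume $\{v\}$ is the unique minimal {\teq}-retentive set of $T[N^-_T(u)]$. Lemma~\ref{lem:teqsourceuniqueminial} (the ``only if'' part) forces $v$ to be the source of $T[N^-_T(u)]$. Rewriting what ``source'' means in terms of the original tournament $T$, we get $v\in N^-_T(u)$, hence $v\trelation u$, and $\{v\}\trelation N^-_T(u)\setminus\{v\}$, which is exactly the definition of $v$ being the captain of $u$.

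For the moreover part, suppose $v$ is the captain of $u$ and $u$ lies in a minimal {\teq}-retentive set $R$ of $T$. Since $v\in N^-_T(u)$, in particular $N^-_T(u)\neq\emptyset$, so by definition of {\teq}-retentive set we have $\mathteq(T[N^-_T(u)])\subseteq R$. But $\mathteq(T[N^-_T(u)])$ is by definition the union of all minimal {\teq}-retentive sets of $T[N^-_T(u)]$, which by the first part equals $\{v\}$. Hence $v\in R$, as claimed. I do not expect a real obstacle here: the entire argument is a chain of definition unfoldings glued together by one invocation of Lemma~\ref{lem:teqsourceuniqueminial}; the only care needed is to keep distinct the roles of $T$ and of the subtournament $T[N^-_T(u)]$ when translating between ``captain in $T$'' and ``source of $T[N^-_T(u)]$''.
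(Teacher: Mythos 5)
Your proposal is correct and follows essentially the same route as the paper's own proof: both directions of the equivalence are obtained by translating ``captain of $u$'' into ``source of $T[N^-_T(u)]$'' and invoking Lemma~\ref{lem:teqsourceuniqueminial}, and the moreover clause follows from the first part together with the definition of a {\teq}-retentive set. No gaps.
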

\begin{proof}
We begin the proof with the first part.
If $v$ is the captain of $u$ in $T$, then $v$ dominates $u$ and all the other inneighbors of $u$. Due to Lemma~\ref{lem:teqsourceuniqueminial}, $\{v\}$ is the unique minimal {\teq}-retentive set of $T[N^-_T(u)]$. On the other hand, if $\{v\}$ is the unique minimal {\teq}-retentive set of $T[N^-_T(u)]$, according to Lemma~\ref{lem:teqsourceuniqueminial}, $v$ is the source of $T[N^-_T(u)]$; thus $v$ is the captain of $u$ in $T$.

Now we prove the second part. Due to the first part, if $v$ is the captain of $u$ in $T$, then $\{v\}=\mathteq{(T[N^-_T(u)])}$. Then, according to the definition of {\teq}-retentive sets, if $u$ is included in a minimal {\teq}-retentive set $R$ of $T$, so is $v$.
\end{proof}

\begin{lemma}
\label{lem:inneighboronesouce}
Let $T$ be a tournament and $R$ be a minimal {\teq}-retentive set of $T$. Let $v$ and $u$ be two vertices in $R$. Then, $v$ is the captain of $u$ in $T[R]$ if and only if $v$ is the captain of $u$ in $T$.
\end{lemma}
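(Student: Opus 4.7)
The forward direction is immediate: if $v$ is the captain of $u$ in $T$, then the second part of Lemma~\ref{lem:captainistheuniqueteqofslavery} gives $v\in R$, and since $N^-_{T[R]}(u)=N^-_T(u)\cap R$, the vertex $v$ still dominates $u$ together with every inneighbor of $u$ that lies in $R$, making $v$ the captain of $u$ in $T[R]$.

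For the backward direction I propose a proof by contradiction driven by a descent construction. Write $S=N^-_T(u)$ and $S'=N^-_T(u)\cap R$. Combining Lemma~\ref{lem:captainistheuniqueteqofslavery} applied inside $T[R]$ with Lemma~\ref{lem:teqsourceuniqueminial} shows that $v$ is the source of $T[S']$, so $v\trelation w$ for every $w\in S'\setminus\{v\}$. By $\tau$-retentivity of $R$ at $u$, $\tau(T[S])\subseteq R\cap S=S'$. Assume for contradiction that $v$ is not the captain of $u$ in $T$. The first step is to observe that $v\notin\tau(T[S])$: otherwise any minimal $\tau$-retentive set $M$ of $T[S]$ containing $v$ would be a subset of $S'$, and since $v$ dominates $M\setminus\{v\}$, Lemma~\ref{lem:teqirreducible} would force $M=\{v\}$; but then $\{v\}$ being $\tau$-retentive in $T[S]$ would make $v$ the source of $T[S]$ and hence the captain of $u$ in $T$, contradicting the assumption.

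The heart of the proof is then to build a strictly decreasing sequence $S_0\supsetneq S_1\supsetneq\cdots$. Set $S_0=S$, and having defined $S_i$, pick any minimal $\tau$-retentive set $M_i$ of $T[S_i]$ and any $x_i\in M_i$, then let $S_{i+1}=N^-_T(x_i)\cap S_i$. The invariants to maintain are (i) $v\in S_i$, (ii) $v\notin\tau(T[S_i])$, and (iii) $\tau(T[S_i])\subseteq S'\setminus\{v\}$. Propagation is clean: retentivity of $M_i$ at $x_i$ in $T[S_i]$ gives $\tau(T[S_{i+1}])\subseteq M_i\subseteq\tau(T[S_i])\subseteq S'\setminus\{v\}$, which delivers (ii) and (iii) at stage $i+1$; because $x_i\in S'\setminus\{v\}$ and $v$ is the source of $T[S']$, we have $v\trelation x_i$, hence $v\in N^-_T(x_i)\cap S_i=S_{i+1}$; finally $x_i\in S_i\setminus S_{i+1}$ ensures $|S_{i+1}|<|S_i|$. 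Since the $|S_i|$ cannot shrink forever while $v\in S_i$, we eventually reach $|S_i|=1$, which forces $S_i=\{v\}$ and therefore $\tau(T[S_i])=\{v\}$, contradicting invariant (ii).

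The main technical point to verify carefully is the chain $M_i\subseteq\tau(T[S_i])\subseteq S'\setminus\{v\}$ at every stage: this is what guarantees that each chosen $x_i$ is dominated by $v$, which in turn is what stops $v$ from ever leaving the shrinking sets $S_i$ and so forces the contradiction. Once this chain is justified by induction---using $\tau(T[S_0])\subseteq S'$ from $\tau$-retentivity of $R$ for the base case and retentivity of each $M_i$ for the step---the descent runs automatically.
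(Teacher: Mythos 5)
Your proposal is correct, and in the backward direction it takes a genuinely different route from the paper's. The paper argues in one step: minimality of $R$ at $u$ gives $\tau(T[N^-_T(u)])\subseteq N^-_{T[R]}(u)$, the vertex $v$ is the source of $T[N^-_{T[R]}(u)]$, and irreducibility of minimal retentive sets (Lemma~\ref{lem:teqirreducible}) is then invoked to conclude $\tau(T[N^-_T(u)])=\{v\}$, after which Lemma~\ref{lem:captainistheuniqueteqofslavery} finishes. Your first step (any minimal retentive set of $T[N^-_T(u)]$ containing $v$ must equal $\{v\}$, which would already make $v$ the source of $T[N^-_T(u)]$ and hence the captain of $u$) coincides with the paper's use of irreducibility; but where the paper stops, you add the descent $S_0\supsetneq S_1\supsetneq\cdots$ to exclude the remaining possibility that every minimal retentive set of $T[N^-_T(u)]$ avoids $v$. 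That is precisely the case the paper's terse ``and thus $\tau(T[N^-_T(u)])=\{v\}$'' passes over silently: irreducibility by itself does not forbid an irreducible minimal retentive set lying inside $N^-_{T[R]}(u)\setminus\{v\}$, and your induction --- propagating $\tau(T[S_{i+1}])\subseteq M_i\subseteq N^-_{T[R]}(u)\setminus\{v\}$ while $v$ remains in every $S_i$, until $S_i=\{v\}$ forces a contradiction --- is a valid and complete way to close that case (the needed hypotheses, $v\in S_{i+1}$ so that $S_{i+1}\neq\emptyset$ and retentivity of $M_i$ at $x_i$ applies, are all verified). So your argument is longer but more self-contained and makes explicit a step the paper leaves implicit, while the paper's version buys brevity by leaning on Lemmas~\ref{lem:teqirreducible} and~\ref{lem:captainistheuniqueteqofslavery}; the only cosmetic remark is that the forward direction needs no appeal to Lemma~\ref{lem:captainistheuniqueteqofslavery} at all, since $v\in R$ is already assumed and the claim is immediate from the definition of captain.
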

\begin{proof}
It is clear that if $v$ is the captain of $u$ in $T$, then $v$ is the captain of $u$ in $T[R]$. We prove the other direction.
Since $R$ is a minimal {\teq}-retentive set of $T$ and $u\in R$, we have that $\mathteq ({T[N^-_T(u)]})\subseteq N^-_{T[R]}(u)$. Since $v$ is the captain of $u$ in $T[R]$, $v$ is the source of $T[N^-_{T[R]}(u)]$. According to Lemma~\ref{lem:teqirreducible}, every minimal {\teq}-retentive set induces an irreducible subtournament; and thus, it holds that $\mathteq{(T[N^-_T(u)])}=\{v\}$. Then, according to Lemma~\ref{lem:captainistheuniqueteqofslavery}, $v$ is the captain of $u$ in $T$.
\end{proof}

Lemma~\ref{lem:inneighboronesouce} directly implies that the domination graph of $T[R]$ is a subgraph of the domination graph of $T$, for every minimal {\teq}-retentive set $R$ of $T$. Now we study a forbidden structure of minimal {\teq}-retentive sets, as summarized in the following lemma.

\begin{lemma}
\label{lem:captaingraphcirclenoteq}
Let $T$ be a tournament and $R$ be a vertex subset. If $\dcapgraph{T[R]}$ has a directed cycle $C$ such that $R\setminus V(C)\neq \emptyset$, then $R$ cannot be a minimal {\teq}-retentive set of $T$.
\end{lemma}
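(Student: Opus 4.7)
The plan is to show that $V(C)$ itself is a $\tau$-retentive set of $T$; since $V(C)$ is a proper subset of $R$ by hypothesis, $R$ cannot be minimal.

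First I would fix notation: let $C=(v_0,v_1,\dots,v_{k-1})$ be the directed cycle in $\dcapgraph{T[R]}$, so that for each index $i$, the vertex $v_i$ is the captain of $v_{(i+1)\bmod k}$ in $T[R]$. (Note that $k\ge 3$: no directed $2$-cycle can occur in a directed domination graph, because if $v$ were the captain of $u$ and $u$ also the captain of $v$, then $v\trelation u$ and $u\trelation v$ simultaneously, contradicting asymmetry.) By Lemma~\ref{lem:inneighboronesouce}, the captain relation in $T[R]$ agrees with that in $T$ on vertices of $R$, so $v_i$ is in fact the captain of $v_{(i+1)\bmod k}$ in $T$ as well.

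Next, I apply Lemma~\ref{lem:captainistheuniqueteqofslavery} to each arc of $C$: for every $i$, since $v_i$ is the captain of $v_{(i+1)\bmod k}$ in $T$, we have
\[
\mathteq(T[N^-_T(v_{(i+1)\bmod k})])=\{v_i\}\subseteq V(C).
\]
Since each $v_j\in V(C)$ has a captain (namely $v_{(j-1)\bmod k}$), its inneighbor set in $T$ is nonempty, so the defining condition of a $\tau$-retentive set is checked at every vertex of $V(C)$. Consequently $V(C)$ is a $\tau$-retentive set of $T$.

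To conclude, assume for contradiction that $R$ is a minimal $\tau$-retentive set of $T$. The hypothesis $R\setminus V(C)\neq\emptyset$ combined with $V(C)\subseteq R$ gives $V(C)\subsetneq R$, so $V(C)$ is a strictly smaller $\tau$-retentive set of $T$ contained in $R$, contradicting minimality of $R$. The only delicate point is the transfer from ``captain in $T[R]$'' to ``captain in $T$'', which is exactly what Lemma~\ref{lem:inneighboronesouce} provides; once that step is in place, the rest is a direct invocation of Lemma~\ref{lem:captainistheuniqueteqofslavery}.
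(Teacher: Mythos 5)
Your proof is correct and follows essentially the same route as the paper's: transfer the captain relation from $T[R]$ to $T$ via Lemma~\ref{lem:inneighboronesouce}, conclude $\{v_i\}=\tau(T[N^-_T(v_{(i+1)\bmod k})])$ from Lemma~\ref{lem:captainistheuniqueteqofslavery}, and deduce that $V(C)$ is a $\tau$-retentive set properly contained in $R$, contradicting minimality. One small ordering point: Lemma~\ref{lem:inneighboronesouce} has as hypothesis that $R$ is a minimal $\tau$-retentive set of $T$, so the assumption for contradiction must already be in force when you invoke it (as in the paper's proof); as written, you claim unconditionally that $V(C)$ is retentive in $T$ and only introduce that assumption afterwards, so the argument should be restructured as a proof by contradiction from the start.
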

\begin{proof}
Assume for the sake of contradiction that $R$ is a minimal {\teq}-retentive set of $T$ and $C=(c_0,c_1,...,c_{k-1})$ is a directed cycle in $\dcapgraph{T[R]}$.
Therefore, for all $i=0,1,...,k-1$, $c_i$ is the captain of $c_{\mymod{(i+1)}{k}}$ in ${T[R]}$. According to Lemma~\ref{lem:inneighboronesouce}, $c_i$ is the captain of $c_{\mymod{(i+1)}{k}}$ in $T$. Then, according to Lemma~\ref{lem:captainistheuniqueteqofslavery}, $\{c_{i}\}=\mathteq(T[N^-_T(c_{\mymod{(i+1)}{k}})])$ for every $i=0,1,...,k-1$, which implies that $V(C)$ is also a {\teq}-retentive set of $T$. Since $R\setminus V(C)\neq \emptyset$, this contradicts with the assumption that $R$ is a minimal {\teq}-retentive set. 
\end{proof}


Now we are ready to show our result concerning the number of {\teq}-retentive tournaments of size $4$.

\begin{figure}[h!]
\begin{center}
\includegraphics[width=0.9\textwidth]{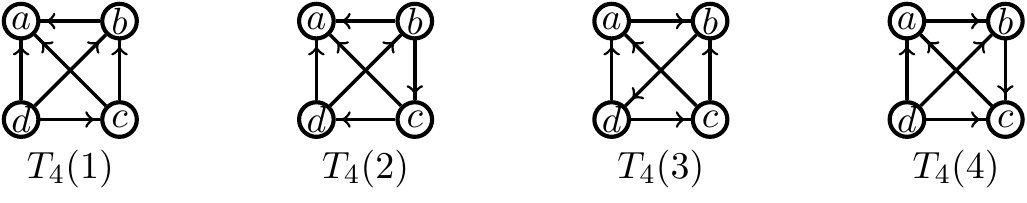}
\end{center}
\caption{All non-isomorphic tournaments of size 4.}
\label{fig:alltfour}
\myvspace{-16pt}
\end{figure}

\begin{theorem}
\label{lem:noretentivesetofsizefour}
There are no {\teq}-retentive tournaments of size~4.
\end{theorem}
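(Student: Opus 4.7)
The plan is to combine Lemma~\ref{lem:teqirreducible} with Lemma~\ref{lem:captaingraphcirclenoteq} after a short case analysis on the four non-isomorphic tournaments of size~$4$ shown in \myfig{fig:alltfour}.

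First, I would invoke Lemma~\ref{lem:teqirreducible} to discard every reducible $4$-vertex tournament: if $T[R]$ is reducible then $R$ cannot be a minimal {\teq}-retentive set, so no reducible tournament on $4$ vertices can be a {\teq}-retentive tournament. A quick inspection of the four tournaments in \myfig{fig:alltfour} shows that three of them are reducible (the transitive one, the one obtained by adding a source to a directed triangle, and the one obtained by adding a sink to a directed triangle), leaving exactly one candidate: the unique irreducible tournament $H$ of size~$4$, whose score sequence is $(1,1,2,2)$.

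Next, I would analyze $H$ directly. Label its vertices $a,b,c,d$ so that $a\trelation b$, $a\trelation c$, $b\trelation c$, $b\trelation d$, $c\trelation d$, $d\trelation a$. Computing captains inside $H$ I get: $d$ is the captain of $a$ (its only inneighbor), $a$ is the captain of $b$ (its only inneighbor), $a$ is the captain of $c$ because $a\trelation b$, and $b$ is the captain of $d$ because $b\trelation c$. Hence $\dcapgraph{H}$ contains the directed $3$-cycle $C=(d,a,b)$ together with the pendant arc $a\trelation c$, and in particular $V(H)\setminus V(C)=\{c\}\neq\emptyset$.

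Now suppose, for contradiction, that there were a tournament $T$ with a minimal {\teq}-retentive set $R$ such that $T[R]$ is isomorphic to $H$. Since the captain structure is determined purely inside $T[R]$, the computation above shows that $\dcapgraph{T[R]}$ contains a directed cycle $C$ with $R\setminus V(C)\neq\emptyset$. But then Lemma~\ref{lem:captaingraphcirclenoteq} forbids $R$ from being a minimal {\teq}-retentive set of $T$, a contradiction. Combined with the reducibility exclusions above, this yields $\beta_4=0$. The only real obstacle is essentially bookkeeping: one must verify that the irreducible tournament of size~$4$ is unique and that its directed domination graph really does possess a directed cycle properly contained in the vertex set; both facts are immediate from the definitions and the figure.
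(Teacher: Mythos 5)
Your proof is correct and follows essentially the same route as the paper: both dismiss the three reducible size-4 tournaments via Lemma~\ref{lem:teqirreducible}, and both rule out the unique irreducible one by exhibiting a directed triangle in its directed domination graph that misses one vertex and invoking Lemma~\ref{lem:captaingraphcirclenoteq}. Your explicit captain computation (arcs $d\trelation a$, $a\trelation b$, $a\trelation c$, $b\trelation d$, giving the cycle $(d,a,b)$ with $c$ left over) matches the paper's observation that such a cycle exists in $\dcapgraph{T_4(3)}$.
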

\myvspace{-8pt}
\begin{proof}
\myfig{fig:alltfour} shows all non-isomorphic tournaments of size~4. Since $T_4(1), T_4(2)$ and $T_4(4)$ are all reducible, according to Lemma~\ref{lem:teqirreducible}, none of them is a {\teq}-retentive tournament. Now let's consider $T_4(3)$. It is easy to check that $(b,d,c)$ is a directed cycle (triangle) in $\dcapgraph{T_4(3)}$. Then, according to Lemma~\ref{lem:captaingraphcirclenoteq}, there is no tournament $T$ which has a minimal {\teq}-retentive set $R$ such that $T[R]$ is isomorphic to $T_4(3)$. Thus, $T_4(3)$ cannot be a {\teq}-retentive tournament either.
\end{proof}

In the following, we investigate {\teq}-retentive tournaments of size $5$.
According to Lemma~\ref{lem:teqirreducible}, we need only to study irreducible tournaments.
All non-isomorphic and irreducible tournaments of size 5 are shown in \myfig{fig:tfive}.


\begin{figure}[h!]
\begin{center}
\includegraphics[width=\textwidth]{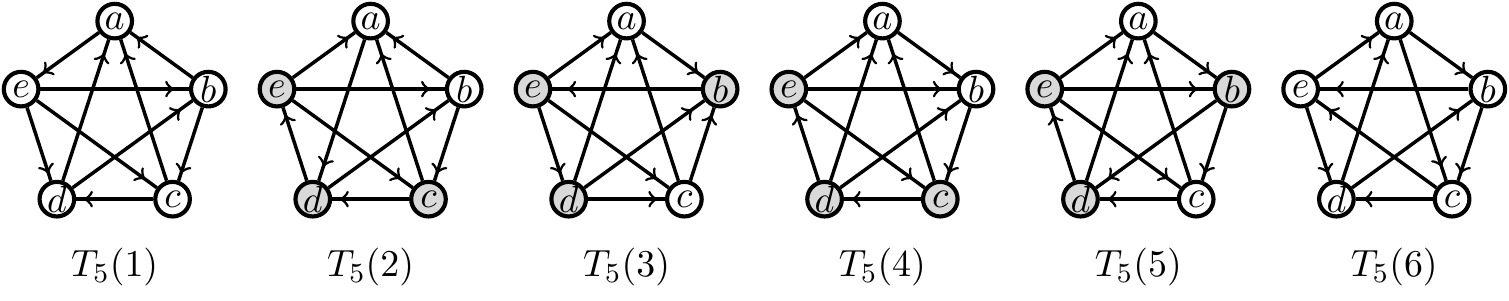}
\end{center}
\caption{All non-isomorphic and irreducible tournaments of size $5$. In each $T_5(i)$, $i\in \{2,3,4,5\}$, the three vertices in gray form a directed triangle in $\dcapgraph{T_5(i)}$.}
\label{fig:tfive}
\end{figure}

\begin{theorem}
\label{thm:teqisomorphctot5}
Every {\teq}-retentive tournament of size 5 is either isomorphic to $T_5(1)$ or to $T_5(6)$ in~\myfig{fig:tfive}.
\end{theorem}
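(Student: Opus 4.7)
The plan is to combine the structural restrictions of Lemma~\ref{lem:teqirreducible} and Lemma~\ref{lem:captaingraphcirclenoteq} to rule out four of the six irreducible tournaments on five vertices, and then to exhibit explicit witnessing tournaments for the two remaining candidates $T_5(1)$ and $T_5(6)$.

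By Lemma~\ref{lem:teqirreducible} every {\teq}-retentive tournament is irreducible, so the only candidates are the six tournaments $T_5(1),\ldots,T_5(6)$ shown in Figure~\ref{fig:tfive}. For each $i\in\{2,3,4,5\}$ I would read off from Figure~\ref{fig:tfive} the three captain relations among the gray vertices and confirm that they form a directed triangle in $\dcapgraph{T_5(i)}$. Since $T_5(i)$ has two further vertices outside this triangle, Lemma~\ref{lem:captaingraphcirclenoteq} immediately yields that no minimal {\teq}-retentive set of any tournament can induce a subtournament isomorphic to $T_5(i)$. Hence none of $T_5(2), T_5(3), T_5(4), T_5(5)$ is a {\teq}-retentive tournament.

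For the positive direction, I would produce for each $i\in\{1,6\}$ a tournament $T^{(i)}$ together with a minimal {\teq}-retentive set $R^{(i)}$ such that $T^{(i)}[R^{(i)}]$ is isomorphic to $T_5(i)$. The most economical attempt is to take $T^{(i)}=T_5(i)$ itself and $R^{(i)}=V(T_5(i))$; if the entire vertex set of $T_5(i)$ is a minimal {\teq}-retentive set of $T_5(i)$, nothing else is required. If this fails for one of the two tournaments, I would fall back to the augmentation $T^{(i)}=T_5(i)\cup\{v\}$, where $v$ is a single new vertex dominated by every vertex of $T_5(i)$, so that $N^-_{T^{(i)}}(v)=V(T_5(i))$. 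In this augmented tournament the in-neighborhoods of the original vertices are unchanged, so retentiveness of $V(T_5(i))$ reduces to a purely internal computation inside $T_5(i)$ that is immediate.

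The main obstacle is establishing the minimality of $R^{(i)}=V(T_5(i))$ for $i\in\{1,6\}$, i.e.\ that no proper nonempty $R'\subsetneq V(T_5(i))$ is {\teq}-retentive. I would dispatch this case by case on $|R'|$. A size-$1$ candidate $\{w\}$ is retentive only if $w$ is the source of $T_5(i)$ by Lemma~\ref{lem:teqsourceuniqueminial}, but neither $T_5(1)$ nor $T_5(6)$ has a source since both are irreducible. Size-$2$ candidates induce reducible subtournaments and are excluded by Lemma~\ref{lem:teqirreducible}. Size-$4$ candidates are excluded by Theorem~\ref{lem:noretentivesetofsizefour}. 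The only delicate case is size-$3$: such an $R'$ must be a directed triangle, and for each of the finitely many directed triangles embedded in $T_5(1)$ and $T_5(6)$ I would exhibit a vertex $u\in R'$ whose in-neighborhood in $T_5(i)$ has a $\tau$ value escaping $R'$, using Lemma~\ref{lem:captainistheuniqueteqofslavery} whenever that in-neighborhood has a captain and direct inspection of the at-most-four-vertex in-neighborhood $\tau$ values otherwise. This is a short, finite check specific to the adjacency structures of $T_5(1)$ and $T_5(6)$.
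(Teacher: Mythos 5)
Your proposal is correct and follows essentially the same route as the paper: Lemma~\ref{lem:teqirreducible} restricts attention to the six irreducible tournaments of size~5, the directed triangles in $\dcapgraph{T_5(i)}$ for $i=2,3,4,5$ together with Lemma~\ref{lem:captaingraphcirclenoteq} eliminate those four, and the positive direction is witnessed by $T_5(1)$ and $T_5(6)$ themselves, each having its full vertex set as unique minimal {\teq}-retentive set (the paper asserts this by inspection, which is exactly the finite check you outline; just note that to rule out non-minimal proper retentive subsets you should pass to a minimal retentive set contained in them before invoking Lemma~\ref{lem:teqirreducible} and Theorem~\ref{lem:noretentivesetofsizefour}).
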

\myvspace{-8pt}
\begin{proof}
See \myfig{fig:tfive} for all non-isomorphic and irreducible tournaments of size $5$. Since there is a directed cycle in each $\dcapgraph{T_5(i)}$ for all $i=2,3,4,5$ (see the caption of \myfig{fig:tfive} for further details), according to Lemma~\ref{lem:captaingraphcirclenoteq}, none of $\{T_5(2),T_5(3),T_5(4),T_5(5)\}$ is a {\teq}-retentive tournament.
On the other hand, both $T_5(1)$ and $T_5(6)$ have a unique minimal {\teq}-retentive set consisting of all the five vertices; and thus, they are {\teq}-retentive tournaments.
\end{proof}

Now we study $\beta_n$ for $n=6,7,8,9,10$.
We first study some important lemmas which help us filter out a large number of non-{\teq}-retentive tournaments.
We say that a {\teq}-retentive set $R$ of a tournament $T$ is $c$-{\it{locally bounded}} for some constant $c$, if for every $v\in R$, it holds that $|\mathteq{(T[N^-_T(v)])}|\leq c$.

The following lemma characterizes both the sufficient and necessary conditions for minimal {\teq}-retentive sets of size $3$.
Roughly, it states that three vertices $a,b,c$ form a minimal {\teq}-retentive set if and only if they cyclically dominate each other and, moreover, a majority of them dominate every other vertex.

\begin{lemma}
\label{lem:t3minimalretentiveset}
Let $T$ be a tournament and $R=\{a,b,c\}\subseteq V(T)$ be a 3-subset of $V(T)$. Then, $R$ is a minimal {\teq}-retentive set of $T$ if and only if

(1) $T[R]$ is a directed triangle; and

(2) no vertex in $T$ dominates two vertices in $R$.
\end{lemma}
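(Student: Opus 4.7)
The plan is to prove both directions of the equivalence. Throughout, write $R=\{a,b,c\}$, and whenever $T[R]$ is a directed triangle fix the cyclic labelling $a\succ b\succ c\succ a$, so that inside $R$ the unique inneighbor of $a$ is $c$, the unique inneighbor of $b$ is $a$, and the unique inneighbor of $c$ is $b$.

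For the forward direction, assume $R$ is a minimal {\teq}-retentive set. Condition~(1) is immediate from Lemma~\ref{lem:teqirreducible}: the only irreducible tournament on three vertices is the directed triangle. For condition~(2), I would argue by contradiction. Note first that no vertex inside $R$ dominates two vertices of $R$, since in a directed triangle each vertex dominates exactly one other. So suppose some $v\in V(T)\setminus R$ dominates two vertices of $R$; after relabelling we may assume $v\succ a$ and $v\succ b$. The {\teq}-retentiveness of $R$ at $a$ gives $\tau(T[N^-_T(a)])\subseteq R\cap N^-_T(a)=\{c\}$, hence $\tau(T[N^-_T(a)])=\{c\}$; by Lemma~\ref{lem:teqsourceuniqueminial}, $c$ is then the source of $T[N^-_T(a)]$, and since $v\in N^-_T(a)$ this forces $c\succ v$. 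The symmetric argument applied at $b$ yields $a\succ v$, contradicting $v\succ a$.

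For the backward direction, assume conditions~(1) and~(2). I first verify that $R$ is {\teq}-retentive. Fix any $u\in R$ and let $u'\in R$ be its unique inneighbor inside $R$. By condition~(2), no other inneighbor $x$ of $u$ can dominate $u'$, since otherwise $x$ would dominate the two vertices $u,u'$ of $R$; hence $u'\succ x$ for every $x\in N^-_T(u)\setminus\{u'\}$, so $u'$ is the source of $T[N^-_T(u)]$. Lemma~\ref{lem:teqsourceuniqueminial} then gives $\tau(T[N^-_T(u)])=\{u'\}\subseteq R$, as required. To see minimality, let $R'\subsetneq R$ be any nonempty proper subset. The map $u\mapsto u'$ on $R$ is a cyclic permutation of order three, whose only invariant subsets are $\emptyset$ and $R$, so some $u\in R'$ has $u'\notin R'$; but we just showed $\tau(T[N^-_T(u)])=\{u'\}$, so $\tau(T[N^-_T(u)])\not\subseteq R'$ and $R'$ cannot be {\teq}-retentive.

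The crux of both directions is the same source-identification step: one must argue that the intra-$R$ inneighbor $u'$ of a triangle vertex $u$ is actually the source of the entire inneighborhood $T[N^-_T(u)]$ in $T$, and then upgrade this to $\tau(T[N^-_T(u)])=\{u'\}$ via Lemma~\ref{lem:teqsourceuniqueminial}. Getting this hinge right through condition~(2), which is precisely what rules out the outside inneighbors of $u$ beating $u'$, is where the substantive content of the lemma resides.
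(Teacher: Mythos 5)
Your proof is correct and follows essentially the same route as the paper's: both directions hinge on showing, via \condition{2} (resp.\ retentiveness), that the intra-triangle predecessor of each vertex $u\in R$ is the source of $T[N^-_T(u)]$, and then applying Lemma~\ref{lem:teqsourceuniqueminial} to pin down $\mathteq{(T[N^-_T(u)])}$ as a singleton. The only (minor) divergence is the minimality step, where the paper rules out minimal retentive sets of sizes $1$ and $2$ via Lemmas~\ref{lem:teqsourceuniqueminial} and~\ref{lem:teqirreducible}, while you check directly that every nonempty proper subset of $R$ violates retentiveness because the captain map is a $3$-cycle; both arguments are valid.
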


\begin{proof}
We first prove that if $R$ is a minimal {\teq}-retentive set, then the above two conditions hold.

$(\Rightarrow:)$ Suppose that $\{a,b,c\}$ is a minimal {\teq}-retentive set of $T$. According to Lemma~\ref{lem:teqirreducible}, we know that $T[R]$ is irreducible. This implies that $T[R]$ is a directed triangle; thus {\condition{1}} holds. Without loss of generality, assume that $a\trelation b, b\trelation c$ and $c\trelation a$ in $T$. Since $\mathteq{(T[N^-_T(a)])}\neq \emptyset$ (according to the definition of {\teq}) and $\mathteq{(T[N^-_T(a)])} \subseteq N^-_{T[R]}(a)=\{c\}$, we have that $\mathteq{(T[N^-_T(a)])}=\{c\}$. Then according to Lemma~\ref{lem:captainistheuniqueteqofslavery}, $c$ is the captain of $a$ in $T$. Analogously, we can show that $a$ is the captain of $b$, and $b$ is the captain of $c$ in $T$. Therefore, any vertex that dominates $a$ is dominated by $c$, any vertex that dominates $c$ is dominated by $b$, and any vertex that dominates $b$ is dominated by $a$. This directly implies {\condition{2}}.

Now we prove the other direction.

$(\Leftarrow:)$ 
Without loss of generality, assume that $a\trelation b, b\trelation c$ and $c\trelation a$ in $T$. Since no vertex in $T$ dominates two of $\{a,b,c\}$ ({\condition{2}}), any vertex $v\in V(T)\setminus R$ which dominates $a$ must be dominated by $c$. Therefore, $c$ is the captain of $a$ in $T$. According to Lemma~\ref{lem:captainistheuniqueteqofslavery}, $\{c\}$ is the unique minimal {\teq}-retentive set of $T[N^-_T(a)]$. Analogously, 
$\{b\}$ and $\{a\}$ are the unique minimal {\teq}-retentive sets of $T[N^-_T(c)]$ and $T[N^-_T(b)]$, respectively. In summary, $\{a,b,c\}$ is a {\teq}-retentive set of $T$. Since there is no source in $T$ (directly implied by {\conditions{1}{2}}: since $a,b,c$ form a directed triangle, none of them can be the source of $T$; since no vertex in $T$ dominates two vertices in $R$, no vertex in $T\setminus R$ can be the source of $T$), according to Lemma~\ref{lem:teqsourceuniqueminial}, any minimal {\teq}-retentive set must contain at least two vertices. However, there is no minimal {\teq}-retentive set of size $2$ according to Lemma~\ref{lem:teqirreducible}. Thus, $R$ is a minimal {\teq}-retentive set of $T$.
\end{proof}

Based on Lemma~\ref{lem:t3minimalretentiveset}, we derive the following lemma.
For a vertex $v$ and three of its inneighbors $a,b,c$, we say $\{a,b,c\}$ is a {\it{tri-captain}} of $v$ if (1) the three vertices $a,b,c$ form a directed cycle; and (2) no other inneighbor of $v$ dominates at least two of $\{a,b,c\}$. In fact, due to Lemma~\ref{lem:t3minimalretentiveset}, if $\{a,b,c\}$ is a tri-captain of $v$ in a tournament $T$, then $\{a,b,c\}$ is a minimal {\teq}-retentive set of $T[N^-_T(v)]$. The proof of Lemma~\ref{lem:t3minimalretentiveset} in fact also implies that a subset $R$ of size $3$ is a minimal {\teq}-retentive set if and only if $\dcapgraph{T}[R]$ is a directed triangle.
This implication is used in the proof of the following lemma.

\begin{lemma}
\label{lem:eachtournamentonecaptaintriangle}
Every tournament has at most one minimal {\teq}-retentive set of size $3$.
\end{lemma}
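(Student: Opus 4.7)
The plan is to assume for contradiction that a tournament $T$ has two distinct minimal {\teq}-retentive sets $R_1, R_2$, each of size $3$, and to derive a contradiction by a short case analysis on $|R_1 \cap R_2|$. Throughout, I will lean on Lemma~\ref{lem:t3minimalretentiveset}: both $T[R_1]$ and $T[R_2]$ are directed triangles, and no vertex of $T$ dominates two vertices of $R_i$ for $i\in\{1,2\}$.

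The first step is to rule out the overlapping cases $|R_1 \cap R_2|\in\{1,2\}$. For this I would invoke the observation stated just before the lemma: a $3$-subset $R$ is a minimal {\teq}-retentive set iff $\dcapgraph{T}[R]$ is a directed triangle. Since every vertex has at most one captain, $\dcapgraph{T}$ has in-degree at most $1$, so two directed triangles of $\dcapgraph{T}$ can share a vertex only if they are identical (tracing captain-arcs backwards from any common vertex visits the same three vertices). Hence $R_1\neq R_2$ forces $R_1\cap R_2=\emptyset$.

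The remaining, and main, case is the disjoint one. Here I would use a double-counting argument on the arcs between $R_1$ and $R_2$. Applying Condition~(2) of Lemma~\ref{lem:t3minimalretentiveset} to $R_2$, each vertex of $R_1$ dominates at most one vertex of $R_2$, so the number of arcs from $R_1$ to $R_2$ is at most $3$. Applying Condition~(2) to $R_1$ instead, each vertex of $R_2$ is dominated by at least two vertices of $R_1$ (since it dominates at most one of the three vertices of $R_1$), so the number of arcs from $R_1$ to $R_2$ is at least $2\cdot 3=6$. Contradiction.

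The main obstacle is simply identifying the right counting inequality in the disjoint case and making sure the overlapping cases are handled without a tedious sub-case enumeration on triangle orientations; using the captain-indegree bound of $\dcapgraph{T}$ sidesteps this cleanly. Everything else follows directly from Lemma~\ref{lem:t3minimalretentiveset} and the uniqueness of captains, so the proof remains short.
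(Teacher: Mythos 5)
Your proof is correct, but it follows a genuinely different route from the paper's. The paper argues as follows: by the proof of Lemma~\ref{lem:t3minimalretentiveset}, each size-$3$ minimal {\teq}-retentive set yields a $3$-cycle in $\capgraph{T}$, so two distinct such sets would produce two distinct cycles in the domination graph, contradicting the Fisher et al.\ structure theorem (Lemma~\ref{lem:dominationgraphproperty}), which allows at most one cycle. You instead avoid Lemma~\ref{lem:dominationgraphproperty} entirely: you first force disjointness of the two triangles by the uniqueness of captains (in-degree at most $1$ in $\dcapgraph{T}$, so two directed triangles of $\dcapgraph{T}$ sharing a vertex coincide), and then obtain the contradiction in the disjoint case by double-counting arcs between $R_1$ and $R_2$ using only \condition{2} of Lemma~\ref{lem:t3minimalretentiveset}: at most $3$ arcs go from $R_1$ to $R_2$ (each vertex of $R_1$ dominates at most one vertex of $R_2$), yet at least $6$ must (each vertex of $R_2$ dominates at most one vertex of $R_1$, hence is dominated by at least two). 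Both steps are sound; note you only need the forward direction of the ``size-$3$ retentive set iff directed triangle in $\dcapgraph{T}$'' observation, which the paper states explicitly before the lemma. What your approach buys is self-containedness and elementarity --- it needs no structural result about domination graphs beyond captain uniqueness --- whereas the paper's proof is shorter once Lemma~\ref{lem:dominationgraphproperty} is available and handles overlapping triangles implicitly through that structure theorem.
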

\begin{proof}
We prove the lemma by contradiction. Assume that there is a tournament $T$ which has two distinct minimal {\teq}-retentive sets $\{a(1),b(1),c(1)\}$ and $\{a(2),b(2),c(2)\}$.
According to Lemma~\ref{lem:t3minimalretentiveset}, both $\{a(1),b(1),c(1)\}$ and $\{a(2),b(2),c(2)\}$ form directed triangles. Without loss of generality, assume that $a(i)\trelation b(i), b(i)\trelation c(i)$ and $c(i)\trelation a(i)$, for both $i=1,2$. Then, according to the proof of Lemma~\ref{lem:t3minimalretentiveset}, $a(i)$ is the captain of $b(i)$, $b(i)$ is the captain of $c(i)$, and $c(i)$ is the captain of $a(i)$ in $T$. Therefore, both $(a(1),b(1),c(1))$ and $(a(2),b(2),c(2))$ are cycles in ${\capgraph{T}}$. However, since $\{a(1),b(1),c(1)\}\neq \{a(2),b(2),c(2)\}$, this contradicts with Lemma~\ref{lem:dominationgraphproperty}.
\end{proof}

The following lemma states that if a vertex $v$ in a minimal {\teq}-retentive set $R$ has at most 5 inneighbors in $R$, and has a tri-captain $\{a,b,c\}$ in the subtournament $T[R]$, then $\{a,b,c\}$ is the unique minimal {\teq}-retentive set of the subtournament induced by all its inneighbors in $T$.

\begin{lemma}
\label{lem:localboundedsmallteqtournament}
Let $T$ be a tournament, $R$ be a minimal {\teq}-retentive set of $T$, and $v$ be a vertex in $R$. If $v$ has a tri-captain $\{a,b,c\}$ in $T[R]$ and $|N^-_{T[R]}(v)|\leq 5$,
then $\{a,b,c\}=\mathteq{(T[N^-_T(v)])}$.
\end{lemma}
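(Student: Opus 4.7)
The plan is to show that $\{a,b,c\}$ is the unique minimal $\tau$-retentive set of $T'' := T[N^-_T(v)]$, which immediately yields $\tau(T'') = \{a,b,c\}$. Let $U := N^-_{T[R]}(v)$. Since $R$ is $\tau$-retentive in $T$ and $v \in R$, any minimal $\tau$-retentive set $S$ of $T''$ lies in $R \cap V(T'') = U$. Combining this with Lemma~\ref{lem:teqirreducible}, the facts $\beta_2 = 0$ and $\beta_4 = 0$ (the latter is Theorem~\ref{lem:noretentivesetofsizefour}), and the hypothesis $|U| \le 5$, the admissible sizes for $|S|$ reduce to $\{1, 3, 5\}$. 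I then plan to rule out every option except $S = \{a,b,c\}$.

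For $|S| = 1$, the unique element $w$ of $S$ must be the source of $T''$ (Lemma~\ref{lem:teqsourceuniqueminial}) and hence dominates every other vertex of $U$; this either contradicts the directed triangle $\{a,b,c\}$ when $w \in \{a,b,c\}$, or contradicts the tri-captain condition when $w \in U \setminus \{a,b,c\}$ (since $w$ would then dominate all three of $a,b,c$). For $|S| = 5$, where $S = U$, the induced tournament $T[U]$ is a $\tau$-retentive tournament of size $5$ and is therefore isomorphic to $T_5(1)$ or $T_5(6)$ by Theorem~\ref{thm:teqisomorphctot5}; each of these has $U$ as its unique minimal $\tau$-retentive set. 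But applying Lemma~\ref{lem:t3minimalretentiveset} inside $T[U]$ shows that $\{a,b,c\}$ is itself a minimal $\tau$-retentive set of $T[U]$: the triangle condition holds by hypothesis, and the non-domination condition is exactly the tri-captain assumption together with the triangle structure. This contradicts the uniqueness.

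The main obstacle is the case $|S| = 3$ with $S \ne \{a,b,c\}$. By Lemma~\ref{lem:t3minimalretentiveset}, such an $S$ would be a directed triangle in $U$ with no vertex of $T''$ dominating two of its elements. I plan to split into subcases based on $|S \cap \{a,b,c\}| \in \{0,1,2\}$; the value $0$ is immediately impossible because $|U \setminus \{a,b,c\}| \le 2$. In each remaining subcase the triangle orientation inside $S$ is forced by the orientations inside $\{a,b,c\}$ together with the tri-captain constraint that each vertex of $U \setminus \{a,b,c\}$ dominates at most one of $\{a,b,c\}$, and a direct check of each forced configuration exhibits a vertex of $\{a,b,c\}$ that dominates two elements of $S$, contradicting Lemma~\ref{lem:t3minimalretentiveset}. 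This completes the elimination, and since $\tau(T'')$ is nonempty, $\{a,b,c\}$ must be its unique minimal $\tau$-retentive set, giving $\tau(T'') = \{a,b,c\}$.
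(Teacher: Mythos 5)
Your proposal is correct, and its skeleton (restrict any minimal retentive set $S$ of $T[N^-_T(v)]$ to $N^-_{T[R]}(v)$, reduce to $|S|\in\{1,3,5\}$ via Lemma~\ref{lem:teqirreducible} and Theorem~\ref{lem:noretentivesetofsizefour}, then eliminate $1$ and $5$) matches the paper's proof; the interesting divergence is in the two harder cases. For $|S|=5$ you argue via the fact (from the proof of Theorem~\ref{thm:teqisomorphctot5}) that $T_5(1)$ and $T_5(6)$ have the full vertex set as unique minimal retentive set, so the tri-captain $\{a,b,c\}$, which by Lemma~\ref{lem:t3minimalretentiveset} is itself a minimal retentive set of $T[U]$, contradicts uniqueness; the paper phrases the same contradiction as the non-existence in $T_5(1),T_5(6)$ of a directed triangle dominating all outside vertices --- these are equivalent, and yours is arguably the cleaner formulation. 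For $|S|=3$ the paper invokes Lemma~\ref{lem:eachtournamentonecaptaintriangle} (at most one size-$3$ minimal retentive set per tournament, which rests on the domination-graph structure theorem, Lemma~\ref{lem:dominationgraphproperty}), whereas you do an elementary finite check on $|S\cap\{a,b,c\}|\in\{1,2\}$: the tri-captain constraint forces the orientation of the arcs from $\{a,b,c\}$ to the at most two outside vertices, and in every configuration some vertex of $\{a,b,c\}$ dominates two elements of $S$, violating the forward direction of Lemma~\ref{lem:t3minimalretentiveset} applied to $S$. I verified that this check goes through in all configurations (and even the $|S\cap\{a,b,c\}|=0$ case, which you dismiss via $|U\setminus\{a,b,c\}|\leq 2$, could be excluded by a counting argument without the size bound), so your route buys a self-contained argument that avoids the domination-graph machinery for this step, at the cost of a case analysis that you have only sketched and should write out explicitly; the paper's appeal to Lemma~\ref{lem:eachtournamentonecaptaintriangle} is shorter and independent of the bound $|N^-_{T[R]}(v)|\leq 5$ within that case.
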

\begin{proof}
The main idea of the proof is that we exclude the possibilities of $|\mathteq{(T[N^-_T(v)])}|=1$ and $|\mathteq{(T[N^-_T(v)])}|=5$ based on some properties of {\teq}-retentive sets. This leaves only the possibility that $|\mathteq{(T[N^-_T(v)])}|=3$. Then, based on Lemma~\ref{lem:t3minimalretentiveset}, we can show that $\{a,b,c\}=\mathteq{(T[N^-_T(v)])}$. 
The formal proof is as follows.

Let $R'$ be any minimal {\teq}-retentive set of $T[N^-_T(v)]$. We shall show that $R'=\{a,b,c\}$.
Since $R$ is a minimal {\teq}-retentive set of $T$ and $v\in R$, it holds that $\mathteq{(T[N^-_T(v)])}\subseteq N^-_{T[R]}(v)$. Thus, $R'\subseteq N^-_{T[R]}(v)$.
Since $|N^-_{T[R]}(v)|\leq 5$, it holds that $|R'|\leq 5$. According to Lemma~\ref{lem:teqirreducible} and Theorem~\ref{lem:noretentivesetofsizefour}, there are no minimal {\teq}-retentive sets of sizes $2$ and $4$. Therefore, either $|R'|=1$ or $|R'|=3$ or $|R'|=5$. In the following, we exclude the possibility of $|R'|=1$ and $|R'|=5$. Since $\{a,b,c\}$ is a tri-captain of $v$ in $T[R]$, the three vertices $a,b,c$ form a directed triangle. Moreover, no vertex in $N^-_{T[R]}(v)$ dominates two of $\{a,b,c\}$.
\caseinproofskip

Case~1. $|R'|=1$.

Since $a,b,c$ form a directed triangle, none of them is the source of $T[N^-_T(v)]$. Since no vertex in $N^-_{T[R]}(v)\setminus \{a,b,c\}$ dominates all of $\{a,b,c\}$, none of $N^-_{T[R]}(v)\setminus \{a,b,c\}$ is the source of $T[N^-_T(v)]$ either. Then, according to Lemma~\ref{lem:teqsourceuniqueminial}, it cannot be that $|R'|=1$.
\caseinproofskip

Case~2. $|R'|=5$.

Since $|N^-_{T[R]}(v)|\leq 5$ and $\{a,b,c\}\subseteq N^-_{T[R]}(v)$, it holds that $\{a,b,c\}\subset R'$. According to Theorem~\ref{thm:teqisomorphctot5}, $T[R']$ is either isomorphic to $T_5(1)$ or to $T_5(6)$ in \myfig{fig:tfive}. However, this cannot be the case. The reason is as follows:
there is no directed triangle in both $T_5(1)$ and $T_5(6)$ such that every vertex not in the triangle is dominated by at least two vertices in the triangle, but $T[R']$ contains such a directed triangle formed by $a,b,c$.

Now it leaves only the case that $|R'|=3$. According to Lemma~\ref{lem:t3minimalretentiveset}, $\{a,b,c\}$ is a minimal {\teq}-retentive set of $T[N^-_{T[R]}(v)]$.
According to Lemma~\ref{lem:eachtournamentonecaptaintriangle}, $\{a,b,c\}$ is the unique minimal {\teq}-retentive set of size $3$ of $T[N^-_{T[R]}(v)]$. That is, there exists no further directed triangle formed by $\{x,y,z\}\subseteq N^-_{T[R]}(v)$, such that $\{x,y,z\}\neq R'$ and no vertex in $N^-_{T[R]}(v)$ dominates two of $\{x,y,z\}$ (see Lemma~\ref{lem:t3minimalretentiveset}). Then, due to Lemma~\ref{lem:t3minimalretentiveset} and the fact that $R'\subseteq N^-_{T[R]}(v)$, it must hold that $R'=\{a,b,c\}$, implying that $\{a,b,c\}=\mathteq{(T[N^-_T(v)])}$.
\end{proof}

\begin{lemma}\label{lem:tauthreeequalretentive}
Let $T$ be a tournament. If $\mathteq(T)=\{a,b,c\}$, then $\{a,b,c\}$ is the unique minimal {\teq}-retentive set of $T$.
\end{lemma}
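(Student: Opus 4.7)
The plan is to exploit the definition $\tau(T) = \bigcup \{R : R \text{ is a minimal } \tau\text{-retentive set of } T\}$ together with the size restrictions on minimal $\tau$-retentive sets that have already been established. Since every minimal $\tau$-retentive set of $T$ is contained in $\tau(T)$, the hypothesis $\tau(T) = \{a,b,c\}$ forces every minimal $\tau$-retentive set of $T$ to be a subset of $\{a,b,c\}$, hence to have size at most $3$.

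First I would rule out the small sizes. A minimal $\tau$-retentive set of size $2$ cannot exist because Lemma~\ref{lem:teqirreducible} requires such a set to induce an irreducible subtournament, and no two-vertex tournament is irreducible. A minimal $\tau$-retentive set of size $1$ would, by Lemma~\ref{lem:teqsourceuniqueminial}, consist of the source of $T$ and would be the \emph{unique} minimal $\tau$-retentive set; but then $\tau(T)$ would be that singleton, contradicting $|\tau(T)| = 3$. Consequently, every minimal $\tau$-retentive set of $T$ has size exactly $3$.

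Next I would invoke Lemma~\ref{lem:eachtournamentonecaptaintriangle}, which guarantees that a tournament has at most one minimal $\tau$-retentive set of size $3$. Combined with the fact that the union of all such sets equals $\{a,b,c\}$, this forces the existence of exactly one minimal $\tau$-retentive set, which must itself be $\{a,b,c\}$. This yields the conclusion.

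There is essentially no obstacle here: the result is a short corollary of Lemmas~\ref{lem:teqirreducible}, \ref{lem:teqsourceuniqueminial}, Theorem~\ref{lem:noretentivesetofsizefour} (only needed if we want to argue more generally, but not strictly needed since subsets of $\{a,b,c\}$ already have size $\leq 3$), and Lemma~\ref{lem:eachtournamentonecaptaintriangle}. The only subtle point to present carefully is the chain ``singleton minimal retentive set implies $|\tau(T)|=1$'', which uses Lemma~\ref{lem:teqsourceuniqueminial} in the direction that a singleton retentive set is necessarily unique among minimal retentive sets.
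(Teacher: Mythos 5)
Your proof is correct and follows essentially the same route as the paper: rule out minimal retentive sets of size $2$ via Lemma~\ref{lem:teqirreducible} and of size $1$ via the source argument of Lemma~\ref{lem:teqsourceuniqueminial}, then conclude that the only remaining possibility is the single set $\{a,b,c\}$. Your appeal to Lemma~\ref{lem:eachtournamentonecaptaintriangle} is harmless but redundant, since a size-$3$ subset of $\{a,b,c\}$ is already forced to equal $\{a,b,c\}$.
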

\begin{proof}
We prove the lemma as follows. Let $\mathteq(T)=\{a,b,c\}$. According to Lemma~\ref{lem:teqirreducible}, there are no minimal {\teq}-retentive sets of size two. Thus, either each of $\{\{a\},\{b\},\{c\}\}$ is a minimal {\teq}-retentive set of $T$, or $\{a,b,c\}$ is the unique minimal {\teq}-retentive set of $T$. According to Lemma~\ref{lem:teqsourceuniqueminial}, if it is the former case, then every of $\{a,b,c\}$ is a source of $T$. However, every tournament has at most one source; a contradiction. Thus, it must be the latter case.
\end{proof}

Now we explore another forbidden structure to {\teq}-retentive tournaments. By and large, this structure characterizes all tournaments which have a proper 3-locally bounded {\teq}-retentive set in which every vertex has bounded indegree 5.

\begin{lemma}
\label{lem:threeloallyboudnedminimalteqagainst}
Let $H$ be a tournament. If $H$ has a 3-locally bounded {\teq}-retentive set $R$ such that
(1) $V(H)\setminus R\neq\emptyset$; and
(2) for every vertex $v\in R$, $|N^-_H(v)|\leq 5$,
then $H$ cannot be a {\teq}-retentive tournament.
\end{lemma}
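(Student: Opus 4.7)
The plan is to argue by contradiction. Suppose $H$ were a {\teq}-retentive tournament. Then there exist a tournament $T$ and a minimal {\teq}-retentive set $R^{\star}$ of $T$ with $T[R^{\star}]$ isomorphic to $H$; after identification I may assume $V(H)=R^{\star}$ and $H=T[R^{\star}]$, so that $R\subsetneq R^{\star}$. The goal is to show that $R$ itself is already a {\teq}-retentive set of $T$, which contradicts the minimality of $R^{\star}$. The key reduction is to prove that for every $v\in R$,
\[\mathteq(T[N^-_T(v)])\;=\;\mathteq(H[N^-_H(v)]).\]
Granting this identity, the {\teq}-retentivity of $R$ inside $H$ transports directly to $T$, because $\mathteq(H[N^-_H(v)])\subseteq R$ by assumption and $N^-_H(v)\neq\emptyset$ for every $v\in R$ (since $|R^{\star}|\geq 2$ forces $H$ to be irreducible by Lemma~\ref{lem:teqirreducible}).

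Before splitting into cases I would pin down the possible sizes of $\mathteq(H[N^-_H(v)])$. Lemma~\ref{lem:teqirreducible} and Theorem~\ref{lem:noretentivesetofsizefour} forbid minimal {\teq}-retentive sets of sizes $2$ and $4$, while Lemma~\ref{lem:teqsourceuniqueminial} permits at most one size-$1$ minimal retentive set (the source). These together rule out $|\mathteq(\cdot)|=2$ in any tournament, so the $3$-local boundedness of $R$ leaves only $|\mathteq(H[N^-_H(v)])|\in\{1,3\}$. In the size-$1$ case, writing $\mathteq(H[N^-_H(v)])=\{w\}$, Lemma~\ref{lem:teqsourceuniqueminial} identifies $w$ as the source of $H[N^-_H(v)]$, i.e., the captain of $v$ in $H=T[R^{\star}]$; Lemma~\ref{lem:inneighboronesouce} lifts $w$ to the captain of $v$ in $T$, and Lemma~\ref{lem:captainistheuniqueteqofslavery} concludes $\mathteq(T[N^-_T(v)])=\{w\}$. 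In the size-$3$ case, Lemma~\ref{lem:tauthreeequalretentive} shows that $\{a,b,c\}:=\mathteq(H[N^-_H(v)])$ is the unique minimal {\teq}-retentive set of $H[N^-_H(v)]$; Lemma~\ref{lem:t3minimalretentiveset} then certifies that $\{a,b,c\}$ is a directed triangle such that no vertex in $N^-_H(v)\setminus\{a,b,c\}$ dominates two of its members, i.e., a tri-captain of $v$ in $T[R^{\star}]$. Since $|N^-_{T[R^{\star}]}(v)|=|N^-_H(v)|\leq 5$ by hypothesis~(2), Lemma~\ref{lem:localboundedsmallteqtournament} applies and delivers $\{a,b,c\}=\mathteq(T[N^-_T(v)])$. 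In both cases the displayed identity holds, so $\mathteq(T[N^-_T(v)])\subseteq R$ and the contradiction is obtained.

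The main obstacle is the interplay between the two ambient tournaments: every conclusion about $\mathteq$ in the outer tournament $T$ has to be pulled back from the subtournament $H=T[R^{\star}]$, and this pull-back is only legal because $R^{\star}$ is itself a minimal {\teq}-retentive set of $T$---exactly the hypothesis that unlocks Lemmas~\ref{lem:inneighboronesouce} and~\ref{lem:localboundedsmallteqtournament}. Once this reduction is in place, the remainder is a short case split on $|\mathteq(H[N^-_H(v)])|\in\{1,3\}$ together with bookkeeping on the forbidden retentive-set sizes established earlier; no new combinatorial construction is required.
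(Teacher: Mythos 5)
Your proposal is correct and follows essentially the same route as the paper: identify $H$ with the induced subtournament on the minimal retentive set of $T$, rule out $|\mathteq(H[N^-_H(v)])|\in\{0,2\}$, and handle the sizes $1$ and $3$ via Lemmas~\ref{lem:captainistheuniqueteqofslavery}, \ref{lem:inneighboronesouce}, \ref{lem:tauthreeequalretentive}, \ref{lem:t3minimalretentiveset} and~\ref{lem:localboundedsmallteqtournament}, concluding that $R$ is retentive in $T$ and contradicting minimality. Your packaging of the case analysis as the identity $\mathteq(T[N^-_T(v)])=\mathteq(H[N^-_H(v)])$ is only a cosmetic reorganization of the paper's argument.
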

\begin{proof}
We prove the lemma by contradiction. The main idea of the proof is that if a tournament $T$ has a subtournament $H$ which has a 3-locally bounded {\teq}-retentive set $R$ as stated in the lemma, and $V(H)$ is a minimal {\teq}-retentive set of $T$, then due to Lemma~\ref{lem:localboundedsmallteqtournament} and some other properties of {\teq}-retentive sets, for every vertex $v\in R$ it holds that $\mathteq{(T[N^-_T(v)])}\subseteq R$, which implies that $R$ is also a {\teq}-retentive set of the tournament $T$; contradicting with the minimality of $V(H)$. The formal proof is as follows.

Assume that there is an $R\subseteq V(H)$ that satisfies the conditions in the above lemma but $H$ is a {\teq}-retentive tournament. Then there is a tournament $T$ which has a minimal {\teq}-retentive set $S$ such that $T[S]$ is isomorphic to $H$. For ease of exposition, let $H=T[S]$ (this does not affect the proof since $T[S]$ is isomorphic to $H$); thus $S=V(H)$. We claim that $R$ is a {\teq}-retentive set of $T$. Let $v$ be any vertex in $R$. Due to Lemma~\ref{lem:teqirreducible}, $H$ is irreducible; and thus, $N^-_H(v)\neq\emptyset$. This means that $|\mathteq{(H[N^-_H(v)])}|\neq 0$. Then, since $R$ is a 3-locally bounded minimal {\teq}-retentive set of $H$, we need to distinguish between the following two cases. We shall prove that in both cases $\mathteq{(T[N^-_T(v)])}\subseteq R$ holds.
\caseinproofskip

Case~1. $|\mathteq{(H[N^-_H(v)])}|=1$.

Let $\{u\}=\mathteq{(H[N^-_H(v)])}$. Since $R$ is a minimal {\teq}-retentive set of $H$ and $v\in R$, we have that $u\in R$. According to Lemma~\ref{lem:captainistheuniqueteqofslavery}, $u$ is the captain of $v$ in $H$. Since $V(H)$ is a minimal {\teq}-retentive set of $T$, according to Lemma~\ref{lem:inneighboronesouce}, $u$ is still the captain of $v$ in $T$. Then, according to Lemma~\ref{lem:captainistheuniqueteqofslavery}, $\mathteq{(T[N^-_T(v)])}=\{u\}\subseteq R$.
\caseinproofskip

Case~2. $|\mathteq{(H[N^-_H(v)])}|=3$.

Let $\{a,b,c\}=\mathteq{(H[N^-_H(v)])}$. Due to Lemma~\ref{lem:tauthreeequalretentive}, $\{a,b,c\}$ is the unique minimal {\teq}-retentive set of $H[N^-_H(v)]$. Then, according to Lemma~\ref{lem:t3minimalretentiveset}, $\{a,b,c\}$ is a tri-captain of $v$ in $H$. Since $R$ is a minimal {\teq}-retentive set of $H$ and $v\in R$, we have that $\{a,b,c\}\subseteq R$. Since $|N^-_H(v)|\leq 5$ and $V(H)$ is a minimal {\teq}-retentive set of $T$, according to Lemma~\ref{lem:localboundedsmallteqtournament}, $\mathteq{(T[N^-_T(v)])}=\{a,b,c\}\subseteq R$.

Notice that it cannot be $|\mathteq{(H[N^-_H(v)])}|=2$. The reason is as follows. Due to Lemma~\ref{lem:teqirreducible}, there is no minimal {\teq}-retentive set of size $2$; thus if $|\mathteq{(H[N^-_H(v)])}|=2$, it must be that $H[N^-_H(v)]$ has two minimal {\teq}-retentive sets each of size 1. This implies that $H[N^-_H(v)]$ has two distinct sources. However, every tournament has at most one source; a contradiction.

Due to the above analysis, for any vertex $v\in R$, it holds $\mathteq{(T[N^-_{T}(v)])}\subseteq R$; implying that $R$ is a {\teq}-retentive set of $T$. However, since $V(H)\setminus R\neq\emptyset$, this contradicts with the assumption that $V(H)$ is a minimal {\teq}-retentive set of $T$. 
\end{proof}

We write a C++ program to check whether there is a 3-locally bounded {\teq}-retentive set that satisfies the conditions in Lemma~\ref{lem:threeloallyboudnedminimalteqagainst} for each of the  non-isomorphic and irreducible tournaments of sizes $6,7,8,9,10$ (our tournaments are from https://cs.anu.edu.au/$\sim$bdm/data/digraphs.html,  maintained by Brendan McKay). In particular, for a tournament $H$ with $6\leq |V(H)|\leq 10$, we enumerate all proper subsets $S$ of $V(H)$ and check if $S$ is a 3-locally bounded {\teq}-retentive set that satisfies the conditions in Lemma~\ref{lem:threeloallyboudnedminimalteqagainst} (we discuss later how we check if a subset is a 3-locally bounded {\teq}-retentive set). After filtering out these tournaments, there remain $2, 26, 395, 30596$ and $3881175$ tournaments of sizes $6,7,8,9,10$, respectively. We then further check that all the $2$ and $26$ tournaments of sizes $6$ and $7$, respectively, are {\teq}-retentive tournaments (each has a unique minimal {\teq}-retentive set consisting of all vertices in the tournament). Thus, $\beta_6=2$ and $\beta_7=26$. \myfig{fig:teqtournamentofsizesix} shows the 2 non-isomorphic {\teq}-retentive tournaments of size $6$ along with their directed {\captaingraph}s. A full list of the 26 non-isomorphic {\teq}-retentive tournaments of size $7$, in terms of the upper triangles of their adjacent matrices, is given in \mytable{tab:teqretentivetournametnsofsizeseven}. See also Appendix for graph representations of these tournaments and their directed {\captaingraph}s. The following theorem directly follows from the above discussions.

\begin{table}
\begin{center}
\begin{tabular}{c c c c}
111000101101101011101 &
000001000101000100000 &
000001110001010110000 &
000011000101000100001 \\
000011100001100100010 &
000011101001100010101 &
000101110001010110101 &
000110101001010010011 \\
001010100000101001000 &
010001000101001100000 &
010001000101010100001 &
010001110001110110101 \\
010010000101100100011 &
010010010001100101101 &
010010100001101100010 &
011000000100110001001 \\
011000000101100011001 &
011000100000111100101 &
100100101101010010011 &
100100101101100010111 \\
100100110101010110011 &
100100110101100110111 &
100100111001010110111 &
101000110001000111101 \\
111000101101011101101 &
111000110011011111000 \\
\end{tabular}
\caption{A full list of all non-isomorphic {\teq}-retentive tournaments of size $7$. Each tournament is given as the upper triangle of the adjacency matrix in row order, on one line without spaces. Each row, except the last row, shows four tournaments.}
\label{tab:teqretentivetournametnsofsizeseven}
\end{center}
\end{table}

\begin{theorem}
\label{thm:betas}
$\beta_6=2, \beta_7=26, \beta_8\leq 395, \beta_9\leq 30596$ and $\beta_{10}\leq 3881175$.
\end{theorem}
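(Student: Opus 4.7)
The plan is to combine Lemma~\ref{lem:teqirreducible} with Lemma~\ref{lem:threeloallyboudnedminimalteqagainst} and a computer-aided enumeration over the non-isomorphic tournaments of each size. First, I would take as input the lists of all non-isomorphic tournaments of sizes $6,7,8,9,10$ (for instance from Brendan McKay's catalogue), and immediately discard the reducible ones: by Lemma~\ref{lem:teqirreducible} a reducible tournament can never be a {\teq}-retentive tournament.

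Second, for each surviving irreducible tournament $H$, I would loop over all proper nonempty subsets $R\subsetneq V(H)$ with $|N^-_H(v)|\leq 5$ for every $v\in R$, and test whether $R$ is a $3$-locally bounded {\teq}-retentive set of $H$. By Lemma~\ref{lem:threeloallyboudnedminimalteqagainst}, as soon as one such $R$ is found, $H$ can be rejected. The inner test reduces to computing $\mathteq(H[N^-_H(v)])$ for each $v\in R$, a computation that I can carry out exactly with the tools established earlier in the paper: singletons are identified through Lemma~\ref{lem:teqsourceuniqueminial}, triples through the tri-captain characterisation in Lemma~\ref{lem:t3minimalretentiveset} and its uniqueness companion Lemma~\ref{lem:eachtournamentonecaptaintriangle}, size-$4$ sets are ruled out by Theorem~\ref{lem:noretentivesetofsizefour}, size-$5$ sets must be isomorphic to $T_5(1)$ or $T_5(6)$ by Theorem~\ref{thm:teqisomorphctot5}, and Lemma~\ref{lem:tauthreeequalretentive} justifies reading off $\mathteq$ directly once a triple is confirmed. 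Since the inneighbourhoods involved have size at most $5$, the recursion terminates after one layer and the whole procedure is a bounded, deterministic check.

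Third, for the counts that are claimed as equalities, I would verify each of the $2$ and $26$ surviving tournaments of sizes $6$ and $7$ is truly {\teq}-retentive by exhibiting a witness tournament $T$; the natural choice is $T=H$ itself, for which it suffices to verify that $V(H)$ is the unique minimal {\teq}-retentive set of $H$ (equivalently, that no proper subset of $V(H)$ is {\teq}-retentive in $H$). Running the same subset enumeration with the $3$-local-boundedness clause dropped settles this. For sizes $8,9,10$ I would stop at the filtering stage, yielding the advertised upper bounds $395$, $30596$ and $3881175$ without attempting to decide which of the survivors are genuinely {\teq}-retentive.

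The main obstacle is computational rather than mathematical: the number of non-isomorphic irreducible tournaments on $10$ vertices is in the millions, and for each one we must inspect many subsets of size up to $5$, each inspection triggering a small recursive $\mathteq$-computation. The delicate correctness point is to ensure that the size-$\leq 5$ $\mathteq$-routine really is exhaustive; the structural lemmas recalled above are chosen precisely so that every case on such small subtournaments is covered, which keeps the recursion shallow and the implementation faithful to the proofs in the paper.
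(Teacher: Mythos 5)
Your proposal matches the paper's proof: the paper likewise runs a computer enumeration over the non-isomorphic irreducible tournaments of sizes $6$--$10$, filters out every tournament admitting a proper $3$-locally bounded {\teq}-retentive set with all indegrees at most $5$ via Lemma~\ref{lem:threeloallyboudnedminimalteqagainst} (using exactly the captain/tri-captain criteria of Lemmas~\ref{lem:captainistheuniqueteqofslavery} and~\ref{lem:t3minimalretentiveset} for the inner test), and then confirms that each of the $2$ and $26$ survivors of sizes $6$ and $7$ has all of its vertices as its unique minimal {\teq}-retentive set, while stopping at the filtering stage for sizes $8$--$10$. The only difference is a slightly more elaborate (but equivalent) justification of the small-$\mathteq$ subroutine; the approach and conclusions are the same.
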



\begin{figure}
\begin{center}
\includegraphics[width=\textwidth]{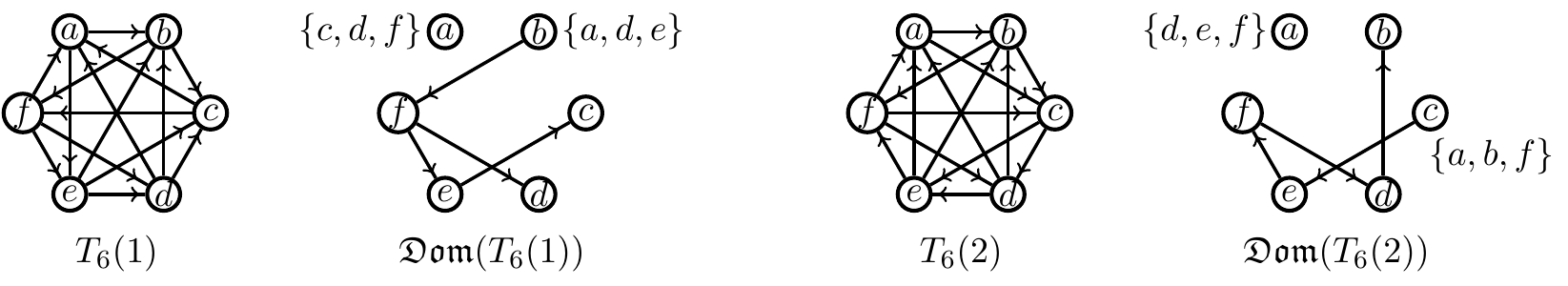}
\end{center}
\caption{The two {\teq}-retentive tournaments of size $6$ and their directed {\captaingraph}s. In the directed {\captaingraph}s, if a vertex has a tri-captain, then its tri-captain is labeled next to the vertex.}
\label{fig:teqtournamentofsizesix}
\end{figure}

Now we discuss how we check if a subset $S$ is a 3-locally bounded {\teq}-retentive set of a tournament $H$.
Due to Lemma~\ref{lem:t3minimalretentiveset}, it suffices to check for each vertex $v\in S$ which has no captain $u\in S$ in $H$ (if $v$ has a captain $u$, then according to Lemma~\ref{lem:captainistheuniqueteqofslavery},  $\mathteq{(T[N^-_H(v)])}=\{u\}$) whether there are three inneighbors $x,y,z$ of $v$ in $S$ such that (1) $x,y,z$ form a directed triangle; and (2) no vertex in $N^-_{T[H]}(v)\setminus \{x,y,z\}$ dominates two of $\{x,y,z\}$.

{\bf{Remark.}} We believe that there are still much space for the improvement of the upper bounds for $\beta_8,\beta_9$ and $\beta_{10}$. However, we need further properties of {\teq}-retentive sets to help us filter out more non-{\teq}-retentive tournaments. The drastic increase of the upper bounds for $\beta_9$ and $\beta_{10}$ is due to the fact that vertices which have tri-captains, as well as vertices whose indegree is bounded by 5 in tournaments of sizes $9$ and $10$ become relatively rare. Recall that in Lemma~\ref{lem:threeloallyboudnedminimalteqagainst}, every vertex in $R$ has indegree bounded by 5. Hence, tournaments of sizes $9$ and $10$ to which Lemma~\ref{lem:threeloallyboudnedminimalteqagainst} applies are relatively rare. Another major reason is that the number of non-isomorphic tournaments of size $n$ increases exponentially in $n$. 


\section{Tournaments with Two {\teq}-Retentive Sets}
In this section, we study $(H_1,H_2)$-{\teq}-retentive tournaments for small tournaments $H_1$ and $H_2$.
Schwartz's Conjecture is equivalent to saying that there are no $(H_1,H_2)$-{\teq}-retentive tournaments.
In spite of the fact that Schwartz's Conjecture is not true~\cite{DBLP:journals/scw/BrandtCKLNSST13,Brandt2013b}, researchers believe that tournaments that violate Schwartz's Conjecture are rare.  In this section, we focus on the following question:

``What structures are forbidden for a tournament to have two distinct minimal {\teq}-retentive sets?"

Before answering the above question, let's first study some useful properties.

\begin{lemma}
\label{lem:distinctminiteqs}
If a tournament $T$ has two distinct minimal {\teq}-retentive sets $R_1$ and $R_2$, then $R_1\cap R_2=\emptyset$.
\end{lemma}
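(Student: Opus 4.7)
The plan is to argue by contradiction: assume $R_1 \cap R_2 \neq \emptyset$, and show that the intersection itself is a {\teq}-retentive set, which will contradict the minimality of $R_1$ and $R_2$ together with the assumption that $R_1 \neq R_2$.

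First, I would invoke the definition of a {\teq}-retentive set verbatim. Fix any vertex $v \in R_1 \cap R_2$. If $N^-_T(v) = \emptyset$ there is nothing to check for $v$; otherwise, since $R_1$ is {\teq}-retentive and $v \in R_1$, we have $\mathteq(T[N^-_T(v)]) \subseteq R_1$, and symmetrically $\mathteq(T[N^-_T(v)]) \subseteq R_2$. Therefore $\mathteq(T[N^-_T(v)]) \subseteq R_1 \cap R_2$. Since this holds for every $v \in R_1 \cap R_2$, and since $R_1 \cap R_2$ is nonempty by assumption, $R_1 \cap R_2$ is itself a {\teq}-retentive set of $T$.

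Now I would close the argument by applying minimality twice. Because $R_1 \cap R_2 \subseteq R_1$ and $R_1 \cap R_2$ is a {\teq}-retentive set, minimality of $R_1$ forces $R_1 \cap R_2 = R_1$, i.e.\ $R_1 \subseteq R_2$. The symmetric argument using minimality of $R_2$ yields $R_2 \subseteq R_1$, hence $R_1 = R_2$, contradicting the hypothesis that $R_1$ and $R_2$ are distinct.

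There is no real obstacle here: the lemma is essentially a statement about the closure of the family of {\teq}-retentive sets under nonempty intersection. The only subtlety worth being explicit about is the edge case $N^-_T(v) = \emptyset$ in the definition, which is handled automatically because the retentive condition imposes no constraint on such vertices. Hence the proof should fit in a short paragraph.
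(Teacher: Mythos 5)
Your proof is correct and follows essentially the same argument as the paper: take a vertex in the nonempty intersection, use retentiveness of both sets to show the intersection is itself a {\teq}-retentive set, and contradict minimality. The explicit handling of the $N^-_T(v)=\emptyset$ edge case is fine (it is already built into the paper's definition of retentive sets) and does not change the substance.
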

\begin{proof}
We prove the lemma by contradiction. Assume that there is a tournament $T$ which has two distinct minimal {\teq}-retentive sets $R_1$ and $R_2$ such that $R_1\cap R_2=A\neq \emptyset$. Let $v\in A$ be any vertex in $A$. Since both $R_1$ and $R_2$ are minimal {\teq}-retentive sets of $T$, it holds that  $\mathteq{(T[N^-_T(v)])}\subseteq R_1$ and $\mathteq{(T[N^-_T(v)])}\subseteq R_2$; hence, $\mathteq{(T[N^-_T(v)])}\subseteq (R_1\cap R_2)=A$. This implies that $A$ is also a {\teq}-retentive set of $T$. Since $R_1$ and $R_2$ are distinct, $A$ is a proper subset of at least one of $R_1$ and $R_2$. However, this contradicts the minimality of $R_1$ and $R_2$.
\end{proof}

Due to the above lemma, any two distinct minimal {\teq}-retentive sets in the same tournament are disjoint. In the following, when we write ``two  minimal {\teq}-retentive sets" in a tournament, we mean `two disjoint minimal {\teq}-retentive sets".

Due to Lemma~\ref{lem:teqirreducible} and Theorem~\ref{lem:noretentivesetofsizefour}, there are no minimal {\teq}-retentive sets of sizes $2$ and $4$. Therefore, there are no $(T_2\cup T_4,T_k)$-{\teq}-retentive tournaments for every positive integer $k$. Let $T$ be a tournament. If $\{v\}\subseteq V(T)$ is a minimal {\teq}-retentive set of $T$, then due to the definition of {\teq}-retentive sets, it holds that $N^-_T(v)=\emptyset$; that is, $v$ is the source of $T$. This implies that $\{v\}$ is the unique minimal {\teq}-retentive set of $T$. Therefore, there are no $(T_1,T_k)$-{\teq}-retentive tournaments for every positive integer $k$. In the following, we study $(T_3,T_k)$-{\teq}-retentive tournaments. The following lemmas are useful.

\begin{lemma}
\label{lem:novertexdominatestwoadjacentverticesincapgraingraphs}
Let $T$ be a tournament and $R$ be a minimal {\teq}-retentive set of $T$ such that $\capgraph{T[R]}$ is not empty. Let $v\trelation u$ be an arc in $\dcapgraph{T[R]}$. Then, there is no vertex in $T$ that dominates both $v$ and $u$.
\end{lemma}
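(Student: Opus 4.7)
The plan is to reduce to captaincy in the full tournament $T$ and then derive a direct contradiction from the definition of captain. The only nontrivial input needed is Lemma~\ref{lem:inneighboronesouce}, which lets us lift the captain relation from the induced subtournament $T[R]$ to $T$ itself.

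First I would unpack the hypothesis. The arc $v\trelation u$ in $\dcapgraph{T[R]}$ means precisely that $v$ is the captain of $u$ in $T[R]$. Applying Lemma~\ref{lem:inneighboronesouce}, I obtain that $v$ is still the captain of $u$ in the full tournament $T$. By the definition of captain, this says that $v\trelation u$ in $T$ and $\{v\}\trelation N^-_T(u)\setminus\{v\}$, i.e., $v$ dominates every inneighbor of $u$ in $T$ other than itself.

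Next I would assume for contradiction that some vertex $w\in V(T)$ satisfies both $w\trelation v$ and $w\trelation u$. Since $w\trelation v$, in particular $w\neq v$ by asymmetry of $\trelation$. Since $w\trelation u$, we have $w\in N^-_T(u)$, and together with $w\neq v$ this gives $w\in N^-_T(u)\setminus\{v\}$. But then, since $v$ is the captain of $u$ in $T$, we get $v\trelation w$, which contradicts $w\trelation v$ (again by asymmetry). This concludes the proof.

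The argument is essentially a one-line unwinding of definitions, so there is no real obstacle — the only subtlety is remembering to invoke Lemma~\ref{lem:inneighboronesouce} to promote $v$ from being the captain of $u$ in $T[R]$ to being the captain of $u$ in $T$; without this step the conclusion would not follow, since the captain property in a subtournament need not constrain vertices outside $R$. The hypothesis that $\capgraph{T[R]}$ is nonempty is used only to guarantee that an arc $v\trelation u$ of $\dcapgraph{T[R]}$ exists in the first place, and plays no further role in the argument.
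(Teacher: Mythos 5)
Your proof is correct and follows the same route as the paper: identify the arc with $v$ being the captain of $u$ in $T[R]$, lift this to $T$ via Lemma~\ref{lem:inneighboronesouce}, and conclude from the definition of captain that any vertex dominating $u$ is dominated by $v$. The paper states this last step directly, while you spell out the contradiction explicitly; the arguments are otherwise identical.
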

\begin{proof}
Since $v\trelation u$ in $\dcapgraph{T[R]}$, $v$ is the captain of $u$ in ${T[R]}$. Since $R$ is a minimal {\teq}-retentive set of $T$ and $v,u\in R$, according to Lemma~\ref{lem:inneighboronesouce}, $v$ is the captain of $u$ in $T$. Hence, all vertices which dominate $u$ in $T$ are dominated by $v$.
\end{proof}

Two vertices in a graph (resp. directed graph) are {\it{adjacent}} if there is an edge (resp. an arc) between them.
Lemma~\ref{lem:novertexdominatestwoadjacentverticesincapgraingraphs} is equivalent to saying that no vertex in a tournament $T$ dominates two adjacent vertices in the domination graph of a subtournament induced by a minimal {\teq}-retentive set of $T$.

\comments{there is a lemma commented due to an incorrect proof. [Let $T$ a tournament and $C$ a connected subgraph of $\capgraph{T[\mathteq{(T)}]}$.
Then all the vertices in $C$ are included in the same minimal {\teq}-retentive set of $T$.] This lemma could be considered as a good Conjecture.}
%

A {\it{broom}} of order $k$ is a directed graph consisting of $k+2$ vertices and $k+1$ arcs such that there is a vertex $v$ which has exactly one inneighbor $u$ and exactly $k$ outneighbors $w_1,w_2,...,w_k$ in the directed graph. We call $u$ the {\it{head}}, $v$ the {\it{center}} and all $w_i$s the {\it{leaves}} of the broom. Moreover, we use $\broom{u}{v}{w_1,w_2,...,w_k}$ to denote this broom.

\begin{figure}
\begin{center}
\includegraphics[width=\textwidth]{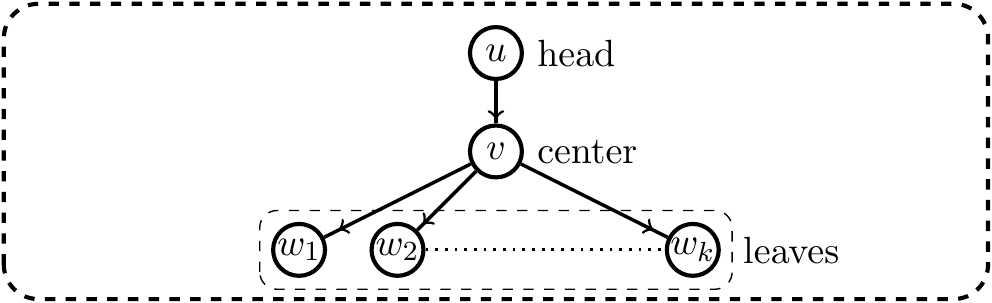}
\end{center}
\caption{A broom $\broom{u}{v}{w_1,...,w_k}$ of size $k$.}
\label{fig:broom}
\end{figure}



\begin{lemma}
\label{lem:teqtrianglecyclebroom}
Let $T$ be a tournament. If $T$ has a minimal {\teq}-retentive set $R$ of size $3$, it cannot have another minimal {\teq}-retentive set $R'$ which satisfies at least one of the following conditions.

(1) $\exists v\in R'$ and a minimal {\teq}-retentive set $S$ of $T[N^-_T(v)]$ such that there is a directed cycle in $\dcapgraph{T[S]}$; or

(2) $\exists v\in R'$ and a minimal {\teq}-retentive set $S$ of $T[N^-_T(v)]$ such that there is a broom $\broom{u}{v'}{w_1,w_2}$ in $\dcapgraph{T[S]}$ such that $w_1,w_2$ together with a third vertex $w'\in N^-_T(v)$ form a minimal {\teq}-retentive set of ${T[N^-_{T[N^-_T(v)]}(u)]}$.
\end{lemma}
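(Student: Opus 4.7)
The strategy for both parts is to classify every vertex of $R'$ by its pattern of dominations with respect to $R$ and then obtain a combinatorial contradiction. Lemma~\ref{lem:t3minimalretentiveset} applied to $R$ gives that $R=\{a,b,c\}$ is a directed triangle in which no vertex of $T$ dominates two members of $R$; consequently any $x\in V(T)\setminus R$ either is dominated by all three of $\{a,b,c\}$ or dominates exactly one of them while being dominated by the other two, and I write $R'_a,R'_b,R'_c$ for the three subclasses of the latter kind. A preliminary observation is that no $v\in R'$ can be dominated by all of $R$: if it were, Lemma~\ref{lem:t3minimalretentiveset} would make $\{a,b,c\}$ a minimal {\teq}-retentive set of $T[N^-_T(v)]$, so $\{a,b,c\}\subseteq\mathteq{(T[N^-_T(v)])}\subseteq R'$, contradicting $R\cap R'=\emptyset$ from Lemma~\ref{lem:distinctminiteqs}. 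Hence every $v\in R'$ falls into exactly one of $R'_a,R'_b,R'_c$, and by the cyclic symmetry of $R$ I henceforth assume the $v$ of the hypothesis lies in $R'_a$, so $\{b,c\}\subseteq N^-_T(v)$ and $a\notin N^-_T(v)$.

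For condition~(1), Lemma~\ref{lem:captaingraphcirclenoteq} applied inside the tournament $T[N^-_T(v)]$ forces $S$ to be exactly the vertex set of the directed cycle, say $S=\{c_0,\dots,c_{k-1}\}$, and Lemma~\ref{lem:dominationgraphproperty} applied to $T[S]$ makes $k$ odd. Lemma~\ref{lem:inneighboronesouce} lifts the captain relations up one level, so each $c_i$ is the captain of $c_{(i+1)\bmod k}$ in $T[N^-_T(v)]$ and therefore dominates every element of $R\cap N^-_T(v)=\{b,c\}$ that happens to be an inneighbor of $c_{(i+1)\bmod k}$. A three-way case analysis on the type of $c_{(i+1)\bmod k}$ then gives: type $R'_a$ would force $c_i$ to dominate both $b$ and $c$, contradicting Lemma~\ref{lem:t3minimalretentiveset}(2); type $R'_b$ forces $c_i$ to dominate $c$, hence $c_i\in R'_c$; and type $R'_c$ symmetrically forces $c_i\in R'_b$. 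Thus the types along the cycle alternate strictly between $R'_b$ and $R'_c$, which is impossible for $k$ odd.

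For condition~(2) I apply the same type mechanism to the broom-and-tri-captain structure. Applying it to $u$-captain-of-$v'$ inside $T[N^-_T(v)]$ excludes $v'\in R'_a$; applying it separately to $v'$-captain-of-$w_1$ and $v'$-captain-of-$w_2$ shows $w_1,w_2\notin R'_a$ and forces $w_1$ and $w_2$ into a common class in $\{R'_b,R'_c\}$ while placing $v'$ in the opposite class. In either subcase — say $w_1,w_2\in R'_b$ and $v'\in R'_c$ — one more application to $u$-captain-of-$v'$ pins down $u\in R'_b$, and then a direct computation gives $N^-_T(u)\cap N^-_T(v)\cap R=\{c\}$. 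Since $w_1$ and $w_2$ are both of type $R'_b$ they are each dominated by $c$ in $T$, so $c$ dominates two members of $\{w_1,w_2,w'\}$ while lying in the tournament $T[N^-_T(u)\cap N^-_T(v)]$; this contradicts Lemma~\ref{lem:t3minimalretentiveset}(2) applied to the minimal {\teq}-retentive set $\{w_1,w_2,w'\}$ of $T[N^-_{T[N^-_T(v)]}(u)]$. The symmetric subcase $w_1,w_2\in R'_c$, $v'\in R'_b$, $u\in R'_c$ gives the same contradiction with $b$ playing the role of $c$.

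The principal technical obstacle I anticipate is that a captain relation inside $T[N^-_T(v)]$ need not lift to a captain relation in $T$, because enlarging the vertex set introduces further inneighbors that the would-be captain need not dominate. This blocks the most direct route to~(1), namely transporting the forbidden directed cycle in $\dcapgraph{T[S]}$ all the way up to $\dcapgraph{T}$ and combining it with the 3-cycle contributed by $R$ to contradict Lemma~\ref{lem:dominationgraphproperty}. The type classification of $R'$ relative to $R$ sidesteps the obstacle by re-expressing each captain requirement as an explicit domination constraint against the fixed triangle $R$, where Lemma~\ref{lem:t3minimalretentiveset}(2) applies globally in $T$; this converts condition~(1) into a parity obstruction on the length of the cycle and condition~(2) into an overlapping-domination obstruction inside $T[N^-_T(u)\cap N^-_T(v)]$.
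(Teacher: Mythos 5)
Your proof is correct and takes essentially the same approach as the paper's: both hinge on Lemma~\ref{lem:t3minimalretentiveset} (applied to $R$ and to $\{w_1,w_2,w'\}$), on lifting captain relations inside $T[N^-_T(v)]$ via Lemma~\ref{lem:inneighboronesouce} (which the paper packages as Lemma~\ref{lem:novertexdominatestwoadjacentverticesincapgraingraphs}), and on the oddness of domination-graph cycles from Lemma~\ref{lem:dominationgraphproperty}, reaching a parity contradiction for condition~(1) and an overlapping-domination contradiction for condition~(2). Your $R'_a,R'_b,R'_c$ classification with the alternating 2-coloring is just a dual packaging of the paper's argument (the paper exhibits a leaf dominating two of $\{a,b,c\}$, you exhibit a triangle vertex dominating two of $\{w_1,w_2,w'\}$), and the one step you leave implicit---that the vertices of $S$ lie in $R'$, so your type classification applies to them---is immediate from $S\subseteq \mathteq{(T[N^-_T(v)])}\subseteq R'$.
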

\begin{proof}
We prove the lemma by contradiction. \myfig{fig:teqtrianglecyclebroom} is helpful for the readers to follow the proof. Assume that the lemma is not true. Let $R=\{a,b,c\}$ be a minimal {\teq}-retentive set of $T$. We first consider {\condition{1}}. Assume for the sake of contradiction that $T$ has another minimal {\teq}-retentive set $R'$, where there is a vertex $v\in R'$ and a minimal {\teq}-retentive set $S$ of $T[N_T^-(v)]$ such that there is a directed cycle $C=(u_0, u_1,..., u_{t-1})$ in $\dcapgraph{T[S]}$. According to Lemma~\ref{lem:dominationgraphproperty}, $t$ is odd.  Since $\{a,b,c\}$ is a  minimal {\teq}-retentive set of $T$, according to Lemma~\ref{lem:t3minimalretentiveset}, at least two of $\{a,b,c\}$ dominate $v$ in $T$. Without loss of generality, assume that $\{a,b\}\trelation \{v\}$. Then, since $S$ is a minimal {\teq}-retentive set of $T[N_T^-(v)]$, according to Lemma~\ref{lem:novertexdominatestwoadjacentverticesincapgraingraphs}, none of $\{a,b\}\subset N^-_T(v)$ dominates two adjacent vertices in $C$. Since $t$ is odd, this implies that for every $x\in \{a,b\}$, there are two adjacent vertices in $C$ that both dominate $x$. Let $u_i, u_{\mymod{(i+1)}{t}}$ be two adjacent vertices in $C$ that dominate $a$. 
Since $\{a,b,c\}$ is a minimal {\teq}-retentive set of $T$, according to~Lemma~\ref{lem:t3minimalretentiveset}, no vertex in $T$ dominates two of $\{a,b,c\}$. This implies that
$\{b\}\trelation \{u_i,u_{i+1}\}$. 
However, as discussed above, according to Lemma~\ref{lem:novertexdominatestwoadjacentverticesincapgraingraphs}, no vertex in $N^-_T(v)$, including $b$, dominates two adjacent vertices in $C$; a contradiction.

Now let's consider {\condition{2}}. Assume for the sake of contradiction that $T$ has another minimal {\teq}-retentive set $R'$ where there is a vertex $v$ and a minimal {\teq}-retentive set $S$ of $T[N^-_T(v)]$ as stated in {\condition{2}} in the lemma.
Similar to the argument for {\condition{1}},
we can infer that at least two of $\{a,b,c\}$ dominate $v$. Without loss of generality, assume that $\{a,b\}\trelation \{v\}$ in $T$. According to Lemma~\ref{lem:t3minimalretentiveset}, no vertex in $T$ dominates two of $\{a,b,c\}$; and thus, at least one of $\{a,b\}$ dominates $v'$ in $T$. If $a\trelation v'$, then, since $S$ is a minimal {\teq}-retentive set of $N^-_T(v)$ and $\{a,v',w_1,w_2\}\subset N^-_T(v)$, according to Lemma~\ref{lem:novertexdominatestwoadjacentverticesincapgraingraphs}, $\{w_1,w_2\}\trelation \{a\}$. Moreover, $u\trelation a$.
Then, according to Lemma~\ref{lem:t3minimalretentiveset}, it holds that $b\trelation u$.
Since $\{w_1,w_2,w'\}$ is a minimal {\teq}-retentive set of $T[N^-_{T[N^-_T(v)]}(u)]$ and $b\in N^-_{T[N^-_T(v)]}(u)$, according to Lemma~\ref{lem:t3minimalretentiveset}, at least one of $\{w_1, w_2\}$ dominates $b$. However, since $\{a,b,c\}$ is a minimal {\teq}-retentive set of $T$, according to  Lemma~\ref{lem:t3minimalretentiveset}, no vertex in $T$ dominates two of $\{a,b,c\}$ (but due to the above analysis, at least one of $\{w_1,w_2\}$ dominates both $a$ and $b$); a contradiction. 
The proof for the case $b\trelation v'$ can be obtained from the above proof for the case $a\trelation v'$ by exchanging all occurrences of ``$a$" and ``$b$".
\begin{figure}[h]
\begin{center}
\includegraphics[width=\textwidth]{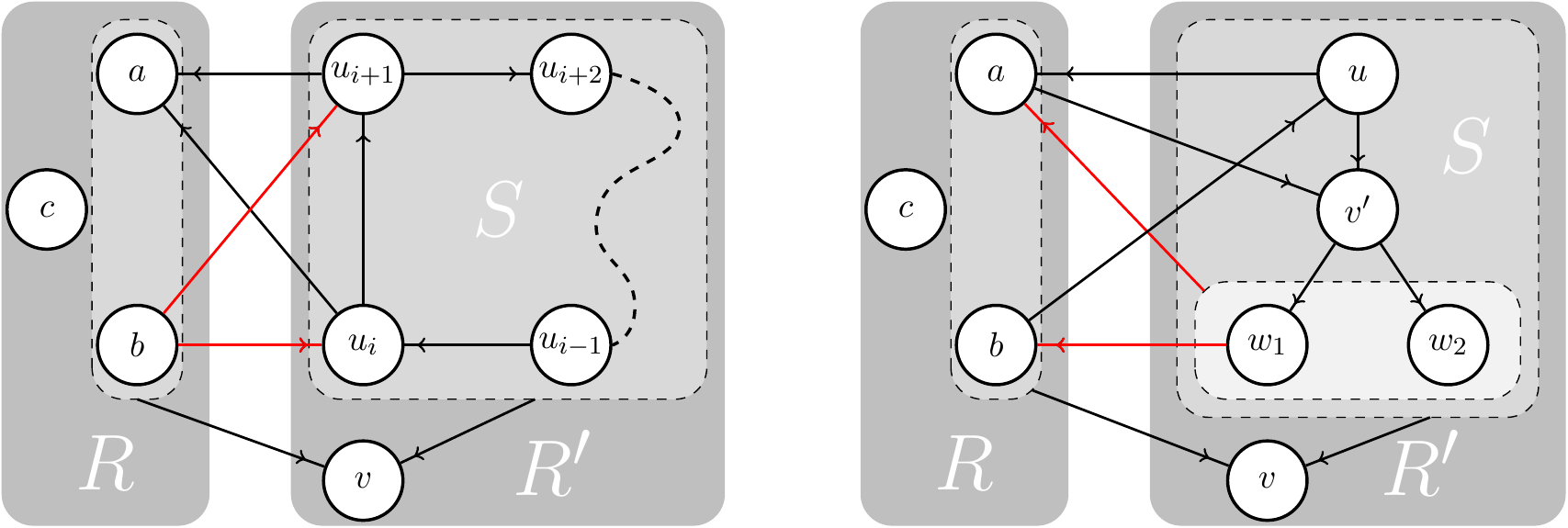}
\end{center}
\caption{This figure illustrates the proof of Lemma~\ref{lem:teqtrianglecyclebroom}. To keep the picture
clean, not all vertices and arcs of $T[R'\cup R]$ are drawn. The figure on the left side illustrates {\condition{1}} and the figure on the right side illustrates {\condition{2}}. In both figures, $R=\{a,b,c\}$ is a minimal {\teq}-retentive set of $T$. Moreover, $R'$ is assumed to be another minimal {\teq}-retentive set of $T$, and $S$ is assumed to be a minimal {\teq}-retentive set of $T[N^-_T(v)]$. Furthermore, the circle drawn in the area labeled with $S$ in the left figure, and the broom $\broom{u}{v'}{w_1,w_2}$ in the right figure are parts of the domination graph of $T[N^-_T(v)]$. Assuming $\{a,b\}\trelation \{v\}$, we arrive at two red arcs in each figure which violate Lemma~\ref{lem:t3minimalretentiveset}.}
\label{fig:teqtrianglecyclebroom}
\end{figure}
\end{proof}

\noindent

Our first main result in this section is summarized as follows.

\begin{theorem}
\label{thm:notthreekleqtwelveteqtournaments}
There are no $(T_3, T_k)$-{\teq}-retentive tournaments for every positive integer $k\leq 12$.
\end{theorem}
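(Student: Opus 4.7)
The plan is to derive a contradiction by assuming that some tournament $T$ has two disjoint (Lemma~\ref{lem:distinctminiteqs}) minimal {\teq}-retentive sets $R$ and $R'$ with $|R|=3$ and $|R'|=k\le 12$, and by locating inside $R'$ one of the two configurations forbidden by Lemma~\ref{lem:teqtrianglecyclebroom}. The values $k\in\{1,2,3,4\}$ are dispatched immediately: $k=1$ by Lemma~\ref{lem:teqsourceuniqueminial} (a singleton minimal {\teq}-retentive set would be the unique minimal one), $k=2$ and $k=4$ by Lemma~\ref{lem:teqirreducible} and Theorem~\ref{lem:noretentivesetofsizefour}, and $k=3$ by Lemma~\ref{lem:eachtournamentonecaptaintriangle}.

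For $k\in\{5,6,7\}$ I would use the explicit enumeration of {\teq}-retentive tournaments ($\beta_5=2,\beta_6=2$ from Theorem~\ref{thm:teqisomorphctot5}, and $\beta_7=26$ from Table~\ref{tab:teqretentivetournametnsofsizeseven}). For each of the $30$ non-isomorphic candidates $H\cong T[R']$, I would locate a vertex $v\in V(H)$ with $|N^-_H(v)|\leq 5$ carrying a tri-captain $\{a,b,c\}$ in $H$. Lemma~\ref{lem:localboundedsmallteqtournament} then forces $\tau(T[N^-_T(v)])=\{a,b,c\}$; the subtournament induced by $\{a,b,c\}$ is a directed triangle whose directed captain graph is a directed $3$-cycle (by the proof of Lemma~\ref{lem:t3minimalretentiveset}); hence condition~(1) of Lemma~\ref{lem:teqtrianglecyclebroom} is triggered, contradicting the existence of $R$. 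The required vertex $v$ can be exhibited by a finite inspection of the $30$ tournaments in question.

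For $k\in\{8,9,10,11,12\}$ no explicit enumeration is available, so I would use a pigeonhole step. Since $\sum_{v\in R'}|N^-_{T[R']}(v)|=\binom{k}{2}$, some $v^*\in R'$ satisfies $|N^-_{T[R']}(v^*)|\leq \lfloor(k-1)/2\rfloor\leq 5$. Because $R'$ is minimal, $S^*:=\tau(T[N^-_T(v^*)])\subseteq N^-_{T[R']}(v^*)$, so $|S^*|\leq 5$. The exclusion of sizes $2$ and $4$ (Lemma~\ref{lem:teqirreducible}, Theorem~\ref{lem:noretentivesetofsizefour}) together with the uniqueness of the source of any tournament forces $|S^*|\in\{1,3,5\}$ and makes $S^*$ itself a single minimal {\teq}-retentive set of $T[N^-_T(v^*)]$. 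If $|S^*|=3$, condition~(1) of Lemma~\ref{lem:teqtrianglecyclebroom} is triggered directly via the directed triangle as above. If $|S^*|=5$, $T[S^*]$ is isomorphic to $T_5(1)$ or $T_5(6)$; by direct inspection of these two small tournaments, each admits either a directed captain cycle or a broom configuration satisfying condition~(1) or~(2) of Lemma~\ref{lem:teqtrianglecyclebroom}.

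The remaining subcase $|S^*|=1$ is the main obstacle: here $v^*$ has a captain $u\in R'$ in $T[R']$ (Lemma~\ref{lem:captainistheuniqueteqofslavery} and Lemma~\ref{lem:inneighboronesouce}), and neither condition of Lemma~\ref{lem:teqtrianglecyclebroom} is witnessed at $v^*$. I would propagate along the captain relation inside $T[R']$: combining Lemma~\ref{lem:dominationgraphproperty} (which confines $\capgraph{T[R']}$ to a spiked odd cycle or a forest of caterpillars) with Lemma~\ref{lem:captaingraphcirclenoteq} (any captain cycle must cover all of $R'$) and the strong external constraint from $R$ that no vertex of $T$ dominates two of $\{a,b,c\}$ (Lemma~\ref{lem:t3minimalretentiveset}), I would trace the captain chain starting at $v^*$ until it reaches either a vertex possessing a tri-captain in $T[R']$ (to which the size-$3$ analysis reapplies) or a vertex producing a broom of the shape required by condition~(2). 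The hard part is executing this case analysis in full, because for $k\in\{11,12\}$ the captain graph of $T[R']$ may be a long spiked odd cycle and the broom has to be located carefully; everything else reduces to finite inspection or direct invocation of the lemmas already developed.
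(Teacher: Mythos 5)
Your reduction for $k\le 4$, your pigeonhole step for large $k$, and your treatment of the subcases $|S^*|=3$ (via the directed $3$-cycle in $\dcapgraph{T[S^*]}$ and condition~(1) of Lemma~\ref{lem:teqtrianglecyclebroom}) and $|S^*|=5$ (via Theorem~\ref{thm:teqisomorphctot5} and Lemma~\ref{lem:teqtrianglecyclebroom}) are sound and essentially parallel the paper, which runs the same minimum-indegree case analysis uniformly for all $k\le 12$. But there are two genuine gaps. The first is exactly the case you call ``the main obstacle'': for $|S^*|=1$ your ``propagation along the captain relation'' is only sketched and never executed, so for $k\ge 8$ nothing is actually proved there. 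In fact no propagation is needed: if $\mathteq{(T[N^-_T(v^*)])}=\{u\}$, then $u$ is the captain of $v^*$ in $T$ (Lemma~\ref{lem:captainistheuniqueteqofslavery}) and $u\in R'$, so $\capgraph{T}$ contains the edge $\edge{u}{v^*}$ with both endpoints outside $R$; on the other hand, the proof of Lemma~\ref{lem:t3minimalretentiveset} shows that the three vertices of $R$ are captains of one another in $T$, so $R$ induces a $3$-cycle in $\capgraph{T}$. A cycle together with an edge disjoint from it is impossible by Lemma~\ref{lem:dominationgraphproperty} (equivalently, apply Lemma~\ref{lem:captaingraphagainsttwoteq} with the roles of the two sets exchanged, since $\dcapgraph{T[R]}$ is a directed triangle and $\dcapgraph{T[R']}$ is non-empty). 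This one observation is precisely the paper's Case~1 and disposes of the subcase completely.

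The second gap is in your $k\in\{5,6,7\}$ branch, which rests on a false finite claim: not every one of the $30$ candidate tournaments has a vertex of indegree at most $5$ carrying a tri-captain. In $T_5(6)$ and $T_7(26)$ the directed domination graph is a Hamiltonian directed cycle, so every vertex has a (single) captain and hence no vertex has a tri-captain; in $T_7(2)$ and $T_7(3)$ every vertex except $a$ has a captain and $\mathteq$ of the in-neighbourhood of $a$ has size $5$, so again no tri-captain exists. These exceptional candidates are rescued by the same captain-edge-versus-triangle argument above (any $R'$ with non-empty $\dcapgraph{T[R']}$ is already incompatible with a size-$3$ minimal {\teq}-retentive set), but as written your case distinction simply does not cover them; and once that argument is in place the separate enumeration for $k\le 7$ is unnecessary, since the uniform analysis (minimum-indegree vertex, then minimal retentive set of size $1$, $3$ or $5$) covers all $k\le 12$ at once, which is what the paper does. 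A smaller point: in the $|S^*|=5$ subcase, ``direct inspection'' of $T_5(1)$ alone does not certify condition~(2) of Lemma~\ref{lem:teqtrianglecyclebroom}; you must invoke Lemma~\ref{lem:localboundedsmallteqtournament} to promote the tri-captain $\{b,c,d\}$ of $a$ inside $T[S^*]$ to $\mathteq$ of the in-neighbourhood of $a$ inside $T[N^-_T(v^*)]$, as the paper does.
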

\begin{proof}
The main idea of the proof is as follows. Assume that there is a tournament $T$ with two minimal {\teq}-retentive sets $R_1$ and $R_2$ such that $|R_1|=3$ and $|R_2|\leq 12$. Observe that there is a vertex $v\in R_2$ which has at most 5 inneighbors in $R_2$. Hence, every minimal {\teq}-retentive set of $T[N^-_T(v)]$ is of size at most $5$. Then, we show all possibilities of $\mathteq{(T[N^-_{T}(v)])}$ lead to some contradictions, based on some properties of minimal {\teq}-retentive sets of size at most $5$.
The formal proof is as follows.

Assume for the sake of contradiction that there is a $(T_3, T_k)$-{\teq}-retentive tournament $T$ for some integer $1\leq k\leq 12$. Let $R_1=\{a,b,c\}$ be a minimal {\teq}-retentive set of $T$, and let $R_2$ be another minimal {\teq}-retentive set of $T$ of size $k$. Observe that every tournament with at most $12$ vertices has a vertex of indegree at most 5. Let $v$ be a vertex in $T[R_2]$ with minimum indegree. Thus, $|N^-_{T[R_2]}(v)|\leq 5$. According to Lemma~\ref{lem:t3minimalretentiveset}, at least two of $\{a,b,c\}$ dominate $v$. Without loss of generality, assume that $a$ and $b$ dominate $v$. Since $R_2$ is a minimal {\teq}-retentive set of $T$ and $v\in R_2$, it holds that $\mathteq{(T[N^-_T(v)])}\subseteq N^-_{T[R_2]}(v)$. Therefore, every minimal {\teq}-retentive set of $T[N^-_T(v)]$ is of size at most $5$. Observe that it cannot be that $N^-_{T[R_2]}(v)=\emptyset$ (If $N^-_{T[R_2]}(v)=\emptyset$, then due to Lemma~\ref{lem:teqirreducible}, it must be that $R_2=\{v\}$. Then due to Lemma~\ref{lem:teqsourceuniqueminial}, $v$ is the source of $T$. Moreover, $\{v\}$ is the unique minimal {\teq}-retentive set of $T$; contradicting with our assumption that $R_1$ and $R_2$ are two distinct minimal {\teq}-retentive sets of $T$). Let $R(v)$ be a minimal {\teq}-retentive set of $T[N^-_T(v)]$.
We distinguish between several cases with respect to the size of $R(v)$.  According to Lemma~\ref{lem:teqirreducible} and Theorem~\ref{lem:noretentivesetofsizefour}, there are no minimal {\teq}-retentive sets of sizes $2$ and $4$. It remains to consider the following cases.
\caseinproofskip

Case~1. $|R(v)|=1$.

Let $R(v)=\{u\}$. According to Lemma~\ref{lem:captainistheuniqueteqofslavery}, $u$ is the captain of $v$ in $T$.
Therefore, the edge $\edge{u}{v}$ exists in $\capgraph{T}$. Since $\{a,b,c\}$ is a minimal {\teq}-retentive set of size $3$, according to the proof of Lemma~\ref{lem:t3minimalretentiveset}, $\{a,b,c\}$ form a cycle in $\capgraph{T}$. However, this contradicts with Lemma~\ref{lem:dominationgraphproperty}.
\caseinproofskip

Case~2. $|R(v)|=3$.

Let $R(v)=\{a',b',c'\}$. Since $\{a',b',c'\}$ is a minimal {\teq}-retentive set of $T[N^-_T(v)]$ and $\{a,b\}\subseteq N^-_T(v)$, according to Lemma~\ref{lem:t3minimalretentiveset}, at least two of $\{a',b',c'\}$ dominate $a$, and moreover, at least two of $\{a',b',c'\}$ dominate $b$. This implies that at least one of $\{a',b',c'\}$ dominates both $a$ and $b$. However, since $R_1=\{a,b,c\}$ is a minimal {\teq}-retentive set of $T$, according to Lemma~\ref{lem:t3minimalretentiveset}, no vertex of $T$ dominates two of $\{a,b,c\}$; a contradiction.
\caseinproofskip

Case~3. $|R(v)|=5$.

According to Theorem~\ref{thm:teqisomorphctot5}, $T[R(v)]$ is either isomorphic to $T_5(1)$ or to $T_5(6)$ (See \myfig{fig:tfive} for $T_5(1)$ or $T_5(6)$, and \myfig{fig:tfiveonesixcaptaingraphs} for the directed {\captaingraph}s of $T_5(1)$ and $T_5(6)$). We distinguish between the following two cases. Observe that $R(v)=N^-_{T[R_2]}(v)$ (since $|N^-_{T[R_2]}(v)|\leq 5$).


\begin{figure}[h!]
\begin{center}
\includegraphics[width=\textwidth]{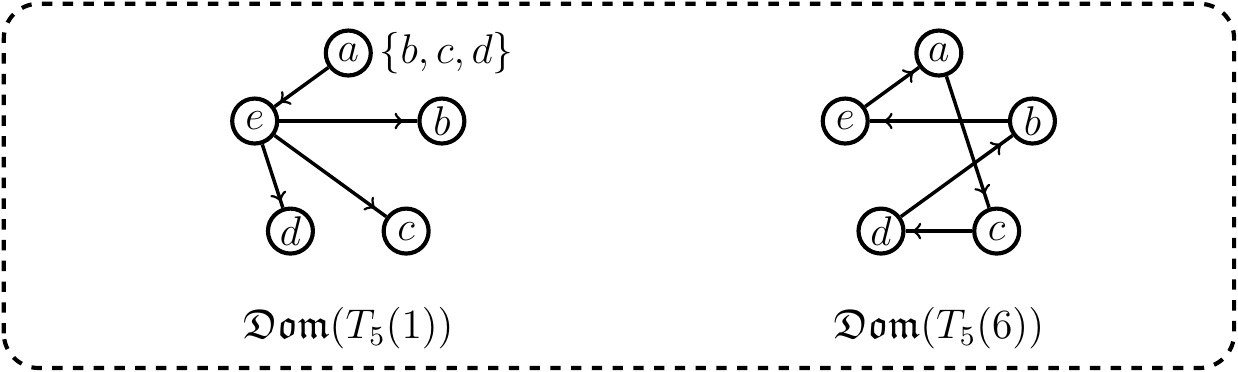}
\end{center}
\caption{Directed {\captaingraph}s of $T_5(1)$ and $T_5(6)$ in \myfig{fig:tfive}. The tri-captain of the vertex $a$ in $T_5(1)$ is $\{b,c,d\}$.}
\label{fig:tfiveonesixcaptaingraphs}
\myvspace{-1pt}
\end{figure}

Subcase 1. $T[R(v)]$ is isomorphic to $T_5(1)$.

For ease of exposition, let $T_5(1)=T[R(v)]$. Moreover, let the vertices in $R(v)$ be labeled as in the tournament $T_5(1)$ in {\myfigs}~\ref{fig:tfive}~and~\ref{fig:tfiveonesixcaptaingraphs}. It is easy to verify that  $\{b,c,d\}=\mathteq{(T[N^-_{T[R(v)]}(a)])}$. Then, since the vertex $a$ has no more than five inneighbors in $R(v)$, and $R(v)$ is a minimal {\teq}-retentive set of $T[N^-_{T}(v)]$, according to Lemma~\ref{lem:localboundedsmallteqtournament}, it holds that $\{b,c,d\}=\mathteq{(T[N^-_{H}(a)])}$, where $H=T[N^-_{T}(v)]$.
Observe that there is a broom $\broom{a}{e}{d,c}$ in $\dcapgraph{T_5(1)}=\dcapgraph{T[R(v)]}$. 
Then, according to Lemma~\ref{lem:teqtrianglecyclebroom} ({\condition{2}}), $R_2$ cannot be a minimal {\teq}-retentive set of $T$; a contradiction.

Subcase 2. $T[R(v)]$ is isomorphic to $T_5(6)$.

For ease of exposition, let $T_5(6)=T[R(v)]$. Moreover, let the vertices in $R(v)$ be labeled as in the tournament $T_5(6)$ in {\myfigs}~\ref{fig:tfive}~and~\ref{fig:tfiveonesixcaptaingraphs}. There is a directed cycle $(a,c,d,b,e)$ in $\dcapgraph{T_5(6)}=\dcapgraph{T[R(v)]}$. 
Then, according to Lemma~\ref{lem:teqtrianglecyclebroom} ({\condition{1}}), $R_2$ cannot be a minimal {\teq}-retentive set of $T$; a contradiction.
\end{proof}

We further prove that there are no $(T_5, T_k)$-{\teq}-retentive tournaments for every positive integer $k\leq 10$. Let's first study some useful properties.

\begin{lemma}
\label{lem:captaingraphagainsttwoteq}
Let $T$ be a tournament, and $R_1$ and $R_2$ be two vertex subsets of $T$ such that $R_1\cap R_2=\emptyset$. If the following conditions hold, $R_1$ and $R_2$ cannot be minimal {\teq}-retentive sets of $T$ simultaneously.

(1) ${\dcapgraph{T[R_1]}}$ is not empty; and

(2) ${\dcapgraph{T[R_2]}}$ has a directed cycle.
\end{lemma}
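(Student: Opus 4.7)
The plan is to assume for contradiction that $R_1$ and $R_2$ are both minimal {\teq}-retentive sets of $T$, and derive a contradiction by walking a chain of captain relationships from the $R_1$-side edge, across the spiked structure of $\capgraph{T}$, into the $R_2$-side cycle.

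First, let $C$ be the directed cycle guaranteed by condition~(2). Since $R_2$ is a minimal {\teq}-retentive set, Lemma~\ref{lem:captaingraphcirclenoteq} forces $V(C)=R_2$, and by Lemma~\ref{lem:inneighboronesouce} the arcs of $C$ are also arcs of $\dcapgraph{T}$, so $C$ is a cycle in $\capgraph{T}$. By Lemma~\ref{lem:dominationgraphproperty}, $\capgraph{T}$ cannot be a forest of caterpillars, so it must be a spiked odd cycle (possibly with some isolated vertices) whose unique cycle is $C$. Now take the arc $u\trelation v$ in $\dcapgraph{T[R_1]}$ given by condition~(1); by Lemma~\ref{lem:inneighboronesouce}, $u$ is the captain of $v$ in $T$, so $\{u,v\}$ is an edge of $\capgraph{T}$, and the disjointness $R_1\cap R_2=\emptyset$ gives $u,v\notin V(C)$.

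Second, since $v$ is incident to an edge it cannot be an isolated vertex of $\capgraph{T}$, so $v$ lies in the connected spiked-cycle component. I would then pick a shortest path $P=(v=p_0,p_1,\ldots,p_k=c)$ in $\capgraph{T}$ from $v$ to a cycle vertex $c\in V(C)$, with $p_i\notin V(C)$ for all $i<k$. The central technical step is to orient the edges of $P$ using the fact that a vertex has at most one captain in $T$. Since every vertex of $C$ already has its unique captain inside $C$, the edge $\{p_{k-1},c\}$ must be oriented $c\trelation p_{k-1}$; then $p_{k-1}$'s unique captain is fixed to be $c$, forcing $\{p_{k-2},p_{k-1}\}$ to be oriented $p_{k-1}\trelation p_{k-2}$; inductively, one obtains $c=p_k\trelation p_{k-1}\trelation\cdots\trelation p_1\trelation p_0=v$ in $\dcapgraph{T}$. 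In particular $p_1$ is the captain of $v$ in $T$, and combining this with $u$ being the captain of $v$ (together with uniqueness of captains) yields $p_1=u$.

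Finally, I would iterate Lemma~\ref{lem:captainistheuniqueteqofslavery} along the chain: $v\in R_1$ forces its captain $u=p_1\in R_1$; then $p_2\in R_1$; continuing, $p_k=c\in R_1$. But $c\in V(C)=R_2$ contradicts $R_1\cap R_2=\emptyset$. The main obstacle is the orientation step along $P$: Lemma~\ref{lem:dominationgraphproperty} only supplies the undirected structure of $\capgraph{T}$, whereas the captain-propagation argument needs a directed path, and the at-most-one-captain principle, anchored at the cycle vertex $c$ (whose captain is already spoken for by $C$), is exactly what makes the orientation cascade outward along $P$ until it meets the already-directed edge $u\trelation v$.
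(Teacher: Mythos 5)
Your proposal is correct, and its first half is exactly the paper's proof: lift the arc $u\trelation v$ of $\dcapgraph{T[R_1]}$ and the cycle $C$ of $\dcapgraph{T[R_2]}$ to the whole tournament via Lemma~\ref{lem:inneighboronesouce} and confront Lemma~\ref{lem:dominationgraphproperty}. The paper stops right where your second paragraph begins: once $\capgraph{T}$ contains the cycle $C$ together with the edge $\{u,v\}$ whose endpoints avoid $V(C)$ (by disjointness of $R_1$ and $R_2$), Lemma~\ref{lem:dominationgraphproperty} is already violated, since a forest of caterpillars has no cycle and in a spiked odd cycle with isolated vertices every edge is incident to a cycle vertex. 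Your extra machinery --- the shortest path from $v$ to $V(C)$, the orientation cascade forced by uniqueness of captains anchored at the cycle (the cycle predecessors already occupy the captain role of each cycle vertex, so the edges of the path must point away from the cycle), and the propagation of membership in $R_1$ along Lemma~\ref{lem:captainistheuniqueteqofslavery} until a vertex of $V(C)\subseteq R_2$ is forced into $R_1$ --- is sound; each step checks out because path vertices are distinct and each vertex has at most one captain. But it is redundant: it re-derives by hand the structural consequence that the paper invokes in one line. What your longer route buys is that it only uses connectivity of the non-isolated part of $\capgraph{T}$ rather than the finer observation that every edge of a spiked cycle touches its cycle, and it makes the contradiction concrete ($c\in R_1\cap R_2$); what it costs is length. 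Two minor points: the appeal to Lemma~\ref{lem:captaingraphcirclenoteq} to get $V(C)=R_2$ is unnecessary, as $V(C)\subseteq R_2$ is all you use, and the identification $p_1=u$ plays no role in the final propagation.
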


\begin{proof}
Assume for the sake of contradiction that the above two conditions hold, and both $R_1$ and $R_2$ are minimal {\teq}-retentive sets of $T$. Let $C=(v_0,v_1,...,v_{k-1})$ be a directed cycle in ${\dcapgraph{T[R_2]}}$. Moreover, let $u\trelation v$ be any arbitrary arc in  ${\dcapgraph{T[R_1]}}$. Since  ${\dcapgraph{T[R_1]}}$ is not empty, such an arc exists. Therefore, for all $i=0,1,...,k-1$, $v_i$ is the captain of $v_{\mymod{(i+1)}{k}}$ in $T[R_2]$. Moreover, $u$ is the captain of $v$ in $T[R_1]$. Then since $R_1$ and $R_2$ are minimal {\teq}-retentive sets of $T$, according to Lemma~\ref{lem:inneighboronesouce}, in the tournament $T$, $v_i$ is still the captain of $v_{\mymod{(i+1)}{k}}$ for every $i=0,1,...,k-1$, and $u$ is still the captain of $v$. This implies that both the directed circle $C$ and the arc $u\trelation v$ exist in $\dcapgraph{T}$. However, since $R_1\cap R_2=\emptyset$, this contradicts with Lemma~\ref{lem:dominationgraphproperty}.
\end{proof}


\begin{lemma}
\label{lem:notwoteqonecaptaincycleonetricaptain}
Let $T$ be a tournament, and $R_1$ and $R_2$ be two vertex subsets of $T$ such that $R_1\cap R_2=\emptyset$. If the following conditions hold, $R_1$ and $R_2$ cannot be minimal {\teq}-retentive sets of $T$ simultaneously.

(1) there is a directed cycle in $\dcapgraph{T[R_1]}$; and

(2) there are four vertices $v,a,b,c,\in R_2$ 
such that $\{a,b,c\}=\mathteq{(T[N^-_T(v)])}$.
\end{lemma}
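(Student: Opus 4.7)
My plan is to argue by contradiction, assuming that both $R_1$ and $R_2$ are minimal {\teq}-retentive sets of $T$. The first step is to convert each hypothesis into a structural constraint. From $\{a,b,c\}=\mathteq{(T[N^-_T(v)])}$, Lemma~\ref{lem:tauthreeequalretentive} yields that $\{a,b,c\}$ is the unique minimal {\teq}-retentive set of $T[N^-_T(v)]$; applying Lemma~\ref{lem:t3minimalretentiveset} inside the tournament $T[N^-_T(v)]$ then forces $\{a,b,c\}$ to form a directed triangle and to satisfy the tri-captain property at $v$, i.e., no vertex of $N^-_T(v)$ dominates two vertices of $\{a,b,c\}$. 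On the other side, let $C=(u_0,\ldots,u_{t-1})$ be the directed cycle in $\dcapgraph{T[R_1]}$ guaranteed by hypothesis. Lemma~\ref{lem:captaingraphcirclenoteq} forces $V(C)=R_1$, Lemma~\ref{lem:dominationgraphproperty} applied to the tournament $T[R_1]$ forces $t$ to be odd, and Lemma~\ref{lem:inneighboronesouce} lifts each captaincy from $T[R_1]$ to $T$, so that $u_{i-1}$ dominates every element of $N^-_T(u_i)\setminus\{u_{i-1}\}$ (with indices taken modulo $t$ throughout).

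Next I would classify the cycle vertices by their orientation relative to $v$. Since $R_1\cap R_2=\emptyset$, each $u_i$ differs from $v$, so $\{0,\ldots,t-1\}$ partitions into $D=\{i:u_i\trelation v\}$ and $\overline{D}=\{i:v\trelation u_i\}$. The pivotal observation is that whenever $i\in\overline{D}$ one has $v\in N^-_T(u_i)$, and since $u_{i-1}$ captains $u_i$ in $T$ this forces $u_{i-1}\trelation v$, hence $i-1\in D$. Thus the shift $i\mapsto i-1$ injects $\overline{D}$ into $D$, giving $|\overline{D}|\le|D|$. From this I want to extract two cyclically consecutive indices lying in $D$: if no such pair existed, $D$ would be an independent set of the cycle $C_t$ and therefore have size at most $(t-1)/2$, which combined with the injection forces $|\overline{D}|=t-|D|\ge(t+1)/2>|D|$, a contradiction. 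Odd parity of $t$ is essential at this point.

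Finally, I would fix consecutive $i-1,i\in D$, so that both $u_{i-1}$ and $u_i$ lie in $N^-_T(v)$; neither of them belongs to $\{a,b,c\}$ because $\{u_{i-1},u_i\}\subseteq R_1$ while $\{a,b,c\}\subseteq R_2$. The tri-captain property applied to $u_i$ says that $u_i$ dominates at most one vertex of $\{a,b,c\}$, so at least two vertices of $\{a,b,c\}$ lie in $N^-_T(u_i)$; the captaincy of $u_{i-1}$ over $u_i$ then forces $u_{i-1}$ to dominate those same two vertices, contradicting the tri-captain property applied to $u_{i-1}\in N^-_T(v)\setminus\{a,b,c\}$. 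The main obstacle I anticipate is the combinatorial parity step, namely turning the condition $\overline{D}-1\subseteq D$ together with the odd length of $C$ into two consecutive indices of $D$; once this is in hand, the tri-captain property supplied by Lemmas~\ref{lem:tauthreeequalretentive} and~\ref{lem:t3minimalretentiveset} finishes the argument in a single stroke.
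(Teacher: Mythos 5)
Your proof is correct and follows essentially the same route as the paper's: an odd cycle in $\dcapgraph{T[R_1]}$ (Lemma~\ref{lem:dominationgraphproperty}), a parity argument producing two consecutive cycle vertices that both dominate $v$, and then a clash between the captaincy along the cycle (lifted to $T$ via Lemma~\ref{lem:inneighboronesouce}) and condition~(2) of Lemma~\ref{lem:t3minimalretentiveset} for $\{a,b,c\}$ inside $T[N^-_T(v)]$. The only cosmetic differences are that you phrase the parity step as an injection/independent-set count rather than via non-bipartiteness of an odd cycle, and you realize the final contradiction at the tri-captain condition for $u_{i-1}$ (via captain propagation) instead of pigeonholing one of $\{a,b,c\}$ onto two adjacent cycle vertices and invoking Lemma~\ref{lem:novertexdominatestwoadjacentverticesincapgraingraphs}.
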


\begin{proof}
Assume for the sake of contradiction that the above two conditions hold, and both $R_1$ and $R_2$ are minimal {\teq}-retentive sets of $T$.
Let $C=(u_0,u_1,...,u_{k-1})$ be a directed cycle in $\dcapgraph{T[R_1]}$.
According to Lemma~\ref{lem:dominationgraphproperty}, $k$ is odd. Furthermore, since $R_1$ is a minimal {\teq}-retentive set of $T$, according to Lemma~\ref{lem:novertexdominatestwoadjacentverticesincapgraingraphs}, we know that $v$ does not dominate two adjacent vertices in the cycle $C$. Since $k$ is odd, this implies that there are two adjacent vertices in the cycle $C$ which dominate the vertex $v$. Without loss of generality, assume that $\{u_{i},u_{j}\}\trelation \{v\}$ for some $i\in \{0,1,...,k-1\}$, where $j\equiv \mymod{(i+1)}{k}$.
Since $\{a,b,c\}=\mathteq{(T[N^-_T(v)])}$ and $u_i, u_{j}\in N^-_T(v)$, according to Lemma~\ref{lem:t3minimalretentiveset}, at least two of $\{a,b,c\}$ dominate $u_i$, and also, at least two of $\{a,b,c\}$ dominate $u_{j}$. This implies that at least one of $\{a,b,c\}$ dominates both $u_i$ and $u_{j}$. However, since $R_1$ is a minimal {\teq}-retentive set of $T$, according to Lemma~\ref{lem:novertexdominatestwoadjacentverticesincapgraingraphs}, no vertex in $T$ dominates two adjacent vertices in the cycle $C$; a contradiction. 
\end{proof}

\bigskip
%

\medskip

\begin{lemma}
\label{lem:twoteqonecaptainbroomonetricaptain}
Let $T$ be a tournament, and $R_1$ and $R_2$ be two vertex subsets of $T$ such that $R_1\cap R_2=\emptyset$.
If the following conditions hold, $R_1$ and $R_2$ cannot be minimal {\teq}-retentive sets of $T$ simultaneously.

(1) There is a broom $\broom{u}{v}{w_1,w_2,w_3}$ in $\dcapgraph{T[R_1]}$;

(2) $\mathteq{(T[N^-_T(u)])}=\{w_1,w_2,w_3\}$; and

(3) there is a vertex $\bar{v}\in R_2$ such that $|\mathteq{(T[N^-_T(\bar{v})])}|=3$.
\end{lemma}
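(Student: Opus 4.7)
The plan is a proof by contradiction: assume both $R_1$ and $R_2$ are minimal {\teq}-retentive sets of $T$ and squeeze the three hypotheses against each other. First I would translate the hypotheses into statements about $T$ itself. By Lemma~\ref{lem:inneighboronesouce}, the broom arcs in $\dcapgraph{T[R_1]}$ give captain relations in $T$: $u$ is the captain of $v$ and $v$ is the captain of each $w_i$ in $T$. By condition~(2) together with Lemmas~\ref{lem:tauthreeequalretentive} and~\ref{lem:t3minimalretentiveset}, $\{w_1,w_2,w_3\}$ is a tri-captain of $u$ in $T$. By condition~(3) plus the same two lemmas, writing $\{a,b,c\}=\mathteq(T[N^-_T(\bar v)])$, the triple $\{a,b,c\}$ is a tri-captain of $\bar v$ in $T$; and since $\bar v\in R_2$, we have $\{a,b,c\}\subseteq R_2$, which is disjoint from the broom vertices $\{u,v,w_1,w_2,w_3\}\subseteq R_1$.

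The argument then splits on the direction of the single arc between $v$ and $\bar v$. In Case~1 ($\bar v\trelation v$), captainship of $u$ over $v$ forces $u\trelation\bar v$, so the tri-captain of $\bar v$ supplies two of $\{a,b,c\}$, say $a$ and $b$, with $a,b\trelation u$; the tri-captain of $u$ then guarantees that at least two $w_i$ dominate $a$ and at least two dominate $b$, so by pigeonhole some $w_j$ dominates both. If $w_j\trelation\bar v$, the vertex $w_j\in N^-_T(\bar v)\setminus\{a,b,c\}$ would double-dominate $\{a,b,c\}$, contradicting the tri-captain at $\bar v$; and if $\bar v\trelation w_j$, captainship of $v$ over $w_j$ forces $v\trelation\bar v$, contradicting the case hypothesis. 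In Case~2 ($v\trelation\bar v$), the tri-captain of $\bar v$ yields $a,b\trelation v$ for two elements of $\{a,b,c\}$; captainship of $u$ over $v$ propagates to $u\trelation a$ and $u\trelation b$, and captainship of $v$ over every $w_i$ propagates to $w_i\trelation a$ and $w_i\trelation b$ for each $i$ (otherwise $v$ would dominate $a$ or $b$, contradicting $a,b\trelation v$). The same ``which side of $\bar v$'' dichotomy applied to each $w_i$ then forces $\bar v\trelation w_i$ for all three $i$, since $w_i\trelation\bar v$ would again produce a forbidden double-domination of $\{a,b,c\}$. Finally I would examine the arc between $\bar v$ and $u$: $\bar v\trelation u$ would put $\bar v\in N^-_T(u)\setminus\{w_1,w_2,w_3\}$ dominating all three $w_i$, contradicting the tri-captain at $u$; and $u\trelation\bar v$ would put $u\in N^-_T(\bar v)\setminus\{a,b,c\}$ requiring two of $\{a,b,c\}$ to dominate $u$, but $u$ already dominates $a$ and $b$, leaving only $c$.

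The main obstacle I anticipate is bookkeeping rather than a single hard step: one must repeatedly verify that the vertex being tested actually lies in $N^-_T(\bar v)\setminus\{a,b,c\}$ or in $N^-_T(u)\setminus\{w_1,w_2,w_3\}$, which always follows from $R_1\cap R_2=\emptyset$, and one must consistently propagate captainships from $T[R_1]$ to $T$ via Lemma~\ref{lem:inneighboronesouce}. The broom's third leaf is essential: with only two leaves the pigeonhole step in Case~1 would fail, which is precisely why the analogous Lemma~\ref{lem:teqtrianglecyclebroom}(2) needed an extra ``third vertex'' hypothesis, whereas here three leaves together with the tri-captain at $u$ suffice on their own.
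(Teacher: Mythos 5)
Your proposal is correct: every captain propagation ($u$ over $v$, $v$ over each $w_i$) is justified by Lemma~\ref{lem:inneighboronesouce}, the tri-captain conditions at $u$ and at $\bar v$ follow from Lemmas~\ref{lem:tauthreeequalretentive} and~\ref{lem:t3minimalretentiveset}, and the disjointness facts you invoke ($\{a,b,c\}\subseteq R_2$, broom vertices in $R_1$, $R_1\cap R_2=\emptyset$) are exactly what is needed to apply them. The overall skeleton matches the paper's proof -- contradiction plus a case split on the orientation of the $v$--$\bar v$ arc -- but the closing arguments inside the cases are genuinely different. For $v\trelation\bar v$ the paper stops as soon as it finds one element of $\mathteq(T[N^-_T(\bar v)])$ dominating both $v$ and some $w_k$, which violates Lemma~\ref{lem:novertexdominatestwoadjacentverticesincapgraingraphs} (no vertex dominates two adjacent vertices of $\capgraph{T[R_1]}$); you instead determine all arcs between $\bar v$ and $\{w_1,w_2,w_3\}$ and finish with a secondary split on the $u$--$\bar v$ arc, contradicting the tri-captain at $u$ or at $\bar v$. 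For $\bar v\trelation v$ the paper counts arcs: at least six from $\{b_1,b_2,b_3\}$ to $\{w_1,w_2,w_3\}$ and at least four back, exceeding the nine available; you avoid counting via a pigeonhole producing a single $w_j$ that dominates two of $\{a,b,c\}$, followed by a split on the $w_j$--$\bar v$ arc. Your route is slightly longer but more uniform (every contradiction is a double-domination against a tri-captain, with Lemma~\ref{lem:inneighboronesouce} used directly in place of Lemma~\ref{lem:novertexdominatestwoadjacentverticesincapgraingraphs}), while the paper's arc-count in the second case is more compact; your closing observation that three leaves are what make the pigeonhole work, in contrast to the extra hypothesis in Lemma~\ref{lem:teqtrianglecyclebroom}(2), is accurate.
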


\begin{proof}
We prove the lemma by contradiction. \myfig{fig:againsttwoteqonetriangleonebroom} is helpful for the readers to follow the proof.
Assume for the sake of contradiction that the above two conditions hold, and both $R_1$ and $R_2$ are minimal {\teq}-retentive sets of $T$. Let $\mathteq{(T[N^-_T(\bar{v})])}=\{b_1,b_2,b_3\}$. We distinguish between two cases with respect to the direction of the arc between $v$ and $\bar{v}$.

\begin{figure}[h!]
\begin{center}
\includegraphics[width=\textwidth]{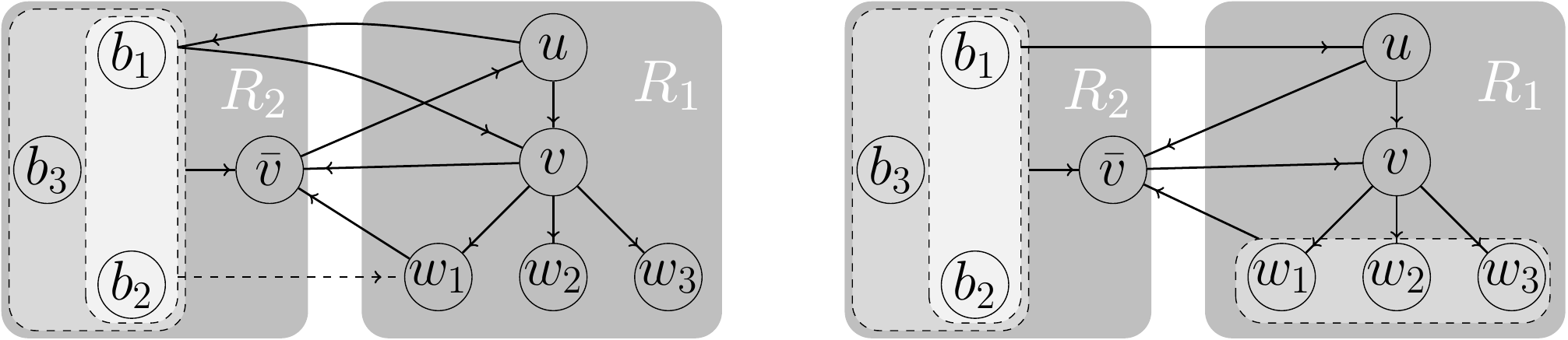}
\end{center}
\caption{An illustration of the proof of Lemma~\ref{lem:twoteqonecaptainbroomonetricaptain}. The figure on the left side illustrates Case~1 and the figure on the right side illustrates Case~2. The dashed arc to $w_1$ in the left figure means that at least one of $\{b_1,b_2\}$ dominates $w_1$. In both figures, $\{b_1,b_2,b_3\}=\mathteq{(T[N^-_T(\bar{v})])}$, $\broom{u}{v}{w_1,w_2,w_3}$ is a broom in $\dcapgraph{T[R_1]}$, and $\{w_1,w_2,w_3\}=\mathteq{(T[N^-_T(u)])}$. To make the figures clear, only arcs and vertices in $T[R_1\cup R_2]$ that are relevant in the proof are drawn. Moreover, we specify the variables $i,j,k$ in the proof here as $i=k=1$ and $j=2$. 
}
\label{fig:againsttwoteqonetriangleonebroom}
\end{figure}
\caseinproofskip

Case~1. $v\trelation \bar{v}$.

Since $\{b_1,b_2,b_3\}=\mathteq{(T[N^-_T(\bar{v})])}$, according to Lemma~\ref{lem:tauthreeequalretentive} and Lemma~\ref{lem:t3minimalretentiveset}, at least two of $\{b_1,b_2,b_3\}$ dominate $v$.
Let's say $\{b_i,b_j\}\trelation \{v\}$ for some $\{i,j\}\subset \{1,2,3\}$.
Since $u$ is the captain of $v$ in $T[R_1]$ (implied by {\condition{1}}), and $R_1$ is a minimal {\teq}-retentive set of $T$, then according to Lemma~\ref{lem:inneighboronesouce}, $u$ is still the captain of $v$ in $T$. This implies that $\{u\}\trelation \{b_i,b_j\}$. Then, it holds that $\bar{v}\trelation u$, since otherwise, $u$ is an inneighbor of $\bar{v}$, and according to Lemma~\ref{lem:tauthreeequalretentive} and Lemma~\ref{lem:t3minimalretentiveset}, at least two of $\{b_1,b_2,b_3\}$ should have dominated $u$. Since $\{w_1,w_2,w_3\}=\mathteq{(T[N^-_T(u)])}$, according to Lemma~\ref{lem:tauthreeequalretentive} and Lemma~\ref{lem:t3minimalretentiveset}, at least one (in fact at least two, but ``one" is enough for our proof) of $\{w_1,w_2,w_3\}$ dominates $\bar{v}$. Let's say $w_k\trelation \{\bar{v}\}$ for some $k\in \{1,2,3\}$. Since $\{b_1,b_2,b_3\}=\mathteq{(T[N^-_T(\bar{v})])}$, according to Lemma~\ref{lem:tauthreeequalretentive} and Lemma~\ref{lem:t3minimalretentiveset}, at least two of $\{b_1,b_2,b_3\}$ dominate $w_k$. Hence, at least one of $\{b_i,b_j\}$ dominates $w_k$. Since $\{b_i,b_j\}\trelation \{v\}$, at least one of $\{b_i,b_j\}$ dominates both $v$ and $w_k$.
However, according to Lemma~\ref{lem:novertexdominatestwoadjacentverticesincapgraingraphs}, no vertex dominates two adjacent vertices in $\dcapgraph{T[R_1]}$; a contradiction.

\caseinproofskip

Case~2. $\bar{v}\trelation {v}$.

Since $u$ is the captain of $v$ in $T[R_1]$ and $R_1$ is a minimal {\teq}-retentive set of $T$, then according to Lemma~\ref{lem:inneighboronesouce}, $u$ is still the captain of $v$ in $T$. This implies that $u\trelation \bar{v}$.  Since $\{b_1,b_2,b_3\}=\mathteq{(T[N^-_T(\bar{v})])}$, according to Lemma~\ref{lem:tauthreeequalretentive} and Lemma~\ref{lem:t3minimalretentiveset}, at least two of $\{b_1,b_2,b_3\}$ dominate $u$. Let's say $\{b_i,b_j\}\trelation \{u\}$ for some $\{i,j\}\subset \{1,2,3\}$.  Moreover, according to Lemma~\ref{lem:novertexdominatestwoadjacentverticesincapgraingraphs}, no vertex in $T$ dominates two adjacent vertices in $\capgraph{T[R_1]}$; thus, $\{w_1,w_2,w_3\}\trelation \{\bar{v}\}$. Again, since  $\{b_1,b_2,b_3\}=\mathteq{(T[N^-_T(\bar{v})])}$, according to Lemma~\ref{lem:tauthreeequalretentive} and Lemma~\ref{lem:t3minimalretentiveset}, at least two of $\{b_1,b_2,b_3\}$ dominate $w_1, w_2$ and $w_3$, respectively. Therefore, there are at least 6 arcs from $\{b_1,b_2,b_3\}$ to $\{w_1, w_2, w_3\}$. On the other hand, since $\{b_i,b_j\}\trelation \{u\}$, and $\{w_1,w_2,w_3\}=\mathteq{(T[N^-_T(u)])}$, according to Lemma~\ref{lem:tauthreeequalretentive} and Lemma~\ref{lem:t3minimalretentiveset} at least two of $\{w_1, w_2, w_3\}$ dominate $b_i$ and $b_j$, respectively. Therefore, there are at least $4$ arcs from $\{w_1, w_2, w_3\}$ to $\{b_1,b_2,b_3\}$. This leads to at least 10 arcs between $\{b_1,b_2,b_3\}$ and $\{w_1,w_2,w_3\}$ in total. However, there are 9 arcs between them; a contradiction.
\end{proof}

For vertices $a,b,c,d$ (it may that two of them are identical) in a tournament, $[a,b]\asymp [c,d]$ means that there is an arc $a\trelation b$ if and only if there is an arc $c\trelation d$. On the other hand, $[a,b]\not\asymp [c,d]$ means that there is an arc $a\trelation b$ if and only if there is an arc $d\trelation c$ in the tournament. Clearly, if $[a,b]\asymp [c,d]$ and $[c,d]\asymp [e,f]$, then $[a,b]\asymp [e,f]$. Moreover, if $[a,b]\asymp [c,d]$ and $[c,d]\not\asymp [e,f]$, then $[a,b]\not\asymp [e,f]$. Furthermore, if $[a,b]\not\asymp [c,d]$ and $[c,d]\not\asymp [e,f]$, then $[a,b]\asymp [e,f]$.

\begin{lemma}
\label{lem:teqisomorhpictoT43}
Let $T$ be a tournament. If $T$ has two distinct minimal {\teq}-retentive sets $R_1$ and $R_2$ such that

(1) there is an arc $u\trelation v$ in $\dcapgraph{T[R_1]}$; and

(2) there is an arc $z\trelation w$ in $\dcapgraph{T[R_2]}$,

\noindent then, 
$[u,w]\asymp [v,z], [u,z]\asymp [v,w]$ and $[u,w]\not\asymp [u,z]$.
\end{lemma}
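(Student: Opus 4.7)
The plan is to reduce everything to two concrete case analyses, driven by the captain relations $u \trelation v$ and $z \trelation w$ which hold not just in $T[R_1]$ and $T[R_2]$ but in $T$ itself. Specifically, since $R_1$ and $R_2$ are minimal {\teq}-retentive sets of $T$, Lemma~\ref{lem:inneighboronesouce} upgrades the two captain relations in $\dcapgraph{T[R_1]}$ and $\dcapgraph{T[R_2]}$ to the full tournament: $u$ dominates $v$ together with every other inneighbor of $v$ in $T$, and $z$ dominates $w$ together with every other inneighbor of $w$ in $T$. By Lemma~\ref{lem:distinctminiteqs}, $R_1 \cap R_2 = \emptyset$, so in particular $u,v,w,z$ are four distinct vertices.

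I would then split on the direction of the single arc between $u$ and $w$.

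\textbf{Case A ($u \trelation w$).} Here I would show the remaining three arcs among $\{u,v,w,z\}$ are forced: if instead $v \trelation w$, then $v \in N^-_T(w)$, so the captain property of $z$ over $w$ forces $z \trelation v$; but then $z \in N^-_T(v)$, so the captain property of $u$ over $v$ forces $u \trelation z$, while at the same time $u \in N^-_T(w)$ forces $z \trelation u$, a contradiction. Hence $w \trelation v$. Next, $u \in N^-_T(w)$ gives $z \trelation u$, and then if $z \trelation v$ the captain property of $u$ would force $u \trelation z$, contradicting $z \trelation u$; so $v \trelation z$. This pins down the four arcs as $u \trelation w$, $w \trelation v$, $v \trelation z$, $z \trelation u$, and I would then simply verify $[u,w]\asymp[v,z]$, $[u,z]\asymp[v,w]$, $[u,w]\not\asymp[u,z]$ by inspection.

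\textbf{Case B ($w \trelation u$).} By the same kind of reasoning, if $w \trelation v$ then $w \in N^-_T(v)$ forces $u \trelation w$, contradicting the case assumption; so $v \trelation w$. Then $v \in N^-_T(w)$ gives $z \trelation v$, and $z \in N^-_T(v)$ forces $u \trelation z$. The four arcs are now $w \trelation u$, $v \trelation w$, $z \trelation v$, $u \trelation z$, and again the three asserted relations hold directly. I would conclude by noting these two cases are exhaustive.

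The only real obstacle is bookkeeping: there are four arcs to determine and each requires a double application of the captain property (once to turn an arc into an inneighbor relation, once to use the captain's domination over that inneighbor), together with a careful contradiction argument to rule out the ``wrong'' sub-direction. Once the two forced configurations are obtained, the three relations $[u,w]\asymp[v,z]$, $[u,z]\asymp[v,w]$, and $[u,w]\not\asymp[u,z]$ are immediate, since in Case~A all four arcs form the directed 4-cycle $u\to w\to v\to z\to u$ and in Case~B they form the reverse 4-cycle.
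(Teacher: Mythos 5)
Your proof is correct and takes essentially the same route as the paper: both upgrade the two domination-graph arcs to captain relations in the full tournament $T$ (the paper via Lemma~\ref{lem:novertexdominatestwoadjacentverticesincapgraingraphs}, you via Lemma~\ref{lem:inneighboronesouce} and the captain definition directly, which amounts to the same fact), case on the direction of one cross arc, force the remaining three arcs among $\{u,v,w,z\}$, and read off the three relations from the two resulting directed 4-cycle configurations. The only cosmetic difference is that the paper splits on the $u$--$z$ arc, giving a slightly shorter chain of forced arcs than your split on the $u$--$w$ arc (which needs an extra contradiction sub-argument in Case~A), but both case analyses are exhaustive and sound.
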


\begin{proof}
We distinguish between two cases, with respect to the arc between $u$ and $z$ in the tournament $T$. See \myfig{fig:isomorphic43} for an illustration.



\begin{figure}[h!]
\begin{center}
\includegraphics[width=\textwidth]{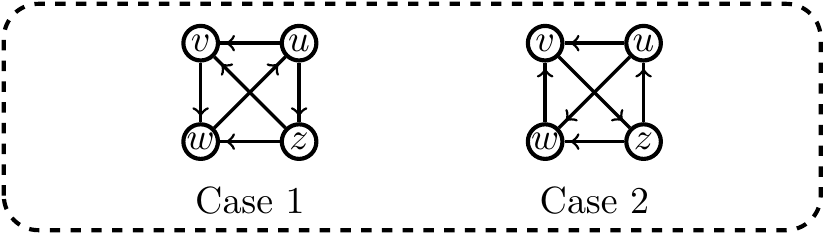}
\end{center}
\caption{This figure illustrates the two cases in the proof of Lemma~\ref{lem:teqisomorhpictoT43}.}
\label{fig:isomorphic43}
\end{figure}

\caseinproofskip
Case~1. $u\trelation z$ in $T$.

Since $R_2$ is a minimal {\teq}-retentive set of $T$ and $z\trelation w$ in $\dcapgraph{T[R_2]}$, then according to~Lemma~\ref{lem:novertexdominatestwoadjacentverticesincapgraingraphs}, it holds that $w\trelation u$. 
Then, since $R_1$ is a minimal {\teq}-retentive set of $T$ and $u\trelation v$ in $\dcapgraph{T[R_1]}$, according to~lemma~\ref{lem:novertexdominatestwoadjacentverticesincapgraingraphs}, it holds that $v\trelation w$. Again, by  Lemma~\ref{lem:novertexdominatestwoadjacentverticesincapgraingraphs}, we can infer that $z\trelation v$. 
\caseinproofskip

Case~2. $z\trelation u$ in $T$.

Since $R_1$ is a minimal {\teq}-retentive set of $T$ and $u\trelation v$ in $\dcapgraph{T[R_1]}$, according to~Lemma~\ref{lem:novertexdominatestwoadjacentverticesincapgraingraphs}, it holds that $v\trelation z$. Then, since $R_2$ is a minimal {\teq}-retentive set of $T$ and $z\trelation w$ in $\dcapgraph{T[R_2]}$, according to~lemma~\ref{lem:novertexdominatestwoadjacentverticesincapgraingraphs}, it holds that $w\trelation v$. Again, by lemma~\ref{lem:novertexdominatestwoadjacentverticesincapgraingraphs}, we can infer that $u\trelation w$. 

It is then easy to verify that in both cases, $[u,w]\asymp [v,z], [u,z]\asymp [v,w]$ and $[u,w]\not\asymp [u,z]$.
\end{proof}

\begin{lemma}
\label{twoteqonecaptainbroomonenonemptycaptain}
Let $T$ be a tournament, and $R_1$ and $R_2$ be two vertex subsets of $T$ such that $R_1\cap R_2=\emptyset$. If the following conditions hold, $R_1$ and $R_2$ cannot be minimal {\teq}-retentive sets of $T$ simultaneously.

(1) there is a broom $\broom{u}{v}{w_1,w_2}$ in $\dcapgraph{T[R_1]}$;

(2) $\mathteq{(T[N^-_T(u)])}=\{w_{1},w_{2},w\}$, where $w\in R_1\setminus \{w_1,w_2\}$; and 

(3) $\dcapgraph{T[R_2]}$ is not empty.
\end{lemma}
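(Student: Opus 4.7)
The plan is to pick an arbitrary arc $x\trelation y$ in $\dcapgraph{T[R_2]}$ (which exists by condition~(3), and which lifts to an arc of $\dcapgraph{T}$ by Lemma~\ref{lem:inneighboronesouce}) and play it against each of the three arcs of the broom in $\dcapgraph{T[R_1]}$, namely $u\trelation v$, $v\trelation w_1$ and $v\trelation w_2$. Applying Lemma~\ref{lem:teqisomorhpictoT43} to $u\trelation v$ versus $x\trelation y$ rigidly pins down the orientation in $T$ of every arc between $\{u,v\}$ and $\{x,y\}$, up to the single binary choice of the arc between $u$ and $x$. Splitting on this choice gives two symmetric cases, both of which I expect to end at the same contradiction with Lemma~\ref{lem:t3minimalretentiveset}.

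In the case $u\trelation x$, Lemma~\ref{lem:teqisomorhpictoT43} forces $y\trelation u$, $v\trelation y$ and $x\trelation v$. Re-applying the same lemma to $v\trelation w_1$ versus $x\trelation y$, now using the already-known orientation between $v$ and $\{x,y\}$, forces $y\trelation w_1$ and $w_1\trelation x$; the identical argument on $v\trelation w_2$ yields $y\trelation w_2$ and $w_2\trelation x$. Thus $y\in N^-_T(u)$ and $y$ dominates both $w_1$ and $w_2$. But condition~(2) together with Lemma~\ref{lem:tauthreeequalretentive} says that $\{w_1,w_2,w\}$ is the unique minimal {\teq}-retentive set of $T[N^-_T(u)]$, so by Lemma~\ref{lem:t3minimalretentiveset} no vertex of $T[N^-_T(u)]$ can dominate two vertices of $\{w_1,w_2,w\}$; a contradiction.

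The mirror case $x\trelation u$ is handled by exactly the same propagation: Lemma~\ref{lem:teqisomorhpictoT43} forces $u\trelation y$, $y\trelation v$, $v\trelation x$ and then $x\trelation w_1$, $x\trelation w_2$, so now $x\in N^-_T(u)$ dominates two elements of $\{w_1,w_2,w\}$, contradicting Lemma~\ref{lem:t3minimalretentiveset} in the same way.

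The only delicate step is the bookkeeping of the $\asymp$/$\not\asymp$ relations while chaining Lemma~\ref{lem:teqisomorhpictoT43} three times, and verifying that the orientations derived between $\{w_1,w_2\}$ and $\{x,y\}$ are consistent with those already forced between $\{u,v\}$ and $\{x,y\}$. Once the first application is carried out, the remaining two applications are mechanical, and in both sub-cases the contradiction arises from the same phenomenon: exactly one of $\{x,y\}$ lands inside $N^-_T(u)$ and ends up dominating both $w_1$ and $w_2$, which the tri-captain structure forbids.
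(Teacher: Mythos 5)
Your proof is correct and follows essentially the same route as the paper: fix an arbitrary arc of $\dcapgraph{T[R_2]}$, chain Lemma~\ref{lem:teqisomorhpictoT43} against the three broom arcs to force one endpoint of that arc into $N^-_T(u)$ dominating both $w_1$ and $w_2$, and contradict Lemma~\ref{lem:tauthreeequalretentive} with Lemma~\ref{lem:t3minimalretentiveset}. The only cosmetic difference is that you split cases on the arc between $u$ and $x$ while the paper splits on the arc between the broom's center $v$ and the tail of the $R_2$-arc; these case splits determine each other via Lemma~\ref{lem:teqisomorhpictoT43}, so the arguments coincide.
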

\begin{proof}
We prove the lemma by contradiction. Assume that the lemma is not true.
Let $\bar{v}\trelation \bar{u}$ be any arc in $\dcapgraph{T[R_2]}$ (since $\dcapgraph{T[R_2]}$ is not empty, such an arc exists).
We distinguish between two cases with respect to the arc between $\bar{v}$ and $v$. \myfig{fig:conflicttwoteqonebroomonenonempty} is helpful for the readers to follow the proof.

\begin{figure}
\begin{center}
\includegraphics[width=\textwidth]{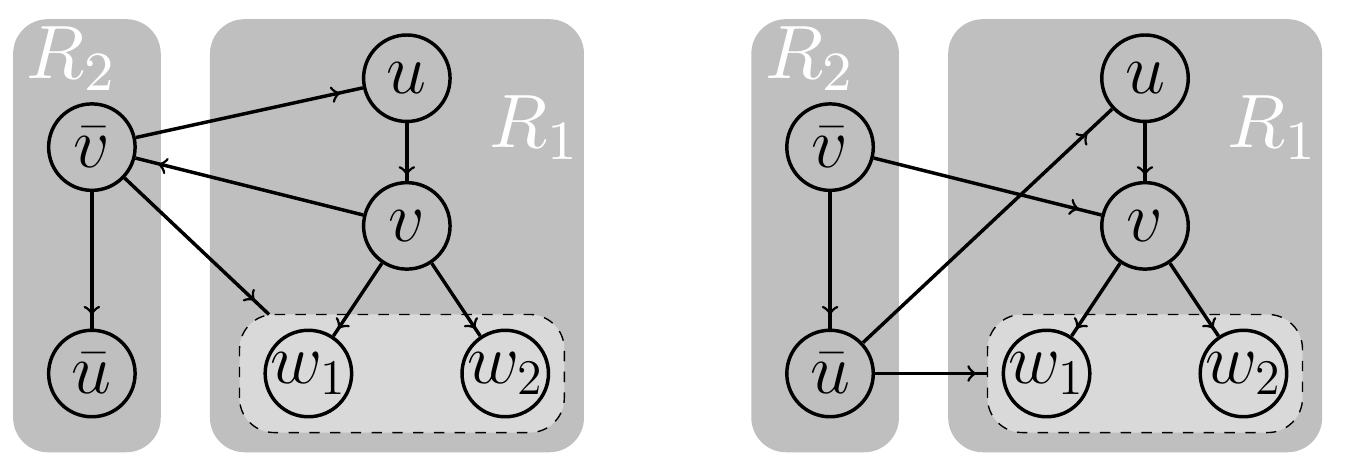}
\end{center}
\caption{An illustration of the proof of Lemma~\ref{twoteqonecaptainbroomonenonemptycaptain}. The figure on the left side illustrates Case~1 and the figure on the right side illustrates Case~2. In both figures, $\broom{u}{v}{w_1,w_2}$ is a broom in $T[R_1]$. To make the figures clear, not all vertices and arcs in $T[R_1\cup R_2]$ are drawn.}
\label{fig:conflicttwoteqonebroomonenonempty}
\end{figure}
\caseinproofskip

Case~1. $v\trelation \bar{v}$.

According to Lemma~\ref{lem:teqisomorhpictoT43}, it holds that $\{\bar{v}\}\trelation \{u,w_1, w_2\}$. However, this contradicts with the fact that $\{w_1,w_2,w\}=\mathteq{(T[N^-_T(u)])}$, since according to Lemma~\ref{lem:t3minimalretentiveset} and Lemma~\ref{lem:tauthreeequalretentive}, at least two of $\{w_1,w_2, w\}$ should have dominated $\bar{v}$.
\caseinproofskip

Case~2. $\bar{v}\trelation {v}$.

According to Lemma~\ref{lem:teqisomorhpictoT43}, it holds $\{\bar{u}\}\trelation \{u,w_1,w_2\}$. 
However, since $\{w_1,w_2,w\}=\mathteq{[T(N^-_T(u))]}$, according to Lemma~\ref{lem:t3minimalretentiveset} and Lemma~\ref{lem:tauthreeequalretentive}, at least two of $\{w_1,w_2,w\}$ should have dominated $\bar{u}$; a contradiction.
\end{proof}

We are ready to show our main result concerning $(T_5, T_k)$-{\teq}-retentive tournaments for small values of $k$.

\begin{theorem}
\label{thm:notfivekleqtenteqtournaments}
There are no $(T_5, T_k)$-{\teq}-retentive tournaments for every positive integer $k\leq 10$.
\end{theorem}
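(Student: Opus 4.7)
The plan is to assume for contradiction a $(T_5,T_k)$-{\teq}-retentive tournament $T$ with two disjoint minimal {\teq}-retentive sets $R_1$ and $R_2$ (disjointness from Lemma~\ref{lem:distinctminiteqs}), where $|R_1|=5$ and $|R_2|=k\leq 10$. By Theorem~\ref{thm:teqisomorphctot5}, $T[R_1]$ is isomorphic to either $T_5(1)$ or $T_5(6)$. Because the average indegree in $T[R_2]$ is $(|R_2|-1)/2\leq 9/2$, there is a vertex $v\in R_2$ with $|N^-_{T[R_2]}(v)|\leq 4$. Any minimal {\teq}-retentive set $R(v)$ of $T[N^-_T(v)]$ is contained in $\mathteq(T[N^-_T(v)])\subseteq R_2$, hence in $N^-_{T[R_2]}(v)$, so $|R(v)|\leq 4$. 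Lemma~\ref{lem:teqirreducible} and Theorem~\ref{lem:noretentivesetofsizefour} rule out the sizes $2$ and $4$, leaving $|R(v)|\in\{1,3\}$. When $|R(v)|=1$, Lemma~\ref{lem:teqsourceuniqueminial} makes $R(v)$ the unique minimal {\teq}-retentive set of $T[N^-_T(v)]$. When $|R(v)|=3$, any second minimal {\teq}-retentive set of $T[N^-_T(v)]$ would be disjoint from $R(v)$ and of size $1$, i.e.\ a source of $T[N^-_T(v)]$; but such a source would dominate all three vertices of $R(v)$, contradicting Lemma~\ref{lem:t3minimalretentiveset}. Hence $\mathteq(T[N^-_T(v)])=R(v)$ in both subcases, and in particular $|\mathteq(T[N^-_T(v)])|=3$ in the second.

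In the case $T[R_1]\cong T_5(6)$, the directed {\captaingraph} $\dcapgraph{T[R_1]}$ contains the $5$-cycle described in the proof of Theorem~\ref{thm:teqisomorphctot5}. If $|R(v)|=1$ with $R(v)=\{u\}$, then by Lemmas~\ref{lem:captainistheuniqueteqofslavery} and~\ref{lem:inneighboronesouce}, $u$ is the captain of $v$ in $T[R_2]$, so $\dcapgraph{T[R_2]}$ is non-empty; combined with the cycle in $\dcapgraph{T[R_1]}$, this contradicts Lemma~\ref{lem:captaingraphagainsttwoteq} with the roles of $R_1$ and $R_2$ swapped. If $|R(v)|=3$, then Lemma~\ref{lem:notwoteqonecaptaincycleonetricaptain}, applied with the cycle in $\dcapgraph{T[R_1]}$ and with the triple $\mathteq(T[N^-_T(v)])=R(v)\subset R_2$, yields the contradiction directly.

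In the case $T[R_1]\cong T_5(1)$, the tri-captain of the vertex $a$ is $\{b,c,d\}$ and $|N^-_{T[R_1]}(a)|\leq 4$, so Lemma~\ref{lem:localboundedsmallteqtournament} gives $\mathteq(T[N^-_T(a)])=\{b,c,d\}$. A short direct computation of the captain relationships inside $T_5(1)$ upgrades the two-leaf broom $\broom{a}{e}{c,d}$ recorded in the proof of Theorem~\ref{thm:notthreekleqtwelveteqtournaments} to the three-leaf broom $\broom{a}{e}{b,c,d}$ in $\dcapgraph{T[R_1]}$, by verifying that $e$ is also the captain of $b$. If $|R(v)|=1$, then $\dcapgraph{T[R_2]}$ is non-empty just as before, and Lemma~\ref{twoteqonecaptainbroomonenonemptycaptain} applied to the broom $\broom{a}{e}{c,d}$ with the extra vertex $w=b\in R_1\setminus\{c,d\}$ closes the subcase. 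If $|R(v)|=3$, Lemma~\ref{lem:twoteqonecaptainbroomonetricaptain} applied to the three-leaf broom $\broom{a}{e}{b,c,d}$ and to $\bar v=v\in R_2$ (for which $|\mathteq(T[N^-_T(\bar v)])|=3$) finishes the proof. The hardest step is this last subcase: the two-leaf broom recorded in the proof of Theorem~\ref{thm:notthreekleqtwelveteqtournaments} is too small for the hypothesis of Lemma~\ref{lem:twoteqonecaptainbroomonetricaptain}, so the argument genuinely depends on showing that $e$ is the captain of all three triangle vertices $b,c,d$ in $T_5(1)$, thereby promoting the broom to three leaves; the remaining subcases then reduce cleanly to the existing structural lemmas on directed {\captaingraph}s and tri-captains.
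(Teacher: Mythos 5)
Your proposal is correct and takes essentially the same route as the paper: the same choice of a vertex $v\in R_2$ of indegree at most $4$ in $T[R_2]$, the same reduction to $|\mathteq(T[N^-_T(v)])|\in\{1,3\}$, and the same four structural lemmas (Lemmas~\ref{twoteqonecaptainbroomonenonemptycaptain}, \ref{lem:twoteqonecaptainbroomonetricaptain}, \ref{lem:captaingraphagainsttwoteq} and \ref{lem:notwoteqonecaptaincycleonetricaptain}) applied to the broom structure of $\dcapgraph{T_5(1)}$ (including the paper's own use of the three-leaf broom $\broom{a}{e}{b,c,d}$) and the directed cycle in $\dcapgraph{T_5(6)}$. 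The only detail the paper makes explicit that you leave implicit is the degenerate case $N^-_{T[R_2]}(v)=\emptyset$ (then $v$ would be the source of $T$ and $\{v\}$ its unique minimal {\teq}-retentive set, contradicting the existence of two such sets), which is needed before one can speak of a minimal {\teq}-retentive set of $T[N^-_T(v)]$.
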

\begin{proof}
We prove this theorem by contradiction. Assume that there is a $(T_5, T_k)$-{\teq}-retentive tournament $T$ for some positive integer $k\leq 10$. Let $R_1$ and $R_2$ be two minimal {\teq}-retentive sets of $T$ such that $T[R_1]$ is isomorphic to some tournament in $T_5$, and $T[R_2]$ is isomorphic to some tournament in $T_k$. We have showed in Theorem~\ref{thm:teqisomorphctot5} that every {\teq}-retentive tournament of size $5$ is either isomorphic to $T_5(1)$ or to $T_5(6)$ in \myfig{fig:tfive}. Therefore, $T[R_1]$ is either isomorphic to $T_5(1)$ or $T_5(6)$. We distinguish between these two cases. The directed {\captaingraph} of $T_5(1)$ and $T_5(6)$ are shown in \myfig{fig:tfiveonesixcaptaingraphs}.
Observe that each tournament with at most 10 vertices has a vertex of indegree at most 4. In the following, let $v$ be a vertex of indegree at most 4 in $T[R_2]$, that is, $|N^-_{T[R_2]}(v)|\leq 4$.
Further observe that it cannot be that $N^-_{T[R_2]}(v)=\emptyset$ (If $N^-_{T[R_2]}(v)=\emptyset$, then due to Lemma~\ref{lem:teqirreducible}, it must be that $R_2=\{v\}$. Then due to Lemma~\ref{lem:teqsourceuniqueminial}, $v$ is the source of $T$. Moreover, $\{v\}$ is the unique minimal {\teq}-retentive set of $T$; contradicting with our assumption that $R_1$ and $R_2$ are two distinct minimal {\teq}-retentive sets of $T$). Hence, in the following, we assume that $N^-_{T[R_2]}(v)\neq \emptyset$; thus, $\mathteq{(T[N^-_T(v)])}\neq \emptyset$.

\caseinproofskip
Case~1. $T[R_1]$ is isomorphic to $T_5(1)$.

For ease of exposition, let $T_5(1)=T[R_1]$. Moreover, let the vertices in $R_1$ be labeled as in the tournament $T_5(1)$ in {\myfigs}~\ref{fig:tfive} and~\ref{fig:tfiveonesixcaptaingraphs}. Observe that there is a broom $\broom{a}{e}{b,c}$ in $\dcapgraph{T[R_1]}$ (see \myfig{fig:tfiveonesixcaptaingraphs}). Moreover, $\{b,c,d\}$ is a tri-captain of $a$ in $T[R_1]$. Since $R_1$ is a minimal {\teq}-retentive set of $T$, according to Lemma~\ref{lem:localboundedsmallteqtournament}, it holds that $\{b,c,d\}=\mathteq{(T[N^-_T(a)])}$. Now let's take a look at $T[R_2]$.
Since $R_2$ is a minimal {\teq}-retentive set of $T$ and $v\in R_2$, we have that $\mathteq{(T[N^-_T(v)])}\subseteq N^-_{T[R_2]}(v)$. According to Lemma~\ref{lem:teqirreducible} and Theorem~\ref{lem:noretentivesetofsizefour}, there are no minimal {\teq}-retentive sets of sizes $2$ and $4$. This implies that $|\mathteq{(T[N^-_T(v)])}|\neq 2$ and $|\mathteq{(T[N^-_T(v)])}|\neq 4$. Therefore, either $|\mathteq{(T[N^-_T(v)])}|=1$ or $|\mathteq{(T[N^-_T(v)])}|=3$. In the former case, due to Lemma~\ref{lem:teqsourceuniqueminial}, there is a vertex $u$ (a source in $T[N^-_{T[{R_2}]}(v)]$) in $N^-_{T[R_2]}(v)$ which dominates every other vertex in $N^-_{T[R_2]}(v)$. Hence $u$ is the captain of $v$ in $T[R_2]$; implying that $\dcapgraph{T[R_2]}$ is not empty. Now it is easy to see that $R_1$ and $R_2$ satisfy all conditions in Lemma~\ref{twoteqonecaptainbroomonenonemptycaptain}, which in turn implies that $R_1$ and $R_2$ cannot be both minimal {\teq}-retentive sets of $T$; a contradiction. In the latter case, $R_1$ and $R_2$ satisfy the conditions in Lemma~\ref{lem:twoteqonecaptainbroomonetricaptain} (notice that $\broom{a}{e}{b,c,d}$ is a broom in $\dcapgraph{T[R_1]}$), which in turn implies that $R_1$ and $R_2$ cannot be both minimal {\teq}-retentive sets of $T$; a contradiction either.
\caseinproofskip

Case 2. $T[R_1]$ is isomorphic to $T_5(6)$.

The proof for this case is similar to that for Case~1. First, we distinguish between  $|\mathteq{(T[N^-_T(v)])}|=1$ and $|\mathteq{(T[N^-_T(v)])}|=3$, as in the above proof. If $|\mathteq{(T[N^-_T(v)])}|=1$, then we can still arrive at the conclusion that $\dcapgraph{T[R_1]}$ is not empty, as discussed in the above proof. Then, since there is a directed cycle in $\dcapgraph{T_5(6)}$ (see the directed cycle $(a,c,d,b,e)$ in the directed domination graph of $T_5(6)$ in  \myfig{fig:tfiveonesixcaptaingraphs}) and $T[R_1]$ is isomorphic to $T_5(6)$, there is a directed cycle in $\dcapgraph{T[R_1]}$. Now, it is clear that $R_1$ and $R_2$ satisfy the conditions in Lemma~\ref{lem:captaingraphagainsttwoteq}, which, however, in turn implies that $R_1$ and $R_2$ cannot be two minimal {\teq}-retentive sets of $T$ simultaneously; a contradiction.
On the other hand, if $|\mathteq{(T[N^-_T(v)])}|=3$, then $R_1$ and $R_2$ will satisfy the conditions of Lemma~\ref{lem:notwoteqonecaptaincycleonetricaptain}, which, however, in turn implies that $R_1$ and $R_2$ cannot be two minimal {\teq}-retentive sets of $T$ simultaneously; a contradiction either. 
\end{proof}

Now we study $(T_6,T_k)$-{\teq}-retentive tournaments. 
The following lemmas are useful for our study.

\begin{figure}
\begin{center}
\begin{minipage}{0.25\textwidth}
\begin{center}
\includegraphics[width=0.8\textwidth]{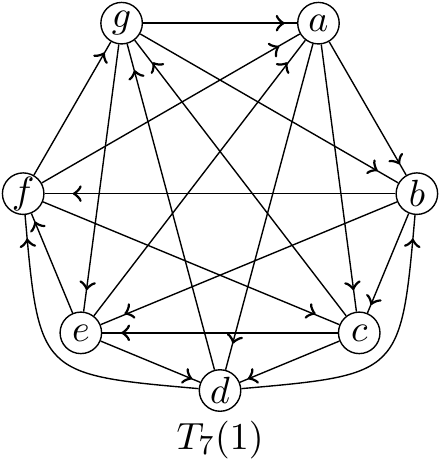}
\end{center}
\end{minipage}\begin{minipage}{0.25\textwidth}
\begin{center}
\begin{tabular}{l|c}
     &   tri-captain \\ \hline

$a$  &   $\{e,f,g\}$ \\ \hline

$b$  &   $\{a,d,g\}$ \\ \hline

$c$  &   $\{a,b,f\}$ \\ \hline

$d$  &   $\{a,c,e\}$ \\ \hline

$e$  &   $\{b,c,g\}$ \\ \hline

$f$  &   $\{b,d,e\}$\\ \hline

$g$  &   $\{c,d,f\}$ \\ \hline
\end{tabular}
\end{center}
\end{minipage}
\caption{All tournaments of size 7 that have non-empty domination graphs are isomorphic to $T_7(1)$. The tri-captain of each vertex is shown in the table on the right side.}
\label{fig:tsevenone}
\end{center}
\end{figure}

\begin{lemma}
\label{lem:againsttwoteqonetsixnineonecaptainnotempty}
Let $T$ be a tournament. Let $R_1$ and $R_2$ be two vertex subsets of $T$ such that $R_1\cap R_2=\emptyset$. Then, $R_1$ and $R_2$ cannot be minimal {\teq}-retentive sets of $T$ simultaneously if the following conditions hold.

(1) $T[R_1]$ is isomorphic to the tournament $T_6(2)$ in \myfig{fig:teqtournamentofsizesix}; and

(2) $\dcapgraph{T[R_2]}$ is not empty.
\end{lemma}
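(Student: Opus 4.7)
The plan is to proceed by contradiction and reduce to one of the two previously established conflict lemmas. Suppose $R_1$ and $R_2$ are both minimal \teq-retentive sets of $T$, with $T[R_1]\cong T_6(2)$ and $\dcapgraph{T[R_2]}$ non-empty. Since $R_1\cap R_2=\emptyset$ by Lemma~\ref{lem:distinctminiteqs}, the two sets have disjoint vertex sets and I only need to exploit the structure of $T_6(2)$ depicted in \myfig{fig:teqtournamentofsizesix}.

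The first step is to read off the directed \captaingraph\ of $T_6(2)$ from \myfig{fig:teqtournamentofsizesix} and locate a broom $\broom{u}{v}{w_1,w_2}$ inside $\dcapgraph{T[R_1]}$, together with a third vertex $w\in R_1\setminus\{w_1,w_2\}$ such that $\{w_1,w_2,w\}$ is precisely the tri-captain of $u$ in $T[R_1]$ (the labelings next to vertices in \myfig{fig:teqtournamentofsizesix} make such a triple readable directly; this is the part that is specific to $T_6(2)$ rather than $T_6(1)$). Once this configuration is isolated, I would invoke Lemma~\ref{lem:localboundedsmallteqtournament}: since $|R_1|=6$, every vertex of $R_1$ has at most $5$ inneighbors in $T[R_1]$, so the tri-captain $\{w_1,w_2,w\}$ of $u$ in $T[R_1]$ satisfies $\{w_1,w_2,w\}=\mathteq{(T[N^-_T(u)])}$.

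At this point the hypotheses of Lemma~\ref{twoteqonecaptainbroomonenonemptycaptain} are in place: condition~(1) gives the broom in $\dcapgraph{T[R_1]}$, condition~(2) gives $\mathteq{(T[N^-_T(u)])}=\{w_1,w_2,w\}$ with $w\in R_1$, and condition~(3) is exactly the given non-emptiness of $\dcapgraph{T[R_2]}$. That lemma then asserts that $R_1$ and $R_2$ cannot simultaneously be minimal \teq-retentive sets of $T$, yielding the contradiction. In the alternative case (should the figure reveal a directed cycle inside $\dcapgraph{T[R_1]}$ rather than the desired broom configuration), the proof would instead close via Lemma~\ref{lem:captaingraphagainsttwoteq} applied with the roles of $R_1$ and $R_2$ exchanged.

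The main obstacle is purely bookkeeping: verifying from the picture of $T_6(2)$ that the identified broom-plus-tri-captain configuration really exists, i.e.\ that the two leaves of the broom sit inside the tri-captain of its head. The structural lemmas in the previous subsection were written precisely to make this type of ``once the small pattern is spotted, a contradiction follows'' argument work mechanically, so after the pattern is extracted from \myfig{fig:teqtournamentofsizesix} the remainder of the proof is a direct citation of Lemma~\ref{twoteqonecaptainbroomonenonemptycaptain} (or Lemma~\ref{lem:captaingraphagainsttwoteq}).
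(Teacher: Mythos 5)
Your proposal hinges on an unverified pattern-spotting step that in fact fails: $T_6(2)$ does not contain the configuration required by Lemma~\ref{twoteqonecaptainbroomonenonemptycaptain}. That lemma needs a broom $\broom{u}{v}{w_1,w_2}$ in $\dcapgraph{T[R_1]}$ whose \emph{head} $u$ satisfies $\mathteq{(T[N^-_T(u)])}=\{w_1,w_2,w\}$, i.e.\ the head must have a tri-captain containing both leaves. This works for $T_6(1)$ (the broom $\broom{b}{f}{e,d}$ with tri-captain $\{e,d,a\}$ of $b$), and that is exactly how the paper disposes of $T_6(1)$ in Case~1 of Theorem~\ref{thm:notsixsixseverneightteqtournaments}; but in $T_6(2)$ the directed domination graph is essentially the directed path $e\trelation f\trelation d\trelation b$, the only vertices with tri-captains are $a$ (tri-captain $\{d,e,f\}$) and $c$ (tri-captain $\{a,b,f\}$), and these are not heads of any two-leaf broom, while the captain vertices along the path ($e$, $f$, $d$) are captains of a single slave each and, for $f$ and $d$, themselves have captains, so $\mathteq{(T[N^-_T(\cdot)])}$ is a singleton rather than a $3$-set. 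Hence no admissible head exists and the broom lemma simply cannot be invoked for $R_1\cong T_6(2)$. Your fallback route is also closed: $\dcapgraph{T_6(2)}$ is acyclic (a directed cycle on a proper subset of the six vertices would contradict Lemma~\ref{lem:captaingraphcirclenoteq}, since $T_6(2)$ has its full vertex set as a minimal {\teq}-retentive set, and a spanning cycle would have even length, contradicting Lemma~\ref{lem:dominationgraphproperty}), so Lemma~\ref{lem:captaingraphagainsttwoteq} does not apply either. This absence of any ready-made pattern is precisely why the paper states and proves the present lemma separately instead of citing the earlier conflict lemmas.

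What is actually needed, and what the paper does, is a direct arc-chasing argument specific to $T_6(2)$: take an arbitrary arc $v\trelation u$ in $\dcapgraph{T[R_2]}$, split on the orientation between $u$ and the vertex $a$ of $R_1$, use Lemma~\ref{lem:localboundedsmallteqtournament} to fix $\{d,e,f\}=\mathteq{(T[N^-_T(a)])}$ and $\{a,b,f\}=\mathteq{(T[N^-_T(c)])}$, and then combine Lemmas~\ref{lem:tauthreeequalretentive}, \ref{lem:t3minimalretentiveset}, \ref{lem:teqisomorhpictoT43} and \ref{lem:novertexdominatestwoadjacentverticesincapgraingraphs} to force $e\trelation u$, $u\trelation f$, $d\trelation u$ and $u\trelation c$, contradicting the fact that at least two of $\{a,b,f\}$ must dominate $u$; the case $a\trelation u$ reduces to the same computation with $v$ in place of $u$. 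Some argument of this hands-on kind, exploiting the path structure of $\dcapgraph{T_6(2)}$ together with its two tri-captains, is unavoidable here; as written, your proof reduces the lemma to a pattern that the tournament in question does not possess.
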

\begin{proof}
Assume for the sake of contradiction that $R_1$ and $R_2$ are two minimal {\teq}-retentive sets of $T$ that satisfy the above two conditions in the lemma. For ease of exposition, assume that $T_6(2)=T[R_1]$. Moreover, let the vertices in $R_1$ be labeled as in the tournament $T_6(2)$ in \myfig{fig:teqtournamentofsizesix}. Let $v\trelation u$ be an arbitrary arc in $\dcapgraph{T[R_2]}$.  We complete the proof by distinguishing between the following two cases. \myfig{fig:againsttwoteqonetsixnineonecaptainnotempty} is helpful for the readers to follow the proof.
\caseinproofskip

Case~1. $u\trelation a$.

It is easy to verify that $\{d,e,f\}$ is a tri-captain of $a$ in $T[R_1]$. Since $R_1$ is a minimal {\teq}-retentive set of $T$, and $a$ has no more than $5$ inneighbors in $R_1$, according to Lemma~\ref{lem:localboundedsmallteqtournament}, it holds that  $\{d,e,f\}=\mathteq{(T[N^-_T(a)])}$. Then, due to Lemma~\ref{lem:tauthreeequalretentive} and Lemma~\ref{lem:t3minimalretentiveset}, at least two of $\{d,e,f\}$ dominate $u$.
Moreover, due to Lemma~\ref{lem:teqisomorhpictoT43}, $[u,e]\not\asymp [u,f]$ and $[u,f]\not\asymp [u,d]$. Therefore, it holds that $e\trelation u$, $u\trelation f$ and $d\trelation u$.
Then, again due to Lemma~\ref{lem:teqisomorhpictoT43}, we have that $u\trelation c$. It is clear that  $\{a,b,f\}$ is a tri-captain of $c$ in $T[R_1]$. Since $R_1$ is a minimal {\teq}-retentive set of $T$, and $c$ has no more than five inneighbors in $R_1$, according to Lemma~\ref{lem:localboundedsmallteqtournament}, $\{a,b,f\}=\mathteq{(T[N^-_{T}(c)])}$. Then, according to Lemma~\ref{lem:tauthreeequalretentive} and Lemma~\ref{lem:t3minimalretentiveset}, at least two of $\{a,b,f\}$ dominate $u$. However, we have showed that $\{u\}\trelation \{a,f\}$; a contradiction.

\caseinproofskip
Case~2. $a\trelation u$.

Due to Lemma~\ref{lem:novertexdominatestwoadjacentverticesincapgraingraphs}, $v\trelation a$. Then, by replacing all occurrences of $u$ with $v$ in the above proof for Case~1, we can prove the lemma for this case.
\begin{figure}[h!]
\begin{center}
\includegraphics[width=\textwidth]{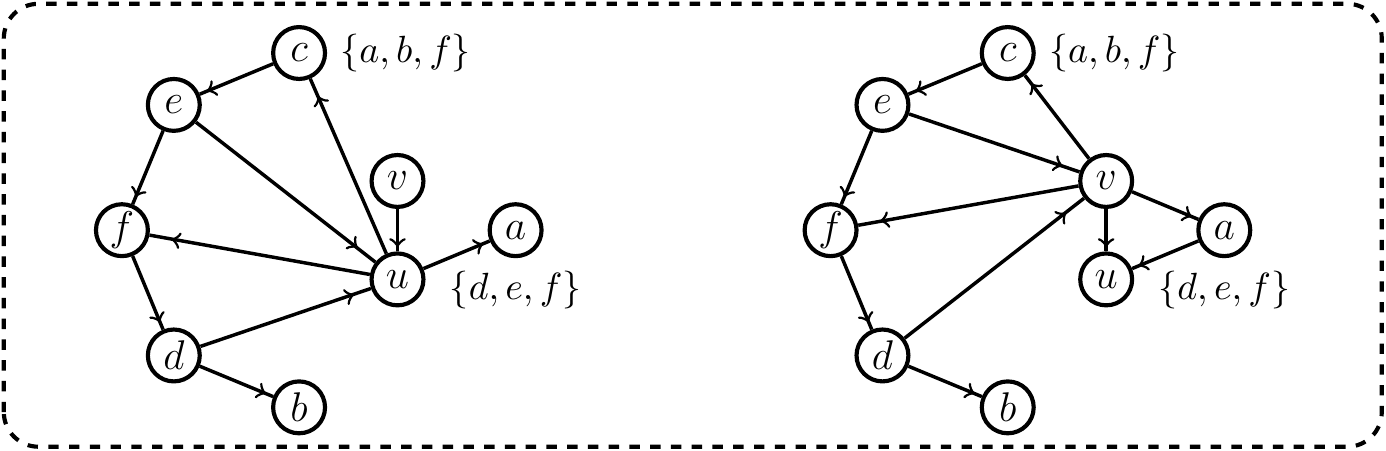}
\end{center}
\caption{An illustration of the proof of Lemma~\ref{lem:againsttwoteqonetsixnineonecaptainnotempty}. The figure on the left hand illustrates Case~1, and the figure on the right side illustrates Case~2. To keep the figures clear, not all arcs are drawn. In both figures, $R_1=\{a,b,c,d,e,f\}$, and $v\trelation u$ is an arc in $\dcapgraph{T[R_2]}$. In each figure, the directed subgraph induced by $R_1$ is $\dcapgraph{T[R_1]}$. A 3-set labeled next to a vertex is the tri-captain of the vertex in $T[R_1]$.}
\label{fig:againsttwoteqonetsixnineonecaptainnotempty}
\end{figure}
\end{proof}

\begin{lemma}
\label{lem:tsixsixtricaptain}
Let $T$ be a tournament, and $R_1$ and $R_2$ be two vertex subsets of $T$ such that $R_1\cap R_2=\emptyset$. If the following conditions hold, $R_1$ and $R_2$ cannot be minimal {\teq}-retentive sets of $T$ simultaneously.

(1) $T[R_1]$ is isomorphic to the tournament $T_6(1)$ in \myfig{fig:teqtournamentofsizesix}; and

(2) there is a vertex $v\in R_2$ such that $|\mathteq{(T[N^-_T(v)])}|=3$
\end{lemma}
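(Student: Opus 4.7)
The plan is to argue by contradiction, following the pattern of Lemma~\ref{lem:againsttwoteqonetsixnineonecaptainnotempty}, but adapted to $T_6(1)$ and to the tri-captain hypothesis on $R_2$ (rather than an arc in $\dcapgraph{T[R_2]}$). Assume both $R_1$ and $R_2$ are minimal {\teq}-retentive sets of $T$, set $T_6(1)=T[R_1]$ with the labeling of \myfig{fig:teqtournamentofsizesix}, and write $\{b_1,b_2,b_3\}=\mathteq{(T[N^-_T(v)])}$. By Lemma~\ref{lem:tauthreeequalretentive}, $\{b_1,b_2,b_3\}$ is the unique minimal {\teq}-retentive set of $T[N^-_T(v)]$, and by Lemma~\ref{lem:t3minimalretentiveset} it is a directed triangle forming a tri-captain of $v$: no vertex of $T$ dominates two of $\{b_1,b_2,b_3\}$, and every $w\in N^-_T(v)\setminus\{b_1,b_2,b_3\}$ is dominated by at least two of $\{b_1,b_2,b_3\}$.

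Next, pick a vertex $a\in R_1$ whose tri-captain $\{y_1,y_2,y_3\}\subseteq R_1$ in $T[R_1]$ is listed in \myfig{fig:teqtournamentofsizesix}. Since $|N^-_{T[R_1]}(a)|\leq 5$ and $R_1$ is a minimal {\teq}-retentive set, Lemma~\ref{lem:localboundedsmallteqtournament} yields $\{y_1,y_2,y_3\}=\mathteq{(T[N^-_T(a)])}$. Now split on the orientation of the arc between $a$ and $v$. If $v\trelation a$, then $v\in N^-_T(a)$, so by Lemma~\ref{lem:t3minimalretentiveset} applied to $\{y_1,y_2,y_3\}$ at least two of the $y_i$'s dominate $v$; for each such $y_i$, either $y_i\in\{b_1,b_2,b_3\}$ or at least two of $\{b_1,b_2,b_3\}$ dominate $y_i$. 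Chasing these arcs with Lemma~\ref{lem:teqisomorhpictoT43} pins down the orientations between $\{y_1,y_2,y_3\}$ and $\{b_1,b_2,b_3\}$, and combined with the tri-captain property of $v$ produces a vertex dominating two of $\{b_1,b_2,b_3\}$, contradicting Lemma~\ref{lem:t3minimalretentiveset}. If $a\trelation v$, then $a\in N^-_T(v)$, so either $a\in\{b_1,b_2,b_3\}$, or at least two $b_i$'s dominate $a$; in the latter case those $b_i\in N^-_T(a)$ must each be dominated by at least two of $\{y_1,y_2,y_3\}$, which by pigeonhole forces some $y_j$ to dominate two of $\{b_1,b_2,b_3\}$, again contradicting Lemma~\ref{lem:t3minimalretentiveset}. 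The case $a\in\{b_1,b_2,b_3\}$ is handled analogously after transferring the analysis to one of the two $y_i$'s dominating $a$.

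The main obstacle will be that $T_6(1)$ is less symmetric than $T_6(2)$, so a single choice of $a$ may not close every sub-case uniformly; the plan is to iterate the argument over the vertices of $R_1$ using the tri-captains listed in \myfig{fig:teqtournamentofsizesix}, accumulating arc information between $R_1$ and $\{v,b_1,b_2,b_3\}$ until every orientation is forced into conflict with the tri-captain property of $\{b_1,b_2,b_3\}$ for $v$ or with Lemma~\ref{lem:novertexdominatestwoadjacentverticesincapgraingraphs} applied inside $\dcapgraph{T[R_1]}$. The delicate bookkeeping is the combinatorics of which $y_i$ coincides with some $b_j$ versus which is merely dominated by two $b_j$'s, and I expect this to be the technical crux.
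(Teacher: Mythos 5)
Your proposal follows the same general spirit as the paper (contradiction; use Lemma~\ref{lem:localboundedsmallteqtournament} to pin down $\mathteq{(T[N^-_T(u)])}$ for vertices $u\in R_1$; chase arcs between $R_1$ and $\{v,b_1,b_2,b_3\}$), but as written it has two concrete flaws and defers the essential work. First, you assert that ``no vertex of $T$ dominates two of $\{b_1,b_2,b_3\}$.'' Lemma~\ref{lem:t3minimalretentiveset}, applied to the tournament $T[N^-_T(v)]$, only forbids vertices \emph{of $N^-_T(v)$} from dominating two of $\{b_1,b_2,b_3\}$; a vertex outside $N^-_T(v)$ is unconstrained. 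Both of your announced contradictions lean on the overstated version: in the case $a\trelation v$, your pigeonhole argument produces some $y_j$ dominating two of the $b_i$'s, which is contradictory only if you have separately forced $y_j\trelation v$, and you have not. (This is exactly why the paper's hard case, the orientation $v\trelation f$, is a long forced chain: one must first establish, vertex by vertex, which elements of $R_1$ dominate $v$ before the ``at least two of $\{x,y,z\}$ dominate it'' rule may be invoked.) Second, you propose to pin down orientations between $\{y_1,y_2,y_3\}$ and $\{b_1,b_2,b_3\}$ via Lemma~\ref{lem:teqisomorhpictoT43}, but that lemma requires an arc in $\dcapgraph{T[R_2]}$. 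Condition (2) of the present lemma supplies only a vertex $v\in R_2$ with $|\mathteq{(T[N^-_T(v)])}|=3$, i.e.\ precisely a vertex with \emph{no} captain, and nothing guarantees $\dcapgraph{T[R_2]}$ is non-empty (in the intended application, $T[R_2]\cong T_7(1)$, whose domination graph is empty). The correct cross-set tool is Lemma~\ref{lem:novertexdominatestwoadjacentverticesincapgraingraphs} applied to arcs of $\dcapgraph{T[R_1]}$ alone, which is what the paper uses. A minor further point: since $R_2$ is retentive and $v\in R_2$, we have $\{b_1,b_2,b_3\}\subseteq R_2$, so your sub-cases in which some $y_i$ or $a$ coincides with a $b_j$ are vacuous by $R_1\cap R_2=\emptyset$.

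Beyond these issues, the part you label ``delicate bookkeeping'' is in fact the entire proof, and the paper's execution is quite specific to $T_6(1)$: it splits on the single arc between $v$ and the vertex $f$ (the centre of the broom $\broom{b}{f}{e,d}$ in $\dcapgraph{T_6(1)}$). If $f\trelation v$, two of $\{x,y,z\}$ dominate $f$, hence by Lemma~\ref{lem:novertexdominatestwoadjacentverticesincapgraingraphs} the vertices $b,e,d$ dominate those two, hence $v$ dominates $b,e,d$ (otherwise the two-dominate rule would apply to them inside $N^-_T(v)$), and then the tri-captain $\{a,e,d\}$ of $b$ together with Lemma~\ref{lem:localboundedsmallteqtournament} gives the contradiction; the orientation $v\trelation f$ needs a substantially longer chain through $b,e,d,c,a$ and all of $x,y,z$, with the tri-captain $\{c,d,f\}$ of $a$ also invoked. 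Until you carry out such forced chains for every orientation, with the non-domination constraint correctly restricted to inneighbors of $v$ (and of the relevant vertices of $R_1$), the proof is not complete.
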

\begin{proof}
Assume for the sake of contradiction that $R_1$ and $R_2$ are two minimal {\teq}-retentive sets of $T$ that satisfy the above two conditions in the lemma. For ease of exposition, let $T_6(1)=T[R_1]$.  Moreover, let the vertices in $R_1$ be labeled as in the tournament $T_6(1)$ in \myfig{fig:teqtournamentofsizesix}.
Let $v, x, y, z$ be four vertices in $R_2$ such that $\{x,y,z\}=\mathteq{(T[N^-_T(v)])}$.  We distinguish  between two cases with respect to the arc between the vertex $f$ in $R_1$ and the vertex $v$ in $R_2$. Recall first that if a vertex $a'\in R_1$ is the captain of another vertex $b'\in R_1$ in $T[R_1]$, then $a'$ is also the captain of $b'$ in $T$. This is due to Lemma~\ref{lem:inneighboronesouce} and the assumption that $R_1$ is a minimal {\teq}-retentive set of $T$. 
\myfig{fig:conflicttwoteqonetricaptainonetsixtwo} is helpful for the readers to follow the proof.
\begin{figure}
\begin{center}
\includegraphics[width=\textwidth]{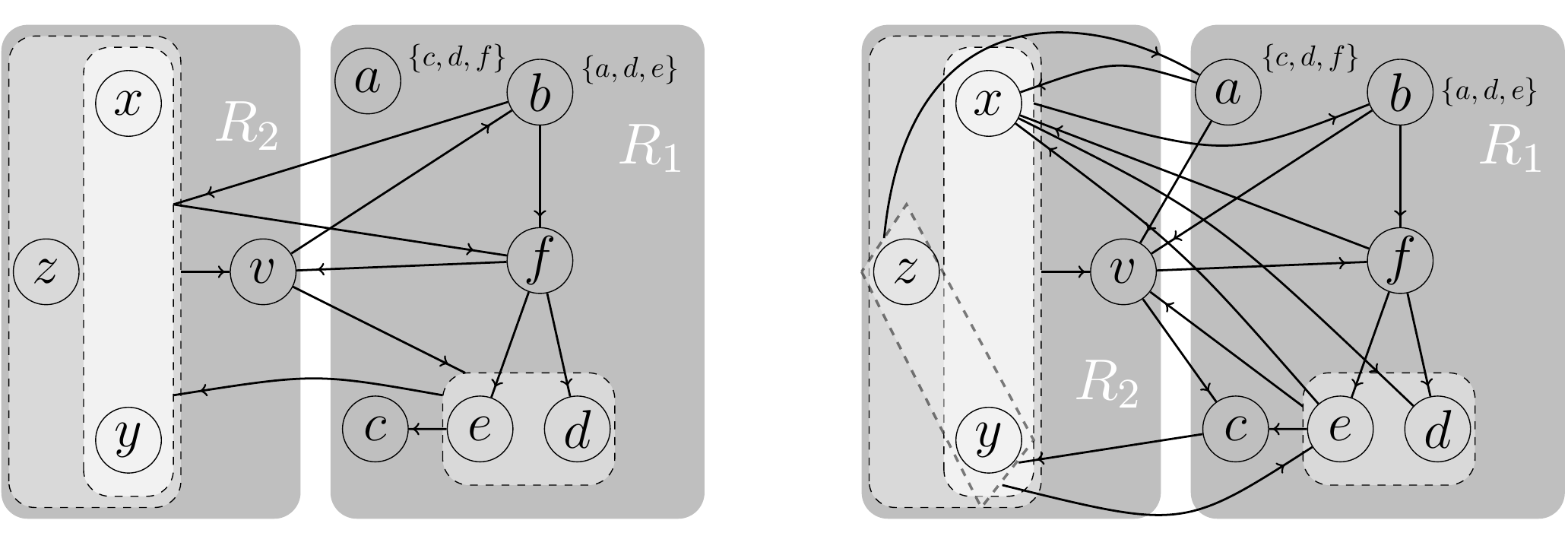}
\end{center}
\myvspace{-15pt}
\caption{An illustration of the proof of Lemma~\ref{lem:tsixsixtricaptain}. The figure on the left side illustrates Case~1 and the figure on the right side illustrates Case~2. To keep the figures clear, not all arcs and vertices in $T[R_1\cup R_2]$ are drawn. In both cases, $\{x,y,z\}=\mathteq{(T[T^-_T(v)])}$. Moreover, the directed graph induced by $\{a,b,c,d,e,f\}$ in each figure is $\dcapgraph{T[R_1]}$. The 3-subset of vertices labeled next to a vertex is the tri-captain of the vertex in $T[R_1]$.}
\label{fig:conflicttwoteqonetricaptainonetsixtwo}
\myvspace{-10pt}
\end{figure}
\caseinproofskip

Case~1. $f\trelation v$.

Since $\{x,y,z\}=\mathteq{(T[N^-_T(v)])}$, according to Lemma~\ref{lem:tauthreeequalretentive} and Lemma~\ref{lem:t3minimalretentiveset}, at least two of $\{x,y,z\}$ dominate $f$. Without loss of generality, assume that $\{x,y\}\trelation \{f\}$. Then, according to Lemma~\ref{lem:novertexdominatestwoadjacentverticesincapgraingraphs},
it holds that  $\{b,e,d\}\trelation \{x,y\}$. This implies that $\{v\}\trelation \{b,e,d\}$, since otherwise, according to Lemma~\ref{lem:tauthreeequalretentive} and Lemma~\ref{lem:t3minimalretentiveset}, for each $i\in \{b,e,d\}$ at least two of $\{x,y,z\}$ should have dominated $i$.
It is easy to check that $\{a,e,d\}$ is a tri-captain of $b$ in $T[R_1]$.
Since $R_1$ is a minimal {\teq}-retentive set of $T$, according to Lemma~\ref{lem:localboundedsmallteqtournament}, $\{a,e,d\}=\mathteq{(T[N^-_{T}(b)])}$. Then, since $v\trelation b$, according to Lemma~\ref{lem:tauthreeequalretentive} and Lemma~\ref{lem:t3minimalretentiveset}, at least two of $\{a,e,d\}$ dominate $v$. However, we have showed above that $\{v\}\trelation \{e,d\}$); a contradiction.
\caseinproofskip

Case~2. $v\trelation f$.

According to Lemma~\ref{lem:novertexdominatestwoadjacentverticesincapgraingraphs}, $\{b,e,d\}\trelation \{v\}$. Then, since $\{x,y,z\}=\mathteq{(T[N^-_T(v)])}$, according to Lemma~\ref{lem:tauthreeequalretentive} and Lemma~\ref{lem:t3minimalretentiveset}, at least two of $\{x,y,z\}$ dominate $b$. Without loss of generality, assume that $\{x,y\}\trelation \{b\}$. With the same reason, at least two of $\{x,y,z\}$ dominate $e$, and moreover, at least two of $\{x,y,z\}$ dominate $d$. These imply that at least one of $\{x,y\}$ dominates $d$. Assume that $x\trelation d$ (the proof applies to the case that $y\trelation d$ if we exchange all occurrences of ``$x$'' and ``$y$'' in the following argument). As showed above in the proof for Case~1, $\{a,e,d\}=\mathteq{(T[N^-_{T}(b)])}$. Then, according to Lemma~\ref{lem:tauthreeequalretentive} and Lemma~\ref{lem:t3minimalretentiveset}, at least two of $\{a,e,d\}$ dominate $x$. Since we have just showed that $x\trelation d$, it must be that $\{a,e\}\trelation \{x\}$. Then, as we have showed above that at least two of $\{x,y,z\}$ dominate $e$, it holds that $\{y,z\}\trelation \{e\}$. Then, according to Lemma~\ref{lem:novertexdominatestwoadjacentverticesincapgraingraphs}, it holds that $\{c\}\trelation \{y,z\}$. This implies that $v\trelation c$, since otherwise, according to Lemma~\ref{lem:tauthreeequalretentive} and Lemma~\ref{lem:t3minimalretentiveset}, at least two of $\{x,y,z\}$ should have dominated $c$. It is easy to verify that $\{c,d,f\}$ is a tri-captain of $a$ in $T[R_1]$. Since $R_1$ is a minimal {\teq}-retentive set of $T$ and $a$ has no more than 5 inneighbors in $R_1$, according to Lemma~\ref{lem:localboundedsmallteqtournament}, it holds that $\{c,d,f\}=\mathteq{(T[N^-_{T}(a)])}$.
Since we have showed that $\{v\}\trelation \{c,f\}$, it must be that $a\trelation v$, since otherwise, according to Lemma~\ref{lem:tauthreeequalretentive} and Lemma~\ref{lem:t3minimalretentiveset} at least two of $\{c,d,f\}$ should have dominated $v$. Then, according to Lemma~\ref{lem:tauthreeequalretentive} and Lemma~\ref{lem:t3minimalretentiveset}, at least two of $\{x,y,z\}$ dominate $a$. Since we have showed above that $a\trelation x$, it must be that $\{y,z\}\trelation \{a\}$. However, since $\{a,d,e\}=\mathteq(T[N^-_T(b)])$ and $y\trelation b$, according to Lemma~\ref{lem:tauthreeequalretentive} and Lemma~\ref{lem:t3minimalretentiveset}, at least two of $\{a,d,e\}$ should have dominated $y$; a contradiction (we have showed above that $\{y\}\trelation \{a,e\}$).
\end{proof}

\begin{figure}
\begin{center}
\includegraphics[width=\textwidth]{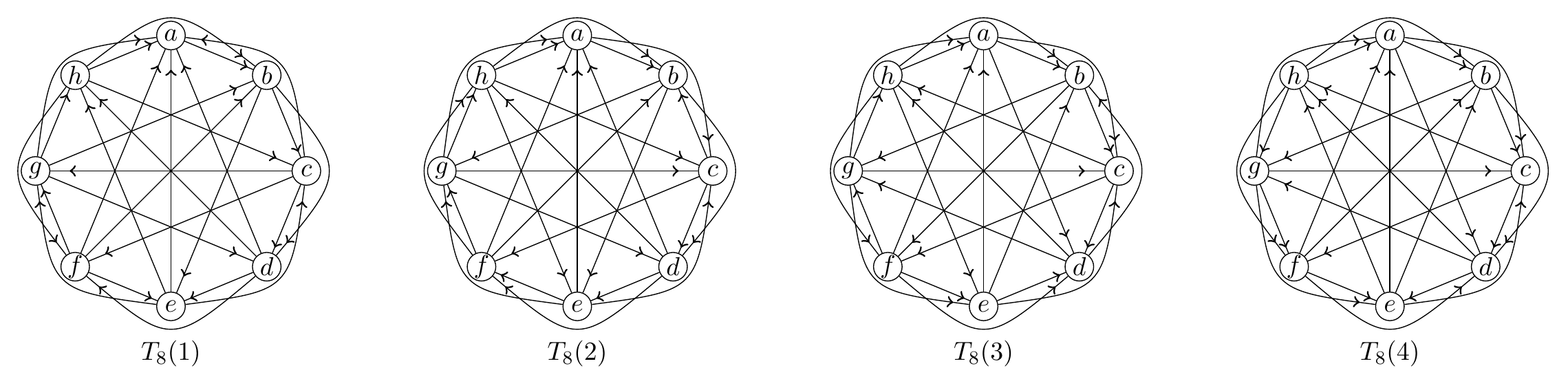}
\end{center}
\myvspace{-18pt}
\caption{All non-isomorphic tournaments of size $8$ whose {\captaingraph}s are not empty.}
\label{fig:teight}
\end{figure}


\begin{table*}
\begin{center}
{{\begin{tabular}{l|c|c|c|c|c|c|c|c}
\hline

    &  $a$ & $b$ & $c$  & $d$  & $e$ & $f$ & $g$ & $h$\\ \hline

$T_8(1)$  & & $\{h,f,g\}$ & $\{e,h,b\}$ & $\{b,c,g\}$ & $\{b,d,f\}$ & $\{c,d,h\}$ & $\{c,e,f\}$ & $\{d,e,g\}$\\ \hline

$T_8(2)$ & $\{d,e,g\}$ & $\{c,f,h\}$&$\{e,g,h\}$&$\{b,g,c\}$&$\{b,d,h\}$&$\{c,d,e\}$&$\{b,e,f\}$&$\{d,f,g\}$\\ \hline

$T_8(3)$&$\{f,g,h\}$&$\{e,d,h\}$&$\{b,e,g\}$&$\{c,f,e\}$&$\{f,g,h\}$&$\{b,c,h\}$&$\{d,b,f\}$&$\{c,d,g\}$\\ \hline

$T_8(4)$&$\{h,g,e\}$&$\{h,f,e\}$&$\{b,g,e\}$&$\{a,c,f\}$&$\{d,g,f\}$&$\{g,c,h\}$&$\{b,d,h\}$&$\{c,d,e\}$\\ \hline
\end{tabular}}}
\end{center}
\myvspace{-15pt}
\caption{A summary of tri-captains of vertices in $T_8(1),T_8(2),T_8(3),T_8(4)$ in \myfig{fig:teight}. The vertex $a$ in $T_8(1)$ has no tri-captain.}
\label{tab:teighttricaptains}
\myvspace{-20pt}
\end{table*}


Let $F=\{Tree_1, Tree_2,...,Tree_k\}$ be a forest. We say two vertices $v$ and $u$ are {\it{\sibling}s} in $F$ if the following two conditions hold:

(1) $v$ and $u$ are in the same tree $Tree_i$ for some $i\in \{1,2,...,k\}$; and

(2) for every vertex $w\in V(Tree_i)\setminus\{v,u\}$, it holds that \[dist(w,v)\equiv dist(w,u) \mod{(2)},\] where $dist(a,b)$ denotes the distance between two vertices $a$ and $b$ in the tree $Tree_i$, and $V(Tree_i)$ is the set of vertices in $Tree_i$.

\begin{lemma}
\label{lem:captaingrapht43property}
Let $T$ be a tournament. Let $R_1$ and $R_2$ be two distinct minimal {\teq}-retentive sets of $T$ such that neither $\capgraph{T[R_1]}$ nor $\capgraph{T[R_2]}$ is empty. Then, both $\capgraph{T[R_1]}$ and $\capgraph{T[R_2]}$ are forests. Moreover, for every two {\sibling}s $v,u\in R_1$ in $\capgraph{T[R_1]}$, and every non-isolated vertex $w\in R_{2}$ in ${\capgraph{T[R_{2}]}}$, it holds that $[w,v]\asymp [w,u]$. Furthermore, for every two vertices $v,u\in R_1$ that are in the same tree in $\capgraph{T[R_1]}$ but are not {\sibling}s in $\capgraph{T[R_1]}$, and for every non-isolated vertex $w\in R_{2}$ in $\capgraph{T[R_{2}]}$, it holds that $[w,v]\not\asymp [w,u]$.
\end{lemma}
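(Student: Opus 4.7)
The plan is to establish the lemma in two stages: first that both domination graphs are forests, then that the sibling relation in $\capgraph{T[R_1]}$ is governed by an ``edge-flip'' property derived from Lemma~\ref{lem:teqisomorhpictoT43}.

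For the forest property, I would invoke Lemma~\ref{lem:dominationgraphproperty}, which states that each $\capgraph{T[R_i]}$ is either a spiked odd cycle (possibly with isolated vertices) or a forest of caterpillars. The key observation is that in $\dcapgraph{T[R_i]}$ every vertex has at most one incoming arc, since each vertex has at most one captain; consequently any undirected cycle in $\capgraph{T[R_i]}$ must lift consistently to a directed cycle in $\dcapgraph{T[R_i]}$. If $\capgraph{T[R_1]}$ were a spiked odd cycle, then $\dcapgraph{T[R_1]}$ would contain a directed cycle, and combined with the non-emptiness of $\dcapgraph{T[R_2]}$ (inherited from the non-emptiness of $\capgraph{T[R_2]}$) this would contradict Lemma~\ref{lem:captaingraphagainsttwoteq}. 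Thus $\capgraph{T[R_1]}$ is a forest of caterpillars, and symmetrically so is $\capgraph{T[R_2]}$.

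For the sibling characterization, I would fix a non-isolated vertex $w\in R_2$ in $\capgraph{T[R_2]}$ together with an arc of $\dcapgraph{T[R_2]}$ incident with $w$, which is of the form $w^\ast\trelation w$ or $w\trelation w^\ast$. The core technical claim is the edge-flip property: for every edge $\edge{x}{y}$ of $\capgraph{T[R_1]}$ one has $[w,x]\not\asymp [w,y]$. To prove it I would orient the edge, say as $x\trelation y$ in $\dcapgraph{T[R_1]}$, and apply Lemma~\ref{lem:teqisomorhpictoT43} to this arc together with the $w$-incident arc. A short case split on whether $w$ is the head or the tail of its $\dcapgraph{T[R_2]}$-arc extracts from the three conclusions of that lemma the two relations that chain together to force $[w,x]$ and $[w,y]$ to have opposite orientations.

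Given the edge-flip property, iterating along the unique path from $v$ to $u$ in the shared tree of $\capgraph{T[R_1]}$ yields $[w,v]\asymp [w,u]$ precisely when the path length (equivalently $dist(v,u)$) is even. The final ingredient is a purely graph-theoretic observation: in a tree, two distinct vertices $v,u$ satisfy the siblings parity condition if and only if $dist(v,u)$ is even. This follows by splitting on whether an arbitrary vertex $x$ of the tree lies on the $v$-to-$u$ path (in which case $dist(x,v)+dist(x,u)=dist(v,u)$) or branches off at some vertex $y$ on that path (in which case $dist(x,v)-dist(x,u)=dist(y,v)-dist(y,u)$, again a quantity whose parity matches $dist(v,u)$). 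Combining this equivalence with the parity outcome of the edge-flip iteration yields both clauses of the lemma. The main obstacle is the clean execution of the edge-flip case analysis, since Lemma~\ref{lem:teqisomorhpictoT43} supplies three $\asymp/\not\asymp$ relations and one must pick the right pair to chain under each orientation of the $w$-incident arc; everything else reduces to propagating parity along a path and to a small tree-distance identity.
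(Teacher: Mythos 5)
Your proposal is correct and takes essentially the same route as the paper: the two sibling clauses are obtained exactly as in the paper's proof, by extracting the edge-flip relation $[w,x]\not\asymp[w,y]$ for every edge of $\capgraph{T[R_1]}$ from Lemma~\ref{lem:teqisomorhpictoT43} (regardless of the orientations of the two arcs) and propagating parity along the unique tree path, the only cosmetic difference being that the paper reads off that the path has odd length directly from the sibling condition applied to the path's first internal vertex instead of proving your standalone distance-parity equivalence. For the forest clause the paper lifts a cycle of $\capgraph{T[R_i]}$ together with a disjoint edge of the other domination graph into $\capgraph{T}$ and contradicts Lemma~\ref{lem:dominationgraphproperty}, whereas you reach the same contradiction through Lemma~\ref{lem:captaingraphagainsttwoteq} after the (correct) observation that the at-most-one-captain property forces any undirected cycle of a domination graph to be a directed cycle of the directed domination graph --- an equivalent packaging, provided you also note that $R_1\cap R_2=\emptyset$ (Lemma~\ref{lem:distinctminiteqs}) so that the cited lemma applies.
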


\begin{proof}
Let's first prove that both $\capgraph{T[R_1]}$ and $\capgraph{T[R_2]}$ are forests, that is, neither $\capgraph{T[R_1]}$ nor $\capgraph{T[R_2]}$ contains a cycle. Assume for the sack of contradiction that this is not the case. Let $C$ be a cycle in $\capgraph{T[R_i]}$ for some $i\in \{1,2\}$. Since $R_i$ is a minimal {\teq}-retentive set of $T$, according to Lemma~\ref{lem:inneighboronesouce} and the definition of {\captaingraph}s, if there is an edge between two vertices $a$ and $b$ in $\capgraph{T[R_i]}$, this edge still exists in $\capgraph{T}$. Therefore, the cycle $C$ still exits in $\capgraph{T}$. Moreover, since $\capgraph{T[R_{3-i}]}$ is not empty, there is at least one edge $\edge{c}{d}$ in $\capgraph{T[R_{3-i}]}$. Since $R_{i-3}$ is a minimal {\teq}-retentive set of $T$, according to Lemma~\ref{lem:inneighboronesouce}, the edge $\edge{c}{d}$ still exists in $\capgraph{T}$. According to Lemma~\ref{lem:distinctminiteqs}, $R_1\cap R_2=\emptyset$; thus, both the vertex $c$ and the vertex $d$ are not in the circle $C$. However, this contradicts with Lemma~\ref{lem:dominationgraphproperty}.

Let $v,u\in R_1$ be two vertices that are {\sibling}s in $\capgraph{T[R_1]}$, and $w\in R_{2}$ a non-isolated vertex in ${\capgraph{T[R_{2}]}}$. In the following, we prove that $[w,v]\asymp [w,u]$.
To this end, let $y$ be an arbitrary neighbor of $w$ in $\capgraph{T[R_{2}]}$ (since $w$ is a non-isolated vertex in $\capgraph{T[R_{2}]}$, such a vertex $y$ exists). Let $Tree$ be the tree that includes $v$ and $u$ in ${\capgraph{T[R_1]}}$.
It is a folklore that there is a unique path between every two vertices in a tree. Let $Path=(v,x_1,x_2,...,x_k,u)$ be the unique path between $v$ and $u$ in $Tree$. Since $v$ and $u$ are siblings in $\capgraph{T[R_1]}$, $dist(v,x_1)\equiv dist(u,x_1) \mod{(2)}$. Since $dist(v,x_1)=1$ and $dist(u,x_1)=k$, it follows that
$k$ is odd.
For simplicity, in the following we define $v=x_0$ and $u=x_{k+1}$.
According to~Lemma~\ref{lem:teqisomorhpictoT43}, we have that
\[[w,x_j]\not\asymp[w,x_{j+1}]~\text{and}~[w,x_{j+1}]\not\asymp[w,x_{j+2}]\]
\noindent for every $j\in \{0,1,...,k-1\}$ (notice that these hold regardless of the directions of the arc between $v$ and $u$, and the arc between $w$ and $y$).
Therefore, it holds that $[w,x_j]\asymp[w,x_{j+2}]$, for every $j\in \{0,1,...,k-1\}$. Thus, we have 
\[[w,x_0=v]\asymp[w,x_2]\asymp [w,x_4],...,[w,x_{k+1}=u].\]

Now we prove the last part of the lemma. Let $v,u\in R_1$ be now two vertices that are in the same tree in $\capgraph{T[R_1]}$ but are not {\sibling}s, and $w\in R_{2}$ still a non-isolated vertex in ${\capgraph{T[R_{2}]}}$. The proof is similar to the above proof. Let $(v,x_1,x_2,...,x_k,u)$ still denote the unique path between $v$ and $u$ in the tree that includes both $v$ and $u$. Notice that in this case, $k$ is even (since $dist(v,x_1)=1$, $dist(u,x_1)=k$ and $v$ and $u$ are not {\sibling}s). Moreover, $v$ and $x_{k}$ are siblings. Then, according to above proof, we have that $[w,v]\asymp[w,x_{k}]$. Then, according to Lemma~\ref{lem:teqisomorhpictoT43}, $[w,u]\not\asymp[w,x_{k}]$, which implies that $[w,v]\not\asymp [w,u]$.
\end{proof}

\begin{lemma}
\label{lem:tsixnineiterativetricaptain}
Let $T$ be a tournament, and $R_1$ and $R_2$ be two vertex subset of $T$ such that $R_1\cap R_2=\emptyset$. If the following conditions hold, $R_1$ and $R_2$ cannot be minimal {\teq}-retentive sets of $T$ simultaneously.

(1) $T[R_1]$ is isomorphic to the tournament $T_6(2)$ in \myfig{fig:teqtournamentofsizesix};

(2) there is a vertex $v\in R_2$ such that

(2.1) $v$ has a tri-captain $\{x,y,z\}\subset R_2$ in $T$; and

(2.2) at least two of $\{x,y,z\}$ have tri-captains whose vertices are from $R_2$ in $T$.
\end{lemma}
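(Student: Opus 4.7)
The plan is to assume for contradiction that $R_1$ and $R_2$ are two minimal $\tau$-retentive sets of $T$ satisfying the hypotheses, and to combine the rich tri-captain structure forced by $T[R_1]\cong T_6(2)$ with the iterated tri-captain information in $R_2$ until the two sides become incompatible.

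First, I would apply Lemma~\ref{lem:againsttwoteqonetsixnineonecaptainnotempty}: since its condition~(1) matches our condition~(1), its conclusion forces $\dcapgraph{T[R_2]}$ to be empty, for otherwise $R_1$ and $R_2$ could not both be minimal $\tau$-retentive sets. Next, without loss of generality, let $x$ and $y$ be the two members of $\{x,y,z\}$ that admit tri-captains $\{x_1,x_2,x_3\}\subset R_2$ and $\{y_1,y_2,y_3\}\subset R_2$. Using Lemma~\ref{lem:t3minimalretentiveset} together with Lemma~\ref{lem:teqsourceuniqueminial} (any source in the relevant $T[N^-_T(\cdot)]$ would dominate all three triangle vertices, contradicting the tri-captain condition), Lemma~\ref{lem:teqirreducible} and Theorem~\ref{lem:noretentivesetofsizefour} (ruling out minimal $\tau$-retentive sets of sizes~$2$ and~$4$), and Lemma~\ref{lem:eachtournamentonecaptaintriangle} (uniqueness of a size-$3$ minimal $\tau$-retentive set), I would promote these tri-captains to the equalities $\{x,y,z\}=\tau(T[N^-_T(v)])$, $\{x_1,x_2,x_3\}=\tau(T[N^-_T(x)])$, and $\{y_1,y_2,y_3\}=\tau(T[N^-_T(y)])$. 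Labelling the vertices of $T[R_1]$ as $\{a,b,c,d,e,f\}$ following Figure~\ref{fig:teqtournamentofsizesix} so that, as exploited in the proof of Lemma~\ref{lem:againsttwoteqonetsixnineonecaptainnotempty}, $\{d,e,f\}$ is the tri-captain of $a$ in $T[R_1]$ and $\{a,b,f\}$ is the tri-captain of $c$ in $T[R_1]$, Lemma~\ref{lem:localboundedsmallteqtournament} yields $\tau(T[N^-_T(a)])=\{d,e,f\}$ and $\tau(T[N^-_T(c)])=\{a,b,f\}$.

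With this setup, I would analyse the orientation of the arc between $v$ and the distinguished vertex $a \in R_1$, propagating constraints through the nested tri-captains. If $v \trelation a$, the tri-captain property of $\{d,e,f\}$ at $a$ forces at least two of $\{d,e,f\}$ to dominate $v$; for each such $w$, the tri-captain property of $\{x,y,z\}$ at $v$ then forces at least two of $\{x,y,z\}$ to dominate $w$; finally, one more layer using the tri-captains $\{x_1,x_2,x_3\}$ and $\{y_1,y_2,y_3\}$ constrains the arcs from $\{d,e,f\}$ (and, via the structure of $T_6(2)$, from $\{a,b,c\}$) into these deeper triples. The symmetric case $a \trelation v$ is handled by an analogous propagation starting from the observation that at least two of $\{x,y,z\}$ must dominate $a$, using the tri-captain $\{d,e,f\}$ at $a$ to constrain these choices inside $R_1$, and finally invoking the tri-captains of $x$ and $y$ to close the argument.

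The intended contradiction in each branch is that the accumulated arc orientations will force some vertex to dominate two vertices of a triangular tri-captain, violating Lemma~\ref{lem:t3minimalretentiveset}, or alternatively will recreate a captain arc inside $T[R_2]$, contradicting the earlier-established emptiness of $\dcapgraph{T[R_2]}$. The main obstacle I anticipate is the bookkeeping: with two layers of tri-captains on the $R_2$ side and two distinguished tri-captains on the $R_1$ side, many orientations of the bridging arcs must be examined in parallel. To keep the case analysis tractable, I would organise it first by the arc $a \trelation v$ versus $v \trelation a$, then by which of $\{x,y,z\}$ attack $\{d,e,f\}$, and finally use the second tri-captain layer (the tri-captains of $x$ and $y$, which is precisely the strengthening over Lemma~\ref{lem:tsixsixtricaptain}) to eliminate each residual configuration.
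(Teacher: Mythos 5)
Your proposal assembles the right toolkit but it never actually closes: the heart of this lemma is a concrete combinatorial derivation exploiting the structure of $T_6(2)$, and your sketch replaces it with an intention (``the accumulated arc orientations will force \dots'', ``eliminate each residual configuration'') without exhibiting a single completed branch. The paper's proof pivots on one sharp claim that your plan does not contain: \emph{every} vertex $v'\in R_2$ that has a tri-captain contained in $R_2$ must satisfy $b\trelation v'$, where $b$ is the specific vertex of $T_6(2)$ sitting at the end of the captain chain in $\dcapgraph{T[R_1]}$. That claim is proved by assuming $v'\trelation b$ and chasing the chain ($d$ is the captain of $b$, $f$ of $d$, $e$ of $f$, and onward to $c$) via Lemma~\ref{lem:inneighboronesouce}/Lemma~\ref{lem:novertexdominatestwoadjacentverticesincapgraingraphs} together with the tri-captain property of $v'$, ending in a clash with $\{a,b,f\}=\mathteq{(T[N^-_{T}(c)])}$ (which \emph{is} legitimately obtained from Lemma~\ref{lem:localboundedsmallteqtournament}, since $c$ has at most five inneighbors in $R_1$). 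Hypothesis (2.2) is then cashed in exactly once: applying the claim to $v$, $x$ and $y$ makes $b$ an inneighbor of $v$ that dominates two members of $v$'s tri-captain, an immediate contradiction with the tri-captain definition. Your case split on the arc between $v$ and $a$, propagating through the tri-captain of $a$, is a different anchor, and it is not evident that the propagation you describe terminates in a contradiction; in particular the precise role of the second tri-captain layer (the whole point of condition (2.2), which distinguishes this lemma from Lemma~\ref{lem:tsixsixtricaptain}) is never made concrete.

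There is also a correctness issue in the one step you do carry out: the ``promotion'' of the $R_2$-side tri-captains to equalities $\{x,y,z\}=\mathteq{(T[N^-_T(v)])}$, $\{x_1,x_2,x_3\}=\mathteq{(T[N^-_T(x)])}$, $\{y_1,y_2,y_3\}=\mathteq{(T[N^-_T(y)])}$ does not follow from the lemmas you cite. Lemma~\ref{lem:teqsourceuniqueminial}, Lemma~\ref{lem:teqirreducible}, Theorem~\ref{lem:noretentivesetofsizefour} and Lemma~\ref{lem:eachtournamentonecaptaintriangle} exclude minimal {\teq}-retentive sets of sizes $1$, $2$, $4$ and duplicate size-$3$ sets, but nothing excludes minimal {\teq}-retentive sets of size $5$ or larger in $T[N^-_T(v)]$; unlike the vertices of $R_1$, the vertices $v,x,y$ carry no indegree bound, so Lemma~\ref{lem:localboundedsmallteqtournament} is unavailable on the $R_2$ side (the statement places no bound on $|R_2|$). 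These equalities are in fact unnecessary --- condition (2) of the tri-captain definition already gives, for any inneighbor $w$ of $v$, that at least two of $\{x,y,z\}$ dominate $w$, which is all the paper ever uses --- but as written your argument leans on the stronger, unestablished equalities. Your opening application of Lemma~\ref{lem:againsttwoteqonetsixnineonecaptainnotempty} to force $\dcapgraph{T[R_2]}$ to be empty is sound but ends up playing no role in a working proof.
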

\begin{proof}
{\assumecontradiction} For ease of exposition, assume that $T_6(2)=T[R_1]$. Moreover, let $R_1=\{a,b,c,d,e,f\}$ and let the vertices in $R_1$ be labeled as in the tournament $T_6(2)$ in \myfig{fig:teqtournamentofsizesix}.
We first prove the following claim.

{\bf{Claim.}} If there are four vertices $v',x',y',z'\in R_2$ such that $\{x',y',z'\}$ is the tri-captain of $v'$ in $T$, then $b\trelation v'$.

We prove the claim by contradiction. Assume that $v'\trelation b$. Then, since $d$ is the captain of $b$ in $T[R_1]$ and $R_1$ is a minimal {\teq}-retentive set of $T$, according to Lemma~\ref{lem:novertexdominatestwoadjacentverticesincapgraingraphs}, it holds that $d\trelation v'$. Since $\{x',y',z'\}$ is a tri-captain of $v'$ in $T$, at least two of $\{x',y',z'\}$ dominate $d$.  Since $f$ is the captain of $d$ in $T[R_1]$ and $R_1$ is a minimal {\teq}-retentive set of $T$,  according to Lemma~\ref{lem:novertexdominatestwoadjacentverticesincapgraingraphs}, the two vertices in $\{x',y',z'\}$ which dominate $d$ must be dominated by $f$. This implies that $v'\trelation f$, since otherwise, according to Lemma~\ref{lem:novertexdominatestwoadjacentverticesincapgraingraphs}, at least two of $\{x',y',z'\}$ should have dominated $f$.  Then, since $e$ is the captain of $f$ in $T[R_1]$ and $R_1$ is a minimal {\teq}-retentive set of $T$, according to Lemma~\ref{lem:novertexdominatestwoadjacentverticesincapgraingraphs}, $e\trelation v'$. Analogous to what we proved for $v'\trelation f$ as above, we can show that $v'\trelation c$. Since $\{a,b,f\}$ is a tri-captain of $c$ in $T[R_1]$, and $c$ has no more than 5 inneighbors in $R_1$, according to Lemma~\ref{lem:localboundedsmallteqtournament}, we have that $\{a,b,f\}=\mathteq{(T[N^-_T(c)])}$. Then, according to Lemma~\ref{lem:tauthreeequalretentive} and Lemma~\ref{lem:t3minimalretentiveset}, at least two of $\{a,b,f\}$ dominate $v'$. However, we have just showed that $\{v'\}\trelation \{b,f\}$; a contradiction. This completes the proof for the claim.

According to the above claim, if at least two of $\{x,y,z\}$ have tri-captains whose vertices are from $R_2$ in $T$, say $x$ and $y$ do, then $\{b\}\trelation \{x,y\}$. However, since $\{x,y,z\}$ is the tri-captain of $v$ in $T$, at least two of $\{x,y,z\}$ should have dominated $b$; a contradiction.
\end{proof}

Now we are ready to show another main result of this paper.

\begin{theorem}
\label{thm:notsixsixseverneightteqtournaments}
There are no $(T_6,T_6\cup T_7\cup T_8)$-retentive tournaments.
\end{theorem}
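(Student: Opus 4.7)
The plan is to assume for contradiction that there is a tournament $T$ with two disjoint minimal $\tau$-retentive sets $R_1$ and $R_2$ satisfying $|R_1|=6$ and $|R_2| \in \{6,7,8\}$. Since Theorem~\ref{thm:betas} tells us $\beta_6 = 2$, the subtournament $T[R_1]$ must be isomorphic to either $T_6(1)$ or $T_6(2)$ in Figure~\ref{fig:teqtournamentofsizesix}. I will split the argument into these two cases and, in each, invoke the forbidden-pair lemmas already developed, together with degree counts and the known classifications of small tournaments with non-empty (directed) domination graphs.

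For the case $T[R_1] \cong T_6(1)$, my goal is to exhibit a vertex $v \in R_2$ with $|\mathteq(T[N^-_T(v)])| = 3$, which combined with Lemma~\ref{lem:tsixsixtricaptain} yields a contradiction. I would subdivide on whether $\dcapgraph{T[R_2]}$ is empty. If it is empty, then no vertex of $R_2$ has $|\mathteq(T[N^-_T(\cdot)])| = 1$ by Lemma~\ref{lem:captainistheuniqueteqofslavery} together with Lemma~\ref{lem:inneighboronesouce}; a minimum-indegree vertex $v$ in $T[R_2]$ (whose indegree in $T[R_2]$ is at most $3$ since $|R_2| \leq 8$) then automatically satisfies $|\mathteq(T[N^-_T(v)])| = 3$, because the values $1, 2, 4$ are ruled out by Lemma~\ref{lem:teqirreducible} and Theorem~\ref{lem:noretentivesetofsizefour}. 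If $\dcapgraph{T[R_2]}$ is non-empty, then by the classifications in Figure~\ref{fig:teqtournamentofsizesix}, Figure~\ref{fig:tsevenone}, and Figure~\ref{fig:teight}, $T[R_2]$ must be one of the explicitly listed tournaments $T_6(1), T_6(2), T_7(1), T_8(1), \ldots, T_8(4)$. Using the tri-captain labellings in those figures and in Table~\ref{tab:teighttricaptains}, I can locate a vertex $v$ that has a tri-captain in $T[R_2]$ while $|N^-_{T[R_2]}(v)| \leq 5$, and then Lemma~\ref{lem:localboundedsmallteqtournament} delivers $|\mathteq(T[N^-_T(v)])| = 3$.

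For the case $T[R_1] \cong T_6(2)$, Lemma~\ref{lem:againsttwoteqonetsixnineonecaptainnotempty} immediately forces $\dcapgraph{T[R_2]}$ to be empty. When $|R_2| = 6$ this already contradicts a degree-sum count: $\sum_{v \in R_2} |N^-_{T[R_2]}(v)| = \binom{6}{2} = 15$ would have to be at least $3 \cdot 6 = 18$, since emptiness of $\dcapgraph{T[R_2]}$ and the above exclusions force every indegree to be at least $3$. When $|R_2| = 7$, the analogous count $\sum d_i = 21 = 3 \cdot 7$ pins every vertex of $T[R_2]$ to indegree exactly $3$, and since the three inneighbours must form a directed triangle (otherwise they would yield a captain), every vertex of $R_2$ has a tri-captain in $T$ whose members again have tri-captains; the iterative hypothesis of Lemma~\ref{lem:tsixnineiterativetricaptain} is then met by any $v \in R_2$. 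When $|R_2| = 8$, the degree constraints $\sum d_i = 28$ with $d_i \in \{3,4,5,6\}$ leave only the four profiles $(n_3,n_4,n_5,n_6) \in \{(4,4,0,0),\,(5,2,1,0),\,(6,1,0,1),\,(6,0,2,0)\}$. In the first three at most one vertex has indegree $\geq 5$, so any indegree-$3$ vertex $v$ has a tri-captain containing at least two further vertices of indegree $\leq 4$, each of which has $|\mathteq(T[N^-_T(\cdot)])| = 3$ by the same exclusion principle, and Lemma~\ref{lem:tsixnineiterativetricaptain} applies. In the profile $(6,0,2,0)$ I would argue that the two indegree-$5$ vertices $A, B$ cannot both lie in the in-neighbourhood of every indegree-$3$ vertex (else each of $A,B$ would have out-degree at least $6$ in $T[R_2]$, contradicting their indegree $5$); hence some indegree-$3$ vertex $v$ has at least two of its tri-captain members outside $\{A,B\}$, and the same conclusion follows.

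The most delicate step will be the degree-sequence analysis for Case B with $|R_2| = 8$, especially the profile $(6,0,2,0)$: although the four profiles are easy to enumerate, verifying the iterative tri-captain hypothesis of Lemma~\ref{lem:tsixnineiterativetricaptain} in that profile rests on the out-degree obstruction for $A,B$ being simultaneously present in every indegree-$3$ neighbourhood. Running parallel to this, the non-empty $\dcapgraph{T[R_2]}$ sub-case of Case A requires a small but careful per-tournament inspection of $T_6(1), T_6(2), T_7(1), T_8(1), \ldots, T_8(4)$ to confirm the existence of a suitable low-indegree vertex with a tri-captain, drawing on Table~\ref{tab:teighttricaptains}.
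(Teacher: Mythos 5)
Your handling of the case $T[R_1]\cong T_6(1)$ with $\dcapgraph{T[R_2]}$ non-empty contains a genuine gap. The tournaments $T_7(1)$ and $T_8(1),\ldots,T_8(4)$ listed in Figures~\ref{fig:tsevenone} and~\ref{fig:teight} are precisely the tournaments of sizes $7$ and $8$ whose domination graphs are \emph{empty} (the figure captions are misleading, but the mathematics is unambiguous: every vertex of $T_7(1)$ has a tri-captain, hence no vertex of $T_7(1)$ has a captain), so they classify the \emph{empty} side of your dichotomy, not the non-empty one. Consequently, in the sub-case $\dcapgraph{T[R_2]}\neq\emptyset$ you cannot reduce $T[R_2]$ to that short list: a size-$7$ minimal {\teq}-retentive set could induce any of $T_7(2),\ldots,T_7(26)$, all of which have non-empty directed domination graphs, and a size-$8$ one could induce any of the several hundred candidates not excluded by Theorem~\ref{thm:betas}. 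Your plan gives no argument for these, and the tool used exactly here in the paper---Lemma~\ref{twoteqonecaptainbroomonenonemptycaptain}, applied with the broom $\broom{b}{f}{e,d}$ in $\dcapgraph{T_6(1)}$ and $\{e,d,a\}=\mathteq{(T[N^-_T(b)])}$ obtained from Lemma~\ref{lem:localboundedsmallteqtournament}---never appears in your proposal, nor does any substitute (your minimum-indegree trick does not help, since a low-indegree vertex of $R_2$ may simply have a captain when $\dcapgraph{T[R_2]}$ is non-empty). So for $|R_2|\in\{7,8\}$ this sub-case is unproved; it is fine only for $|R_2|=6$, where $\beta_6=2$ forces $T[R_2]\in\{T_6(1),T_6(2)\}$.

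The rest of the plan is essentially correct, and in places avoids the paper's computer classification: the empty sub-case of your Case A (minimum indegree at most $3$, sizes $1$ and $2$ excluded, then Lemma~\ref{lem:tsixsixtricaptain}) is valid, and your Case B is a legitimate computation-free alternative to the paper's Cases~4 and~5: after Lemma~\ref{lem:againsttwoteqonetsixnineonecaptainnotempty} forces $\dcapgraph{T[R_2]}$ to be empty, the degree counts and the $|R_2|=8$ profile analysis do establish the hypotheses of Lemma~\ref{lem:tsixnineiterativetricaptain}, provided you make explicit that the directed-triangle in-neighbourhoods are tri-captains \emph{in $T$} and not merely in $T[R_2]$ (this follows from Lemma~\ref{lem:localboundedsmallteqtournament} together with Lemmas~\ref{lem:tauthreeequalretentive} and~\ref{lem:t3minimalretentiveset}) and that indegree $7$ is excluded by irreducibility (Lemma~\ref{lem:teqirreducible}). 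None of this, however, repairs the missing non-empty sub-case for $T_6(1)$, which is the main hole in the argument.
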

\myvspace{-5pt}
\begin{proof}
Due to Theorem~\ref{thm:betas}, if a tournament $T$ has a minimal {\teq}-retentive set $R$ of size~6, then $T[R]$ is either isomorphic to the tournament $T_6(1)$ or to the tournament $T_6(2)$ in \myfig{fig:teqtournamentofsizesix}. Hence, it suffices to prove that there are no $(\{T_6(1),T_6(2)\},\{T_6(1),T_6(2)\}\cup T_7\cup T_8)$-retentive tournaments. We break down the proof of the theorem into the following cases.
\caseinproofskip

Case~1. There are no $(T_6(1),H)$-retentive tournaments where $\capgraph{H}$ is not empty.

Assume for the sake of contradiction that this is not the case. Let $T$ be a tournament which has two minimal {\teq}-retentive sets $R_1$ and $R_2$ such that $T[R_1]$ is isomorphic to the tournament $T_6(1)$ in \myfig{fig:teqtournamentofsizesix} and $T[R_2]$ is isomorphic to a tournament $H$ such that $\capgraph{H}$ is not empty. For ease of exposition, assume that $T_6(1)=T[R_1]$ (thus $R_1=\{a,b,c,d,e,f\}$ and the vertices are labeled as in \myfig{fig:teqtournamentofsizesix}). Observe that there is a broom $\broom{b}{f}{e,d}$ in $\dcapgraph{T[R_1]}$. Moreover, $\{e,d,a\}$ is a tri-captain of $b$ in $T[R_1]$. 
Then, according to Lemma~\ref{lem:localboundedsmallteqtournament}, $\{e,d,a\}=\mathteq{(T[N^-_{T}(b)])}$. However, according to Lemma~\ref{twoteqonecaptainbroomonenonemptycaptain}, $R_1$ and $R_2$ cannot be minimal {\teq}-retentive sets of $T$ simultaneously; a contradiction.

We wrote a C++ program to filter out all non-isomorphic tournaments of sizes $6,7,8$ such that their domination graphs are not empty.
After filtering out these tournaments, there remain no tournament of size 6, one tournament of size 7 and four tournaments of size 8. The tournament of size 7 is isomorphic to the tournament $T_7(1)$ in \myfig{fig:tsevenone}, and the four tournaments of size 8 are isomorphic to the tournaments $T_8(1),T_8(2),T_8(3),T_8(4)$ in \myfig{fig:teight}, respectively.
Due to our results from the C++ program and the above proof for Case~1, it is sufficient to consider the following cases.
\caseinproofskip

Case~2. There are no $(T_6(1),T_7(1))$-retentive tournaments.

Assume for the sake of contradiction that this is not the case. Let $T$ be a tournament which has two minimal {\teq}-retentive sets $R_1$ and $R_2$ such that $T[R_1]$ is isomorphic to the tournament $T_6(1)$ in \myfig{fig:teqtournamentofsizesix}, and $T[R_2]$ is isomorphic to the tournament $T_7(1)$ in \myfig{fig:tsevenone}. For ease of exposition, assume that $T_7(1)=T[R_2]$. Moreover, let the vertices in $R_2=\{a,b,c,d,e,f,g\}$ be labeled as in the tournament $T_7(1)$ in \myfig{fig:tsevenone}. It is easy to verify that $\{e,f,g\}$ is the tri-captain of $a$ in $T[R_2]$, and $a$ has no more than $5$ inneighbors in $R_2$. Then, due to Lemma~\ref{lem:localboundedsmallteqtournament}, $\{e,f,g\}=\mathteq{(T[N^-_{T}(a)])}$. However, this cannot be the case according to Lemma~\ref{lem:tsixsixtricaptain}.
\caseinproofskip

Case 3. There are no $(T_6(1),  T_8(i))$-{\teq}-retentive tournaments for all $i\in \{1,2,3,4\}$.

One can check that $\{b,c,d,e,f,g,h\}$ is a 3-locally bounded minimal {\teq}-retentive set in each of the tournaments $T_8(1), T_8(2), T_8(3)$ (see \mytable{tab:teighttricaptains} for a summary of tri-captains of all vertices). Moreover, every of $\{b,c,d,e,f,g,h\}$ has at most 5 inneighbors in $T_8(i)$ for every $i\in \{1,2,3\}$. Then, due to Lemma~\ref{lem:threeloallyboudnedminimalteqagainst}, none of $\{T_8(1), T_8(2), T_8(3)\}$ is a {\teq}-retentive tournament. Therefore, there are no $(\{T_6(1)\}, \{T_8(1),T_8(2),T_8(3)\})$-{\teq}-retentive tournaments. It remains to show that there are no  $(T_6(1),  T_8(4))$-{\teq}-retentive tournaments. It is easy to verify that $\{h,g,e\}$ is the tri-captain of the vertex $a$ in $T_8(4)$. Moreover, the vertex $a$ has no more than 5 inneighbors in $T_8(4)$. According to Lemma~\ref{lem:localboundedsmallteqtournament}, we have that $\{h,g,e\}=\mathteq{(T[N^-_{T}(a)])}$. Then, according to Lemma~\ref{lem:tsixsixtricaptain}, there is no tournament $T$ which admits two minimal {\teq}-retentive sets $R_1$ and $R_2$ such that $T[R_1]$ is isomorphic to $T_6(1)$ and $T[R_2]$ is isomorphic to $T_8(4)$; a contradiction.
\caseinproofskip

Case 4. There are no $(T_6(2),H)$-retentive tournaments where ${\capgraph{H}}$ is not empty.

Due to Lemma~\ref{lem:againsttwoteqonetsixnineonecaptainnotempty}, there is no tournament $T$ which has two minimal {\teq}-retentive sets $R_1$ and $R_2$ such that $T[R_1]$ is isomorphic to $T_6(2)$ and $\capgraph{T[R_2]}$ is not empty. This completes the proof for this case.

As discussed above in the proof for Case~1, all tournaments of size 7 which have empty domination graphs are isomorphic to $T_7(1)$. Moreover, all tournaments of size 8 which have empty domination graphs are isomorphic to one of $\{T_8(1),T_8(2),T_8(3),T_8(4)\}$. Furthermore, due to the proof for Case~3, none of $\{T_8(1),T_8(2),T_8(3)\}$ is a {\teq}-retentive tournament. Hence, it suffices to consider the following case.
\caseinproofskip

Case~5. There are no $(\{T_6(2)\},\{T_7(1),T_8(4)\})$-retentive tournaments.

Since every vertex in $T_7(1)$ and every vertex in $T_8(4)$ has a tri-captain (see \myfig{fig:tsevenone} and \mytable{tab:teighttricaptains} for further details), according to Lemma~\ref{lem:tsixnineiterativetricaptain}, there is no tournament $T$ which admits two minimal {\teq}-retentive sets $R_1$ and $R_2$ such that $T[R_1]$ is isomorphic to $T_6(2)$ and $T[R_2]$ is isomorphic to any of $\{T_7(1),T_8(4)\}$. This completes the proof for this case.
\end{proof}

Now, we study $(T_7, T_7)$-{\teq}-retentive tournaments.

\begin{theorem}\label{thm:notsevenseventeqtournaments}
There are no $(T_7, T_7)$-{\teq}-retentive tournaments.
\end{theorem}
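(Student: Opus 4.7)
I would assume for contradiction that some tournament $T$ has two disjoint minimal {\teq}-retentive sets $R_1, R_2$ of size $7$ (disjointness from Lemma~\ref{lem:distinctminiteqs}). By Theorem~\ref{thm:betas} and the filter invoked in the proof of Theorem~\ref{thm:notsixsixseverneightteqtournaments}, each $T[R_i]$ is one of the $26$ {\teq}-retentive tournaments of size $7$ listed in Table~\ref{tab:teqretentivetournametnsofsizeseven}, and exactly one of them---the tournament $T_7(1)$ in \myfig{fig:tsevenone}---has empty directed captain graph. In $T_7(1)$ every vertex has indegree $3$ and its three inneighbors form a directed triangle that, by Lemma~\ref{lem:localboundedsmallteqtournament}, equals $\mathteq(T[N^-_T(v)])$.

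Case 1: $T[R_1] \cong T_7(1) \cong T[R_2]$. For each $u \in R_2$ define $S(u) := N^+_T(u) \cap R_1$. Pick any $v \in S(u)$: Lemma~\ref{lem:t3minimalretentiveset}(2), applied to the minimal {\teq}-retentive set $N^-_{T[R_1]}(v) = \mathteq(T[N^-_T(v)])$ of $T[N^-_T(v)]$, tells us that $u$ cannot dominate two vertices of $N^-_{T[R_1]}(v)$. Hence every vertex of $T[R_1][S(u)]$ has indegree at most $1$; since a tournament on $k$ vertices has $\binom{k}{2}$ arcs, this forces $|S(u)| \leq 3$. The symmetric argument gives $|N^+_T(v) \cap R_2| \leq 3$ for every $v \in R_1$. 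Summing the two sides, the number of arcs between $R_1$ and $R_2$ is at most $21 + 21 = 42 < 49 = |R_1|\cdot|R_2|$, a contradiction.

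Case 2: Exactly one copy, say $T[R_1]$, is $T_7(1)$. The double-counting of Case 1 still yields $|N^+_T(v) \cap R_1| \leq 3$ for every $v \in R_2$. Since $T[R_2]$ has non-empty captain graph, pick an arc $u' \trelation u$ in $\dcapgraph{T[R_2]}$; Lemma~\ref{lem:inneighboronesouce} promotes $u'$ to the captain of $u$ in $T$, so $u'$ dominates every inneighbor of $u$ in $T$, and in particular $N^-_T(u) \cap R_1 \subseteq N^+_T(u') \cap R_1$. Hence $|N^-_T(u) \cap R_1| \leq |N^+_T(u') \cap R_1| \leq 3$, forcing $|N^+_T(u) \cap R_1| \geq 4$, which contradicts the bound $|N^+_T(u) \cap R_1| \leq 3$ applied to $u$ itself.

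Case 3, the main obstacle: neither copy is $T_7(1)$, so both $\dcapgraph{T[R_1]}$ and $\dcapgraph{T[R_2]}$ are non-empty. If either contains a directed cycle, Lemma~\ref{lem:captaingraphagainsttwoteq} immediately produces a contradiction; otherwise, by Lemma~\ref{lem:dominationgraphproperty}, both are non-empty acyclic forests of caterpillars. In this residual subcase the counting from Cases 1--2 is no longer available, so I would mimic the computational filter strategy of Section~\ref{sec:numberofteqretentivetournaments} and the proof of Theorem~\ref{thm:notsixsixseverneightteqtournaments}. Using the upper-triangle adjacency data of Table~\ref{tab:teqretentivetournametnsofsizeseven}, I would enumerate in C++ the $25$ candidates with non-empty captain graph and, for each one whose captain graph is acyclic, exhibit a certificate: either a broom $\broom{u}{v}{w_1,w_2}$ whose head $u$ has tri-captain $\{w_1,w_2,w\}$---so that Lemmas~\ref{lem:localboundedsmallteqtournament} and~\ref{twoteqonecaptainbroomonenonemptycaptain} rule out coexistence with any non-empty $\dcapgraph$---or a sibling-class configuration violating Lemma~\ref{lem:captaingrapht43property}. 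The hard part is not conceptual but purely combinatorial bookkeeping: confirming that some such certificate can be extracted from every one of the acyclic candidates.
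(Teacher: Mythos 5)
Your Cases~1 and~2 are correct, and they are actually a cleaner route than the paper takes for the pairs involving $T_7(1)$: the bound $|N^+_T(u)\cap R_1|\le 3$ for every $u\in R_2$ (via Lemma~\ref{lem:t3minimalretentiveset}(2) applied to the tri-captain of each outneighbour $v\in R_1$, which equals $\mathteq{(T[N^-_T(v)])}$ by Lemma~\ref{lem:localboundedsmallteqtournament}) together with the $49$-arc count replaces the pigeonhole argument of Lemma~\ref{lem:againsttwoteqbothtsevenone}, and combining that bound with a captain arc in $R_2$ (Lemma~\ref{lem:inneighboronesouce}) replaces the longer case chase of Lemma~\ref{lem:againsttwoteqonetsevenoneonecaptainnotempty}.

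The genuine gap is Case~3, which is where essentially all of the content of the theorem lives: the $25$ candidates $T_7(2),\dots,T_7(26)$ paired against one another. There you only state a plan -- enumerate the candidates and hope each acyclic one carries either a broom $\broom{u}{v}{w_1,w_2}$ whose head's $\tau$-set is $\{w_1,w_2,w\}$ (Lemma~\ref{twoteqonecaptainbroomonenonemptycaptain}) or a sibling-parity obstruction via Lemma~\ref{lem:captaingrapht43property} -- and you never verify that such a certificate exists for every candidate. The paper's own proof indicates this certificate menu is insufficient: for $T_7(2)$ and $T_7(3)$ the relevant broom has five leaves inducing $T_5(1)$ or $T_5(6)$, so the head's $\tau$-set has size $5$ and a different lemma (Lemma~\ref{lem:againsttwoteqsbroomtfiveoneandsixcaptainnotempty}) is needed; and for $T_7(8)$ and $T_7(18)$ through $T_7(25)$ neither certificate is used -- the paper has to prove dedicated lemmas (Lemmas~\ref{lem:againsttwoteqonetseveneightonecaptainnotempty} and~\ref{lem:againsttwoteqonetseveneighteenonecaptainnotempty}) that start from an arbitrary arc $v\trelation u$ of $\dcapgraph{T[R_2]}$ and chase forced dominations through several tri-captains of $R_1$, invoking Lemmas~\ref{lem:teqisomorhpictoT43}, \ref{lem:novertexdominatestwoadjacentverticesincapgraingraphs}, \ref{lem:localboundedsmallteqtournament} and~\ref{lem:t3minimalretentiveset} in sequence before reaching a contradiction. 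So roughly ten of the twenty-five candidates are not covered by the two certificate types you allow, and what you dismiss as ``purely combinatorial bookkeeping'' is in fact the bulk of the proof; as written, your argument establishes the theorem only for pairs in which one of the two retentive sets induces $T_7(1)$ or whose domination graph contains a directed cycle.
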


The remainder of this paper is devoted to the proof of Theorem~\ref{thm:notsevenseventeqtournaments}.
As discussed in Section~\ref{sec:numberofteqretentivetournaments} (before Theorem~\ref{thm:betas}), each {\teq}-retentive tournament of size $7$ is isomorphic to one of the 26 tournaments $T_7(i)$, $i\in \{1,2,...,26\}$, shown in Appendix. Observe that, due to symmetry, if there are no $(H_1,H_2)$-{\teq}-retentive tournaments, then there are no $(H_2,H_1)$-{\teq}-retentive tournaments. Therefore, to prove Theorem~\ref{thm:notsevenseventeqtournaments} it suffices to prove that there are no $(T_7(i), T_7(j))$-{\teq}-retentive tournaments for every two integers $1\leq i\leq j\leq 26$.
%
%
We break down our proof into numerous lemmas, with each stating the non-existence of $(\mathcal{H}_1,\mathcal{H}_2)$-retentive tournaments, for $\mathcal{H}_1$ and $\mathcal{H}_2$ being subsets of $\{T_7(i) \mid i\in \{1,2,...,26\}\}$. Moreover, the union of these lemmas cover all possibilities of $T_7(i)$ and $T_7(j)$ mentioned above.  In Appendix, the directed domination graph of each tournament $T_7(i)$, $i\in \{1,2,...,26\}$, is also given. Moreover, if a vertex has a tri-captain, its tri-captain is labeled next to the vertex. These information are helpful for the readers to follow the proofs of the lemmas.

\begin{lemma}
\label{lem:againsttwoteqonetsevenoneonecaptainnotempty}
Let $T$ be a tournament, and $R_1$ and $R_2$ be two vertex subsets of $T$ such that $R_1\cap R_2=\emptyset$. If the following conditions hold, then $R_1$ and $R_2$ cannot be minimal {\teq}-retentive sets of $T$ simultaneously.

(1) $T[R_1]$ is isomorphic to the tournament $T_7(1)$; and

(2) $\dcapgraph{T[R_2]}$ is not empty.
\end{lemma}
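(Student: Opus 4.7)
My plan is to mirror the strategy of Lemma~\ref{lem:againsttwoteqonetsixnineonecaptainnotempty}, but to replace its case-by-case analysis by a global pigeonhole argument exploiting the high regularity of $T_7(1)$. The crucial structural fact, read off from \myfig{fig:tsevenone}, is that every vertex $w$ of $T_7(1)$ has a tri-captain in $T[R_1]$; since $T[R_1]$ is irreducible we have $|N^-_{T[R_1]}(w)|\leq 5$, so Lemma~\ref{lem:localboundedsmallteqtournament} yields $\mathteq{(T[N^-_T(w)])}=\{x,y,z\}$, the tri-captain of $w$. In particular, Lemmas~\ref{lem:tauthreeequalretentive} and~\ref{lem:t3minimalretentiveset} apply to every such $3$-set.

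I would argue by contradiction. Fix an arc $v\trelation u$ in $\dcapgraph{T[R_2]}$. The first step is to show that for every $w\in R_1$, exactly one of $u,v$ dominates $w$ and the other is dominated by $w$. By Lemma~\ref{lem:novertexdominatestwoadjacentverticesincapgraingraphs} applied to $R_2$, no vertex dominates both $u$ and $v$, which rules out $\{w\}\trelation\{u,v\}$. The opposite orientation $\{u,v\}\trelation\{w\}$ is ruled out by noting that both $u$ and $v$ would lie in $N^-_T(w)$, so by Lemma~\ref{lem:t3minimalretentiveset} at least two tri-captain members of $w$ dominate $u$ and at least two dominate $v$, forcing by pigeonhole a single vertex to dominate both $u$ and $v$, again contradicting Lemma~\ref{lem:novertexdominatestwoadjacentverticesincapgraingraphs}.

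Partition $R_1$ into $A=\{w : w\trelation u,\ v\trelation w\}$ and $B=\{w : u\trelation w,\ w\trelation v\}$, with $|A|+|B|=7$. For each $w\in A$, $v\in N^-_T(w)$, so Lemma~\ref{lem:t3minimalretentiveset} forces at least two of the tri-captain of $w$ (which are $T[R_1]$-inneighbors of $w$) to dominate $v$; any such vertex must lie in $B$. Symmetrically, for each $w\in B$ at least two $T[R_1]$-inneighbors of $w$ lie in $A$. Summing, inside $T[R_1]$ there are at least $2|A|$ arcs from $B$ to $A$ and at least $2|B|$ arcs from $A$ to $B$. Since the total number of $T[R_1]$-arcs between $A$ and $B$ equals $|A|\cdot|B|$, we obtain $|A|\cdot|B|\geq 2(|A|+|B|)=14$; but for $|A|+|B|=7$ the product is at most $3\cdot 4=12$, a contradiction.

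The main obstacle is conceptual: $\dcapgraph{T[R_1]}$ is empty for $T_7(1)$ (every vertex has a tri-captain and hence no captain), so the workhorse tools such as Lemma~\ref{lem:teqisomorhpictoT43} and Lemma~\ref{twoteqonecaptainbroomonenonemptycaptain} do not apply directly. One must instead exploit the tri-captains globally via the counting inequality above, which only uses that $T_7(1)$ is covered by tri-captains and that $|R_1|=7$.
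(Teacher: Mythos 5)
Your proof is correct, and it takes a genuinely different route from the paper's. The paper also begins by using Lemma~\ref{lem:localboundedsmallteqtournament} to pin down $\mathteq{(T[N^-_T(w)])}$ as the tri-captain of every $w\in R_1$, but then fixes the arc $v\trelation u$ in $\dcapgraph{T[R_2]}$ and argues by cases on the orientation between $u$ and the specific vertex $e$ of $T_7(1)$, chasing chains of forced arcs (via Lemmas~\ref{lem:tauthreeequalretentive}, \ref{lem:t3minimalretentiveset} and \ref{lem:novertexdominatestwoadjacentverticesincapgraingraphs}) through three subcases until two forced dominations collide. Your argument replaces this structure-specific chase by a global counting step: the initial dichotomy (each $w\in R_1$ is dominated by exactly one of $u,v$) is established exactly as in the paper's toolkit, and then the partition $R_1=A\cup B$ together with the tri-captain property forces at least $2|A|$ arcs from $B$ to $A$ and $2|B|$ arcs from $A$ to $B$ inside $T[R_1]$, so $|A|\,|B|\ge 2(|A|+|B|)=14$ while $|A|+|B|=7$ caps the product at $12$. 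All the lemma applications check out (in particular, every vertex of $T_7(1)$ does have its full inneighbourhood, of size $3\le 5$, as a tri-captain, and $u,v\notin R_1$ by disjointness), including the degenerate cases $A=\emptyset$ or $B=\emptyset$, which the inequality absorbs. What your approach buys is uniformity and generality: it uses only that $R_1$ has size $7$ and is covered by tri-captains of bounded indegree, so it would dispose of any such $R_1$ in one stroke, whereas the paper's proof is tailored to the arc pattern of $T_7(1)$; its stylistic cousin in the paper is the arc-counting argument of Lemma~\ref{lem:againsttwoteqbothtsevenone} rather than the case analysis actually used here. The one limitation worth noting is that the inequality $|A|\,|B|\ge 2(|A|+|B|)$ becomes tight at size $8$ (since $4\cdot 4=16$), so this counting alone would not extend to the size-$8$ analogues the paper treats by other means.
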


\begin{proof}
{\assumecontradiction} For ease of exposition, assume that $T_7(1)=T[R_1]$; thus, $R_1=\{a,b,c,d,e,f,g\}$. 
Let $v\trelation u$ be an arbitrary arc in $\dcapgraph{T[R_2]}$.  We complete the proof by distinguishing between the following two cases, with respect to the arc between the vertices $u$ and $e$. We shall show that each case leads to a contradiction. Before proceeding further, let's first observe a useful  property. The tri-captain of each vertex in $T_7(1)$ is shown in Appendix. Since each vertex has indegree at most 5 in $T_7(1)$ and $R_1$ is a minimal {\teq}-retentive set of $T$, according to Lemma~\ref{lem:localboundedsmallteqtournament}, it holds that

$\{e,f,g\}=\mathteq{(T[N^-_{T}(a)])}$,

$\{a,d,g\}=\mathteq{(T[N^-_{T}(b)])}$,

$\{a,b,f\}=\mathteq{(T[N^-_{T}(c)])}$,

$\{a,c,e\}=\mathteq{(T[N^-_{T}(d)])}$,

$\{b,c,g\}=\mathteq{(T[N^-_{T}(e)])}$,

$\{b,d,e\}=\mathteq{(T[N^-_{T}(f)])}$,

$\{c,d,f\}=\mathteq{(T[N^-_{T}(g)])}$.

\caseinproofskip
Case~1. $u\trelation e$.

Since $\{b,c,g\}=\mathteq{(T[N^-_{T}(e)])}$, according to Lemma~\ref{lem:tauthreeequalretentive} and Lemma~\ref{lem:t3minimalretentiveset}, at least two of $\{b,c,g\}$ dominate $u$. We consider all subcases.

Subcase 1. $\{b,c\}\trelation \{u\}$

According to Lemma~\ref{lem:novertexdominatestwoadjacentverticesincapgraingraphs}, $\{v\}\trelation \{b,c\}$.
Since $\{a,b,f\}=\mathteq{(T[N^-_{T}(c)])}$, according to Lemma~\ref{lem:tauthreeequalretentive} and Lemma~\ref{lem:t3minimalretentiveset}, at least two of $\{a,b,f\}$ dominate $v$. Since $v\trelation b$, it holds that $\{a,f\}\trelation \{v\}$. Then, according to Lemma~\ref{lem:novertexdominatestwoadjacentverticesincapgraingraphs}, $\{u\}\trelation \{a,f\}$.
Since $\{e,f,g\}=\mathteq{(T[N^-_{T}(a)])}$, according to Lemma~\ref{lem:tauthreeequalretentive} and Lemma~\ref{lem:t3minimalretentiveset}, at least two of $\{e,f,g\}$ dominate $u$. However, we have showed above that $\{u\}\trelation \{e,f\}$; a contradiction.

Subcase 2. $\{b,g\}\trelation \{u\}$

According to Lemma~\ref{lem:novertexdominatestwoadjacentverticesincapgraingraphs}, $\{v\}\trelation \{b,g\}$.
Since $\{a,d,g\}=\mathteq{(T[N^-_{T}(b)])}$, according to Lemma~\ref{lem:tauthreeequalretentive} and Lemma~\ref{lem:t3minimalretentiveset}, at least two of $\{a,d,g\}$ dominate $v$. Since $v\trelation g$, it holds that $\{a,d\}\trelation \{v\}$. Then, according to Lemma~\ref{lem:novertexdominatestwoadjacentverticesincapgraingraphs}, $\{u\}\trelation \{a,d\}$.
Since $\{a,c,e\}=\mathteq{(T[N^-_{T}(d)])}$, according to Lemma~\ref{lem:tauthreeequalretentive} and Lemma~\ref{lem:t3minimalretentiveset}, at least two of $\{a,c,e\}$ dominate $u$. However, we have showed above that $\{u\}\trelation \{a,e\}$; a contradiction.

Subcase 3. $\{c,g\}\trelation \{u\}$

According to Lemma~\ref{lem:novertexdominatestwoadjacentverticesincapgraingraphs}, $\{v\}\trelation \{c,g\}$.
Since $\{c,d,f\}=\mathteq{(T[N^-_{T}(g)])}$, according to Lemma~\ref{lem:tauthreeequalretentive} and Lemma~\ref{lem:t3minimalretentiveset}, at least two of $\{c,d,f\}$ dominate $v$. Since $v\trelation c$, it holds that $\{d,f\}\trelation \{v\}$. Then, according to Lemma~\ref{lem:novertexdominatestwoadjacentverticesincapgraingraphs}, $\{u\}\trelation \{d,f\}$.
Since $\{b,d,e\}=\mathteq{(T[N^-_{T}(f)])}$, according to Lemma~\ref{lem:tauthreeequalretentive} and Lemma~\ref{lem:t3minimalretentiveset}, at least two of $\{b,d,e\}$ dominate $u$. However, we have showed above that $\{u\}\trelation \{d,e\}$; a contradiction.

\caseinproofskip
Case~2. $e\trelation u$.

According to Lemma~\ref{lem:novertexdominatestwoadjacentverticesincapgraingraphs}, it holds that $v\trelation e$.
Then, by exchanging all occurrences of $u$ and $v$ in the above proof for Case~1, we can show that this case leads to some contradictions. 
\end{proof}

Since all tournaments of size $7$ with empty domination graphs are isomorphic to $T_7(1)$, the above lemma implies that there are no $(T_7(1),T_7(i))$-{\teq}-retentive tournaments for all integer $2\leq i\leq 26$. In the following, we show that there are no $(T_7(1),T_7(1))$-{\teq}-retentive tournaments.

\begin{lemma}
\label{lem:againsttwoteqbothtsevenone}
Let $T$ be a tournament, and $R_1$ and $R_2$ be two vertex subsets of $T$ such that $R_1\cap R_2=\emptyset$. If both $T[R_1]$ and $T[R_2]$ are isomorphic to $T_7(1)$, then $R_1$ and $R_2$ cannot be minimal {\teq}-retentive sets of $T$ simultaneously.
\end{lemma}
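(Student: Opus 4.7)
The plan is that this lemma follows essentially immediately from Lemma~\ref{lem:againsttwoteqonetsevenoneonecaptainnotempty}, so the proof reduces to verifying the hypotheses of that lemma. Recall that Lemma~\ref{lem:againsttwoteqonetsevenoneonecaptainnotempty} already covers the case where $T[R_1]$ is isomorphic to $T_7(1)$ and $\dcapgraph{T[R_2]}$ is non-empty. Hence the only thing to check is that when $T[R_2]$ is isomorphic to $T_7(1)$, its directed domination graph $\dcapgraph{T[R_2]}$ is non-empty.

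To see this, observe that $T_7(1)$ itself is displayed in \myfig{fig:tsevenone} together with its (non-empty) domination graph. Moreover, in the proof of Case~1 of Theorem~\ref{thm:notsixsixseverneightteqtournaments} we explicitly identified $T_7(1)$ as the unique non-isomorphic tournament of size~$7$ whose domination graph is non-empty (indeed, every vertex of $T_7(1)$ has a tri-captain, and the captain relation is visible from the listed tri-captains in \myfig{fig:tsevenone}). Since the domination graph and the directed domination graph are defined purely in terms of the tournament's dominance relation, they are invariant under isomorphism. Consequently, $\dcapgraph{T[R_2]}$ contains at least one arc.

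With $T[R_1]$ isomorphic to $T_7(1)$ and $\dcapgraph{T[R_2]}$ non-empty, both hypotheses of Lemma~\ref{lem:againsttwoteqonetsevenoneonecaptainnotempty} hold for the pair $(R_1,R_2)$. That lemma then directly implies that $R_1$ and $R_2$ cannot simultaneously be minimal {\teq}-retentive sets of $T$, which is exactly the conclusion we want. The argument requires no further case analysis or combinatorial book-keeping: the hard work has already been done in Lemma~\ref{lem:againsttwoteqonetsevenoneonecaptainnotempty}, and the present lemma is simply a corollary obtained by noting that $T_7(1)$ is the ``non-empty-domination-graph'' tournament of size~$7$, so the roles of $R_1$ and $R_2$ can be filled symmetrically by isomorphic copies of $T_7(1)$.
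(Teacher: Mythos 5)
There is a genuine gap, and it sits exactly at the one point your reduction needs: the claim that $\dcapgraph{T[R_2]}$ is non-empty when $T[R_2]$ is isomorphic to $T_7(1)$ is false. The caption of \myfig{fig:tsevenone} that you lean on is a typo; the body of the paper says the opposite in two places (the sentence immediately after Lemma~\ref{lem:againsttwoteqonetsevenoneonecaptainnotempty}, ``Since all tournaments of size $7$ with empty domination graphs are isomorphic to $T_7(1)$\dots'', and the discussion before Case~5 in the proof of Theorem~\ref{thm:notsixsixseverneightteqtournaments}), and the mathematics confirms it: in $T_7(1)$ every vertex has in-degree $3$ and its in-neighbourhood induces a directed triangle, which has no source, so no vertex has a captain and $\dcapgraph{T_7(1)}$ contains no arc at all. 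Your parenthetical remark actually proves this against you: the fact that every vertex has a \emph{tri-}captain means $\mathteq$ of each in-neighbourhood is a $3$-set, hence (by Lemma~\ref{lem:captainistheuniqueteqofslavery}) not a singleton, hence there is no captain. So hypothesis~(2) of Lemma~\ref{lem:againsttwoteqonetsevenoneonecaptainnotempty} fails when $T[R_2]\cong T_7(1)$; that lemma covers the pairs $(T_7(1),T_7(i))$ for $i\geq 2$ precisely because those $T_7(i)$ have non-empty domination graphs, and the present lemma exists as a separate statement precisely because $T_7(1)$ does not. Your proposal therefore reduces the lemma to a lemma that does not apply, and no isomorphism-invariance argument can repair this.

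The paper's actual proof has to work harder. It uses Lemma~\ref{lem:localboundedsmallteqtournament} to identify, for every vertex $u$ in either copy of $T_7(1)$, the exact $3$-element set $\mathteq{(T[N^-_T(u)])}$ inside that copy, and observes that any two of the seven such sets within one copy intersect in at most one vertex. Counting the $7\times 7=49$ arcs between $R_1$ and $R_2$ yields a vertex $v$ in one copy with at least four out-neighbours $x,y,z,w$ in the other copy; since $v$ dominates each of them, Lemma~\ref{lem:tauthreeequalretentive} and Lemma~\ref{lem:t3minimalretentiveset} force $|\{x,y,z,w\}\cap \mathteq{(T[N^-_T(u)])}|\leq 1$ for each $u\in\{x,y,z,w\}$, so each such $\mathteq$-set puts at least two vertices into the remaining three vertices of that copy, and the pigeonhole principle produces two of these $\mathteq$-sets sharing two vertices --- contradicting the pairwise-intersection bound. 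If you want to prove the lemma yourself, some argument of this counting type (or another structural argument specific to two disjoint copies of $T_7(1)$) is needed; citing Lemma~\ref{lem:againsttwoteqonetsevenoneonecaptainnotempty} cannot do the job.
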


\begin{proof}
We prove the lemma by contradiction. Assume that $R_1$ and $R_2$ are two distinct minimal {\teq}-retentive sets of $T$ such that both $T[R_1]$ and $T[R_2]$ are isomorphic to the tournament $T_7(1)$. 
Let $\{a(i),b(i),c(i),d(i),e(i),f(i),g(i)\}=R_i$ for $i=1,2$, where every vertex $v(i)\in \{a(i),...,g(i)\}$ corresponds to the vertex $v$ in $T_7(1)$.
Due to the proof of Lemma~\ref{lem:againsttwoteqonetsevenoneonecaptainnotempty}, for both $i=1,2$ it holds that

$\{e(i),f(i),g(i)\}=\mathteq{(T[N^-_{T}(a(i))])}$,

$\{a(i),d(i),g(i)\}=\mathteq{(T[N^-_{T}(b(i))])}$,

$\{a(i),b(i),f(i)\}=\mathteq{(T[N^-_{T}(c(i))])}$,

$\{a(i),c(i),e(i)\}=\mathteq{(T[N^-_{T}(d(i))])}$,

$\{b(i),c(i),g(i)\}=\mathteq{(T[N^-_{T}(e(i))])}$,

$\{b(i),d(i),e(i)\}=\mathteq{(T[N^-_{T}(f(i))])}$,

$\{c(i),d(i),f(i)\}=\mathteq{(T[N^-_{T}(g(i))])}$.

One can check easily that for any two tournament equilibrium sets $A$ and $B$ listed above (the sets on the left side), $|A\cap B|\leq 1$. 

For a vertex $v$ and a vertex subset $S$ of $T$, let $N^+(v,S)$ denote the set of outneighbors of $v$ in $S$.
The number of arcs between the vertices in $R_1$ and the vertices in $R_2$ is \[\sum_{v\in R_1}N^+(v,R_2)+\sum_{v\in R_2}N^+(v,R_1).\]
Since $|R_1|=|R_2|=7$, there are exactly $7\times 7=49$ arcs between them; that is, \[\sum_{v\in R_1}N^+(v,R_2)+\sum_{v\in R_2}N^+(v,R_1)=49.\] This implies that there is at least one vertex $v\in R_i$ such that $N^+(v,R_{3-i})\geq 4$ for some $i\in \{1,2\}$. Due to symmetry, assume that $i=1$. Moreover, let $x,y,z,w$ be any four outneighbors of $v$ in $R_2$.
Then, for each $u\in \{x,y,z,w\}$ it holds that \[|\{x,y,z,w\}\cap \mathteq{(T[N^-_{T}(u)])}|\leq 1;\] since otherwise, according to Lemma~\ref{lem:tauthreeequalretentive} and Lemma~\ref{lem:t3minimalretentiveset}, at least two of $\mathteq{(T[N^-_{T}(u)])}$ should have dominated $v$.\comments{further explanation?}
Then, since $|\mathteq{(T[N^-_{T}(u)])}|=3$ for every $u\in \{x,y,z,w\}$ as discussed above, it holds that \[|\mathteq{(T[N^-_{T}(u)])}\cap (R_2\setminus \{x,y,z,w\})|\geq 2.\] Moreover, since $|R_2\setminus \{x,y,z,w\}|=3$, according to Pigeonhole principle, there are two distinct vertices $\{u_1,u_2\}\subset \{x,y,z,w\}$ such that \[|\mathteq{(T[N^-_{T}(u_1)])}\cap \mathteq{(T[N^-_{T}(u_2)])}\cap (R_2\setminus \{x,y,z,w\})|\geq 2.\]
However, this cannot be the case, since $|\mathteq{(T[N^-_{T}(u_1)])}\cap \mathteq{(T[N^-_{T}(u_2)])}|\leq  1$ for every possible $u_1,u_2\in R_2$, as we have observed above; a contradiction.
\end{proof}

Combining Lemmas~\ref{lem:againsttwoteqonetsevenoneonecaptainnotempty} and~\ref{lem:againsttwoteqbothtsevenone}, we can conclude that there are no $(T_7(1),T_7(i))$-{\teq}-retentive tournaments for all integer $1\leq i\leq 26$. In the following, we prove that there are no $(\{T_7(2),T_7(3)\},\bigcup_{2\leq i\leq 26}\{T_7(i)\})$-{\teq}-retentive tournaments. Let's first study some useful properties.

\begin{lemma}
\label{claim:a}
Let $T$ be a tournament and $R$ a minimal {\teq}-retentive set of $T$. If there is a vertex $v\in R$ such that $T[N^-_{T[R]}(v)]$ is isomorphic to the tournament $T_5(1)$ or the tournament $T_5(6)$ in \myfig{fig:tfive}, then $N^-_{T[R]}(v)$ is the unique minimal {\teq}-retentive set of $T[N^-_T(v)]$, that  is, $N^-_{T[R]}(v)=\mathteq{(T[N^-_T(v)])}$.
\end{lemma}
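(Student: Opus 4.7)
The plan is to show that every minimal $\tau$-retentive set $S$ of $T[N^-_T(v)]$ must coincide with $N^-_{T[R]}(v)$; since this also gives uniqueness, we conclude $\mathteq{(T[N^-_T(v)])}=N^-_{T[R]}(v)$. The starting observation is that, since $R$ is a minimal $\tau$-retentive set of $T$ and $v\in R$, we have $\mathteq{(T[N^-_T(v)])}\subseteq N^-_{T[R]}(v)$, so in particular every minimal $\tau$-retentive set $S$ of $T[N^-_T(v)]$ satisfies $S\subseteq N^-_{T[R]}(v)$ and hence $|S|\leq 5$. By Lemma~\ref{lem:teqirreducible} and Theorem~\ref{lem:noretentivesetofsizefour}, $|S|\notin\{2,4\}$, so the only cases to analyze are $|S|\in\{1,3,5\}$.

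Next I would rule out $|S|=1$. If $S=\{s\}$, then by Lemma~\ref{lem:teqsourceuniqueminial}, $s$ is the source of $T[N^-_T(v)]$, so $s$ dominates every other vertex in $N^-_T(v)$, and in particular every vertex in $N^-_{T[R]}(v)\setminus\{s\}$. Hence $s$ would be a source of $T[N^-_{T[R]}(v)]$. But by hypothesis $T[N^-_{T[R]}(v)]$ is isomorphic to $T_5(1)$ or $T_5(6)$, each of which (by Theorem~\ref{thm:teqisomorphctot5} and Lemma~\ref{lem:teqirreducible}) is irreducible and therefore admits no source; contradiction.

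Next I would rule out $|S|=3$. By Lemma~\ref{lem:t3minimalretentiveset} applied inside $T[N^-_T(v)]$, $T[S]$ is a directed triangle and no vertex in $N^-_T(v)$ dominates two vertices of $S$. Restricting to the subset $N^-_{T[R]}(v)\supseteq S$, this yields a directed triangle $T[S]$ in $T[N^-_{T[R]}(v)]$ such that no vertex of $N^-_{T[R]}(v)$ dominates two vertices of $S$. Now applying the other direction of Lemma~\ref{lem:t3minimalretentiveset} inside the tournament $T[N^-_{T[R]}(v)]$ itself would force $S$ to be a minimal $\tau$-retentive set of $T[N^-_{T[R]}(v)]$ of size $3$. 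However, both $T_5(1)$ and $T_5(6)$ admit the full vertex set as their unique minimal $\tau$-retentive set (as recorded in the proof of Theorem~\ref{thm:teqisomorphctot5}); no minimal $\tau$-retentive set of size $3$ exists in them, contradicting the existence of such an $S$.

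Only $|S|=5$ remains, which together with $S\subseteq N^-_{T[R]}(v)$ and $|N^-_{T[R]}(v)|=5$ forces $S=N^-_{T[R]}(v)$. Since this equality holds for every minimal $\tau$-retentive set $S$ of $T[N^-_T(v)]$, there is exactly one such set, namely $N^-_{T[R]}(v)$, and therefore $\mathteq{(T[N^-_T(v)])}=N^-_{T[R]}(v)$. The only real subtlety is in the $|S|=3$ step, where one must be careful to transfer the ``no vertex dominates two of $S$'' condition from the larger tournament $T[N^-_T(v)]$ down to the smaller tournament $T[N^-_{T[R]}(v)]$ in order to apply Lemma~\ref{lem:t3minimalretentiveset} in the opposite direction; everything else is a direct appeal to the already established structural lemmas.
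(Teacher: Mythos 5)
Your proof is correct and follows essentially the same route as the paper: bound every minimal {\teq}-retentive set of $T[N^-_T(v)]$ inside $N^-_{T[R]}(v)$, exclude sizes $2$ and $4$ via Lemma~\ref{lem:teqirreducible} and Theorem~\ref{lem:noretentivesetofsizefour}, rule out size $1$ by the absence of a source in $T_5(1)$/$T_5(6)$, rule out size $3$ via Lemma~\ref{lem:t3minimalretentiveset}, and conclude the set must be all of $N^-_{T[R]}(v)$. The only cosmetic difference is that in the size-$3$ case you invoke the known unique minimal {\teq}-retentive set of $T_5(1)$ and $T_5(6)$ (via the converse direction of Lemma~\ref{lem:t3minimalretentiveset}), whereas the paper checks directly that no suitable directed triangle exists in these tournaments; both are sound.
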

\begin{proof}
Let $v$ be a vertex as stated in the lemma such that $T[N^-_{T[R]}(v)]$ is isomorphic to $T_5(1)$ (resp. $T_5(6)$). For ease of exposition, let $T_5(1)=T[R]$ (resp. $T_5(6)=T[R]$).
Since $R$ is a minimal {\teq}-retentive set of $T$, it holds that $\mathteq{(T[N^-_T(v)])}\subseteq N^-_{T[R]}(v)$. Since $T[N^-_{T[R]}(v)]$ is isomorphic to $T_5(1)$ (resp. $T_5(6)$), there is no source in $T[N^-_{T[R]}(v)]$. Then, according to Lemma~\ref{lem:teqsourceuniqueminial}, $|\mathteq{(T[N^-_T(v)])}|\neq 1$. Furthermore, since there is no directed triangle in $T_5(1)$ (resp. $T_5(6)$) such that every vertex in $T_5(1)$ (resp. $T_5(6)$) not in the triangle is dominated by at least two vertices in the triangle, according to Lemma~\ref{lem:t3minimalretentiveset}, there is no minimal {\teq}-retentive set of size $3$ in $T[N^-_T(v)]$. Then, due to Lemma~\ref{lem:tauthreeequalretentive}, it holds that $|\mathteq{(T[N^-_T(v)])}|\neq 3$. According to Lemma~\ref{lem:teqirreducible} and Theorem~\ref{lem:noretentivesetofsizefour}, there are no minimal {\teq}-retentive sets of size~$2$ and~$4$. This leaves only the case that $N^-_{T[R]}(v)$ is the unique minimal {\teq}-retentive set of $T[N^-_T(v)]$. Hence, it holds that $\mathteq{(T[N^-_T(v)])}=N^-_{T[R]}(v)$.
\end{proof}

\begin{lemma}
\label{lem:againsttwoteqsbroomtfiveoneandsixcaptainnotempty}
Let $T$ be a tournament, and $R_1$ and $R_2$ be two vertex subsets of $T$ such that $R_1\cap R_2=\emptyset$. If the following conditions hold, then $R_1$ and $R_2$ cannot be minimal {\teq}-retentive sets of $T$ simultaneously.

(1) There is a broom $\broom{u}{v}{w_1,...,w_5}$ in $\dcapgraph{T[R_1]}$ such that

(1.1) $T[w_1,...,w_5]$ is isomorphic to the tournament $T_5(1)$ or the tournament $T_5(6)$ in \myfig{fig:tfive};

(1.2) $N^-_{T[R_1]}(u)=\{w_1,...,w_5\}$; and

(2) $\dcapgraph{T[R_2]}$ is not empty.
\end{lemma}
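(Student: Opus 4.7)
The plan is to argue by contradiction, assuming that both $R_1$ and $R_2$ are minimal $\tau$-retentive sets of $T$. Conditions (1.1) and (1.2) together with Lemma~\ref{claim:a} applied to the vertex $u\in R_1$ immediately give $\{w_1,\dots,w_5\}=\mathteq{(T[N^-_T(u)])}$, and (as the proof of Lemma~\ref{claim:a} shows) $\{w_1,\dots,w_5\}$ is the \emph{unique} minimal $\tau$-retentive set of $T[N^-_T(u)]$. Since $\dcapgraph{T[R_2]}$ is not empty by condition~(2), I fix an arbitrary arc $\bar v\trelation\bar u$ in it.

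Next, I would propagate the captain structure between $R_1$ and $R_2$ via repeated applications of Lemma~\ref{lem:teqisomorhpictoT43}. Applied to the arc $u\trelation v$ in $\dcapgraph{T[R_1]}$ and the arc $\bar v\trelation\bar u$ in $\dcapgraph{T[R_2]}$, it fixes the $T_4(3)$-pattern on $\{u,v,\bar v,\bar u\}$; five further applications, one to each arc $v\trelation w_i$ of the broom, fix the analogous pattern on each $\{v,w_i,\bar v,\bar u\}$. This splits into the usual two cases based on the arc between $v$ and $\bar v$. If $v\trelation\bar v$, the propagation forces $\bar v\trelation u$ and $\bar v\trelation w_i$ for every $i\in\{1,\dots,5\}$; if $\bar v\trelation v$, the symmetric calculation instead forces $\bar u\trelation u$ and $\bar u\trelation w_i$ for every $i$. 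Either way I obtain a single vertex $s\in R_2$ (namely $\bar v$ in the first case, $\bar u$ in the second) that lies in $N^-_T(u)$ and dominates every element of $\{w_1,\dots,w_5\}$; in particular $s\in N^-_{T[N^-_T(u)]}(w_i)$ for each~$i$.

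The closing step extracts a contradiction from the internal captain structure of $T_5(1)$ and $T_5(6)$. Inspecting $\dcapgraph{T_5(1)}$ (which contains the broom $\broom{a}{e}{d,c}$) and $\dcapgraph{T_5(6)}$ (a directed $5$-cycle in the captain graph) shows that in either case at least one vertex is a slave of another inside the set of five. Pick indices $i,j$ such that $w_j$ is the captain of $w_i$ in $T[\{w_1,\dots,w_5\}]$. Since $\{w_1,\dots,w_5\}$ is a minimal $\tau$-retentive set of $T[N^-_T(u)]$, Lemma~\ref{lem:inneighboronesouce} promotes this to the statement that $w_j$ is the captain of $w_i$ in the ambient tournament $T[N^-_T(u)]$ itself. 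Consequently $w_j$ dominates every element of $N^-_{T[N^-_T(u)]}(w_i)\setminus\{w_j\}$, and in particular $w_j\trelation s$. This directly contradicts $s\trelation w_j$ established in the previous paragraph.

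The bookkeeping in the middle paragraph—five separate applications of Lemma~\ref{lem:teqisomorhpictoT43}—is mechanical and closely mirrors the arguments already used in the proofs of Lemma~\ref{twoteqonecaptainbroomonenonemptycaptain} and Lemma~\ref{lem:tsixnineiterativetricaptain}, so I expect no real difficulty there. The main obstacle is the selection of $(w_i,w_j)$ in the final step: for $T_5(6)$ every vertex has a captain by virtue of the $5$-cycle, but for $T_5(1)$ one must read the right captain arc off its directed domination graph (taking, for instance, the image of $e$ as $w_i$ and the image of $a$ as $w_j$, using the broom $\broom{a}{e}{d,c}$).
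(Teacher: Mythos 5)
Your proposal is correct and follows essentially the same route as the paper's proof: fix an arc $\bar v\trelation\bar u$ in $\dcapgraph{T[R_2]}$, use Lemma~\ref{lem:teqisomorhpictoT43} (with a two-way case split that merely relabels which endpoint plays the role of the dominating vertex) to obtain a single vertex of $R_2$ dominating $u$ and all of $w_1,\dots,w_5$, and then contradict this via Lemma~\ref{claim:a} and Lemma~\ref{lem:inneighboronesouce} using a captain arc inside $T[\{w_1,\dots,w_5\}]$, which exists because $\dcapgraph{T_5(1)}$ and $\dcapgraph{T_5(6)}$ are non-empty. No gaps.
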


\begin{proof}
We prove the lemma by contradiction. Assume that the above conditions hold, and $R_1$ and $R_2$ are two minimal {\teq}-retentive sets of $T$. Let $\bar{v}\trelation \bar{u}$ be any arbitrary arc in $\dcapgraph{T[R_2]}$. We distinguish between the following two cases.

\caseinproofskip
Case~1. $\bar{u}\trelation v$.

According to Lemma~\ref{lem:teqisomorhpictoT43}, $\{\bar{v}\}\trelation \{u,w_1,...,w_5\}$. Moreover, due to {\conditions{1.1}{1.2}} and Lemma~\ref{claim:a}, we know that $\{w_1,...,w_5\}$ is the unique minimal {\teq}-retentive set of $T[N^-_T(u)]$. The directed domination graphs of both $T_5(1)$ and $T_5(6)$ are not empty (see \myfig{fig:tfiveonesixcaptaingraphs}). Let $w_i\trelation w_j$ with $i,j\in \{1,2,3,4,5\}$ be any arbitrary arc in $\dcapgraph{T[w_1,...,w_5]}$ (since $T[w_1,...,w_5]$ is isomorphic to $T_5(1)$ or $T_5(6)$, such an arc exists). Then, due to Lemma~\ref{lem:inneighboronesouce}, $w_i$ is still the captain of $w_j$ in $T[N^-_T(u)]$. However, this contradicts with $\{\bar{v}\}\trelation \{w_i,w_j\}$.
\caseinproofskip

Case~2. $v\trelation \bar{u}$.

In this case, according to Lemma~\ref{lem:novertexdominatestwoadjacentverticesincapgraingraphs}, $\bar{v}\trelation v$. Then, we can achieve the same contradiction as in the above proof for Case~1 by exchanging all appearances of $\bar{v}$ and $\bar{u}$.
\end{proof}

Observe that in both $\dcapgraph{T_7(2)}$ and $\dcapgraph{T_7(3)}$ there is a broom $\broom{a}{g}{b,c,d,e,f}$ that satisfies {\condition{1}} in the above lemma. Therefore, according to the above lemma, there are no $(\{T_7(2),T_7(3)\},\bigcup_{2\leq i\leq 26}\{T_7(i)\})$-{\teq}-retentive tournaments.

In the following, we prove that there are no $(T_7(i),T_7(j))$-{\teq}-retentive tournaments for several different values of $i$ and $j$. To this end, we derive some useful properties. 

\begin{lemma}
\label{lem:generaltwoteqonecaptainbroomonenonemptycaptain}
Let $T$ be a tournament, and $R_1$ and $R_2$ be two vertex subsets of $T$ such that $R_1\cap R_2=\emptyset$ and $\capgraph{T[R_1]}$ is a forest. If the following conditions hold, $R_1$ and $R_2$ cannot be minimal {\teq}-retentive sets of $T$ simultaneously.

(1) there are four non-isolated vertices $u,v,w_1,w_2$ in $\capgraph{T[R_1]}$ such that

(1.1) $u,v,w_1,w_2$ are in the same tree in $\capgraph{T[R_1]}$;

(1.2) $|\mathteq{(T[N^-_T(u)])}|=3$ and $\{w_1,w_2\}\subset \mathteq{(T[N^-_T(u)])}$;

(1.3) $u$ and $w_1$, and $u$ and $w_2$ are siblings in $\capgraph{T[R_1]}$;

(1.4) $u$ and $v$ are not siblings in $\capgraph{T[R_1]}$; and

(2) $\dcapgraph{T[R_2]}$ is not empty.
\end{lemma}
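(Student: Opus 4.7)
My plan is to argue by contradiction, supposing both $R_1$ and $R_2$ are minimal $\tau$-retentive sets of $T$. Since $\dcapgraph{T[R_2]}$ is non-empty, I would fix an arc $\bar v \trelation \bar u$ in it; both $\bar u$ and $\bar v$ are then non-isolated in $\capgraph{T[R_2]}$, and by Lemma~\ref{lem:inneighboronesouce} $\bar v$ remains the captain of $\bar u$ in the whole tournament $T$. Writing $\{w_1, w_2, w\} = \mathteq(T[N^-_T(u)])$ as afforded by condition~(1.2), Lemmas~\ref{lem:tauthreeequalretentive} and~\ref{lem:t3minimalretentiveset} tell me that these three vertices form a directed triangle and that no vertex of $N^-_T(u)$ dominates two of them --- this is the contradiction I will drive toward.

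The workhorse is Lemma~\ref{lem:captaingrapht43property}: conditions (1.1) and (1.3) make $(u, w_1)$ and $(u, w_2)$ sibling pairs in the tree of $\capgraph{T[R_1]}$ that contains them, and both $\bar u$ and $\bar v$ are non-isolated in $\capgraph{T[R_2]}$. Thus I obtain the four equivalences $[\bar u, u] \asymp [\bar u, w_1]$, $[\bar u, u] \asymp [\bar u, w_2]$, $[\bar v, u] \asymp [\bar v, w_1]$, and $[\bar v, u] \asymp [\bar v, w_2]$, which I will exploit in a short case analysis on the arc between $\bar u$ and $u$.

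If $\bar u \trelation u$, the first two equivalences force $\bar u \trelation w_1$ and $\bar u \trelation w_2$, so $\bar u \in N^-_T(u)$ dominates two vertices of the tri-captain $\{w_1, w_2, w\}$, contradicting Lemma~\ref{lem:t3minimalretentiveset}. If instead $u \trelation \bar u$, the same equivalences yield $w_1 \trelation \bar u$ and $w_2 \trelation \bar u$; because $\bar v$ is the captain of $\bar u$ in $T$, it must dominate every inneighbour of $\bar u$, so $\bar v \trelation w_1$ and $\bar v \trelation w_2$. The equivalences for $\bar v$ then force $\bar v \trelation u$, placing $\bar v$ inside $N^-_T(u)$ dominating two elements of $\{w_1, w_2, w\}$, again contradicting the tri-captain property.

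The delicate point is this second case: translating information about $\bar u$'s inneighbours into information about $\bar v$ requires the extra captaincy step via Lemma~\ref{lem:inneighboronesouce} before the sibling equivalences can be reapplied. Once that is in place, the assumption that $u, v, w_1, w_2$ all lie non-isolated in the same tree of $\capgraph{T[R_1]}$, together with condition~(1.4) placing $v$ in the opposite bipartition class from $u$, guarantees that the sibling/non-sibling machinery of Lemma~\ref{lem:captaingrapht43property} is invoked in a non-degenerate tree and matches the broom-like configurations in which the lemma will subsequently be applied.
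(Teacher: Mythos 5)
Your proof is correct, but it follows a genuinely different route from the paper's. The paper splits into cases according to the arc between $\bar{v}$ and the vertex $v$ of {\condition{1.4}}: using the non-sibling part of Lemma~\ref{lem:captaingrapht43property} it forces either $\bar{v}\trelation u$ or (via Lemma~\ref{lem:novertexdominatestwoadjacentverticesincapgraingraphs}) $\bar{u}\trelation u$, and then the sibling equivalences $[\cdot,u]\asymp[\cdot,w_1]\asymp[\cdot,w_2]$ place that vertex in $N^-_T(u)$ dominating both $w_1$ and $w_2$, contradicting Lemma~\ref{lem:tauthreeequalretentive} and Lemma~\ref{lem:t3minimalretentiveset}. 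You instead split on the arc between $\bar{u}$ and $u$: the case $\bar{u}\trelation u$ coincides with the paper's second case ending, while in the case $u\trelation\bar{u}$ you run the sibling equivalences in reverse to get $\{w_1,w_2\}\trelation\{\bar{u}\}$, transfer to $\bar{v}$ through the captaincy of $\bar{v}$ over $\bar{u}$ in $T$ (Lemma~\ref{lem:inneighboronesouce}), and then use the equivalences for $\bar{v}$ to land $\bar{v}$ in $N^-_T(u)$ dominating $w_1,w_2$. All the individual steps check out (both $\bar{u}$ and $\bar{v}$ are non-isolated in $\capgraph{T[R_2]}$, and $w_1,w_2\neq\bar{v}$ by disjointness, so the captain step is legitimate). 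A notable byproduct of your argument is that it never uses the vertex $v$ or {\condition{1.4}} at all, so it actually establishes a slightly stronger statement than the lemma as stated; the paper's route, by contrast, needs the non-sibling hypothesis to get the case analysis started but avoids the extra captaincy-transfer step.
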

\begin{proof}
We prove the lemma by contradiction. Assume that the above conditions hold, and $R_1$ and $R_2$ are two minimal {\teq}-retentive sets of $T$.
Let $\bar{v}\trelation \bar{u}$ be any arc in $\dcapgraph{T[R_2]}$ (since $\dcapgraph{T[R_2]}$ is not empty, such an arc exists).
We distinguish between two cases with respect to the arc between $\bar{v}$ and $v$. 

\caseinproofskip

Case~1. $v\trelation \bar{v}$.

Since $v$ and $u$ are not siblings in $\capgraph{T[R_1}$, according to Lemma~\ref{lem:captaingrapht43property}, it holds that $\bar{v}\trelation u$. Since $u$ and  $w_1$, and $u$ and $w_2$ are siblings, according to Lemma~\ref{lem:captaingrapht43property}, it holds that $\bar{v}\trelation \{w_1,w_2\}$. However, since $|\mathteq{(T[N^-(u)])}|=3$ and $\{w_1,w_2\}\subset \mathteq{(T[N^-(u)])}$, according to Lemma~\ref{lem:tauthreeequalretentive} and  Lemma~\ref{lem:t3minimalretentiveset}, at least one of $\{w_1,w_2\}$ should have dominated $\bar{v}$; a contradiction.
\caseinproofskip

Case~2. $\bar{v}\trelation {v}$.

Since $v$ and $u$ are not siblings in $\capgraph{T[R_1}$, according to Lemma~\ref{lem:captaingrapht43property}, it holds that $u\trelation \bar{v}$. Then, according to Lemma~\ref{lem:novertexdominatestwoadjacentverticesincapgraingraphs}, $\bar{u}\trelation u$. Now, we can prove the lemma for this case by replacing all occurrences of $\bar{v}$ by $\bar{u}$ in the above proof (from second sentence) for Case 1. 
\end{proof}

Observe that each $T_7(i)$ for $i=4,5,6,7,11,13,14,16,17,21$ satisfies {\condition{1}} for $R_1$ in Lemma~\ref{lem:generaltwoteqonecaptainbroomonenonemptycaptain} (see Appendix and \mytable{table:notsevenmany} for further details). Then, according to the above lemma, there are no $(\{T_7(i)\},\bigcup_{2\leq i\leq 26}\{T_7(i)\})$-{\teq}-retentive tournaments for all $i=4,5,6,7,11,13,14,16,17,21$.

\begin{table}
\begin{center}
\begin{tabular}{l|c|c|c|c}

           &     $u$  &  $v$   & $w_1$   &   $w_2$\\ \hline

$T_7(4)$   &     $a$  &  $f$     & $c$    &    $d$ \\ \hline

$T_7(5)$   &    $a$  &  $g$     & $c$    &    $d$   \\ \hline

$T_7(6)$   &     $a$  &  $g$     & $c$    &    $d$ \\ \hline

$T_7(7)$   &     $f$  &  $g$     & $c$    &    $d$  \\ \hline

$T_7(11)$   &    $c$  &  $f$     & $e$    &    $g$   \\ \hline

$T_7(13)$   &     $b$  &  $f$     & $g$    &    $e$ \\ \hline

$T_7(14)$   &     $e$  &  $f$     & $g$    &    $d$  \\ \hline

$T_7(16)$   &    $c$  &  $f$     & $g$    &    $d$   \\ \hline

$T_7(17)$   &     $d$  &  $f$     & $c$    &    $e$ \\ \hline

$T_7(21)$   &     $b$  &  $f$     & $g$    &    $e$ \\ \hline
\end{tabular}
\end{center}
\caption{This table shows the correspondences between vertices in $T_7(i)$ for $i=4,5,6,7,11,13,14,16,17,21$ and the vertices in $R_1$ in the proof of Lemma~\ref{lem:generaltwoteqonecaptainbroomonenonemptycaptain}. Replacing all appearances of the vertices in $R_1$ in the proof with their corresponding vertices in each $T_7(i)$ can prove that there are no $(T_7(i),H)$-{\teq}-retentive tournaments, where $\capgraph{H}\neq \emptyset$.}
\label{table:notsevenmany}
\end{table}

In the following, we show that there are no $(\{T_7(8)\},\bigcup_{2\leq i\leq 26}\{T_7(i)\})$-{\teq}-retentive tournaments.

\begin{lemma}
\label{lem:againsttwoteqonetseveneightonecaptainnotempty}
Let $T$ be a tournament, and $R_1$ and $R_2$ be two vertex subsets of $T$ such that $R_1\cap R_2=\emptyset$. If the following conditions hold, $R_1$ and $R_2$ cannot be minimal {\teq}-retentive sets of $T$ simultaneously.

(1) $T[R_1]$ is isomorphic to the tournament $T_7(8)$; and

(2) $\dcapgraph{T[R_2]}$ is not empty.
\end{lemma}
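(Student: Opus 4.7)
The plan is to follow the template of the proof of Lemma~\ref{lem:againsttwoteqonetsevenoneonecaptainnotempty}. Assume for contradiction that $R_1$ and $R_2$ are two (disjoint, by Lemma~\ref{lem:distinctminiteqs}) minimal {\teq}-retentive sets satisfying the hypotheses, with $T_7(8) = T[R_1]$ and vertices labeled as in the Appendix. Let $\bar{v} \trelation \bar{u}$ be any arc in $\dcapgraph{T[R_2]}$. Since $T[R_1]$ is irreducible, every vertex of $R_1$ has at most five inneighbors in $R_1$, so Lemma~\ref{lem:localboundedsmallteqtournament} pins down $\mathteq(T[N^-_T(v)])$ for each $v \in R_1$ from the captain and tri-captain data in the Appendix: for each $v$ with a captain $u$ in $T[R_1]$, $\mathteq(T[N^-_T(v)]) = \{u\}$, while for each of the three vertices with tri-captain $\{x,y,z\}$ in $T[R_1]$, $\mathteq(T[N^-_T(v)]) = \{x,y,z\}$.

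Next, read off $\capgraph{T[R_1]}$: one tree is a path on four captain-adjacent vertices, contributing two sibling pairs (the two endpoints and the two internal vertices of the path); a second tree is the captain-edge $\{d,e\}$; and the remaining vertex is isolated. By Lemma~\ref{lem:captaingrapht43property}, for $w \in \{\bar{u}, \bar{v}\}$ (both non-isolated in $\capgraph{T[R_2]}$), the relation $[w,\cdot]$ is constant on each sibling pair and flips across non-siblings in the same tree. The natural pivot is the vertex $e$, whose tri-captain $\{a,b,f\}$ contains an entire sibling pair $\{a,f\}$ of the path tree.

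Case split on the arc between $\bar{u}$ and $e$. In Case~1 ($\bar{u} \trelation e$), Lemma~\ref{lem:tauthreeequalretentive} and Lemma~\ref{lem:t3minimalretentiveset} force at least two vertices of $\{a,b,f\}$ to dominate $\bar{u}$; by sibling rigidity, $\{a,f\} \trelation \{\bar{u}\}$ in every subcase, which flips to $\bar{u} \trelation \{g,c\}$ (the opposite sibling pair) and forces $d \trelation \bar{u}$ (via non-siblings in the $\{d,e\}$ tree). But then $\bar{u} \in N^-_T(c)$, whose $\mathteq$ is the other tri-captain (which contains both $e$ and $g$, both dominated by $\bar{u}$): at most one of its three members dominates $\bar{u}$, contradicting Lemma~\ref{lem:t3minimalretentiveset}. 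In Case~2 ($e \trelation \bar{u}$), apply Lemma~\ref{lem:teqisomorhpictoT43} to the arcs $e \trelation d$ in $\dcapgraph{T[R_1]}$ and $\bar{v} \trelation \bar{u}$ in $\dcapgraph{T[R_2]}$ to conclude $\bar{v} \trelation e$, and then replay Case~1 verbatim with $\bar{v}$ in place of $\bar{u}$.

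The main obstacle is bookkeeping: one must correctly extract the captain, tri-captain, and sibling structure of $T_7(8)$ from the Appendix, and check that the choice of $e$ as pivot makes every subcase collapse through a single tri-captain conflict at $c$ --- in particular, that the three nominal subcases (which pair of $\{a,b,f\}$ dominates $\bar{u}$) merge into the single scenario $\{a,f\} \trelation \{\bar{u}\}$ thanks to siblings. A secondary subtlety in Case~2 is confirming that the argument transfers cleanly from $\bar{u}$ to $\bar{v}$; this is immediate because both endpoints of an arc in $\dcapgraph{T[R_2]}$ are non-isolated in $\capgraph{T[R_2]}$, so Lemma~\ref{lem:captaingrapht43property} applies to either interchangeably.
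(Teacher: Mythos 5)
Your proof is correct and it works off the same template as the paper's: assume both sets are minimal retentive, fix an arbitrary arc $\bar v\trelation\bar u$ in $\dcapgraph{T[R_2]}$, pin down the relevant values $\mathteq{(T[N^-_T(\cdot)])}$ via Lemma~\ref{lem:localboundedsmallteqtournament}, and chase forced arcs until a tri-captain of $T_7(8)$ is violated (your reading of $\capgraph{T_7(8)}$ as the path $c$--$f$--$g$--$a$ plus the edge $d$--$e$ plus the isolated vertex $b$ is accurate). The execution differs: the paper pivots on $b$, walks along the captain arcs $f\trelation g$, $e\trelation d$, $c\trelation f$ one at a time via Lemma~\ref{lem:teqisomorhpictoT43}, and needs the tri-captains of $b$, $e$ and $c$; you pivot on $e$ and let the sibling rigidity of Lemma~\ref{lem:captaingrapht43property} deliver all forced arcs in one shot, needing only the tri-captains of $e$ and $c$ and landing the same contradiction at $c$. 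That is closer in spirit to how the paper disposes of $T_7(9),T_7(10),T_7(12),T_7(15)$ via Lemma~\ref{lem:againsttwoteqonexywonecaptainnotempty}, and it is a valid and slightly more economical route. Two small slips, neither fatal: the sibling pairs of the path $c$--$f$--$g$--$a$ are $\{a,f\}$ and $\{c,g\}$ (pairs at even distance), not ``the two endpoints and the two internal vertices'' --- fortunately the pair you actually use, $\{a,f\}$, is a genuine sibling pair and $\{c,g\}$ is indeed the opposite one; and in the two-vertex tree $\{d,e\}$ the paper's literal definition makes $d$ and $e$ vacuously siblings, so the ``non-sibling'' clause of Lemma~\ref{lem:captaingrapht43property} does not formally apply there --- the flip you want should be cited from Lemma~\ref{lem:teqisomorhpictoT43} (or Lemma~\ref{lem:novertexdominatestwoadjacentverticesincapgraingraphs}) directly, though the arc $d\trelation\bar u$ it yields is never used in your contradiction, so nothing is lost.
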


\begin{proof}
We prove the lemma by contradiction. Suppose that the above two conditions hold, and $R_1$ and $R_2$ are both minimal {\teq}-retentive sets of $T$. For ease of exposition, assume that $T_7(8)=T[R_1]$; thus, $R_1=\{a,b,c,d,e,f,g\}$. Moreover, let the vertices in $R_1$ be labeled as in the tournament $T_7(8)$. Let $v\trelation u$ be an arbitrary arc in $\dcapgraph{T[R_2]}$.  We complete the proof by distinguishing between the following two cases. Before proceeding further, let's first study a property. First, it is easy to check that $\{d,f,g\}=N^-_{T[R_1]}(b)$ is a tri-caption of $b$ in $T[R_1]$, $\{b,e,g\}=N^-_{T[R_1]}(c)$ is a tri-captain of $c$ in $T[R_1]$, and $\{a,b,f\}=N^-_{T[R_1]}(e)$ is a tri-captain of $e$ in $T[R_1]$.
Then, since $R_1$ is a minimal {\teq}-retentive set of $T$, according to Lemma~\ref{lem:localboundedsmallteqtournament}, it holds that
$\{d,f,g\}=\mathteq{(T[N^-_{T}(b)])}$,
$\{b,e,g\}=\mathteq{(T[N^-_{T}(c)])}$, and
$\{a,b,f\}=\mathteq{(T[N^-_{T}(e)])}$.
\caseinproofskip

Case~1. $u\trelation b$.

Since $\{d,f,g\}=\mathteq{(T[N^-_{T}(b)])}$, according to Lemma~\ref{lem:tauthreeequalretentive} and Lemma~\ref{lem:t3minimalretentiveset}, at least two of $\{d,f,g\}$ dominate $u$. Since $f\trelation g$ is an arc in $\dcapgraph{T[R_1]}$ (see $\dcapgraph{T_7(8)}$ in Appendix), according to Lemma~\ref{lem:teqisomorhpictoT43}, $[u,g]\not\asymp [u,f]$. Therefore, only one of $\{g,f\}$ dominates $u$, implying that $d\trelation u$. Then, since $e\trelation d$ is an arc in $\dcapgraph{T[R_1]}$, according to Lemma~\ref{lem:teqisomorhpictoT43}, $u\trelation e$. Since $\{a,b,f\}=\mathteq{(T[N^-_{T}(e)])}$, according to Lemma~\ref{lem:tauthreeequalretentive} and Lemma~\ref{lem:t3minimalretentiveset}, at least two of $\{a,b,f\}$ dominate $u$. Since we have assumed $u\trelation b$, it holds that $\{a,f\}\trelation \{u\}$. Then, since $c\trelation f$ is an arc in $\dcapgraph{T[R_1]}$, according to Lemma~\ref{lem:teqisomorhpictoT43}, it holds that $u\trelation c$. Since $\{b,e,g\}=\mathteq{(T[N^-_{T}(c)])}$, according to Lemma~\ref{lem:tauthreeequalretentive} and Lemma~\ref{lem:t3minimalretentiveset}, at least two of $\{b,e,g\}$ dominate $u$. However, we have showed above that $\{u\}\trelation \{b,e\}$; a contradiction.

\caseinproofskip
Case~2. $b\trelation u$.

According to Lemma~\ref{lem:novertexdominatestwoadjacentverticesincapgraingraphs}, it holds that $v\trelation b$. We can then achieve a contradiction by replacing all appearances of $u$ with $v$ in the above argument for Case~1.
\end{proof}


In the following, we derive some further useful properties on {\teq}-retentive sets. Then, we use these properties to show that there are no $(\{T_7(i)\},\bigcup_{2\leq i\leq 26}\{T_7(i)\})$-{\teq}-retentive tournaments for several different values of $i$.

\begin{lemma}
\label{lem:againsttwoteqonexywonecaptainnotempty}
Let $T$ be a tournament, and $R_1$ and $R_2$ be two vertex subsets of $T$ such that $R_1\cap R_2=\emptyset$ and $\capgraph{T[R_1]}$ is a forest. If the following conditions hold, $R_1$ and $R_2$ cannot be minimal {\teq}-retentive sets of $T$ simultaneously

%
%
%
%
%
%
%
%
%

(1) There are four vertices $x,w_1,w_2,w_3$ in $R_1$ such that

(1.1) $\{w_1,w_2,w_3\}$ is a tri-captain of $x$ in $T[R_1]$ and $|N^-_{T[R_1]}(x)|\leq 5$; and

(1.2) $w_1$ and $w_2$ are in the same tree in $\capgraph{T[R_1]}$ but are not {\sibling}s in $\capgraph{T[R_1]}$.

(2) There are three vertices $y,w_4,w_5$ in $R_1$ (it is possible that $w_4$ and $w_1$ are identical) such that

(2.1) $\{x,w_4,w_5\}$ is a tri-captain of $y$ in $T[R_1]$ and $|N^-_{T[R_1]}(y)|\leq 5$; and

(2.2) $w_3, w_4, y$ are in the same tree in $\capgraph{T[R_1]}$, and $y$ and $w_3$, and $w_4$ and $w_3$ are not {\sibling}s in $\capgraph{T[R_1]}$.

(3) $\dcapgraph{T[R_2]}$ is not empty.
\end{lemma}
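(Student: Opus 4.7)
The plan is to argue by contradiction: I assume $R_1$ and $R_2$ are two minimal $\teq$-retentive sets of $T$ satisfying (1)--(3), pick an arc $\bar v\trelation\bar u$ in $\dcapgraph{T[R_2]}$ (which exists by (3)), and derive a contradiction from the combined constraints imposed by the two tri-captains inside $T[R_1]$ and by Lemma~\ref{lem:captaingrapht43property}.

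First I would globalize the local tri-captain conditions. Since $\{w_1,w_2,w_3\}$ is a tri-captain of $x$ in $T[R_1]$ with $|N^-_{T[R_1]}(x)|\leq 5$, Lemma~\ref{lem:localboundedsmallteqtournament} gives $\mathteq{(T[N^-_T(x)])}=\{w_1,w_2,w_3\}$, and analogously $\mathteq{(T[N^-_T(y)])}=\{x,w_4,w_5\}$. Both $\capgraph{T[R_1]}$ and $\capgraph{T[R_2]}$ are non-empty (the former because conditions (1.2) and (2.2) force the relevant trees to contain at least three vertices; the latter by (3)), so Lemma~\ref{lem:captaingrapht43property} applies. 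Taking any non-isolated $w\in R_2$ together with the non-sibling pairs $(w_1,w_2)$, $(y,w_3)$, $(w_3,w_4)$ from (1.2) and (2.2) yields $[w,w_1]\not\asymp[w,w_2]$, $[w,y]\not\asymp[w,w_3]$, and $[w,w_3]\not\asymp[w,w_4]$. In particular, $w_3\trelation w$ forces simultaneously $w\trelation y$ and $w\trelation w_4$.

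The core of the argument is a case split on the arc between $x$ and $\bar v$. If $\bar v\trelation x$, then $\bar v\in N^-_T(x)$, so by Lemmas~\ref{lem:tauthreeequalretentive} and~\ref{lem:t3minimalretentiveset} applied to $T[N^-_T(x)]$, at least two of $\{w_1,w_2,w_3\}$ must dominate $\bar v$; since at most one of $w_1,w_2$ can (by non-siblinghood), we must have $w_3\trelation\bar v$. The other two non-sibling relations then force $\bar v\trelation y$ and $\bar v\trelation w_4$, so $\bar v\in N^-_T(y)$, and the analogous tri-captain argument on $\{x,w_4,w_5\}$ demands that at least two of them dominate $\bar v$; but $\bar v\trelation x$ and $\bar v\trelation w_4$ leave only $w_5$ as a possible dominator, a contradiction. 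If instead $x\trelation \bar v$, I would invoke Lemma~\ref{lem:inneighboronesouce}: $\bar v$ is the captain of $\bar u$ in $T$, hence every inneighbor of $\bar u$ in $T$ is dominated by $\bar v$, so $x\trelation \bar v$ forces $\bar u\trelation x$. The argument of the previous case then runs verbatim with $\bar u$ in place of $\bar v$, using that $\bar u$ is also non-isolated in $\capgraph{T[R_2]}$.

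I expect the main obstacle to be the second case. When $x\trelation\bar v$, the tri-captain of $x$ gives no direct leverage at $\bar v$, because $\bar v\notin N^-_T(x)$, so Lemma~\ref{lem:t3minimalretentiveset} no longer constrains the arcs from $\bar v$ to $\{w_1,w_2,w_3\}$. The trick of flipping from $\bar v$ to its slave $\bar u$ via the captain relation is precisely what guarantees that at least one non-isolated vertex of $\capgraph{T[R_2]}$ lies in $N^-_T(x)$, thereby reducing the second case back to the first.
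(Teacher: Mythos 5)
Your proposal is correct and follows essentially the same route as the paper's proof: globalize the two tri-captains via Lemma~\ref{lem:localboundedsmallteqtournament}, use the non-sibling relations from Lemma~\ref{lem:captaingrapht43property} to force $w_3$ to dominate the chosen non-isolated vertex of $\capgraph{T[R_2]}$ lying in $N^-_T(x)$, and then contradict the tri-captain of $y$. The only difference is cosmetic: you split cases on the arc between $x$ and the captain $\bar v$ and pass to the slave $\bar u$ via Lemma~\ref{lem:inneighboronesouce}, whereas the paper splits on the arc between $x$ and the slave $u$ and passes to $v$ via Lemma~\ref{lem:novertexdominatestwoadjacentverticesincapgraingraphs}; these are equivalent.
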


\begin{proof}
Assume for the sake of contradiction that all the above conditions hold, and $R_1$ and $R_2$ are both minimal {\teq}-retentive sets of $T$. Let $v\trelation u$ be any arbitrary arc in $\dcapgraph{T[R_2]}$. We complete the proof by distinguishing between the following two cases.
According to Conditions~(1.1)~and~(2.1) in the lemma, and according to Lemma~\ref{lem:localboundedsmallteqtournament}, it holds that $\{w_1,w_2,w_3\}=\mathteq{(T[N^-_{T}(x)])}$, and $\{x,w_4,w_5\}=\mathteq{(T[N^-_{T}(y)])}$.
\caseinproofskip

Case~1. $u\trelation x$.

Since $\{w_1,w_2,w_3\}=\mathteq{(T[N^-_{T}(x)])}$, according to Lemma~\ref{lem:tauthreeequalretentive} and Lemma~\ref{lem:t3minimalretentiveset}, at least two of $\{w_1,w_2,w_3\}$ dominate $u$. According to {\condition{1.2}} and Lemma~\ref{lem:captaingrapht43property}, $[w_1,u]\not\asymp [w_2,u]$. Therefore, only one of $\{w_1,w_2\}$ dominates $u$, implying that $w_3\trelation u$. Then, according to {\condition{2.2}} and Lemma~\ref{lem:captaingrapht43property}, $\{u\}\trelation \{w_4,y\}$. Since $\{x,w_4,w_5\}=\mathteq{(T[N^-_{T}(y)])}$, according to Lemma~\ref{lem:tauthreeequalretentive} and Lemma~\ref{lem:t3minimalretentiveset}, at least two of $\{x,w_4,w_5\}$ dominate $u$. However, we have showed above that $\{u\}\trelation \{w_4,x\}$; a contradiction.

\caseinproofskip
Case~2. $x\trelation u$.

According to Lemma~\ref{lem:novertexdominatestwoadjacentverticesincapgraingraphs}, it holds that $v\trelation x$. Then we can prove the lemma for this case by  replacing all appearances of $u$ with $v$ in the above proof for Case~1.
\end{proof}

The above lemma implies that there are no $(\{T_7(i)\},\bigcup_{2\leq i\leq 26}\{T_7(i)\})$-{\teq}-retentive tournaments for all $i=9,10,12,15$, since each of such $T_7(i)$ satisfies the conditions for $R_1$ as stated in the above lemma. See \mytable{table:x} for further details.

\begin{table}
\begin{center}
\begin{tabular}{l|c|c|c|c|c|c|c}

           &     $x$  &  $w_1$   & $w_2$   &   $w_3$    &    $y$   &   $w_4$   &   $w_5$\\ \hline

$T_7(9)$   &     $e$  &  $f$     & $g$    &    $c$     &     $d$   &   $f$     &   $a$ \\ \hline

$T_7(10)$   &    $b$  &  $e$     & $g$    &    $c$     &     $a$   &   $f$     &   $d$ \\ \hline

$T_7(12)$   &     $b$  &  $f$     & $g$    &    $e$     &     $c$   &   $g$     &   $a$ \\ \hline

$T_7(15)$   &     $b$  &  $f$     & $g$    &    $e$     &     $c$   &   $f$     &   $a$ \\ \hline
\end{tabular}
\end{center}
\caption{This table shows the correspondences between vertices in $T_7(i)$ for $i=9,10,12,15$ and the vertices in $R_1$ in the proof of Lemma~\ref{lem:againsttwoteqonexywonecaptainnotempty}. Replacing all appearances of the vertices in $R_1$ in the proof with their corresponding vertices in each $T_7(i)$ for $i=9,10,12,15$ can prove that there are no $(T_7(i),H)$-{\teq}-retentive tournaments, where $\capgraph{H}\neq \emptyset$.}
\label{table:x}
\end{table}

In the following, we prove that there are no $(\{T_7(i)\},\bigcup_{2\leq i\leq 26}\{T_7(i)\})$-{\teq}-retentive tournaments for every positive integer $i\in \{18,19,20,22,23,24,25,26\}$.

\begin{lemma}
\label{lem:againsttwoteqonetseveneighteenonecaptainnotempty}
Let $T$ be a tournament, and $R_1$ and $R_2$ be two vertex subsets of $T$ such that $R_1\cap R_2=\emptyset$. If the following conditions hold, $R_1$ and $R_2$ cannot be minimal {\teq}-retentive sets of $T$ simultaneously.

(1) $T[R_1]$ is isomorphic to one of the tournaments in $\{T_7(18),T_7(19),T_7(20),T_7(22),T_7(23),T_7(24),T_7(25),T_7(26)\}$ in Appendix; and

(2) $\dcapgraph{T[R_2]}$ is not empty.
\end{lemma}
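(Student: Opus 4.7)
The plan is to mimic the case analysis used in Lemma~\ref{lem:againsttwoteqonetseveneightonecaptainnotempty} (the $T_7(8)$ case), applying it separately to each of the eight remaining tournaments. For every tournament $T_7(i)$ in the list, I would first inspect its directed domination graph (given in the appendix) and read off, for each vertex $x$ with $|N^-_{T[R_1]}(x)|\le 5$ that admits a tri-captain in $T[R_1]$, the identity of $\mathteq{(T[N^-_T(x)])}$; these are determined by Lemma~\ref{lem:localboundedsmallteqtournament}. Since every vertex of any tournament on $7$ vertices has indegree at most $6$, and the listed $T_7(i)$ have enough vertices with indegree $\le 5$ possessing a tri-captain, this step yields a rich supply of ``anchored'' equations $\mathteq{(T[N^-_T(x)])}=\{w_1,w_2,w_3\}$.

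For each $T_7(i)$, I would then fix an arbitrary arc $v\trelation u$ in $\dcapgraph{T[R_2]}$ and pick an anchor vertex $x\in R_1$ whose tri-captain is known. By Lemma~\ref{lem:novertexdominatestwoadjacentverticesincapgraingraphs}, the arc between $\{u,v\}$ and $x$ falls into two symmetric cases $u\trelation x$ and $x\trelation u$ (the second reduces to the first by swapping $u$ and $v$). In the non-trivial case, Lemma~\ref{lem:tauthreeequalretentive} and Lemma~\ref{lem:t3minimalretentiveset} force at least two vertices of the tri-captain of $x$ to dominate $u$. Then using any captain arc inside $\dcapgraph{T[R_1]}$ among those three vertices, Lemma~\ref{lem:teqisomorhpictoT43} pins down which one of them is the unique dominator of $u$ (the other must lie on the opposite side), and continues to propagate the orientation of arcs between $u$ and further vertices of $R_1$ that share captain-edges with the tri-captain members. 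One then steps to a second anchor vertex $y\in R_1$ whose tri-captain involves these new vertices, and applies Lemma~\ref{lem:tauthreeequalretentive}/Lemma~\ref{lem:t3minimalretentiveset} again; a contradiction arises as soon as $u$ is forced to be dominated by at least two tri-captain members that the previous inferences already declared to be outneighbors of $u$.

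For each of $T_7(18),\ldots,T_7(26)$ in the statement, the specific chain of anchors is slightly different, but the pattern is identical: start at an anchor $x$, use the captain arc inside $x$'s tri-captain to eliminate one of the two forced dominators via Lemma~\ref{lem:teqisomorhpictoT43}, propagate to a neighbouring anchor $y$ via another captain arc, and close the loop at a third anchor whose tri-captain has at least two members already shown to be outneighbours of $u$. I would compile, as a preparatory table analogous to Tables~\ref{table:notsevenmany} and~\ref{table:x}, the required triples $(x,y,z)$ of anchor vertices and the captain arcs used, one row per tournament, and then verify the contradiction row-by-row.

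The main obstacle is bookkeeping rather than insight: for each of the eight graphs one must select anchors whose tri-captains and internal captain arcs form a ``closed cycle of inferences'', and verify case-by-case that every propagation step is licensed by Lemma~\ref{lem:teqisomorhpictoT43} (which requires that the relevant arc lies in $\dcapgraph{T[R_1]}$). A few of the tournaments $T_7(i)$ have sparser directed domination graphs, so the chain of anchors may need to be longer, and some propagations may have to route through Lemma~\ref{lem:captaingrapht43property} (sibling analysis in the caterpillar forest $\capgraph{T[R_1]}$) rather than Lemma~\ref{lem:teqisomorhpictoT43} alone. Once the eight tables are in place, the actual verification collapses to short, essentially mechanical checks analogous to Case~1 of Lemma~\ref{lem:againsttwoteqonetseveneightonecaptainnotempty}.
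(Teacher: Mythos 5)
Your overall strategy is the same as the paper's: fix an arbitrary arc $v\trelation u$ in $\dcapgraph{T[R_2]}$, anchor at vertices of $R_1$ whose tri-captains are pinned down by Lemma~\ref{lem:localboundedsmallteqtournament}, and propagate orientations of the arcs between $\{u,v\}$ and $R_1$ via Lemmas~\ref{lem:tauthreeequalretentive}, \ref{lem:t3minimalretentiveset}, \ref{lem:novertexdominatestwoadjacentverticesincapgraingraphs}, \ref{lem:teqisomorhpictoT43} (and, for $T_7(18)$, \ref{lem:captaingrapht43property}) until two already-forced out-arcs of $u$ (or $v$) contradict a forced domination requirement. However, as written the proposal has a genuine gap on two counts. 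First, the tournament-by-tournament verifications are not merely deferrable bookkeeping: they \emph{are} the proof, and your blanket two-case reduction ($u\trelation x$ versus $x\trelation u$, the latter by swapping $u$ and $v$) does not cover all cases uniformly. For $T_7(25)$ the paper must split into four cases according to the arcs between $u$ and both $c$ and $d$, because no single anchor yields the contradiction; a plan that promises a single anchored two-case chain per tournament would stall there until the finer case split is introduced.

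Second, and more substantively, your premise that each listed $T_7(i)$ ``has enough vertices with indegree $\le 5$ possessing a tri-captain'' fails for $T_7(26)$: its directed domination graph is the directed cycle $(a,d,e,c,f,b,g)$ through all seven vertices, so every vertex has an ordinary captain rather than a tri-captain, and the anchored-tri-captain machinery never gets off the ground. The paper handles this case by an entirely different (and much shorter) argument: since $\dcapgraph{T[R_1]}$ contains a directed cycle and $\dcapgraph{T[R_2]}$ is nonempty, Lemma~\ref{lem:captaingraphagainsttwoteq} immediately rules out $R_1$ and $R_2$ being minimal {\teq}-retentive sets simultaneously. Your proposal needs to recognize this exceptional case explicitly (or at least allow cycle-based arguments alongside the anchor chains); without it, one of the eight cases in the statement is simply not provable by the method you describe.
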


\begin{proof}
Assume for the sake of contradiction that the above two conditions hold, and $R_1$ and $R_2$ are both minimal {\teq}-retentive sets of $T$. We break down the proof of the lemma into 7 cases, based on which tournament $T_7(i)$, $i\in \{18,19,20,22,23,24,25,26\}$, $T[R_1]$ is isomorphic to. In each case, for ease of exposition, assume that $T_7(i)=T[R_1]$; thus, $R_1=\{a,b,c,d,e,f,g\}$. Moreover, let the vertices in $R_1$ be labeled as in the tournament $T_7(i)$ in Appendix. Furthermore, let $v\trelation u$ be an arbitrary arc in $\dcapgraph{T[R_2]}$.

\bigskip
{\casebegin}{\bf{$T[R_1]$ is isomorphic to $T_7(18)$}}

We complete the proof by distinguishing between the following two cases. Before proceeding further, let's study some useful properties. First, it is easy to check that in $T[R_1]$, $\{e,f,g\}$ is a tri-captain of $a$, $\{e,f,g\}$ is a tri-captain of $b$, $\{a,b,d\}$ is a tri-captain of $c$, and $\{c,d,g\}$ is a tri-captain of $e$ (see the tournament $T_7(18)$ in Appendix).
Moreover, each of $\{a,b,c,e\}$ has no more than five inneighbors in $R_1$. Then,
according to Lemma~\ref{lem:localboundedsmallteqtournament}, it holds that

$\{e,f,g\}=\mathteq{(T[N^-_{T}(a)])}$,

$\{e,f,g\}=\mathteq{(T[N^-_{T}(b)])}$,

$\{a,b,d\}=\mathteq{(T[N^-_{T}(c)])}$, and

$\{c,d,g\}=\mathteq{(T[N^-_{T}(e)])}$.

\caseinproofskip
Case~1. $u\trelation e$.

Since $\{c,d,g\}=\mathteq{(T[N^-_{T}(e)])}$, according to Lemma~\ref{lem:tauthreeequalretentive} and Lemma~\ref{lem:t3minimalretentiveset}, at least two of $\{c,d,g\}$ dominate $u$. Since $c\trelation g$ is an arc in $\dcapgraph{T[R_1]}$, according to Lemma~\ref{lem:teqisomorhpictoT43}, $[c,u]\not\asymp [g,u]$. Thus, at most one of $\{c,g\}$ dominates $u$. This implies that $d\trelation u$. Then, according to Lemma~\ref{lem:captaingrapht43property}, it holds that $u \trelation f, c\trelation u$ and $u\trelation g$. Moreover, $v\trelation d, f\trelation v$ and $v\trelation c$. Since $\{a,b,d\}=\mathteq{(T[N^-_{T}(c)])}$, according to Lemma~\ref{lem:tauthreeequalretentive} and Lemma~\ref{lem:t3minimalretentiveset}, at least two of $\{a,b,d\}$ dominate $v$. Since we have showed above that $v\trelation d$, it holds that $\{a,b\}\trelation \{v\}$. Then, according to Lemma~\ref{lem:novertexdominatestwoadjacentverticesincapgraingraphs}, it holds that $\{u\}\trelation \{a,b\}$. Since $\{e,f,g\}=\mathteq{(T[N^-_{T}(a)])}=\mathteq{(T[N^-_{T}(b)])}$, according to Lemma~\ref{lem:tauthreeequalretentive} and Lemma~\ref{lem:t3minimalretentiveset}, at least two of $\{e,f,g\}$ dominate $u$. However, we have showed above that $\{u\}\trelation \{e,f,g\}$; a contradiction.

\caseinproofskip
Case~2. $e\trelation u$.

According to Lemma~\ref{lem:novertexdominatestwoadjacentverticesincapgraingraphs}, it holds that $v\trelation e$. Then, we can achieve a contradiction by exchanging all appearances of $v$ and all appearances of $u$ in the above proof for Case~1.
%
%
%
%
%
%
%
%

\bigskip
{\casebegin}{\bf{$T[R_1]$ is isomorphic to $T_7(19)$}}

We complete the proof by distinguishing between the following two cases. Before proceeding further, let's first study some useful properties. First, it is easy to verify that in the subtournament $T[R_1]$,
 $\{c,f,g\}$ is a tri-captain of $a$,
$\{b,e,g\}$ is a tri-captain of $c$,
$\{a,b,f\}$ is a tri-captain of $e$, and
$\{b,c,d\}$ is a tri-captain of $f$.
Moreover, each of $\{a,c,f,e\}$ has no more than five inneighbors in $R_1$.
Then according to Lemma~\ref{lem:localboundedsmallteqtournament}, it holds that

$\{c,f,g\}=\mathteq{(T[N^-_{T}(a)])}$,

$\{b,e,g\}=\mathteq{(T[N^-_{T}(c)])}$,

$\{a,b,f\}=\mathteq{(T[N^-_{T}(e)])}$,

$\{b,c,d\}=\mathteq{(T[N^-_{T}(f)])}$.

\caseinproofskip
Case~1. $u\trelation a$.

Since $\{c,f,g\}=\mathteq{(T[N^-_{T}(a)])}$, according to Lemma~\ref{lem:tauthreeequalretentive} and Lemma~\ref{lem:t3minimalretentiveset}, at least two of $\{c,f,g\}$ dominate $u$. According to Lemma~\ref{lem:teqisomorhpictoT43}, $[f,u]\not\asymp [g,u]$. Thus, at most one of $\{f,g\}$ dominates $u$. This implies that $c\trelation u$. Then, according to Lemma~\ref{lem:novertexdominatestwoadjacentverticesincapgraingraphs}, it holds that $v\trelation c$. Since $\{b,e,g\}=\mathteq{(T[N^-_{T}(c)])}$, according to Lemma~\ref{lem:tauthreeequalretentive} and Lemma~\ref{lem:t3minimalretentiveset}, at least two of $\{b,e,g\}$ dominate $v$. According to Lemma~\ref{lem:teqisomorhpictoT43}, $[g,v]\not\asymp [b,v]$. Thus, at most one of $\{g,b\}$ dominates $v$. This implies that $e\trelation v$. Then, according to Lemma~\ref{lem:teqisomorhpictoT43}, it holds that $v\trelation d, d\trelation u$ and $u\trelation e$. Since $\{a,b,f\}=\mathteq{(T[N^-_{T}(e)])}$, according to Lemma~\ref{lem:tauthreeequalretentive} and Lemma~\ref{lem:t3minimalretentiveset}, at least two of $\{a,b,f\}$ dominate $u$. Since we have showed above that $u\trelation a$, it holds that $\{b,f\}\trelation \{u\}$. Then, according to Lemma~\ref{lem:novertexdominatestwoadjacentverticesincapgraingraphs}, $\{v\}\trelation \{b,f\}$. Since $\{b,c,d\}=\mathteq{(T[N^-_{T}(f)])}$, according to Lemma~\ref{lem:tauthreeequalretentive} and Lemma~\ref{lem:t3minimalretentiveset}, at least two of $\{b,c,d\}$ dominate $v$. However, we have showed above that $\{v\}\trelation \{b,d\}$; a contradiction.

\caseinproofskip
Case~2. $a\trelation u$.

According to Lemma~\ref{lem:novertexdominatestwoadjacentverticesincapgraingraphs}, it holds that $v\trelation a$. Then, we can achieve a contradiction by exchanging all appearances of $v$ and $u$ in the above proof for Case~1.
%
%
%
%
%
%

\bigskip
{\casebegin}{\bf{$T[R_1]$ is isomorphic to $T_7(20)$}}

We complete the proof by distinguishing between the following two cases. Before proceeding further, let's first study some useful properties. First, it is easy to check that in the subtournament $T[R_1]$,
 $\{b,f,g\}=N^-_{T[R_1]}(c)$ is a tri-captain of $c$,
$\{c,e,g\}=N^-_{T[R_1]}(d)$ is a tri-captain of $d$,
$\{a,b,c\}=N^-_{T[R_1]}(e)$ is a tri-captain of $e$, and
$\{b,d,e\}=N^-_{T[R_1]}(f)$ is a tri-captain of $f$.
Then according to Lemma~\ref{lem:localboundedsmallteqtournament}, it holds that

$\{b,f,g\}=\mathteq{(T[N^-_{T}(c)])}$,

$\{c,e,g\}=\mathteq{(T[N^-_{T}(d)])}$,

$\{a,b,c\}=\mathteq{(T[N^-_{T}(e)])}$, and

$\{b,d,e\}=\mathteq{(T[N^-_{T}(f)])}$.

\caseinproofskip
Case~1. $u\trelation d$.

Since $\{c,e,g\}=\mathteq{(T[N^-_{T}(d)])}$, according to Lemma~\ref{lem:tauthreeequalretentive} and Lemma~\ref{lem:t3minimalretentiveset}, at least two of $\{c,e,g\}$ dominate $u$. According to Lemma~\ref{lem:teqisomorhpictoT43}, $[e,u]\not\asymp [g,u]$. Thus, at most one of $\{e,g\}$ dominates $u$. This implies that $c\trelation u$. Then, according to Lemma~\ref{lem:novertexdominatestwoadjacentverticesincapgraingraphs}, $v\trelation c$. Since $\{b,f,g\}=\mathteq{(T[N^-_{T}(c)])}$, according to Lemma~\ref{lem:tauthreeequalretentive} and Lemma~\ref{lem:t3minimalretentiveset}, at least two of $\{b,f,g\}$ dominate $v$. According to Lemma~\ref{lem:teqisomorhpictoT43}, $[g,v]\not\asymp [b,v]$. Thus, at most one of $\{g,b\}$ dominates $v$. This implies that $f\trelation v$. Then, according to Lemma~\ref{lem:novertexdominatestwoadjacentverticesincapgraingraphs}, $u\trelation f$. Since $\{b,d,e\}=\mathteq{(T[N^-_{T}(f)])}$, according to Lemma~\ref{lem:tauthreeequalretentive} and Lemma~\ref{lem:t3minimalretentiveset}, at least two of $\{b,d,e\}$ dominates $u$. Since $u\trelation d$, it holds that $\{b,e\}\trelation \{u\}$. Then, according to Lemma~\ref{lem:novertexdominatestwoadjacentverticesincapgraingraphs}, it holds that $\{v\}\trelation \{e,b\}$. Since $\{a,b,c\}=\mathteq{(T[N^-_{T}(e)])}$, according to Lemma~\ref{lem:t3minimalretentiveset}, at least two of $\{a,b,c\}$ dominate $v$. However, we have showed above that $\{v\}\trelation \{b,c\}$; a contradiction.
\caseinproofskip

Case~2. $d\trelation u$.

According to Lemma~\ref{lem:novertexdominatestwoadjacentverticesincapgraingraphs}, it holds that $v\trelation d$. Then, we can achieve a contradiction by exchanging all appearances of $v$ and $u$ in the above proof for Case~1.
%
%
%
%
%
%

\bigskip
{\casebegin}{\bf{$T[R_1]$ is isomorphic to $T_7(22)$}}

We complete the proof by distinguishing between the following two cases. Before proceeding further, let's study some useful properties. First, it is easy to verify that in the subtournament $T[R_1]$,
 $\{d,f,g\}$ is a tri-captain of $a$,
$\{a,e,g\}$ is a tri-captain of $b$,
$\{b,f,g\}$ is a tri-captain of $c$, and
$\{b,d,e\}$ is a tri-captain of $f$.
Moreover, each of $\{a,b,c,f\}$ has no more than five inneighbors in $R_1$.
Then according to Lemma~\ref{lem:localboundedsmallteqtournament}, it holds that

$\{d,f,g\}=\mathteq{(T[N^-_{T}(a)])}$,

$\{a,e,g\}=\mathteq{(T[N^-_{T}(b)])}$,

$\{b,f,g\}=\mathteq{(T[N^-_{T}(c)])}$, and

$\{b,d,e\}=\mathteq{(T[N^-_{T}(f)])}$.

\caseinproofskip
Case~1. $u\trelation b$.

Since $\{a,e,g\}=\mathteq{(T[N^-_{T}(b)])}$, according to Lemma~\ref{lem:tauthreeequalretentive} and Lemma~\ref{lem:t3minimalretentiveset}, at least two of $\{a,e,g\}$ dominate $u$. According to Lemma~\ref{lem:teqisomorhpictoT43}, $[e,u]\not\asymp [g,u]$. Thus, at most one of $\{e,g\}$ dominates $u$. This implies that $a\trelation u$. Then, according to Lemma~\ref{lem:novertexdominatestwoadjacentverticesincapgraingraphs}, $v\trelation a$. Since $\{d,f,g\}=\mathteq{(T[N^-_{T}(a)])}$, according to Lemma~\ref{lem:tauthreeequalretentive} and Lemma~\ref{lem:t3minimalretentiveset}, at least two of $\{d,f,g\}$ dominate $v$. According to Lemma~\ref{lem:teqisomorhpictoT43}, $[g,v]\not\asymp [d,v]$. Thus, at most one of $\{g,d\}$ dominates $v$. This implies that $f\trelation v$. Then, according to Lemma~\ref{lem:novertexdominatestwoadjacentverticesincapgraingraphs}, $u\trelation f$. Since $\{b,d,e\}=\mathteq{(T[N^-_{T}(f)])}$, according to Lemma~\ref{lem:tauthreeequalretentive} and Lemma~\ref{lem:t3minimalretentiveset}, at least two of $\{b,d,e\}$ dominates $u$. Since $u\trelation b$, it holds that $\{d,e\}\trelation \{u\}$. Then, according to Lemma~\ref{lem:teqisomorhpictoT43}, it holds that $u\trelation c$. Since $\{b,f,g\}=\mathteq{(T[N^-_{T}(c)])}$, according to Lemma~\ref{lem:tauthreeequalretentive} and Lemma~\ref{lem:t3minimalretentiveset}, at least two of $\{b,f,g\}$ dominate $u$. However, we have showed above that $\{u\}\trelation \{b,f\}$; a contradiction.
\caseinproofskip

Case~2. $b\trelation u$.

According to Lemma~\ref{lem:novertexdominatestwoadjacentverticesincapgraingraphs}, it holds that $v\trelation b$. Then, we can achieve a contradiction by exchanging all appearances of $v$ and $u$ in the above proof for Case~1.
%
%
%
%
%
%
%

\bigskip
{\casebegin}{\bf{$T[R_1]$ is isomorphic to $T_7(23)$}}

We complete the proof by distinguishing between the following two cases. Before proceeding further, let's first observe some useful properties. First, it is easy to verify that in the subtournament $T[R_1]$,
$\{b,e,g\}=N^-_{T[R_1]}(c)$ is a tri-captain of $c$,
$\{a,b,d\}=N^-_{T[R_1]}(e)$ is a tri-captain of $e$, and
$\{c,d,e\}=N^-_{T[R_1]}(f)$ is a tri-captain of $f$.
Then, according to Lemma~\ref{lem:localboundedsmallteqtournament}, it holds that

$\{b,e,g\}=\mathteq{(T[N^-_{T}(c)])}$,

$\{a,b,d\}=\mathteq{(T[N^-_{T}(e)])}$,

$\{c,d,e\}=\mathteq{(T[N^-_{T}(f)])}$.

\caseinproofskip
Case~1. $u\trelation c$.

Since $\{b,e,g\}=\mathteq{(T[N^-_{T}(c)])}$, according to Lemma~\ref{lem:tauthreeequalretentive} and Lemma~\ref{lem:t3minimalretentiveset}, at least two of $\{b,e,g\}$ dominate $u$. According to Lemma~\ref{lem:teqisomorhpictoT43}, $[e,u]\not\asymp [g,u]$. Thus, at most one of $\{e,g\}$ dominates $u$. This implies that $b\trelation u$. Then, according to Lemma~\ref{lem:teqisomorhpictoT43}, it holds that $u\trelation f, f\trelation v$ and $v\trelation b$. Since $\{c,d,e\}=\mathteq{(T[N^-_{T}(f)])}$, according to Lemma~\ref{lem:tauthreeequalretentive} and Lemma~\ref{lem:t3minimalretentiveset}, at least two of $\{c,d,e\}$ dominate $u$. Since $u\trelation c$, it holds that $\{d,e\}\trelation \{u\}$. Then, according to Lemma~\ref{lem:novertexdominatestwoadjacentverticesincapgraingraphs}, $\{v\}\trelation \{d,e\}$. Since $\{a,b,d\}=\mathteq{(T[N^-_{T}(e)])}$, according to Lemma~\ref{lem:tauthreeequalretentive} and Lemma~\ref{lem:t3minimalretentiveset}, at least two of $\{a,b,d\}$ dominate $v$. However, we have showed above that $\{v\}\trelation \{b,d\}$; a contradiction.

\caseinproofskip
Case~2. $c\trelation u$.

According to Lemma~\ref{lem:novertexdominatestwoadjacentverticesincapgraingraphs}, it holds that $v\trelation c$. Then, we can achieve a contradiction by exchanging all appearances of $v$ and $u$ in the above proof for Case~1.
%
%
%
%
%
%
%
%
%

\bigskip
{\casebegin}{\bf{$T[R_1]$ is isomorphic to $T_7(24)$}}

We complete the proof by distinguishing between the following two cases. It is easy to verify that in the subtournament $T[R_1]$,
 $\{a,b,c\}$ is a tri-captain of $d$, and $\{g,e,f\}$ is a tri-captain of $a$, $b$ and $c$.
Moreover, each of $\{a,b,c,d\}$ has at most five inneighbors in $R_1$.
Then according to Lemma~\ref{lem:localboundedsmallteqtournament}, it holds that

$\{a,b,c\}=\mathteq{(T[N^-_{T}(d)])}$ and  $\{g,e,f\}=\mathteq{(T[N^-_{T}(a)])}=\mathteq{(T[N^-_{T}(b)])}=\mathteq{(T[N^-_{T}(c)])}$.

\caseinproofskip
Case~1. $u\trelation d$.

According to Lemma~\ref{lem:teqisomorhpictoT43}, it holds that $d\trelation v, \{v\}\trelation \{e,f,g\}$ and $ \{e,f,g\}\trelation \{u\}$. Then, it must hold that $\{a,b,c\}\trelation \{v\}$, since otherwise, according to Lemma~\ref{lem:tauthreeequalretentive} and Lemma~\ref{lem:t3minimalretentiveset}, at least two of $\{e,f,g\}$ should have dominated $v$. Then, due to Lemma~\ref{lem:novertexdominatestwoadjacentverticesincapgraingraphs}, it holds that $\{u\}\trelation \{a,b,c\}$. However, since $\{a,b,c\}=\mathteq{(T[N^-_{T}(d)])}$ and $u\trelation d$, according to Lemma~\ref{lem:tauthreeequalretentive} and Lemma~\ref{lem:t3minimalretentiveset}, at least two of $\{a,b,c\}$ should have dominated $u$; a contradiction.
\caseinproofskip

Case~2. $d\trelation u$.

According to Lemma~\ref{lem:teqisomorhpictoT43}, it holds that $v\trelation d$. Then, we can achieve a contradiction by exchanging all appearances of $v$ and $u$ in the above proof for Case~1.
%
%
%
%
%
%
%

\bigskip
{\casebegin}{\bf{$T[R_1]$ is isomorphic to $T_7(25)$}}

We complete the proof by distinguishing between the following cases with respect to the arcs between $\{u\}$ and $\{c,d\}$. Before proceeding further, let's observe some useful properties. First, it is easy to verify that in the subtournament $T[R_1]$,
 $\{a,c,f\}=N^-_{T[R_1]}(d)$ is a tri-captain of $d$,
$\{a,b,e\}=N^-_{T[R_1]}(c)$ is a tri-captain of $c$,
$\{a,d,g\}=N^-_{T[R_1]}(b)$ is a tri-captain of $b$, and
$\{e,f,g\}=N^-_{T[R_1]}(a)$ is a tri-captain of $a$.
Then, according to Lemma~\ref{lem:localboundedsmallteqtournament}, it holds that

$\{a,c,f\}=\mathteq{(T[N^-_{T}(d)])}$,

$\{a,b,e\}=\mathteq{(T[N^-_{T}(c)])}$,

$\{a,d,g\}=\mathteq{(T[N^-_{T}(b)])}$, and

$\{e,f,g\}=\mathteq{(T[N^-_{T}(a)])}$.
\caseinproofskip

Case~1. $u\trelation d$ and $u\trelation c$.

According to Lemma~\ref{lem:teqisomorhpictoT43}, $\{v\}\trelation \{g,e\}$. Since $\{a,c,f\}=\mathteq{(T[N^-_{T}(d)])}$ and $u\trelation d$, then according to Lemma~\ref{lem:tauthreeequalretentive} and Lemma~\ref{lem:t3minimalretentiveset}, at least two of $\{a,c,f\}$ dominate $u$. Since $u\trelation c$, it holds that $\{f,a\}\trelation \{u\}$. Due to Lemma~\ref{lem:novertexdominatestwoadjacentverticesincapgraingraphs}, $v\trelation a$. Since $\{e,f,g\}=\mathteq{(T[N^-_{T}(a)])}$, according to Lemma~\ref{lem:tauthreeequalretentive} and Lemma~\ref{lem:t3minimalretentiveset}, at least two of $\{e,f,g\}$ dominate $v$. However, we have just showed above that $\{v\}\trelation \{g,e\}$; a contradiction.
\caseinproofskip

Case~2. $u\trelation d$ and $c\trelation u$.

According to Lemma~\ref{lem:teqisomorhpictoT43}, it holds that
$\{d,g\}\trelation \{v\}$, $\{v\}\trelation \{c,e\}$, $e\trelation u$ and $u\trelation g$.
Since $\{a,b,e\}=\mathteq{(T[N^-_{T}(c)])}$ and $v\trelation c$, according to Lemma~\ref{lem:tauthreeequalretentive} and Lemma~\ref{lem:t3minimalretentiveset}, at least two of $\{a,b,e\}$ dominate $v$. Since we have just showed that $v\trelation e$, it holds that $\{a,b\}\trelation \{v\}$. Then, according to Lemma~\ref{lem:novertexdominatestwoadjacentverticesincapgraingraphs}, we have that $u\trelation a$. Since $\{e,f,g\}=\mathteq{(T[N^-_{T}(a)])}$, according to Lemma~\ref{lem:tauthreeequalretentive} and Lemma~\ref{lem:t3minimalretentiveset}, at least two of $\{e,f,g\}$ dominate $u$. Since we have showed that $u\trelation g$ and $e\trelation u$, it holds that $f\trelation u$. Then, according to Lemma~\ref{lem:teqisomorhpictoT43}, $u\trelation b$. Since $\{a,d,g\}=\mathteq{(T[N^-_{T}(b)])}$, according to Lemma~\ref{lem:tauthreeequalretentive} and Lemma~\ref{lem:t3minimalretentiveset}, at least two of $\{a,d,g\}$ dominate $u$. However, we have showed above that $\{u\}\trelation \{g,d\}$; a contradiction.
\caseinproofskip

Case~3. $d\trelation u$ and $u\trelation c$.

According to Lemma~\ref{lem:teqisomorhpictoT43}, it holds that
$\{e,c\}\trelation \{v\}$, $\{v\}\trelation \{d,g\}$, $g\trelation u$ and $u\trelation e$.
Since $\{a,b,e\}=\mathteq{(T[N^-_{T}(c)])}$, according to Lemma~\ref{lem:tauthreeequalretentive} and Lemma~\ref{lem:t3minimalretentiveset}, at least two of $\{a,b,e\}$ dominate $u$. Since we have just showed that $u\trelation e$, it holds that $\{a,b\}\trelation \{u\}$. Then, according to Lemma~\ref{lem:novertexdominatestwoadjacentverticesincapgraingraphs}, we have that $v\trelation a$. Since $\{e,f,g\}=\mathteq{(T[N^-_{T}(a)])}$, according to Lemma~\ref{lem:tauthreeequalretentive} and Lemma~\ref{lem:t3minimalretentiveset}, at least two of $\{e,f,g\}$ dominate $v$. Since we have showed that $v\trelation g$ and $e\trelation v$, it holds that $f\trelation v$. Then, according to Lemma~\ref{lem:teqisomorhpictoT43}, $v\trelation b$. Since $\{a,d,g\}=\mathteq{(T[N^-_{T}(b)])}$, according to Lemma~\ref{lem:tauthreeequalretentive} and Lemma~\ref{lem:t3minimalretentiveset}, at least two of $\{a,d,g\}$ dominate $v$. However, we have showed above that $\{v\}\trelation \{d,g\}$; a contradiction.
\caseinproofskip

Case~4. $d\trelation u$ and $c\trelation u$.

According to Lemma~\ref{lem:teqisomorhpictoT43}, it holds that
$\{e,g\}\trelation \{v\}$, $\{v\}\trelation \{c,d\}$, and $\{u\}\trelation \{e,g\}$.
Since $\{a,c,f\}=\mathteq{(T[N^-_{T}(d)])}$ and $v\trelation d$, according to Lemma~\ref{lem:tauthreeequalretentive} and Lemma~\ref{lem:t3minimalretentiveset}, at least two of $\{a,c,f\}$ dominate $v$. Since $v\trelation c$, it holds that $\{a,f\}\trelation \{v\}$.
Then, according to Lemma~\ref{lem:novertexdominatestwoadjacentverticesincapgraingraphs}, we have that $u\trelation a$. Since $\{e,f,g\}=\mathteq{(T[N^-_{T}(a)])}$, according to Lemma~\ref{lem:tauthreeequalretentive} and Lemma~\ref{lem:t3minimalretentiveset}, at least two of $\{e,f,g\}$ dominate $u$. However, we have showed above that $\{u\}\trelation \{g,e\}$; a contradiction.

\bigskip
{\casebegin}{\bf{$T[R_1]$ is isomorphic to $T_7(26)$}}

Observe that there is a directed cycle $(a,d,e,c,f,b,g)$ in $\dcapgraph{T_7(26)}$. If $\dcapgraph{T[R_2]}$ is not empty, then due to Lemma~\ref{lem:captaingraphagainsttwoteq}, $R_1$ and $R_2$ cannot be both minimal {\teq}-retentive sets of $T$.
\end{proof}

The summary of the above lemmas clearly proves Theorem~\ref{thm:notsevenseventeqtournaments}.
Furthermore, due to Theorems~\ref{thm:notthreekleqtwelveteqtournaments}-\ref{thm:notsevenseventeqtournaments},
we have the following corollary.

\begin{corollary}
Schwartz's Conjecture holds in all tournaments of size at most $14$.
\end{corollary}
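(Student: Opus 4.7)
The plan is a short proof by contradiction that simply bookkeeps the preceding results. Suppose a tournament $T$ with $|V(T)|\leq 14$ violates Schwartz's Conjecture; then $T$ admits two distinct minimal {\teq}-retentive sets $R_1$ and $R_2$. By Lemma~\ref{lem:distinctminiteqs} these are disjoint, whence $|R_1|+|R_2|\leq |V(T)|\leq 14$.

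The first step would be to pin down the possible sizes of $R_1$ and $R_2$. A size-$1$ minimal {\teq}-retentive set would, by Lemma~\ref{lem:teqsourceuniqueminial}, have to consist of the source of $T$ and would then be the unique minimal {\teq}-retentive set, contradicting the existence of a second one. Sizes $2$ and $4$ are ruled out by Lemma~\ref{lem:teqirreducible} and Theorem~\ref{lem:noretentivesetofsizefour}, respectively. Hence $|R_1|,|R_2|\in\{3,5,6,7,8,\dots\}$.

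Next, assuming without loss of generality that $|R_1|\leq |R_2|$ and combining this with $|R_1|+|R_2|\leq 14$, the admissible pairs fall into exactly four families: $(3,k)$ with $3\leq k\leq 11$, $(5,k)$ with $5\leq k\leq 9$, $(6,k)$ with $6\leq k\leq 8$, and $(7,7)$. These families are precisely the ones forbidden by Theorems~\ref{thm:notthreekleqtwelveteqtournaments}, \ref{thm:notfivekleqtenteqtournaments}, \ref{thm:notsixsixseverneightteqtournaments} and \ref{thm:notsevenseventeqtournaments}, respectively, so in every case the assumption leads to a contradiction.

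There is no real obstacle left at this stage: all the substantive work lives in the four theorems cited above, and the corollary itself is just the case-by-case packaging that confirms those theorems jointly exhaust every admissible pair of disjoint minimal {\teq}-retentive sets fitting inside a tournament of size at most $14$. The one point to double-check is the completeness of the exclusion of sizes $1$, $2$ and $4$, so that no split such as $1+13$ or $4+10$ escapes the case analysis.
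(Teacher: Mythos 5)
Your proposal is correct and follows exactly the route the paper intends: the paper derives the corollary directly from Theorems~\ref{thm:notthreekleqtwelveteqtournaments}--\ref{thm:notsevenseventeqtournaments}, leaving implicit the bookkeeping you spell out (disjointness via Lemma~\ref{lem:distinctminiteqs}, exclusion of sizes $1$, $2$, $4$ via Lemma~\ref{lem:teqsourceuniqueminial}, Lemma~\ref{lem:teqirreducible} and Theorem~\ref{lem:noretentivesetofsizefour}, and the exhaustive list of size pairs $(3,\le 11)$, $(5,\le 9)$, $(6,\le 8)$, $(7,7)$). Your case analysis is complete and matches the cited theorems, so no gap remains.
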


\section{Schwartz's Conjecture}
We have derived numerous properties of {\teq}-retentive sets in the previous sections, and achieved many interesting results regarding {\teq}-retentive tournaments with the help of these properties. In this section, we extend the applications of these properties by examining Schwartz's Conjecture in several classes of tournaments.
A tournament is {\it{locally-transitive}} if the outneighbourhood
and the inneighbourhood of every vertex are transitive~\cite{DBLP:journals/combinatorics/BabaiC00,CohenPPlocalltournaments}. Clearly, every vertex in a locally transitive tournament has a captain---the source in the subtournament induced by its inneighbors.


\begin{theorem}
\label{thm:schwartzholdsinlocallyboundedtournaments}
Schwartz's Conjecture holds in all locally-transitive tournaments.
\end{theorem}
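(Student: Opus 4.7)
The plan is to combine the structural characterization of domination graphs (Lemma~\ref{lem:dominationgraphproperty}) with a simple edge-counting argument, exploiting the fact that in a locally-transitive tournament every non-source vertex has a captain. I will proceed by contradiction.

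First, I would dispatch the trivial case. If $T$ has a source $s$, then by Lemma~\ref{lem:teqsourceuniqueminial}, $\{s\}$ is the unique minimal {\teq}-retentive set of $T$, and Schwartz's Conjecture holds. So assume $T$ is locally-transitive, has no source, and suppose for contradiction that $T$ has two distinct minimal {\teq}-retentive sets $R_1$ and $R_2$. By Lemma~\ref{lem:distinctminiteqs}, we have $R_1\cap R_2=\emptyset$.

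Next, I would argue that every vertex in each $T[R_i]$ has a captain in $T[R_i]$. Since $T$ is locally-transitive, $N^-_T(v)$ is transitive for every $v\in V(T)$, and hence any subset of it (in particular $N^-_{T[R_i]}(v)$) is transitive. Since $R_i$ is a minimal {\teq}-retentive set and $|R_i|\geq 2$ (if $|R_i|=1$, Lemma~\ref{lem:teqsourceuniqueminial} would force $T$ to have a source), $T[R_i]$ is irreducible (Lemma~\ref{lem:teqirreducible}) and therefore has no source, so $N^-_{T[R_i]}(v)\neq\emptyset$ for every $v\in R_i$. A nonempty transitive tournament has a unique source, so every $v\in R_i$ has a captain in $T[R_i]$.

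Now comes the key counting step. Since every vertex of $R_i$ has a captain in $T[R_i]$, every vertex has in-degree exactly $1$ in $\dcapgraph{T[R_i]}$. Hence the number of edges of $\capgraph{T[R_i]}$ equals $|R_i|$, and moreover $\capgraph{T[R_i]}$ has no isolated vertex. By Lemma~\ref{lem:dominationgraphproperty}, $\capgraph{T[R_i]}$ is either a forest of caterpillars or a spiked odd cycle (possibly with isolated vertices). A forest on $|R_i|$ vertices has at most $|R_i|-1$ edges, so the forest alternative is ruled out. Therefore $\capgraph{T[R_i]}$ is a spiked odd cycle and, in particular, contains an odd cycle $C_i$.

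Finally, I would lift this back to $\capgraph{T}$. By Lemma~\ref{lem:inneighboronesouce}, every captain relation in $T[R_i]$ is also a captain relation in $T$, so $C_i$ is a cycle of $\capgraph{T}$ entirely contained in $R_i$. Since $R_1\cap R_2=\emptyset$, the cycles $C_1$ and $C_2$ are vertex-disjoint cycles of $\capgraph{T}$. But Lemma~\ref{lem:dominationgraphproperty} says $\capgraph{T}$ contains at most one cycle, a contradiction. I expect the only subtle step to be ruling out $|R_i|=1$ and verifying that $T[R_i]$ inherits local-transitivity (both short but necessary for the captain-existence argument); the rest is a clean edge count against Fisher et al.'s structure theorem.
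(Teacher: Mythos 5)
Your proposal is correct and follows essentially the same route as the paper: show every vertex of each $T[R_i]$ has a captain via local transitivity, deduce a cycle in each domination graph $\capgraph{T[R_i]}$, lift both cycles to $\capgraph{T}$ via Lemma~\ref{lem:inneighboronesouce}, and contradict Lemma~\ref{lem:dominationgraphproperty}. Your edge-counting justification for cycle existence and the explicit handling of the source and $|R_i|=1$ cases are just slightly more careful versions of steps the paper asserts directly.
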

\begin{proof}
We prove the theorem by contradiction. Assume that a locally-transitive tournament $T$ has two minimal {\teq}-retentive sets $R_1$ and $R_2$.
It is clear that both $T[R_1]$ and $T[R_2]$ are locally-transitive. Now consider the directed domination graphs $\dcapgraph{T[R_1]}$ and $\dcapgraph{T[R_2]}$. Since $T[R_1]$ and $T[R_2]$ are locally transitive, for every vertex $v\in R_1$ (resp. $v'\in R_2$), $v$ (resp. $v'$) has a captain in $T[R_1]$ (resp. $T[R_2]$). This implies that there is a directed cycle $C_1$ in $\dcapgraph{T[R_1]}$ and a directed cycle $C_2$ in $\dcapgraph{T[R_2]}$. Since $R_1$ and $R_2$ are both minimal {\teq}-retentive sets of $T$, according to Lemma~\ref{lem:inneighboronesouce}, if a vertex $a\in R_i$ ($i=1,2$) is the captain of a vertex $b\in R_i$ in $T[R_i]$, the vertex a is still the captain of the vertex $b$ in $T$. Therefore, both $C_1$ and $C_2$ exist in $\dcapgraph{T}$. However, this contradicts with Lemma~\ref{lem:dominationgraphproperty}.
\end{proof}

\begin{theorem}
\label{thm:captaincycleminimalteqretentiveset}
Schwartz's Conjecture holds in all tournaments $T$ such that $\dcapgraph{T}$ contains a directed cycle $C$ with $V(C)=V(T)$.
\end{theorem}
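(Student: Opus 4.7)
The plan is to use the Hamiltonian directed cycle in $\dcapgraph{T}$ together with Lemma~\ref{lem:captainistheuniqueteqofslavery} to force any minimal {\teq}-retentive set to coincide with $V(T)$. Concretely, let $C=(v_0,v_1,\ldots,v_{n-1})$ be the given directed cycle in $\dcapgraph{T}$ with $V(C)=V(T)$, so that for every $i\in\{0,1,\ldots,n-1\}$, $v_i$ is the {\captain} of $v_{\mymod{(i+1)}{n}}$ in $T$. I will argue that $V(T)$ is the unique minimal {\teq}-retentive set.

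First, take any minimal {\teq}-retentive set $R$ of $T$ (such a set exists, since $V(T)$ itself is trivially {\teq}-retentive). Because $R\neq\emptyset$, pick some $v_i\in R$. By Lemma~\ref{lem:captainistheuniqueteqofslavery}, since $v_{i-1}$ (indices mod $n$) is the captain of $v_i$ in $T$ and $v_i\in R$, we have $v_{i-1}\in R$. Iterating this step backwards along the cycle $n-1$ times yields $v_{i-2},v_{i-3},\ldots,v_{i-(n-1)}\in R$, i.e., $\{v_0,v_1,\ldots,v_{n-1}\}=V(T)\subseteq R$. Hence $R=V(T)$.

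Since every minimal {\teq}-retentive set equals $V(T)$, the tournament $T$ has exactly one minimal {\teq}-retentive set, which is what Schwartz's Conjecture asserts. The argument is essentially a one-line propagation along the Hamiltonian directed cycle in $\dcapgraph{T}$ and does not present a genuine obstacle; the only thing worth stressing in the write-up is that one must invoke Lemma~\ref{lem:captainistheuniqueteqofslavery} (the ``moreover'' part) rather than Lemma~\ref{lem:inneighboronesouce}, because we are lifting membership of captains from $T$ (not from a subtournament induced by $R$) up to $R$.
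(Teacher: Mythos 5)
Your proposal is correct and follows essentially the same route as the paper: both rest on the ``moreover'' part of Lemma~\ref{lem:captainistheuniqueteqofslavery} to propagate membership in a minimal {\teq}-retentive set backwards along the Hamiltonian cycle of $\dcapgraph{T}$, forcing any minimal {\teq}-retentive set to be all of $V(T)$. The only cosmetic difference is that the paper also explicitly verifies that $V(C)$ is a {\teq}-retentive set, which in your write-up is subsumed by the triviality that $V(T)$ itself is retentive.
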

\begin{proof}
Let $C=(v_0,v_1,...,v_{n-1})$. We claim that $V(C)$ is the unique minimal {\teq}-retentive set of $T$. Since $v_i$ is the captain of $v_{\mymod{(i+1)}{n}}$ in $T$ for all $i=0,1,...,n-1$, due to Lemma~\ref{lem:captainistheuniqueteqofslavery}, it holds $\{v_i\}=\mathteq{(T[N^-_T(v_{\mymod{(i+1)}{n}})])}$ for every $i=0,1,...,n-1$. Therefore, $V(C)$ is a {\teq}-retentive set of $T$. According to Lemma~\ref{lem:captainistheuniqueteqofslavery}, if a vertex $v_{\mymod{(i+1)}{n}}$ is in a minimal {\teq}-retentive set of $T$, so does its captain $v_{i}$. Therefore, $V(C)$ is the unique minimal {\teq}-retentive set of $T$.
\end{proof}



\section{Conclusion}
The tournament equilibrium set is one of the most attractive tournament solutions. Despite that Schwartz's Conjecture does not hold in general~\cite{DBLP:journals/scw/BrandtCKLNSST13,Brandt2013b}, researchers believe that the tournament equilibrium set satisfies many fairness properties for most of the practical purposes~\cite{handbookofcomsoc2015Cha3Brandt,Brandt2010,MSYangIJCAI2015}. In this paper, we proposed and studied several significant problems pertaining to the tournament equilibrium set, aiming at deriving properties that are useful for a comprehensive understanding of the tournament equilibrium set. Our main contributions are summarized as follows:
(1) We introduced {\captaingraph}s into the study of the tournament equilibrium set, and showed that domination graphs of tournaments are very useful in the study.
(2) We initiated the study of {\teq}-retentive tournaments and $(H_1,H_2)$-{\teq}-retentive tournaments. This study is helpful for researchers to have a comprehensive understanding of the intrinsic structures of minimal {\teq}-retentive sets. Moreover, the study of $(H_1,H_2)$-{\teq}-retentive tournaments is useful in checking Schwartz's Conjecture on special classes of tournaments.
(3) We gave upper bounds for $\beta_n$ for $4\leq n\leq 10$, the number of all non-isomorphic {\teq}-retentive tournaments of size $n$. In particular, we showed that $\beta_4=0, \beta_5=\beta_6=2$ and $\beta_7=26$. Our findings tell that minimal {\teq}-retentive sets of small size are considerably rare. 
(4) Our results on $(H_1,H_2)$-{\teq}-retentive tournaments imply that Schwartz's Conjecture holds in all tournaments of size at most 14. Moreover, we proved that Schwartz's Conjecture holds in several other classes of tournaments; adding further positive evidence for the practicability of the tournament equilibrium set.
(5) We derived a lot of properties of the tournament equilibrium set, which we believe are very helpful in further research on the tournament equilibrium set.

Another major contribution of this paper is that we uncover many new significant problems related to the tournament equilibrium set. We believe that solutions to these problems will bring us new technique tools for the study of the tournament equilibrium set. In the following, we discuss a few of them.

We have proved that there are no $(T_3,T_k)$-{\teq}-retentive tournaments for every positive integer $k\leq 12$ (see Theorem~\ref{thm:notthreekleqtwelveteqtournaments}). We conjecture that this holds for every positive integer $k$, that is, if a tournament has a minimal {\teq}-retentive set of size 3, then it has no further minimal {\teq}-retentive sets.
\begin{conjecture}\label{conjec:triangleunique}
If a tournament $T$ has a minimal {\teq}-retentive set $R$ of size 3, then $R$ is the unique minimal {\teq}-retentive set of $T$.
\end{conjecture}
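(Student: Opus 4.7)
The plan is to prove Conjecture~\ref{conjec:triangleunique} by contradiction, extending the strategy of Theorem~\ref{thm:notthreekleqtwelveteqtournaments} to an \emph{arbitrarily large} second minimal {\teq}-retentive set. Suppose $T$ has a minimal {\teq}-retentive set $R=\{a,b,c\}$ of size~$3$ together with another minimal {\teq}-retentive set $R'$. By Lemma~\ref{lem:distinctminiteqs} these sets are disjoint, and by Lemma~\ref{lem:t3minimalretentiveset} every vertex of $V(T)\setminus R$ (in particular, every vertex of $R'$) is dominated by at least two vertices of $\{a,b,c\}$. Moreover, since $\dcapgraph{T[R]}$ is the directed $3$-cycle $(a,b,c)$, Lemma~\ref{lem:captaingraphagainsttwoteq} forces $\dcapgraph{T[R']}$ to contain no directed cycle, and Lemma~\ref{lem:teqtrianglecyclebroom} forbids both directed cycles and the stated broom in $\dcapgraph{T[S]}$ for every minimal {\teq}-retentive set $S$ of $T[N^-_T(v)]$ with $v\in R'$. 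By Lemma~\ref{lem:dominationgraphproperty} this already tells us that $\capgraph{T[R']}$ is a forest of caterpillars.

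Building on these constraints, my next step is to strengthen the broom-type restriction of Lemma~\ref{lem:teqtrianglecyclebroom} by one further level of recursion. For each $v\in R'$, write $\mathteq(T[N^-_T(v)])$ as either a single captain $\{u\}$ or as a tri-captain $\{x,y,z\}$ (sizes $2$ and $4$ being excluded by Lemma~\ref{lem:teqirreducible} and Theorem~\ref{lem:noretentivesetofsizefour}, and sizes $\ge 5$ being the place the present proof must go beyond the $k\le 12$ case). Using that at least two of $\{a,b,c\}$ dominate $v$, together with Lemmas~\ref{lem:novertexdominatestwoadjacentverticesincapgraingraphs}, \ref{lem:t3minimalretentiveset} and \ref{lem:captaingrapht43property}, I would argue along any caterpillar of $\capgraph{T[R']}$ from a leaf inward: at each captain/slave edge $u\to v$ one of $\{a,b,c\}$ is forced to flip its orientation against the edge by Lemma~\ref{lem:teqisomorhpictoT43}, so after walking a bounded number of edges, the two {dominators of $v$ among $\{a,b,c\}$} become incompatible with the tri-captain or captain structure of $\mathteq(T[N^-_T(w)])$ at some vertex $w$ further along the spine. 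Iterating the case analysis of Theorem~\ref{thm:notthreekleqtwelveteqtournaments} along these caterpillar spines, rather than only at a single low-indegree vertex, is what I expect to close the argument.

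The main obstacle is precisely what blocks the proof of Theorem~\ref{thm:notthreekleqtwelveteqtournaments} from generalising: when $|R'|$ is large there need not be a vertex $v\in R'$ with $|N^-_{T[R']}(v)|\le 5$, and Lemma~\ref{lem:localboundedsmallteqtournament} no longer identifies $\mathteq(T[N^-_T(v)])$. One must therefore handle the case where $\mathteq(T[N^-_T(v)])$ is a {\teq}-retentive tournament of size $\ge 7$, which by Theorem~\ref{thm:betas} forces $T[\mathteq(T[N^-_T(v)])]$ itself to have a rich directed domination structure. The delicate point will be to show that these rich structures, combined with the constraint that two of $\{a,b,c\}$ dominate $v$, nonetheless propagate the contradiction inward along $\capgraph{T[R']}$. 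If this direct propagation is too stubborn, an alternative route would be to isolate a new forbidden configuration in $\dcapgraph{T[R\cup R']}$ that already appears implicitly in Lemmas~\ref{lem:teqtrianglecyclebroom} and \ref{lem:tsixnineiterativetricaptain} and to prove it by an extremal counting argument on arcs between $R$ and~$R'$, analogous to the counting used in Lemma~\ref{lem:againsttwoteqbothtsevenone}.
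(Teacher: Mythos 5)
This statement is not proved in the paper at all: it appears as Conjecture~\ref{conjec:triangleunique}, explicitly left open, and the only result the paper establishes in its direction is Theorem~\ref{thm:notthreekleqtwelveteqtournaments}, i.e.\ the restricted case $|R'|\leq 12$, whose proof hinges on the existence of a vertex $v\in R'$ with $|N^-_{T[R']}(v)|\leq 5$. Your proposal does not close the gap either, and you essentially concede this yourself. The first half of your plan (disjointness via Lemma~\ref{lem:distinctminiteqs}, the fact that two of $\{a,b,c\}$ dominate every vertex of $R'$ via Lemma~\ref{lem:t3minimalretentiveset}, exclusion of directed cycles in $\dcapgraph{T[R']}$ via Lemma~\ref{lem:captaingraphagainsttwoteq}, the broom restriction of Lemma~\ref{lem:teqtrianglecyclebroom}) is a correct assembly of constraints that the paper itself already exploits for $k\leq 12$. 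But the decisive step is missing: when every vertex of $R'$ has more than five inneighbors in $R'$, nothing in the paper's toolkit identifies $\mathteq(T[N^-_T(v)])$ --- Lemma~\ref{lem:localboundedsmallteqtournament} is inapplicable, the tri-captain analysis gives no handle, and $\mathteq(T[N^-_T(v)])$ may itself be large (indeed the classification of {\teq}-retentive tournaments stops at size $7$ and only gives upper bounds up to size $10$). Your proposed remedy, ``iterating the case analysis along caterpillar spines'' so that orientations forced by Lemma~\ref{lem:teqisomorhpictoT43} eventually clash with some $\mathteq(T[N^-_T(w)])$, is stated as an expectation, not an argument: you never show that such a clash must occur, and the parity constraints of Lemma~\ref{lem:captaingrapht43property} are consistent with many orientation patterns of $\{a,b,c\}$ against a long caterpillar. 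Likewise, the fallback ``extremal counting on arcs between $R$ and $R'$'' in the style of Lemma~\ref{lem:againsttwoteqbothtsevenone} cannot be transplanted as is, because that counting used the exact identity of all seven sets $\mathteq(T[N^-_T(u)])$ inside $T_7(1)$ and their pairwise intersections of size at most one --- information unavailable for a general $R'$.

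In short, what you have written is a reasonable research plan that reconstructs the published partial result and correctly locates the obstruction, but it contains no new idea that would handle vertices of $R'$ whose in-neighborhood equilibrium sets are large, which is exactly why the statement remains a conjecture in the paper. As it stands, the proposal does not constitute a proof.
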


We showed that if the directed {\captaingraph} of a tournament has a directed cycle $C$ such that $V(C)=V(T)$ then $V(C)$ is the unique minimal {\teq}-retentive set of $T$ (see Theorem~\ref{thm:captaincycleminimalteqretentiveset}).
We conjecture that this is true if we drop the condition that $V(C)=V(T)$.

\begin{conjecture}\label{conjec:captaincycleuniqueminimalteqset}
Let $T$ be a tournament. If there is a directed cycle $C$ in $\dcapgraph{T}$, then $V(C)$ is the unique {\teq}-minimal retentive set of $T$.
\end{conjecture}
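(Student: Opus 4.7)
The plan is to prove the conjecture in two stages: first show $V(C)$ is a minimal $\tau$-retentive set of $T$, then establish uniqueness by contradiction using the structural restrictions imposed by Lemma~\ref{lem:dominationgraphproperty}. Writing $C = (v_0, v_1, \ldots, v_{k-1})$ so that each $v_{i-1}$ is the captain of $v_i$ in $T$, the first stage follows exactly the argument of Theorem~\ref{thm:captaincycleminimalteqretentiveset}: for each $v_i \in V(C)$, Lemma~\ref{lem:captainistheuniqueteqofslavery} gives $\tau(T[N^-_T(v_i)]) = \{v_{i-1}\}$, hence $V(C)$ is a $\tau$-retentive set, and minimality follows because any $\tau$-retentive subset of $V(C)$ containing some $v_i$ must also contain $v_{i-1}$ and so (by iteration) all of $V(C)$.

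For uniqueness, suppose for contradiction that $R$ is a second minimal $\tau$-retentive set; by Lemma~\ref{lem:distinctminiteqs}, $R \cap V(C) = \emptyset$. Since $\capgraph{T}$ contains the cycle $C$, Lemma~\ref{lem:dominationgraphproperty} forces $\capgraph{T}$ to be a spiked odd cycle (possibly with isolated vertices), so $k$ is odd and every non-cycle vertex $u$ is either isolated in $\capgraph{T}$ or a degree-$1$ vertex adjacent to exactly one cycle vertex $v_i$. The latter case cannot occur for $u \in R$: if $v_i$ is the captain of $u$, then $\tau(T[N^-_T(u)]) = \{v_i\} \subseteq R$ contradicts disjointness from $V(C)$; if $u$ is the captain of $v_i$, then $u$ conflicts with the unique cycle captain $v_{i-1}$ of $v_i$. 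Therefore $R$ is contained in the set of $\capgraph{T}$-isolated non-cycle vertices.

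Next, I would exploit the captain relations along $C$ to constrain how any such $u \in R$ interacts with $V(C)$. Since $v_{i-1}$ is captain of $v_i$ in $T$, any $u \notin V(C)$ with $u \trelation v_i$ forces $v_{i-1} \trelation u$; consequently the set $O_u = \{i : u \trelation v_i\}$ contains no two cyclically consecutive indices, and because $k$ is odd, its complement must contain at least one consecutive pair $v_{i-1}, v_i$ of cycle vertices dominating $u$. For such a pair, $v_{i-1}$ remains the captain of $v_i$ in the subtournament $T[N^-_T(u)]$, so $\dcapgraph{T[N^-_T(u)]}$ inherits at least the arc $v_{i-1} \trelation v_i$, and more generally inherits a directed path along every maximal run of cycle vertices contained in $N^-_T(u)$.

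The main obstacle is to upgrade this local structure into the global claim $\tau(T[N^-_T(u)]) \cap V(C) \neq \emptyset$, which would then yield $R \cap V(C) \neq \emptyset$ via $\tau(T[N^-_T(u)]) \subseteq R$, producing the sought contradiction. The natural strategy is induction on $|V(T)|$: when $V(C) \subseteq N^-_T(u)$, the entire cycle $C$ survives as a directed cycle of $\dcapgraph{T[N^-_T(u)]}$, the inductive hypothesis forces $V(C)$ to be the unique minimal $\tau$-retentive set of $T[N^-_T(u)]$, and we are done. The hard case is when $u$ dominates at least one cycle vertex, so the cycle is broken into vertex-disjoint directed paths in $\dcapgraph{T[N^-_T(u)]}$; one must then analyze $\capgraph{T[N^-_T(u)]}$ (itself a spiked odd cycle or forest of caterpillars by Lemma~\ref{lem:dominationgraphproperty}) together with how the surviving captain-paths along $V(C) \cap N^-_T(u)$ interact with the minimal $\tau$-retentive sets of $T[N^-_T(u)]$ in order to force a cycle vertex into $\tau(T[N^-_T(u)])$. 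This final case analysis, particularly the subcase where $\capgraph{T[N^-_T(u)]}$ is a forest of caterpillars and no captain cycle is available for the inductive step, is the principal difficulty.
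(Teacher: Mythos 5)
There is nothing in the paper to compare your argument against: the statement you are proving is stated as Conjecture~\ref{conjec:captaincycleuniqueminimalteqset} in the concluding section and is left open; the paper only proves the special case $V(C)=V(T)$ (Theorem~\ref{thm:captaincycleminimalteqretentiveset}). Your first stage and your structural reductions are sound: the argument that $V(C)$ is a minimal {\teq}-retentive set is exactly the paper's Theorem~\ref{thm:captaincycleminimalteqretentiveset} argument via Lemma~\ref{lem:captainistheuniqueteqofslavery}, the disjointness of a hypothetical second set $R$ from $V(C)$ follows from Lemma~\ref{lem:distinctminiteqs}, the exclusion of pendant vertices of the spiked odd cycle from $R$ is correct (uniqueness of captains plus Lemma~\ref{lem:captainistheuniqueteqofslavery}), and the parity argument showing that every $u\notin V(C)$ is dominated by two cyclically consecutive cycle vertices is fine since $k$ is odd by Lemma~\ref{lem:dominationgraphproperty}.

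However, the proof has a genuine and decisive gap, which you yourself flag: you never establish that $\mathteq{(T[N^-_T(u)])}\cap V(C)\neq\emptyset$ for a vertex $u\in R$, which is the only step that would produce the contradiction with $R\cap V(C)=\emptyset$. Your induction only handles the case $V(C)\subseteq N^-_T(u)$; in the case where $u$ dominates some cycle vertex, the cycle degenerates into captain-paths in $\dcapgraph{T[N^-_T(u)]}$, and arcs or paths in a domination graph by themselves carry no known guarantee of membership in $\mathteq$ of that subtournament (the paper's lemmas, e.g.\ Lemma~\ref{lem:captainistheuniqueteqofslavery} and Lemma~\ref{lem:novertexdominatestwoadjacentverticesincapgraingraphs}, constrain $\mathteq$ only through full cycles or through vertices already known to lie in a minimal retentive set). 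Note also that closing this step would in particular prove Conjecture~\ref{conjec:triangleunique}, which the paper can only verify for sets of size up to $12$ via the elaborate case analysis of Theorem~\ref{thm:notthreekleqtwelveteqtournaments}; this is strong evidence that the missing case is not a routine verification but the actual open content of the conjecture. As it stands, your text is a plausible research plan, not a proof.
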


Since a minimal {\teq}-retentive set of size $3$ induces a directed cycle in the directed {\captaingraph}, Conjecture~\ref{conjec:captaincycleuniqueminimalteqset} implies Conjecture~\ref{conjec:triangleunique}.

According to Lemma~\ref{lem:teqsourceuniqueminial}, a singleton set is a (the unique) minimal {\teq}-retentive set of a tournament if and only if the vertex in the set dominates every other vertex. A captain vertex together with any of its slaves dominate all the other vertices. This encourages us to propose the following conjecture.

\begin{conjecture}\label{conjec:captaingraph}
Let $T$ be a tournament and $A$ be the set of all captain vertices of $T$. Then $A\subseteq \mathteq{(T)}$.
\end{conjecture}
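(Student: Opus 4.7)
The plan is to proceed by induction on $|V(T)|$, with base cases $|V(T)|\le 2$ trivial and $|V(T)|=3$ verified by the small-tournament enumeration preceding Theorem~\ref{lem:noretentivesetofsizefour}. For the inductive step, let $v$ be a captain vertex of $T$ with some slave $u$; the goal is to exhibit at least one minimal {\teq}-retentive set of $T$ containing $v$.

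The key observation driving the induction is that captain--slave relationships are inherited by induced subtournaments that contain both endpoints: if $v'\in V(T)$ satisfies $v\trelation v'$ and $u\trelation v'$, then inside $T[N^-_T(v')]$ the vertex $v$ still dominates $u$ and every other inneighbor of $u$, so $v$ remains a captain vertex of that strictly smaller tournament. By the inductive hypothesis, $v\in \mathteq{(T[N^-_T(v')])}$, and if such a $v'$ lies in a minimal {\teq}-retentive set $R$ of $T$, then the {\teq}-retentiveness of $R$ immediately yields $v\in R$, as desired. Thus, the task reduces to showing that for some minimal {\teq}-retentive set $R$, either $v\in R$ directly, or there exists $v'\in R$ dominated by both $v$ and some slave of $v$.

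The main obstacle is ruling out the pathological case in which every minimal {\teq}-retentive set $R$ simultaneously avoids $v$ and avoids all common outneighbors of $v$ and each of its slaves. To attack it, I would exploit Lemma~\ref{lem:dominationgraphproperty}: $\capgraph{T}$ is either a spiked odd cycle with isolated vertices or a forest of caterpillars. In the forest case, $v$ together with its slaves and its own captain (if any) sits in a very constrained star-like substructure, and combining this with Lemma~\ref{lem:inneighboronesouce} (which says $\capgraph{T[R]}$ embeds into $\capgraph{T}$) and Lemma~\ref{lem:captaingrapht43property} (which forces rigid alternating dominance patterns between siblings in $\capgraph{T[R]}$ and non-isolated vertices outside) one should be able to force $R$ to contain either $v$ itself or a common outneighbor of $v$ and some slave. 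In the odd-cycle case, if $v$ lies on the cycle, iterated use of Lemma~\ref{lem:captainistheuniqueteqofslavery} makes the cycle vertices a {\teq}-retentive set (this is essentially Conjecture~\ref{conjec:captaincycleuniqueminimalteqset}, which I would attempt to prove as a lemma along the way), placing $v$ in $\mathteq{(T)}$.

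A parallel line of attack, which may be cleaner, is to strengthen the statement and prove simultaneously that every slave vertex is also in $\mathteq{(T)}$: if even one slave $u$ of $v$ lies in $\mathteq{(T)}$, then $u$ belongs to some minimal {\teq}-retentive set $R'$, and Lemma~\ref{lem:captainistheuniqueteqofslavery} pushes $v$ into $R'$ too. The symmetric formulation has the advantage that Lemma~\ref{lem:captainistheuniqueteqofslavery} gives a bidirectional force along captain--slave arcs, so it suffices to show that at least one endpoint of every arc in $\dcapgraph{T}$ is forced in. The hardest step, I expect, is excluding exotic configurations in which the local neighborhood of a captain--slave arc $(v,u)$ is arranged so that neither endpoint is forced into any minimal {\teq}-retentive set; this is precisely where the delicate machinery developed in this paper—brooms, tri-captains, and Lemmas~\ref{lem:captaingrapht43property} and~\ref{lem:teqisomorhpictoT43}—would need to be pushed substantially beyond what is required for the size-$14$ bound.
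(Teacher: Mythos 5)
This statement is Conjecture~\ref{conjec:captaingraph}: the paper does not prove it, it is posed as an open problem, so there is no ``paper proof'' to match, and your proposal does not close the gap either --- it is a plan whose decisive step is explicitly left unresolved. The sound parts are the inheritance observation (if $v$ is the captain of $u$ and $v'$ is a common outneighbor of $v$ and $u$, then $v$ is still a captain vertex of $T[N^-_T(v')]$, so the inductive hypothesis plus retentiveness forces $v$ into any minimal {\teq}-retentive set containing $v'$) and the resulting reduction: it suffices that some minimal {\teq}-retentive set contains $v$ itself or such a $v'$. The spiked-odd-cycle case can in fact be handled along the lines you sketch without the full strength of Conjecture~\ref{conjec:captaincycleuniqueminimalteqset}: the odd cycle of $\capgraph{T}$ is necessarily directed in $\dcapgraph{T}$ (each vertex has at most one captain), iterating Lemma~\ref{lem:captainistheuniqueteqofslavery} shows $V(C)$ is {\teq}-retentive and that any minimal {\teq}-retentive set meeting $V(C)$ contains all of it, and in a spiked odd cycle every captain vertex lies on the cycle; this generalizes the argument of Theorem~\ref{thm:captaincycleminimalteqretentiveset}.

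The genuine gap is the forest-of-caterpillars case, which is the generic one, and your treatment of it does not work as gestured. Lemma~\ref{lem:captaingrapht43property} presupposes \emph{two distinct} minimal {\teq}-retentive sets, both with nonempty domination graphs; when $T$ has a unique minimal {\teq}-retentive set $R$ (the expected situation), that lemma says nothing about how an arbitrary captain vertex $v\notin R$ relates to $R$, so it cannot be used to force $R$ to contain $v$ or a common outneighbor of $v$ and a slave. No other lemma in the paper constrains the arcs between a minimal {\teq}-retentive set and captain vertices outside it, so the ``pathological case'' you name is exactly the open content of the conjecture and remains untouched. Your fallback route has an additional error: Lemma~\ref{lem:captainistheuniqueteqofslavery} is not bidirectional --- it forces the captain into a minimal {\teq}-retentive set containing its slave, never the slave into one containing its captain --- so the strengthened claim that every slave vertex lies in $\mathteq{(T)}$ is a strictly stronger unproven assertion, not a symmetrization you get for free. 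As it stands, the proposal establishes the conjecture only in the spiked-odd-cycle case and reduces the rest to a question the paper itself leaves open.
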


\section*{Acknowledgement}
The author would like to thank the anonymous reviewers of AAAI 2016 and ECAI 2016 for their constructive comments.

\bibliographystyle{plain}

\begin{thebibliography}{10}

\bibitem{DBLP:conf/ijcai/AltmanPT09}
A.~Altman, A.~D. Procaccia, and M.~Tennenholtz.
\newblock Nonmanipulable selections from a tournament.
\newblock In Craig Boutilier, editor, {\em IJCAI}, pages 27--32, 2009.

\bibitem{DBLP:journals/jair/AzizBFHLS15}
H.~Aziz, M.~Brill, F.~A. Fischer, P.~Harrenstein, J.~Lang, and H.~G. Seedig.
\newblock Possible and necessary winners of partial tournaments.
\newblock {\em J. Artif. Intell. Res. {(JAIR)}}, 54:493--534, 2015.

\bibitem{DBLP:journals/combinatorics/BabaiC00}
L.~Babai and P.~J. Cameron.
\newblock Automorphisms and enumeration of switching classes of tournaments.
\newblock {\em Electr. J. Comb.}, 7:1--25, 2000.

\bibitem{Brandt2010a}
F.~Brandt.
\newblock Tournament solutions and their applications to multiagent decision
  making.
\newblock In {\em MATES}, page~1, 2010.

\bibitem{DBLP:conf/aaai/BrandtBH14}
F.~Brandt, M.~Brill, and P.~Harrenstein.
\newblock Extending tournament solutions.
\newblock In {\em AAAI}, pages 580--586, 2014.

\bibitem{handbookofcomsoc2015Cha3Brandt}
F.~Brandt, M.~Brill, and P.~Harrenstein.
\newblock {\em Handbook of Computational Social Choice}, chapter~3.
\newblock Cambridge University Press, 2016.

\bibitem{DBLP:conf/ijcai/BrandtBS11}
F.~Brandt, M.~Brill, and H.~G. Seedig.
\newblock On the fixed-parameter tractability of composition-consistent
  tournament solutions.
\newblock In {\em IJCAI}, pages 85--90, 2011.

\bibitem{DBLP:journals/scw/BrandtCKLNSST13}
F.~Brandt, M.~Chudnovsky, I.~Kim, G.~Liu, S.~Norin, A.~Scott, P.~Seymour, and
  S.~Thomass{\'e}.
\newblock A counterexample to a conjecture of {S}chwartz.
\newblock {\em Soc. Choice. Welfare.}, 40(3):739--743, 2013.

\bibitem{Brandt2010}
F.~Brandt, F.~Fischer, P.~Harrenstein, and M.~Mair.
\newblock A computational analysis of the tournament equilibrium set.
\newblock {\em Soc. Choice. Welf.}, 34(4):597--609, 2010.

\bibitem{DBLP:conf/atal/BrandtHKS13}
F.~Brandt, P.~Harrenstein, K.~Kardel, and H.~G. Seedig.
\newblock It only takes a few: on the hardness of voting with a constant number
  of agents.
\newblock In {\em AAMAS}, pages 375--382, 2013.

\bibitem{DBLP:conf/atal/BrandtHS14}
F.~Brandt, P.~Harrenstein, and H.~G. Seedig.
\newblock Minimal extending sets in tournaments.
\newblock In {\em AAMAS}, pages 1539--1540, 2014.

\bibitem{Brandt2013b}
F.~Brandt and H.~G. Seedig.
\newblock A tournament of order 24 with two disjoint {TEQ}-retentive sets.
\newblock Technical report, 2013.
\newblock \url{http://arxiv.org/abs/1302.5592}.

\bibitem{BrandtS2014ORDiscrimitiveTournament}
F.~Brandt and H.~G. Seedig.
\newblock On the discriminative power of tournament solutions.
\newblock In {\em Operations Research Proceedings (OR 2014)}, 2014.
\newblock http://citeseerx.ist.psu.edu/viewdoc/summary?doi=10.1.1.475.7790.

\bibitem{CohenPPlocalltournaments}
N.~Cohen, M.~Paredes, and S.~Pinz\'{o}n.
\newblock Locally transitive tournaments and the classification of (1,
  2)–symplectic metrics on maximal flag manifolds.
\newblock {\em Illinois Journal of Mathematics}, 48(4):1405–1415, 2004.

\bibitem{Dutta1990}
B.~Dutta.
\newblock On the tournament equilibrium set.
\newblock {\em Soc. Choice. Welf.}, 7(4):381--383, 1990.

\bibitem{DBLP:journals/gc/FisherGLMR01}
D.~C. Fisher, D.~R. Guichard, J.~R. Lundgren, S.~K. Merz, and K.~B. Reid.
\newblock Domination graphs with nontrivial components.
\newblock {\em Graph. Combinator.}, 17(2):227--236, 2001.

\bibitem{DBLP:journals/jgt/FisherLMR98}
D.~C. Fisher, J.~R. Lundgren, S.~K. Merz, and K.~B. Reid.
\newblock The domination and competition graphs of a tournament.
\newblock {\em J. Graph. Theor.}, 29(2):103--110, 1998.

\bibitem{Houy2009}
N.~Houy.
\newblock Still more on the tournament equilibrium set.
\newblock {\em Soc. Choice. Welf.}, 32(1):93--99, 2009.

\bibitem{Laffond1993}
G.~Laffond, J.~F. Laslier, and M.~Le Breton.
\newblock More on the tournament equilibrium set.
\newblock {\em Math. Inform. Sci. Humaines.}, (123):37--44, 1993.

\bibitem{MSYangIJCAI2015}
M.~Mnich, Y.~R. Shrestha, and Y.~Yang.
\newblock When does {S}chwartz {C}onjecture hold?
\newblock In {\em IJCAI}, pages 603--609, 2015.

\bibitem{Schwartz1990}
T.~Schwartz.
\newblock Cyclic tournaments and cooperative majority voting: a solution.
\newblock {\em Soc. Choice. Welf.}, 7(1):19--29, 1990.

\bibitem{Douglas2000}
D.~B. West.
\newblock {\em Introduction to Graph Theory}.
\newblock Prentice-Hall, 2st, edition, 2001.

\bibitem{DBLP:conf/aldt/YangG13}
Y.~Yang and J.~Guo.
\newblock Possible winner problems on partial tournaments: {A} parameterized
  study.
\newblock In {\em ADT}, pages 425--439, 2013.

\end{thebibliography}

\newpage
\section*{Appendix}
This appendix shows all the 26 non-isomorphic {\teq}-retentive tournaments of size $7$, as well as their directed domination graphs.
In the domination  graphs, if a vertex has a tri-captain in the tournament, its tri-captain is labeled next to the vertex.
Observe that all vertices which have tri-captains have indegree at most 5 in all these 26 tournaments.
Therefore, according to Lemma~\ref{lem:localboundedsmallteqtournament}, if a tournament $T$ has a minimal {\teq}-retentive set $R$ such that $T[R]$ is isomorphic to one of these 26 tournaments, then the tri-captain showed in the appendix of a vertex $v\in R$ is the unique minimal {\teq}-retentive set of the subtournament induced by all inneighbors of $v$ in $T$.
\label{sec:appb}
\begin{center}
\includegraphics[width=0.45\textwidth]{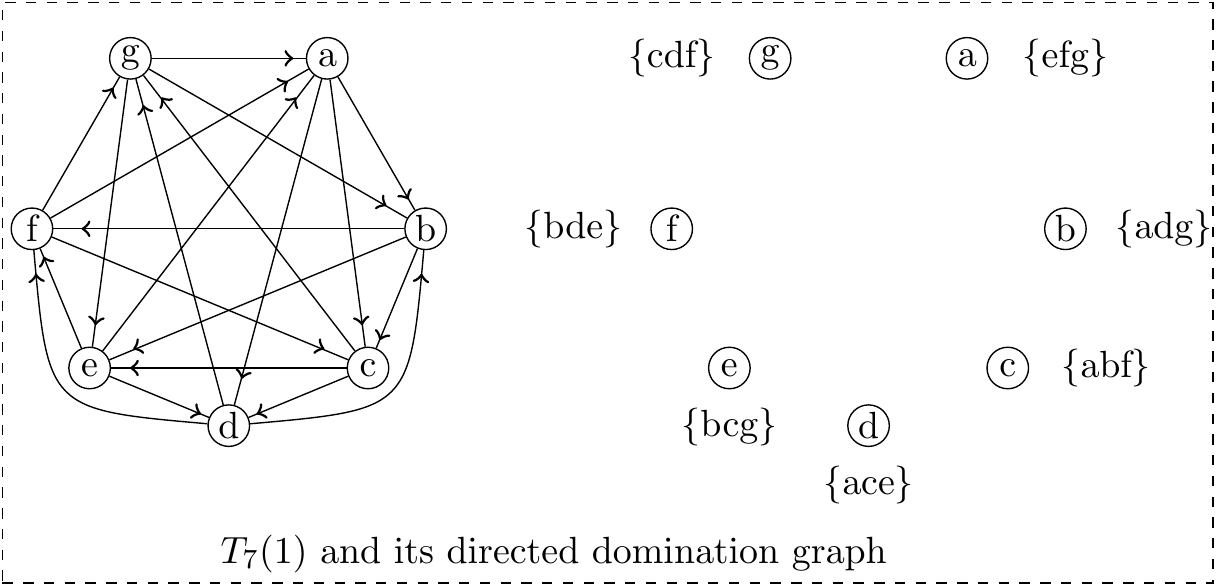}
\includegraphics[width=0.45\textwidth]{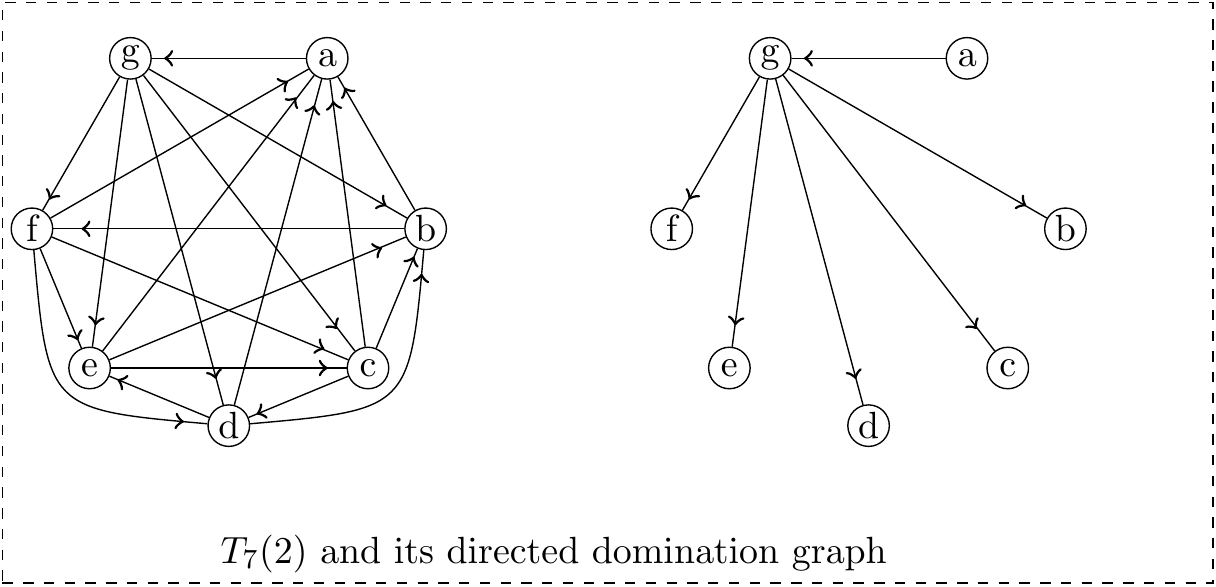}
\includegraphics[width=0.45\textwidth]{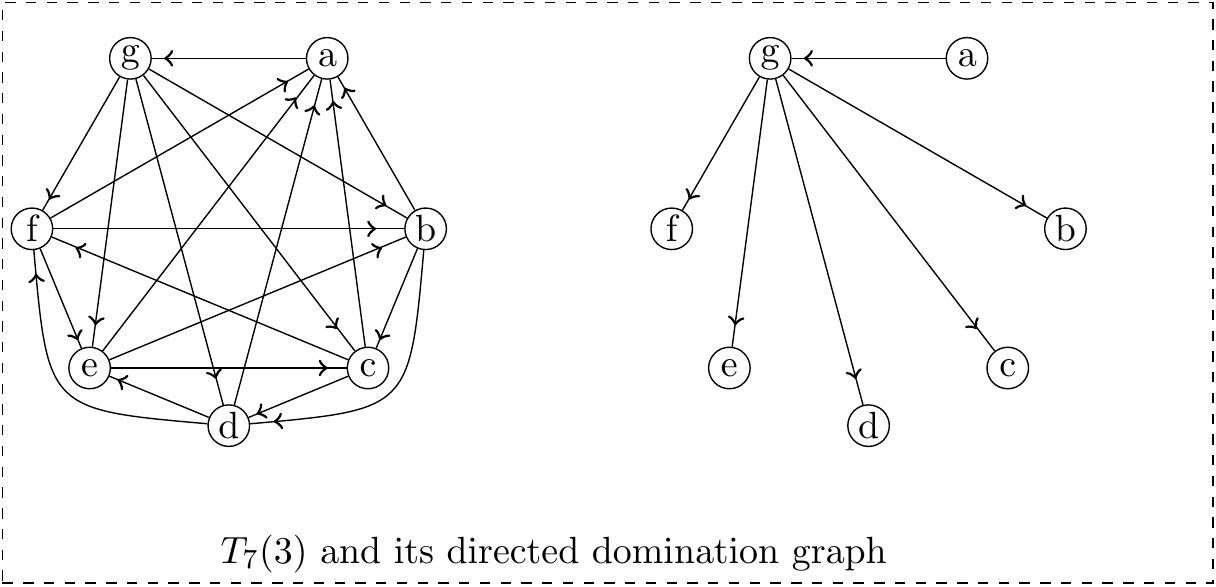}
\includegraphics[width=0.45\textwidth]{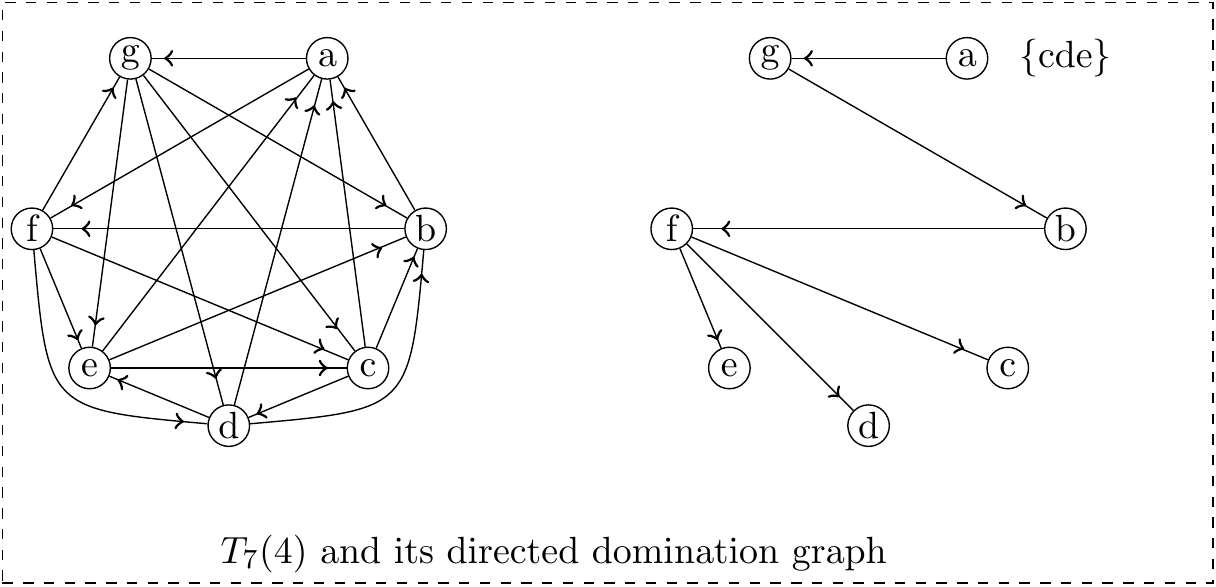}
\includegraphics[width=0.45\textwidth]{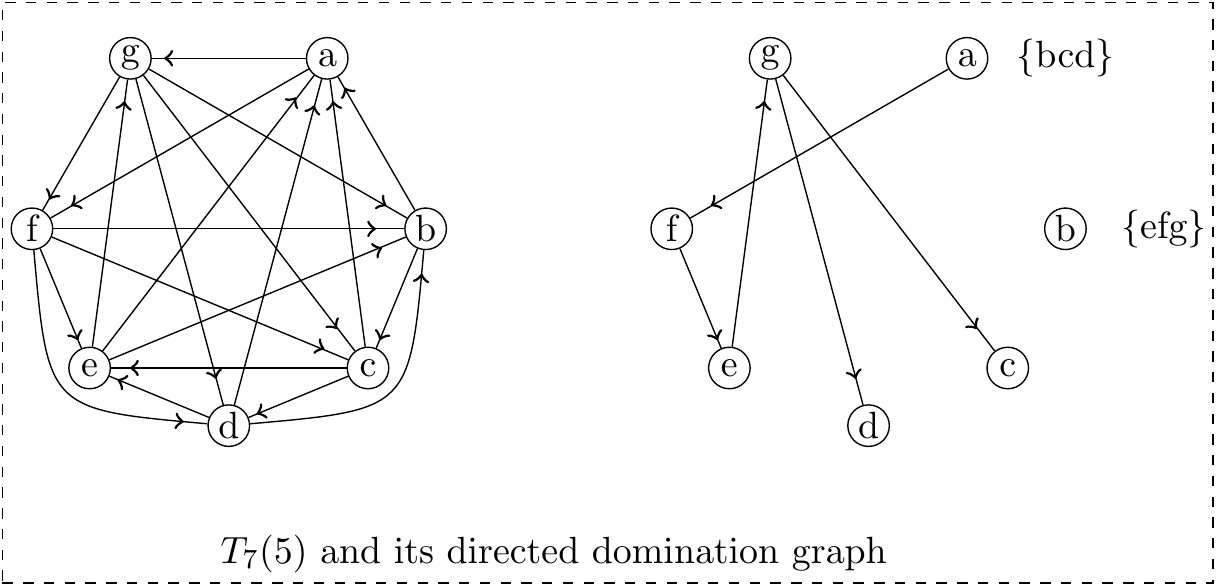}
\includegraphics[width=0.45\textwidth]{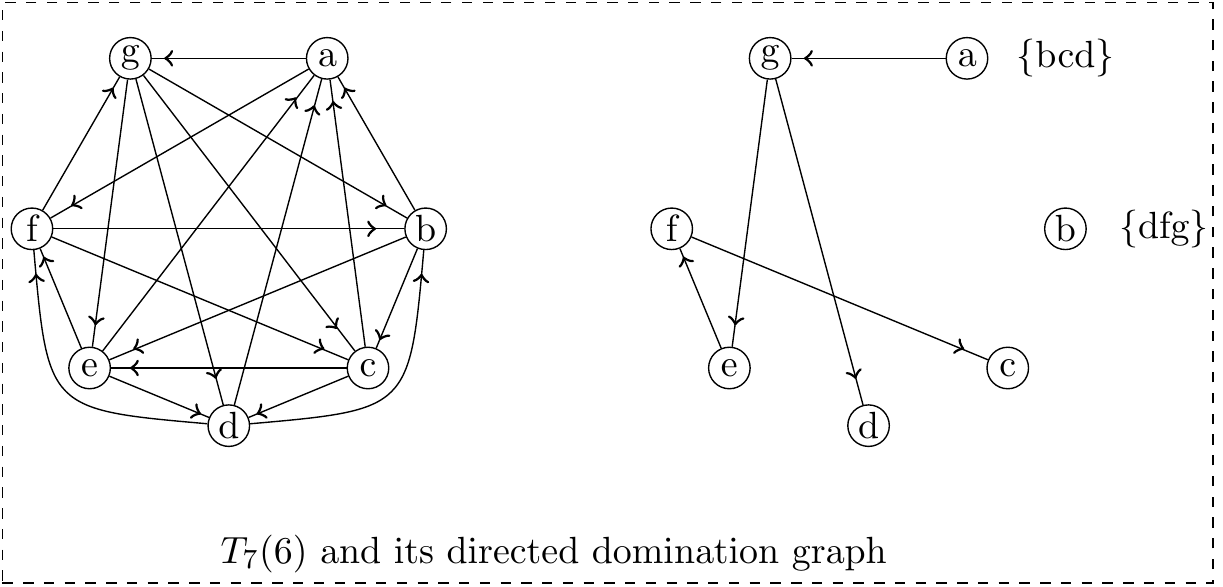}
\includegraphics[width=0.45\textwidth]{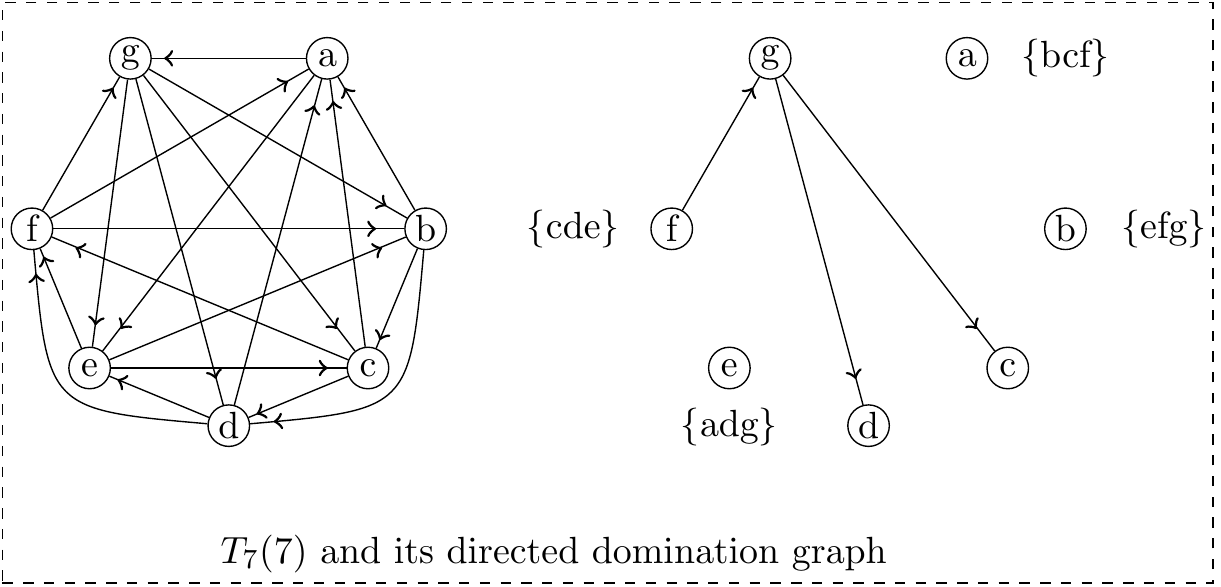}
\includegraphics[width=0.45\textwidth]{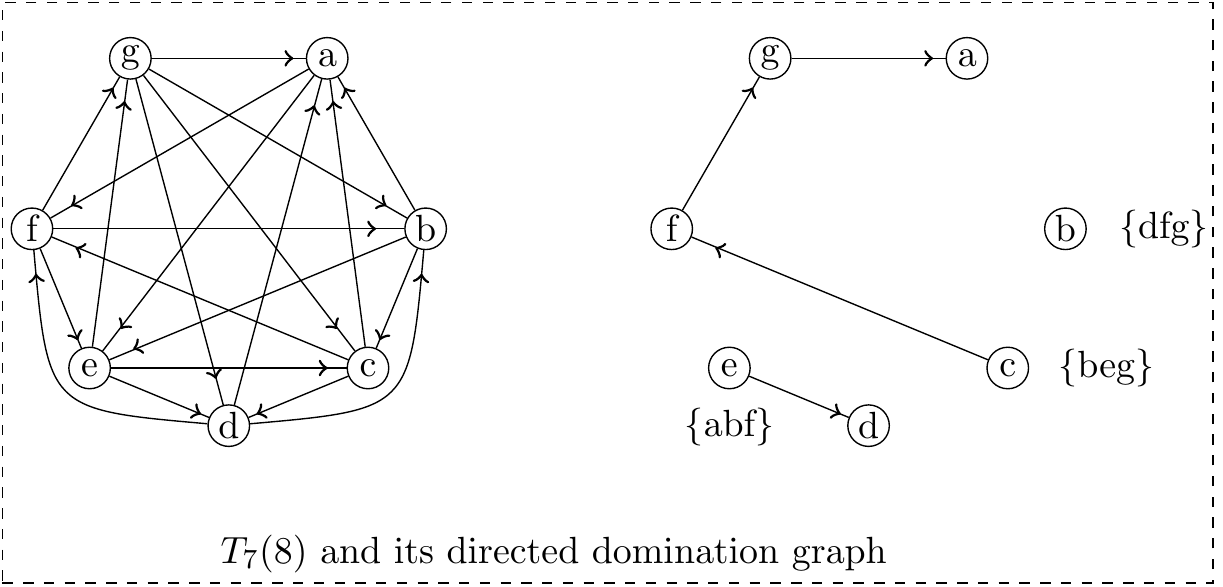}
\includegraphics[width=0.45\textwidth]{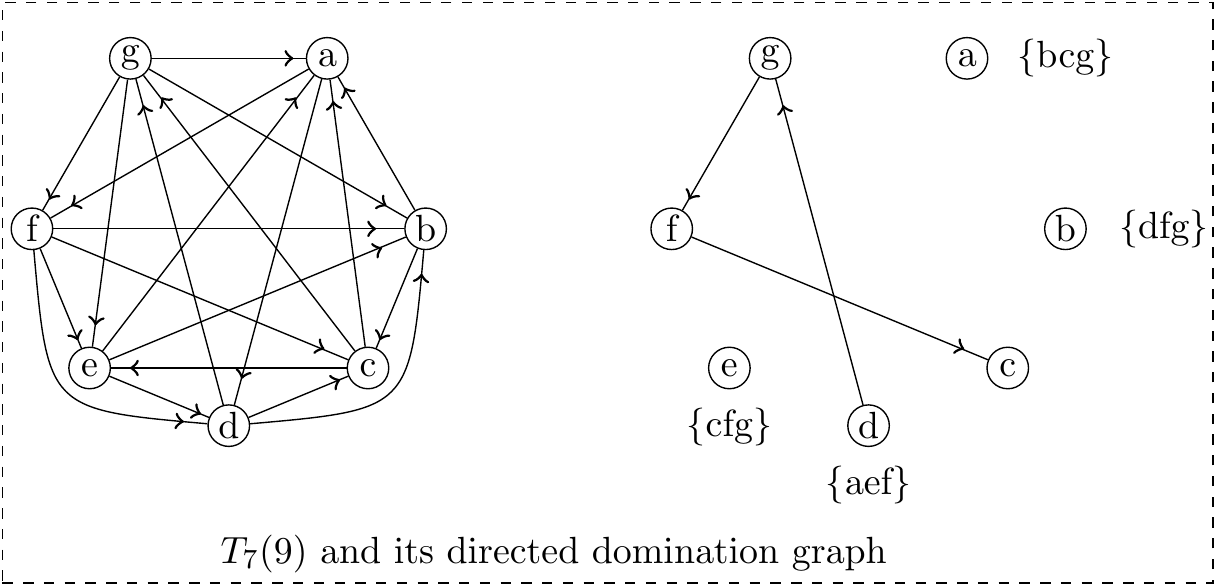}
\includegraphics[width=0.45\textwidth]{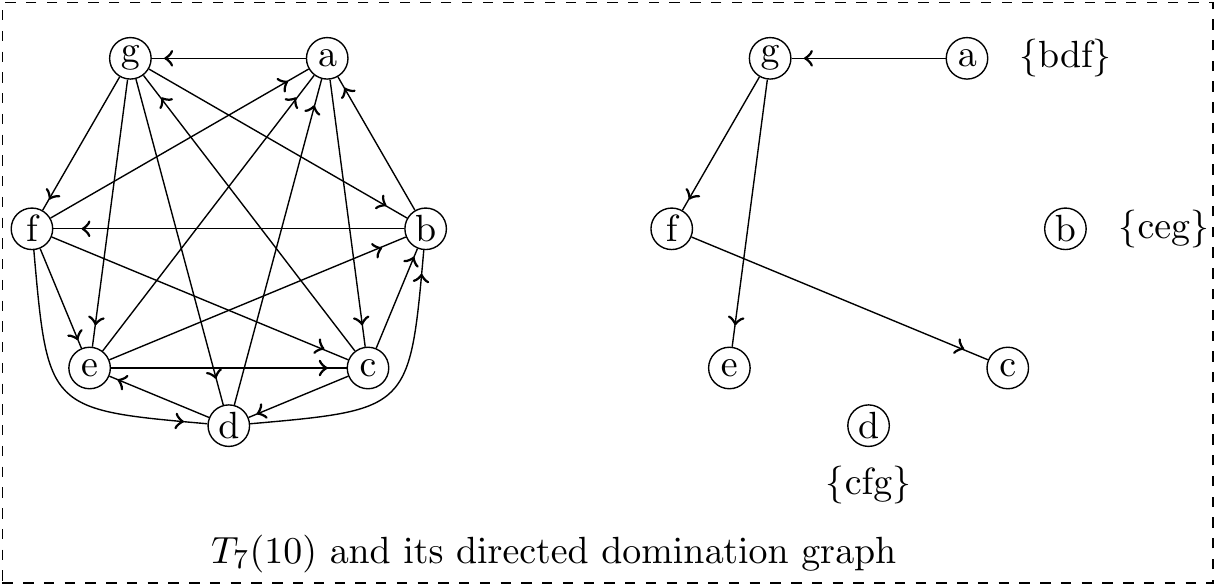}
\includegraphics[width=0.45\textwidth]{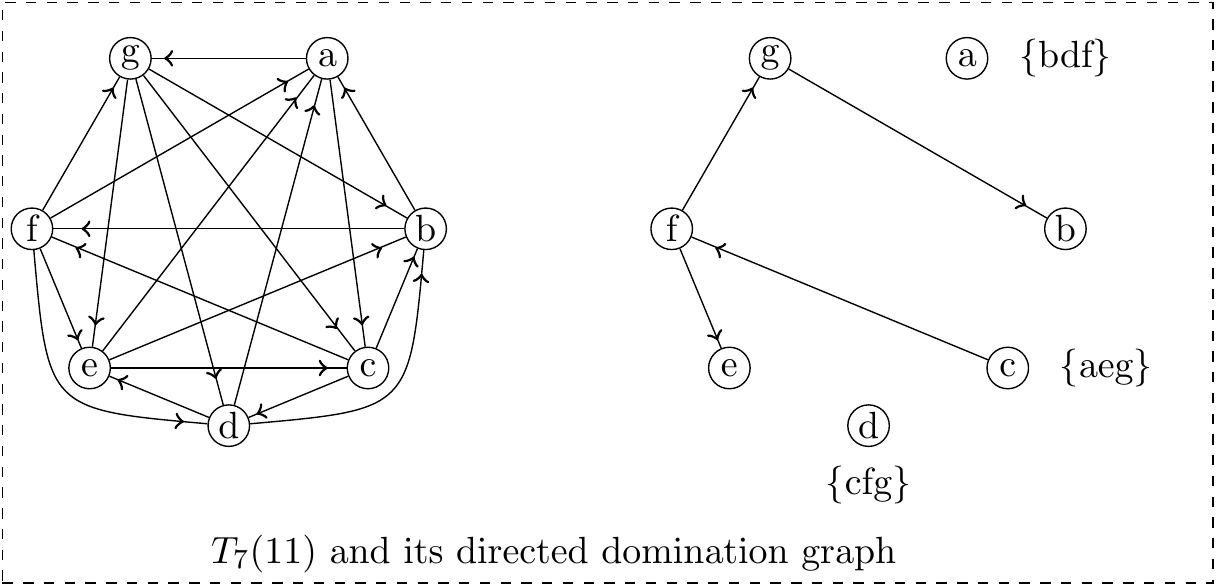}
\includegraphics[width=0.45\textwidth]{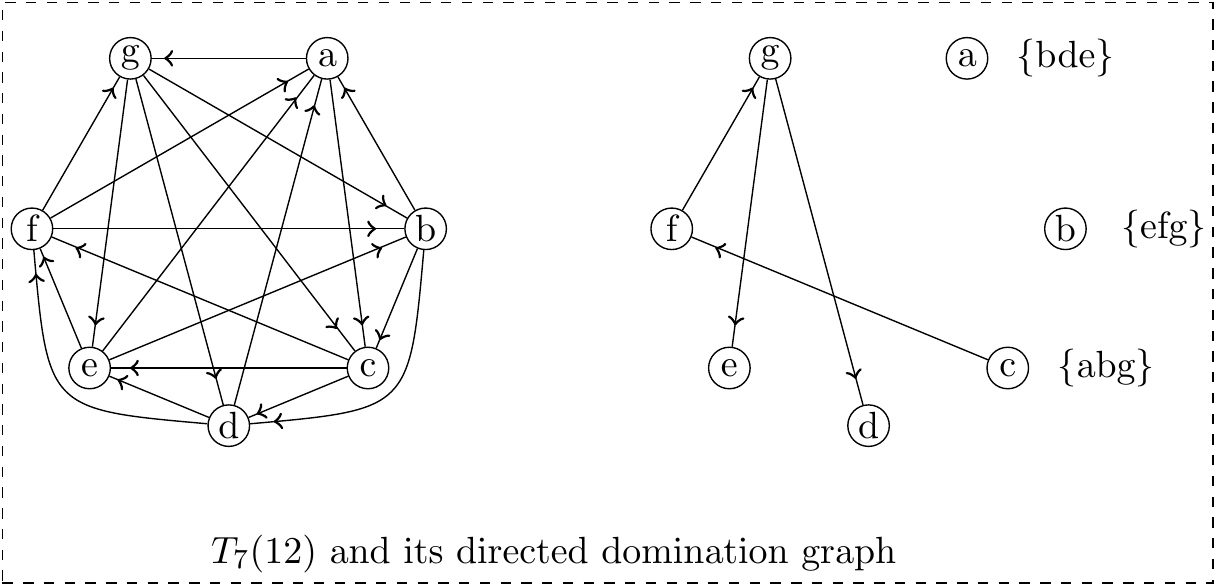}
\includegraphics[width=0.45\textwidth]{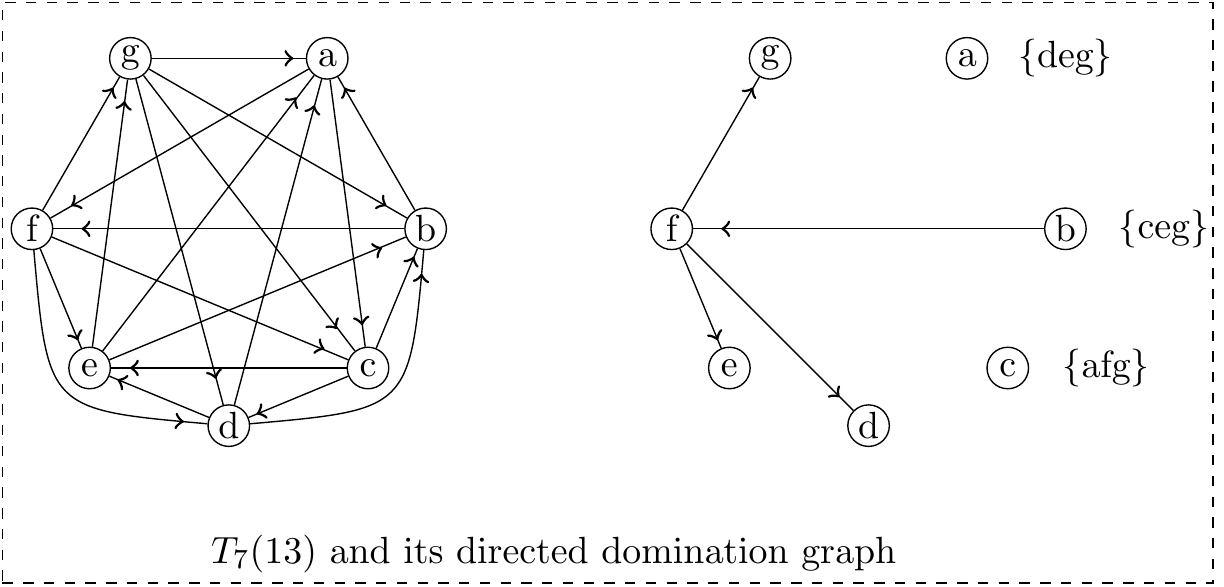}
\includegraphics[width=0.45\textwidth]{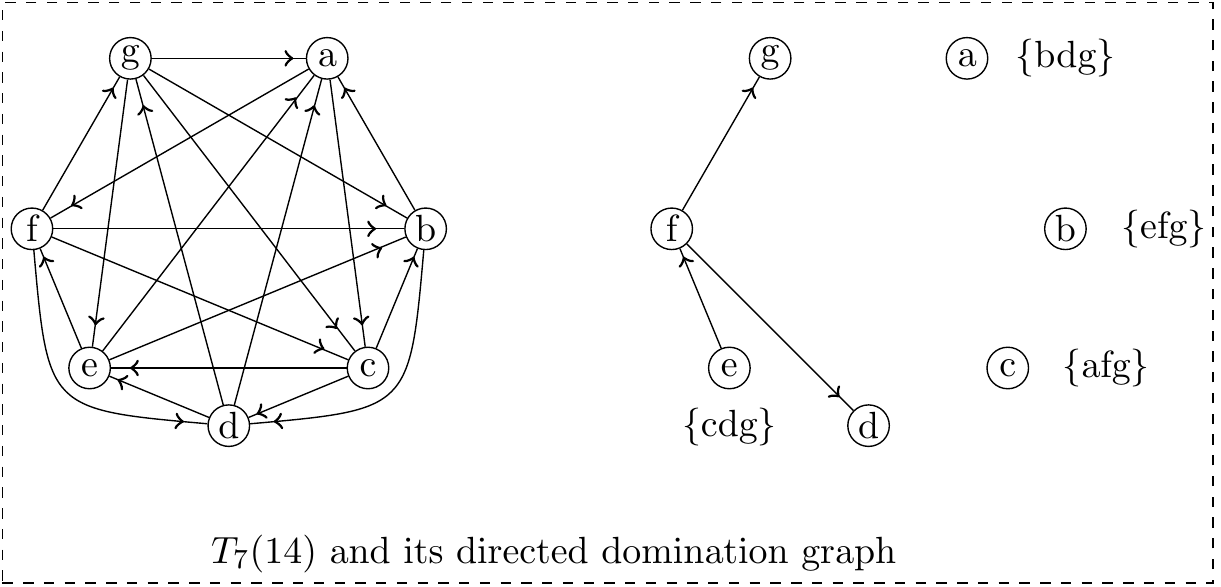}
\includegraphics[width=0.45\textwidth]{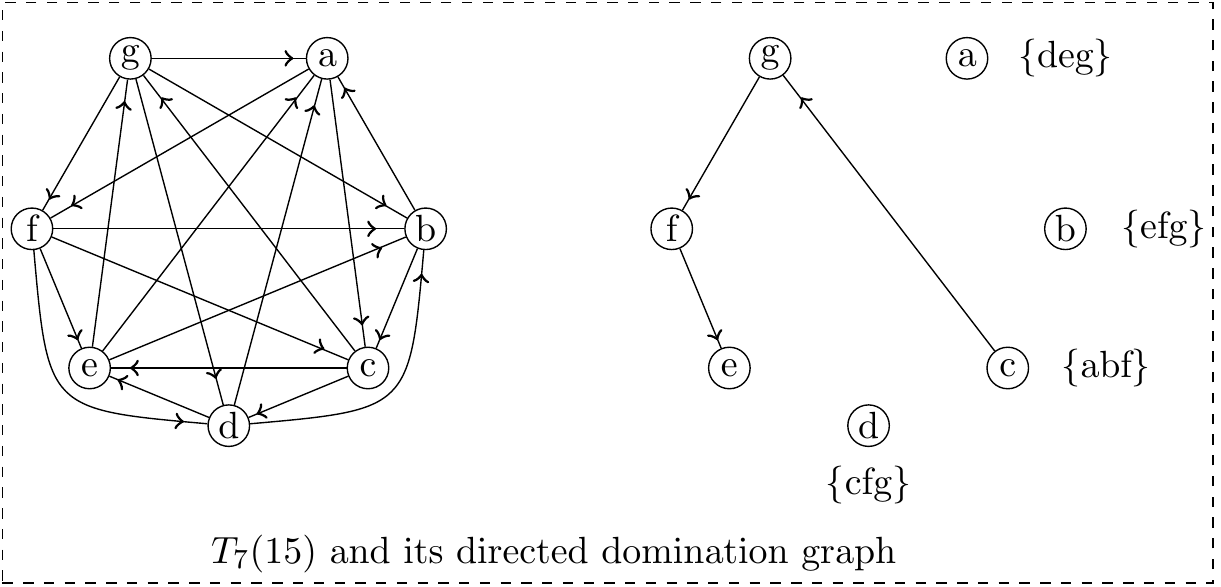}
\includegraphics[width=0.45\textwidth]{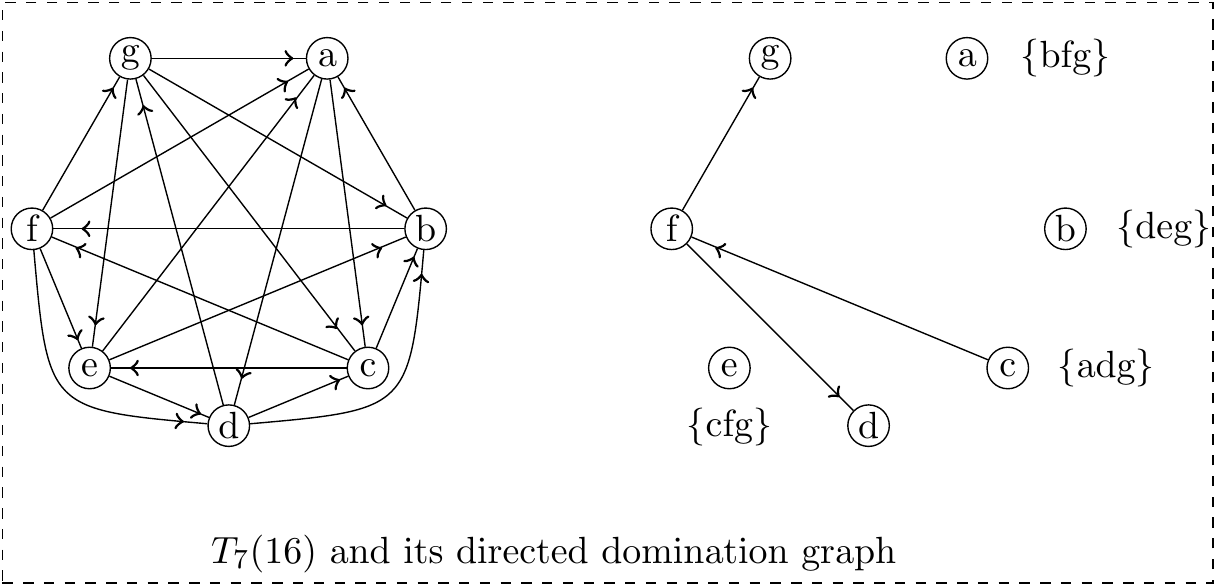}
\includegraphics[width=0.45\textwidth]{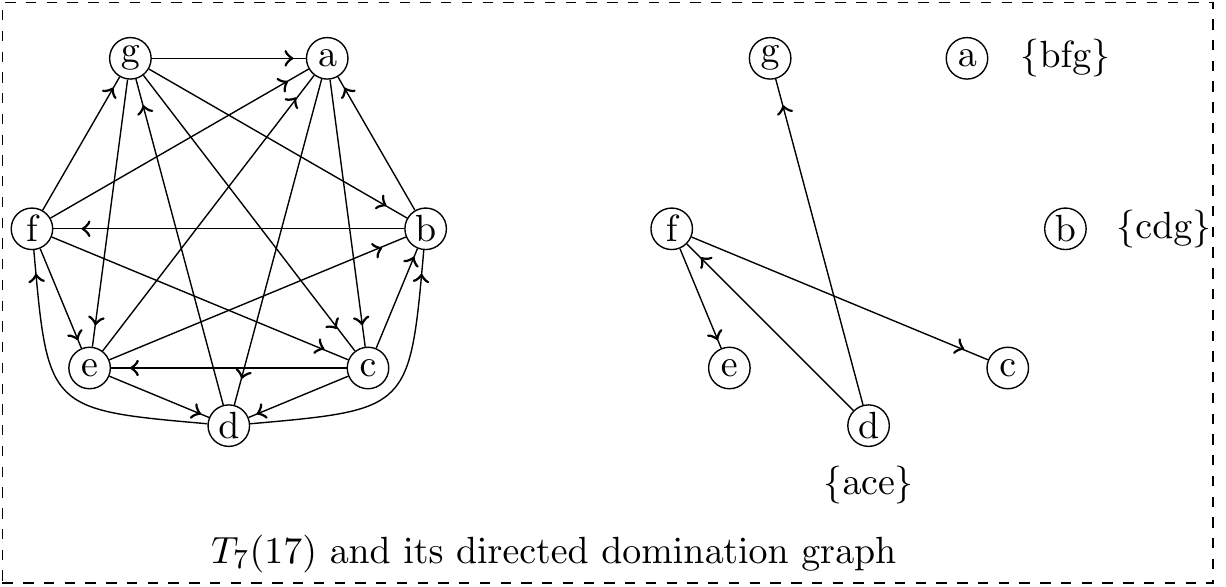}
\includegraphics[width=0.45\textwidth]{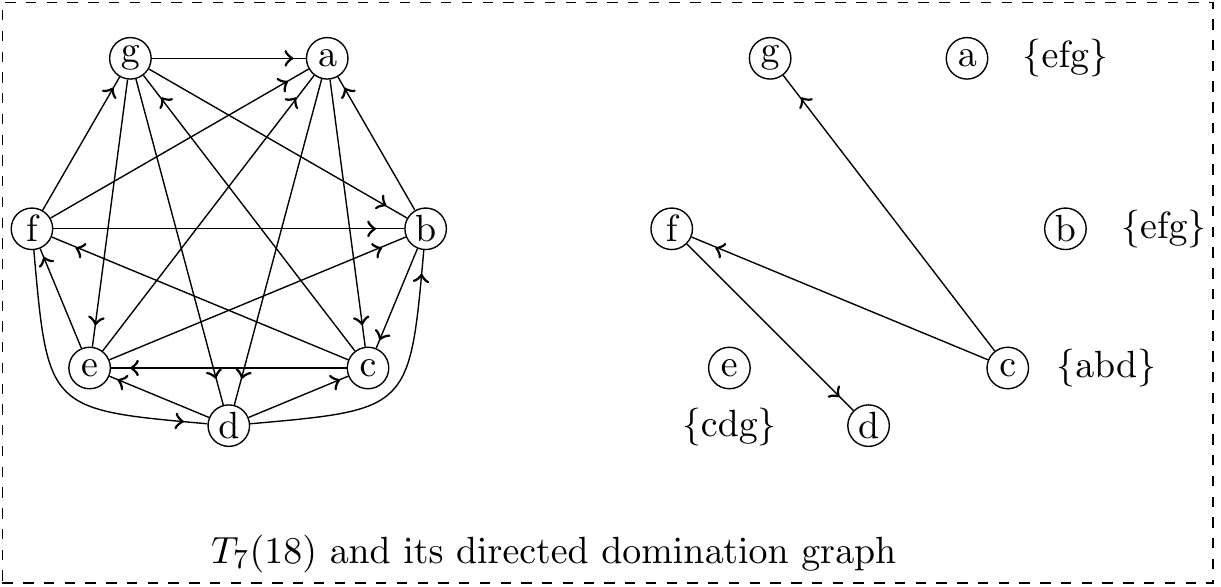}
\includegraphics[width=0.45\textwidth]{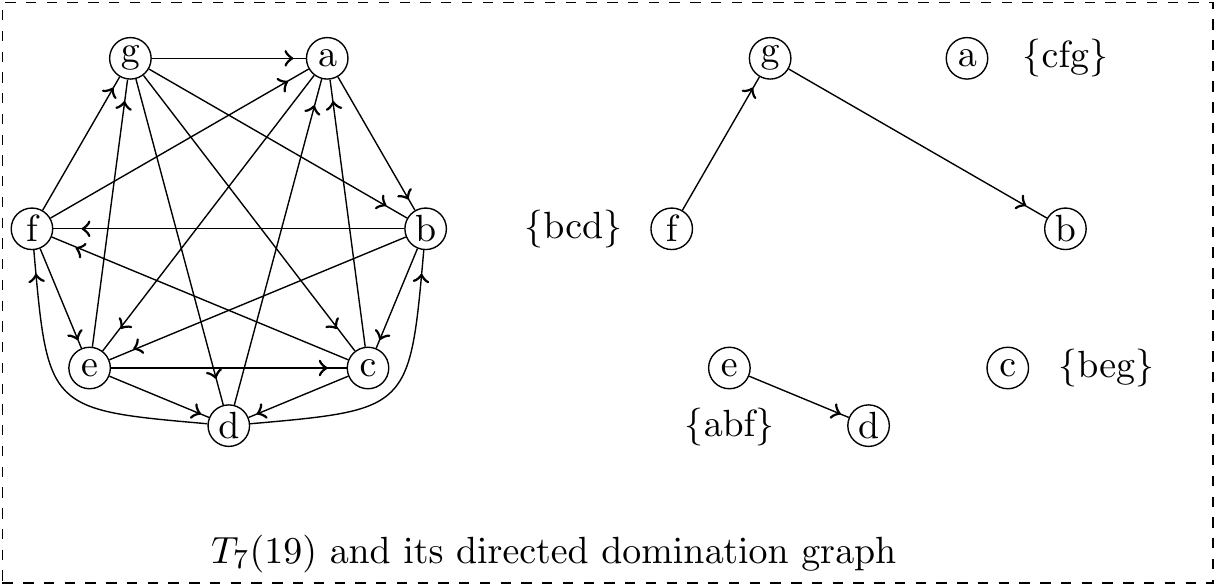}
\includegraphics[width=0.45\textwidth]{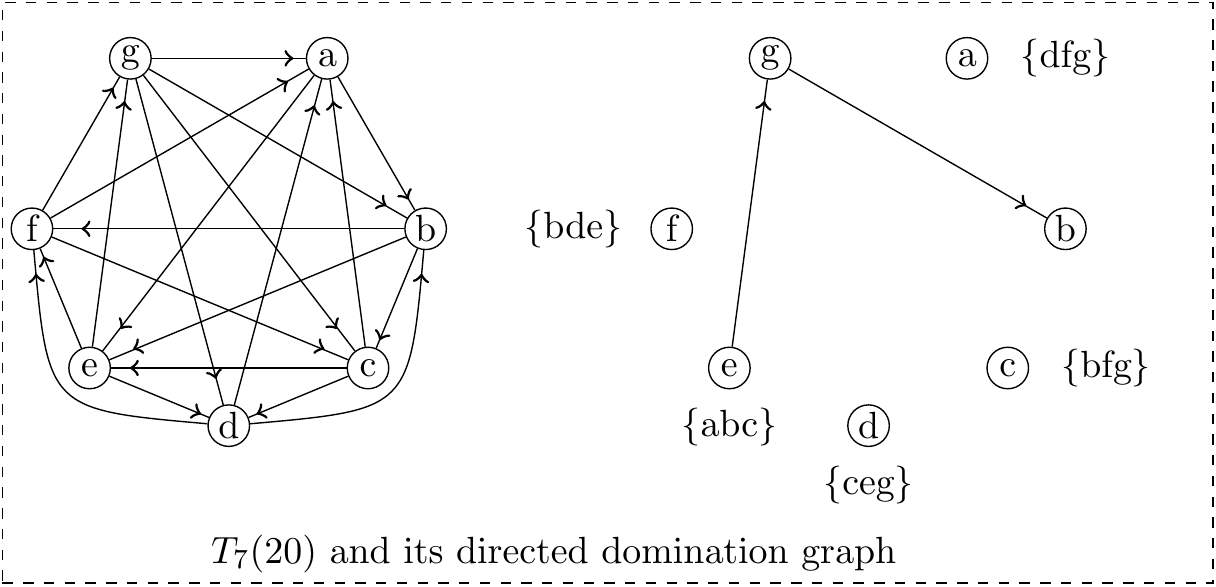}
\includegraphics[width=0.45\textwidth]{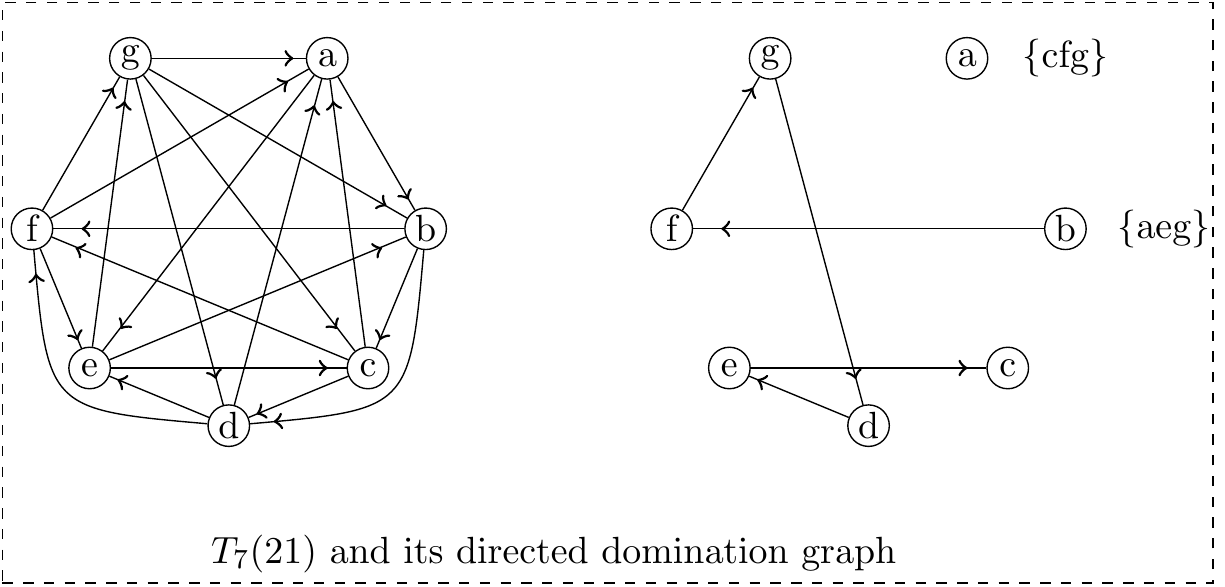}
\includegraphics[width=0.45\textwidth]{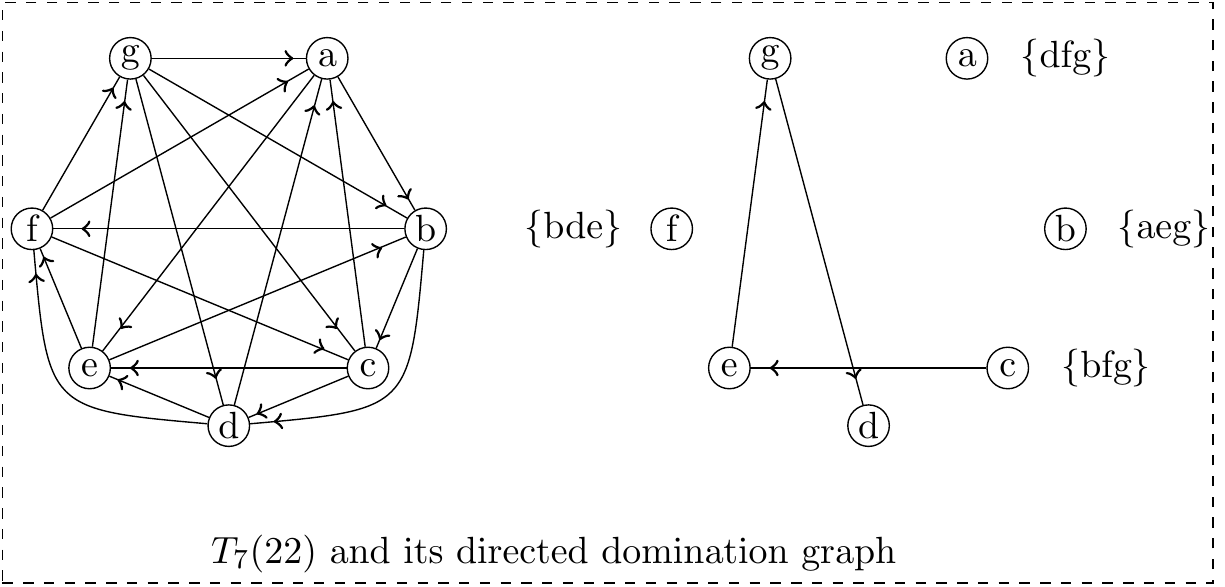}
\includegraphics[width=0.45\textwidth]{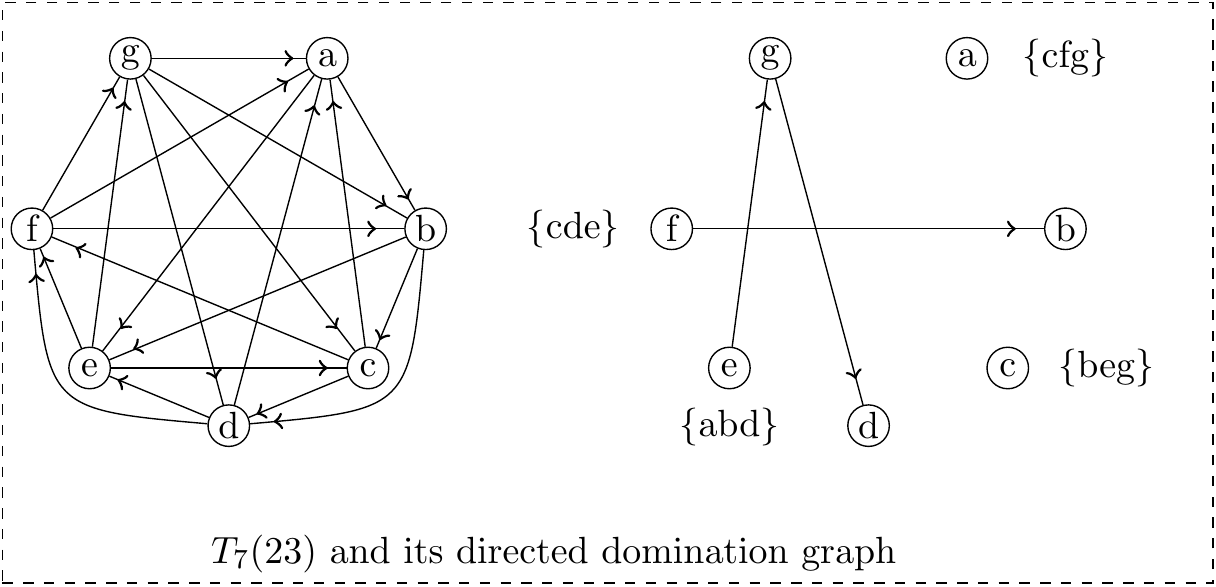}
\includegraphics[width=0.45\textwidth]{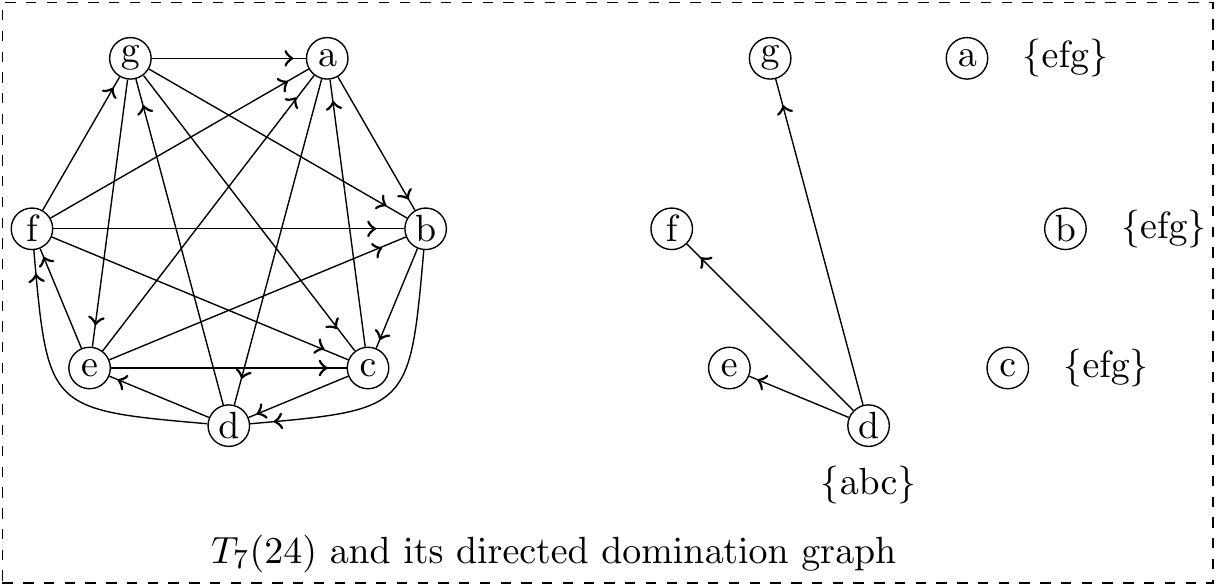}
\includegraphics[width=0.45\textwidth]{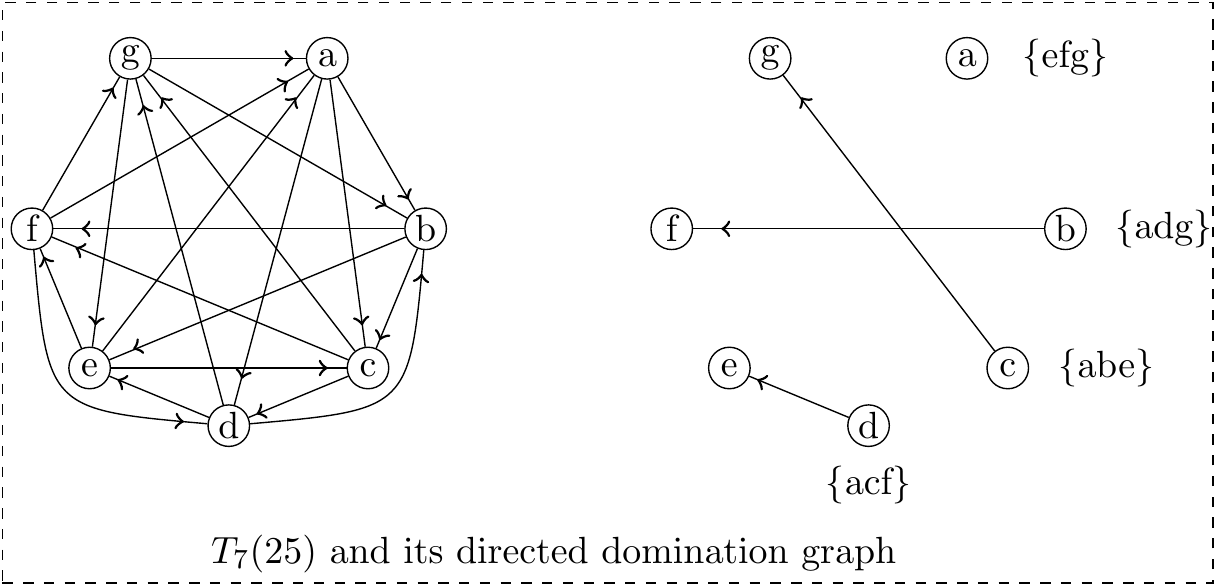}
\includegraphics[width=0.45\textwidth]{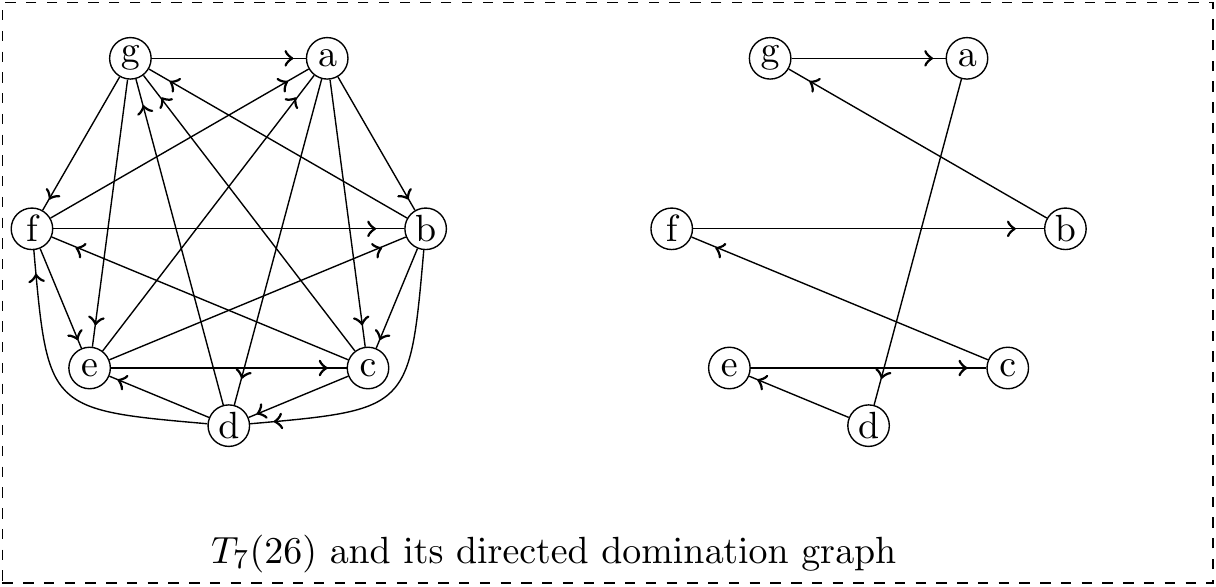}
\end{center}
\end{document}